\documentclass[reqno]{amsart}
\usepackage{textcomp}

\usepackage[dvipsnames]{xcolor}
\usepackage{amsmath}
\usepackage{amsthm}
\usepackage{mathrsfs}
\usepackage{amsfonts,amsthm,amsfonts,amscd,latexsym}
\usepackage{amssymb}
\usepackage{graphicx}
\usepackage{tikz-cd}
\usepackage{epsfig}
\usepackage{flafter}
\usepackage{longtable}
\usepackage{mathtools}
\usepackage{comment}
\usepackage{stmaryrd}
\usepackage{enumitem}
\usepackage{caption}
\usepackage{subcaption}
\usepackage{float}
\usepackage{hyperref}

\hypersetup{
    colorlinks=true,
    linkcolor=blue,
    citecolor=blue,
    filecolor=blue,
    urlcolor=blue
}
\usepackage{tikz}
\usetikzlibrary{graphs,positioning,arrows,shapes.misc,decorations.pathmorphing,matrix,calc}

\tikzset{
    >=stealth,
    every picture/.style={thick},
    graphs/every graph/.style={empty nodes},
}
\tikzstyle{vertex}=[draw,circle,fill=black,inner sep=1pt,minimum width=5pt]

\usepackage{color}

\calclayout

\newcommand{\pp}{\mathbb{P}}

\newcommand{\qq}{\mathbb{Q}}
\newcommand{\zz}{\mathbb{Z}}
\newcommand{\nn}{\mathbb{N}}
\newcommand{\rr}{\mathbb{R}}
\newcommand{\cc}{\mathbb{C}}

\newcommand{\ff}{\mathbb{F}}

\newcommand{\oo}{\mathcal{O}}

\DeclareMathOperator{\Cl}{Cl}

\DeclareMathOperator{\mult}{mult}

\DeclareMathOperator{\relint}{relint}

\newcommand{\drar}{\dashrightarrow}

\newtheorem{introthm}{Theorem}

\newtheorem{theorem}{Theorem}[section]
\newtheorem{lemma}[theorem]{Lemma}
\newtheorem{proposition}[theorem]{Proposition}
\newtheorem{corollary}[theorem]{Corollary}

\theoremstyle{definition}

\newtheorem{definition}[theorem]{Definition}
\newtheorem{example}[theorem]{Example}

\newtheorem{convention}[theorem]{Convention}

\theoremstyle{remark}
\newtheorem{remark}[theorem]{Remark}

\numberwithin{equation}{section}

\usepackage[all]{xy}

\begin{document}

\title[Orbifold Degenerations of Hirzebruch Surfaces]
{Orbifold Degenerations of Hirzebruch Surfaces}

\author[J.P.~Z\'u\~niga]{Juan Pablo Z\'u\~niga}
\address{UCLA Mathematics Department, Box 951555, Los Angeles, CA 90095-1555, USA}
\email{jpzuniga@math.ucla.edu}

\subjclass[2020]{Primary 14J26, 14D06; Secondary 14E30, 14B07, 14M25.}
\keywords{Degenerations, Hirzebruch surfaces, Wahl singularities, complements, toric surfaces}

\begin{abstract}
We study orbifold degenerations of Hirzebruch surfaces. Our main theorem shows that every such degeneration $X$ arises as a partial smoothing of a toric surface. We then give a combinatorial description of the singularities that arise when $-K_X$ is not nef. This complements previous joint work with G. Urzúa, which treated the case in which $-K_X$ is ample. For Hirzebruch surfaces $\mathbb{F}_k$ with $k\geq 3$, we obtain an explicit description of all possible central fibers.
For $k\leq 1$, we use the threefold minimal model program to reduce the problem to the case of central fibers whose anticanonical divisor is nef. The remaining surfaces are toric del Pezzo surfaces of degree $8$ with T-singularities. We classify these by studying the birational geometry arising from mutations of Fano polygons.
\end{abstract}

\maketitle

\setcounter{tocdepth}{1}
\tableofcontents

\section{Introduction}
\label{sec:intro}
Describing the mildly singular degenerations of a fixed smooth surface is a central problem in algebraic geometry. For instance, degenerations are relevant for constructing compactifications of moduli spaces. This approach was established in the work of Kollár and Shepherd-Barron \cite{KSB88} and Alexeev \cite{Ale94} to construct a compactification of the moduli space of surfaces of general type. Similarly, Hacking \cite{H04} constructed a compactification of the moduli space of degree $d$ plane curves, for which elements in the boundary are stable pairs $(X,D)$ where $X$ is a degeneration of $\pp^2$. For extensive literature in this topic, see for example \cite{DHL13}. 

In this article we study the orbifold degenerations of the Hirzebruch surfaces $\ff_k$.
The problem is to describe the surfaces that occur as central fibers, for a fixed general fiber. This is also a problem of threefold geometry. The total space of a degeneration over a curve germ is a $3$-fold.
Two degenerations of the same surface often have total spaces that are birational over the base.
The birational maps relating them are steps of a minimal model program, see \cite{M02} and \cite{HTU17}.
We will use this point of view to modify central fibers.

Orbifold degenerations of smooth rational surfaces have been studied by many authors and from different perspectives, see for example \cite{B85,Man91,HP10,UZ25,MZ25}.
The study of normal degenerations of rational and ruled surfaces was initiated by B\u{a}descu \cite{B85}.
Manetti proved that a normal degeneration of $\pp^2$ with quotient singularities carries at most three singular points, all of them Wahl singularities \cite{Man91}.
A Wahl singularity is a smoothable surface quotient singularity with vanishing Milnor number.
The complete answer for $\pp^2$ is due to Hacking and Prokhorov, who classified the smoothable del~Pezzo surfaces with quotient singularities and Picard rank one \cite[Theorem~1]{HP10}.
For del~Pezzo surfaces of degree $9$ their result says the following.
Every orbifold degeneration $\pp^2\rightsquigarrow X$ is a partial $\qq$-Gorenstein smoothing of a weighted projective plane $\pp(a^2,b^2,c^2)$, where $(a,b,c)$ solves the Markov equation.
Prokhorov later studied the degenerations of del~Pezzo surfaces with ample anticanonical divisor \cite{P15}.
Peng described the $\mathbb{G}_m$-degenerations of a del~Pezzo surface $X$ induced by the log canonical places of a pair $(X,C)$, where $C\in|-K_X|$ is a nodal curve \cite{P26}. However, a classification of orbifold degenerations of rational surfaces is still a wide open problem. 

Two previous works are directly relevant to this paper.
In \cite{UZ25} Urz\'ua and the author determined which Wahl singularities occur in a degeneration of a del~Pezzo surface of any degree.
The answer is a condition on the Hirzebruch--Jung continued fraction of the singularity.
Each continued fraction satisfying it determines a surface, which is unobstructed to deform.
In \cite{MZ25} Moraga and the author studied degenerations of toric and of cluster type pairs.
That paper gives criteria for the existence of nodal anticanonical elements in degenerations of singular toric surfaces of Picard rank one.
As a consequence, they classify the orbifold degenerations of $\pp(1,1,k)$ with $k\geq 3$.

We state our results in the language of $W$-surfaces.
A $W$-surface is a $\qq$-Gorenstein degeneration $\pi\colon(X\subset\mathcal{X})\to(0\in\mathbb{D})$ over a smooth curve germ, with smooth general fiber, whose central fiber $X$ is reduced and has only Wahl singularities, see Definition~\ref{def:W-surface}.
This entails no loss of generality here.
If $X$ is an orbifold degeneration of $\ff_k$, then either all singularities of $X$ are Wahl and $\rho(X)=2$, or $\rho(X)=1$ and such $X$ is classified by \cite[Theorem~1.1]{HP10}.

\subsection{Toric degenerations and nodal anticanonical elements}\label{ss:toric}

Our first theorem reduces the classification to the toric case.

\begin{introthm}\label{ithm:toric}
Let $\pi\colon (X \subset \mathcal{X}) \to (0 \in \mathbb{D})$ be a $W$-surface with general fiber $\mathcal{X}_t \cong \ff_k$.
Then there exists a $\qq$-Gorenstein deformation $\pi'\colon \mathcal{X}' \to \mathbb{D}$ such that the general fiber $\mathcal{X}'_t$ is isomorphic to $X$ and the central fiber $\mathcal{X}'_0$ is a toric surface.
\end{introthm}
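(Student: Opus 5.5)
We plan to show that $X$ itself degenerates, by a $\qq$-Gorenstein deformation, to a toric surface with T-singularities. The natural strategy is to find on $X$ a nodal anticanonical curve, or more generally a cycle of smooth rational curves $D\in|-K_X|$ forming a toric-type boundary (a log Calabi--Yau pair $(X,D)$ with $D$ a cycle). We would then degenerate $(X,D)$ to its toric model by the standard deformation to the normal cone / Looijenga-type construction, keeping track of the Wahl points.

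First we reduce to the case where $X$ has no $(-1)$-curves through which the general fiber's ruling degenerates badly. Since $\rho(X)=2$ and $\mathcal{X}_t\cong\ff_k$, the fibration $\mathcal{X}_t\to\pp^1$ specializes (after running the relative MMP of the $3$-fold $\mathcal{X}$ over $\mathbb{D}$, as in \cite{HTU17}) to a contraction $X\to\pp^1$ or $X\to$ a Picard-rank-one surface. We analyze the extremal contraction $\phi\colon X\to Y$ on the central fiber; it is either a $\pp^1$-fibration or a birational contraction, which the MMP realizes as a divisorial contraction or flip/antiflip of $\mathcal{X}$. In the Picard rank one case we invoke \cite[Theorem~1.1]{HP10} together with the results of \cite{MZ25}, which already exhibit the relevant surfaces $\pp(a^2,b^2,c^2)$ and $\pp(1,1,k)$-type degenerations as partial smoothings of toric surfaces. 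For the fibration case, each singular fiber of $X\to\pp^1$ is a chain of rational curves through Wahl points (a fiber of a $\qq$-Gorenstein ruled surface), and we take $D$ to be two sections plus two fibers chosen through the singular points, giving a cycle. Since there are at most two singular fibers (the Wahl points lie on at most two fibers, by a Noether/Euler-characteristic count using $\mu=0$), this yields a cycle $D$ with $(X,D)$ log canonical and $K_X+D\sim 0$.

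Given such $(X,D)$, the key step is to show that $(X,D)$ admits a $\qq$-Gorenstein toric degeneration. We would take a minimal resolution, interpret $(X,D)$ as a log Calabi--Yau surface with a toric model, and use the fact that the $\qq$-Gorenstein deformations of $X$ preserving $D$ are unobstructed ($H^2(T_X(-\log D))=0$ for rational surfaces with a cycle boundary), so that $X$ lies in a family whose special member is the toric surface with the same fan data (charge zero deformation direction). The T-singularities of the toric surface appear where the nodes of $D$ meet Wahl points, and Wahl points in the interior of $D$ components come from mutations.

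The main obstacle is guaranteeing the existence of the anticanonical cycle $D$ passing correctly through all Wahl points, i.e.\ that each singularity sits on $D$ in a toric way; when $-K_X$ is not nef, a negative curve may force $D$ to be non-reduced. We would first try to handle this by the explicit geometry of the ruling (sections of negative self-intersection through the singular points), and otherwise use MMP flips of $\mathcal{X}$ to move to a central fiber with $-K$ nef where such $D$ exists by \cite{UZ25}, then transport the toric degeneration back.
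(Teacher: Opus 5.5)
There is a genuine gap at the heart of your argument: the degeneration step cannot work as you describe it. Deformations of the pair $(X,D)$ that keep $D$ a cycle of the same combinatorial type (the locally trivial ones governed by $T_X(-\log D)$) preserve $\rho(X)$ and $|D|$, and hence the complexity $c(X,D)=2+\rho(X)-|D|$. By \cite[Theorem~1.2]{BMSZ18}, if $X$ is not toric then every reduced $1$-complement has $c\geq 1$, i.e.\ $|D|\leq 3$. The toric boundary of a toric surface with $\rho=2$, by contrast, has four components. So the special member of such a family is never a toric surface with $D$ specializing to its boundary. Unobstructedness only makes the deformation space smooth; it does not put a toric point in it. Also, no Wahl point can lie in the interior of a component of $D$: since $K_X+D\sim 0$ and $K_X$ is not Cartier there, $D$ is the full local toric boundary, so every Wahl point is a node of $D$.

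The missing idea is that the degeneration must \emph{create} new Wahl points and new boundary components. The paper does this with slides \cite[Definition~4.16, Lemma~5.11]{UZ25}. A $(-1)$-curve $\Gamma$ in the minimal resolution meeting an interior component of a Wahl chain is traded for the configuration $[f_1,\dots,f_{r'},1,e_1,\dots,e_r]$. This gives a $\qq$-Gorenstein degeneration $X\rightsquigarrow X'$ with one extra Wahl point and $\rho(X')=\rho(X)$. It also gives a $1$-complement $B'$ in which the node at $P$ is replaced by $B_1'-\Gamma'-B_2'$, so $c(X',B')=c(X,B)-1$ (Theorem~\ref{thm:slide-complement}). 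Induction on $c$ together with \cite{BMSZ18} ends at a toric surface. A versality, constructibility and curve-selection argument, using $H^2(T)=0$ from \cite{M96}, then assembles the successive degenerations into a single family with general fiber $X$.

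Finally, some non-toric $X$ admit no slide at all: the surfaces of type $(\dagger)$ (Definition~\ref{def:dagger-type}). For these the paper moves the blown-up point along a $\mathbb{G}_m$-orbit into a singular point. This is the kind of ``move the point to a fixed point'' degeneration your toric-model idea suggests. However, the limit acquires a new Wahl singularity that has to be identified by an explicit chart computation (Proposition~\ref{prop:E0-two-sings}), and your sketch gives no such control.

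Your construction of $D$ also does not go through. A singular $X$ with $\rho(X)=2$ carries no $\pp^1$-fibration: both extremal rays are spanned by curves of negative self-intersection (Proposition~\ref{prop:moriray}). In any case a morphism $X\to\pp^1$ would have irreducible fibers, so the chains through Wahl points you describe live on the minimal resolution, not on $X$. In the birational case, the target acquires a cyclic quotient singularity $\frac{1}{\Delta}(1,\Omega)$ that is in general not a T-singularity, so \cite{HP10} and \cite{MZ25} do not apply to it. Your fallback also fails: a toric degeneration of an antiflipped central fiber $X_N$ is a family with general fiber $X_N$, not $X$, so it cannot simply be transported back.

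The paper instead proves directly that a reduced $1$-complement exists (Theorem~\ref{thm:1-compldegHz}, with \cite{P15} when $-K_X$ is ample). Otherwise it contracts $\Gamma_0$ to $\bar X$ and uses that $(\bar X,\bar\Gamma_1)$ is plt with $-(K_{\bar X}+\bar\Gamma_1)$ ample. This lets it lift the complement of $(\bar\Gamma_1,\operatorname{Diff}_{\bar\Gamma_1}(0))$ by Kawamata--Viehweg vanishing. The connectedness lemma over the contracted point then shows that $\Gamma_0$ is an lc center, so $\Gamma_0+\Gamma_1+\varphi^{-1}_*D'$ is a $1$-complement, where $D'$ is the lifted divisor. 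In particular, your worry that $D$ must be non-reduced when $-K_X$ is not nef does not materialize.
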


In other words, every orbifold degeneration of a Hirzebruch surface is a partial $\qq$-Gorenstein smoothing of a toric surface with T-singularities.
Any description of the possible central fibers may therefore start from a list of toric surfaces.

The geometric input for Theorem~\ref{ithm:toric} is the existence of a nodal anticanonical element on $X$.
In Theorem~\ref{thm:1-compldegHz} we prove that every such $X$ carries a reduced divisor $B\in|-K_X|$ with $(X,B)$ log canonical.
When $X$ is singular and $\rho(X)=2$, $B$ is a cycle of rational curves passing through every singular point of $X$.
This is the Picard rank two analogue of \cite[Theorem~7.1]{HP10}, and it holds for every $k\geq 0$.
It already bounds the number of singular points of $X$ by four, with equality only for $X$ toric, see Proposition~\ref{prop:toricity}.
The divisor $B$ is what allows us to degenerate $X$.
We deform it through a sequence of local $\qq$-Gorenstein deformations, which we call \emph{slides} (see \cite[Definition~4.6]{UZ25}), until a toric surface is reached.

One class of central fibers admits no slide, namely the surfaces of type $(\dagger)$ of Definition~\ref{def:dagger-type}.
These are certain blow-ups of degenerations of $\pp^2$ at a point of the toric boundary.
For them the toric degeneration is obtained by an explicit construction, see Proposition~\ref{prop:E0-two-sings}.

\subsection{The Mori cone and the smoothing index}\label{ss:features}

Two features distinguish our setting from that of del~Pezzo surfaces of Picard rank one.

The first feature concerns the Mori cone of a singular central fiber.
It has exactly two extremal rays, generated by curves $\Gamma_0$ and $\Gamma_1$, see Proposition~\ref{prop:moriray}.
When $-K_X$ is not nef one has $K_X\cdot\Gamma_0<0$ and $K_X\cdot\Gamma_1>0$.
The curve $\Gamma_1$ is thus the only obstruction to the nefness of $-K_X$, and both classification results below are organized around it.

The second feature concerns that $k$ is not an invariant of $X$.
A smoothing to $\ff_k$ also gives a smoothing to $\ff_{k-2d}$ for every $d\geq0$ with $k-2d\geq0$, see Theorem~\ref{thm:consc-def}. Thus only $k\bmod 2$ is a deformation invariant, which agrees with the homeomorphism type of the Hirzebruch surface.
The quantity attached to $X$ is therefore the largest $k$ for which a smoothing $\ff_k\rightsquigarrow X$ exists, which we call the \emph{smoothing index} $m(X)$. In Subsection \ref{ss:sbounds} we provide bounds for $m(X)$.

\subsection{Degenerations of $\ff_k$ with $k\geq 3$}\label{ss:k3}

Theorem~\ref{ithm:toric} does not say which surfaces occur, nor which singularities they carry.
For $k\geq 3$ we answer both questions.
The two ranges $k\geq 3$ and $k\leq 1$ require different methods, and the case $k=2$ is not treated in this paper.

The case $k\geq 3$ is distinguished by the following fact.
The anticanonical divisor of a central fiber is never nef.
Hence the extremal curve $\Gamma_1$ with $K_X\cdot\Gamma_1>0$ is always present. The Wahl singularities lying on $\Gamma_1$ are constrained, and this is what makes an explicit answer possible.

We encode those singularities by a condition on their Hirzebruch--Jung continued fractions.
A Wahl chain satisfying it is called a \emph{Hirzebruch Wahl chain of index $k$}, see Definition~\ref{def:hirzebruch-wahl-chain}.
This condition was inspired by the analogous del~Pezzo Wahl chains of \cite[Definition~1.4]{UZ25}, with an extra entry recording the index $k$. A Hirzebruch Wahl chain not only records singularities, but also a surface.
We call this surface the associated \emph{marked surface}, see Definition~\ref{def:marked-surface}.
A marked surface carries one or two Wahl singularities, and we denote it by $X_\bullet$.
Marked surfaces are unobstructed, and every smoothing of $X_\bullet$ has a Hirzebruch surface as general fiber, see Proposition~\ref{prop:markedunobstructed}.
The next theorem shows that they are the models from which the central fibers are obtained. Assume that $X$ is not of $(\dagger)$ type, see Remark \ref{rem:not-dagger}.

\begin{introthm}\label{ithm:marked}
Let $\pi\colon (X \subset \mathcal{X}) \to (0 \in \mathbb{D})$ be a $W$-surface with general fiber $\mathcal{X}_t \cong \ff_k$. Assume that $X$ is singular and that $-K_X$ is not nef.
Then there exists a $\qq$-Gorenstein deformation $\pi'\colon \mathcal{X}' \to \mathbb{D}$ such that the general fiber $\mathcal{X}'_t$ is a marked surface $X_{\bullet}$ and $\mathcal{X}'_0\cong X$.
\end{introthm}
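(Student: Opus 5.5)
The plan is to follow the strategy behind Theorem~\ref{ithm:toric}, but to run the slides in the opposite direction: instead of degenerating $X$ further toward a toric surface, we deform $X$ away from the special position so as to smooth every singularity except those forced by the extremal curve $\Gamma_1$. The steps are as follows.

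\textbf{Step 1: a nodal anticanonical cycle.} By Theorem~\ref{thm:1-compldegHz}, $X$ carries a reduced $B\in|-K_X|$ with $(X,B)$ log canonical. Since $X$ is singular and $\rho(X)=2$, $B$ is a cycle of rational curves through every singular point of $X$.

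\textbf{Step 2: locate the obstruction curve.} By Proposition~\ref{prop:moriray}, the Mori cone is spanned by $\Gamma_0$ and $\Gamma_1$, and $K_X\cdot\Gamma_1>0$ because $-K_X$ is not nef. Then $B\cdot\Gamma_1<0$, so $\Gamma_1$ is a component of $B$. The Wahl singularities on $\Gamma_1$ are exactly those that cannot be smoothed while keeping a curve with $K\cdot\Gamma>0$. I expect to show that there are at most two of them, namely at the two nodes of $B$ lying on $\Gamma_1$. This should come from adjunction: $\Gamma_1^2<0$, and $K_X\cdot\Gamma_1=-2+\sum(\text{local discrepancy terms})>0$ forces the curve to pass through singular points whose chains contribute enough.

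\textbf{Step 3: smooth the remaining singularities.} Every singular point off $\Gamma_1$ is a Wahl singularity at a node of $B$ whose branches do not include $\Gamma_1$. For these I will use local-to-global unobstructedness: $H^2(T_X(-\log B))=0$, or the corresponding vanishing for $\qq$-Gorenstein deformations of log canonical pairs with $K_X+B\sim0$, as in \cite{UZ25}, \cite{HP10}. This lets me realize independently a $\qq$-Gorenstein smoothing of each such point while keeping the Wahl singularities on $\Gamma_1$ equisingular.

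The general fiber $X'$ of this deformation has only the singularities on $\Gamma_1$. It still has $\rho=2$, and it still smooths to a Hirzebruch surface, since $X$ does and the deformation spaces are unobstructed and irreducible locally. We thus obtain $\mathcal{X}'\to\mathbb{D}$ with $\mathcal{X}'_0\cong X$ and $\mathcal{X}'_t\cong X'$.

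\textbf{Step 4: identify $X'$ with a marked surface.} The curve $\Gamma_1$ deforms to a curve $\Gamma_1'\subset X'$ with the same singular points. Contract the relevant $(-1)$-curves, or equivalently take the minimal resolution and look at the chains. The chain through the one or two Wahl points along $\Gamma_1'$ should satisfy the Hirzebruch Wahl chain condition of Definition~\ref{def:hirzebruch-wahl-chain}, with the index read off from $\Gamma_1'^2$ on the resolution. The surface is then determined by that chain as in Definition~\ref{def:marked-surface}, giving $X'\cong X_\bullet$. The $(\dagger)$ case is excluded by hypothesis (Remark~\ref{rem:not-dagger}); this is exactly the case where $\Gamma_1$ fails to meet $B$ transversally at nodes in the required way.

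\textbf{Main obstacle.} I expect the hardest part to be Step~4, namely showing that the numerical condition $K_{X'}\cdot\Gamma_1'>0$, together with the smoothability to $\ff_k$, forces precisely the Hirzebruch Wahl chain shape. The computation should start from the minimal resolution $\tilde X'\to X'$, which is a blow-up of some $\ff_m$. Using Step~1, the strict transform of $B$ is a cycle of smooth rational curves. I would first try to track how $\Gamma_1'$ and the exceptional chains sit in this cycle, then compare with the continued-fraction condition, adapting the argument for del~Pezzo Wahl chains in \cite{UZ25}.
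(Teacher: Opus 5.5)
\textbf{The gap: $\Gamma_1$ in the smooth locus.} Your plan fails when $\Gamma_1$ lies in the smooth locus of $X$. In Step~2 you expect $K_X\cdot\Gamma_1>0$ to force $\Gamma_1$ through singular points, but adjunction says otherwise. For $\Gamma_1\subset X^0$ one has $K_X\cdot\Gamma_1=-2-\Gamma_1^2$, which is positive as soon as $\Gamma_1^2\le -3$. This case really occurs. Take $X=X_{3,1,4}$ of Definition~\ref{def:Xkxy}, the resolution of the point $\frac13(1,1)$ of $\pp(3,1,16)$:
\begin{itemize}
\item it is singular, with one Wahl point $\frac1{16}(1,3)$;
\item it smooths to $\ff_3$ by Corollary~\ref{cor:1-1-n-cons};
\item its $K$-positive extremal curve $\Gamma_1$ is the exceptional $(-3)$-curve, which avoids the Wahl point and has $K_X\cdot\Gamma_1=1$.
\end{itemize}
Your Step~3 then smooths every singularity of $X$, so the general fiber is a Hirzebruch surface. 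It is never a marked surface, since marked surfaces are singular by construction. This case needs a different deformation, not a refinement of yours. The paper treats it separately: it places $X$ in the class $X_{k,x,y}$ and uses the first step of \cite[Example~6.10]{UZ25}, whose general fiber is the marked surface $X_*$ of the primitive chain of index $k-1$ from Example~\ref{ex:primitive-Hz}. In that deformation the Wahl point of $X_*$ lies on its own $\Gamma_1$, while the $\Gamma_1$ of $X$ misses every singular point. So the deformation is not of the form ``keep what lies on $\Gamma_1$, smooth the rest''.

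\textbf{Two unjustified steps when $\Gamma_1$ meets $\operatorname{Sing}(X)$.} In this case your route agrees with the paper's in outcome, but two steps are only asserted.

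First, you assume $\Gamma_1$ survives in $X'$. Curves with $K\cdot\Gamma>0$ need not survive deformations (a $(-3)$-curve disappears in $\ff_3\rightsquigarrow\ff_1$), so local triviality at the Wahl points needs an argument. The paper gets it in three moves. It contracts $\psi\colon X\to\bar X$. It then deforms $\bar X$ trivially near $P=\psi(\Gamma_1)=\frac1\Delta(1,\Omega)$ while smoothing everything else, which is possible since $H^2(\bar X,T_{\bar X})=0$. Finally it extracts $\Gamma_1$ in the family by \cite[Lemma~3.1]{MZ25}, which applies because $a_{\Gamma_1}<1$ exactly when $K_X\cdot\Gamma_1>0$. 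In your direct version you would instead need the following fact. The germ of $X$ along $\Gamma_1$ is an extremal P-resolution of $P$, and it must have no nontrivial $\qq$-Gorenstein deformations that are locally trivial at its Wahl points.

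Second, Step~4 is left as an expectation, and the marking does not come from the positivity $K_{X'}\cdot\Gamma_1'>0$. It comes from the ruling of Lemma~\ref{lem:Hz-minres} applied to $X'$. The fiber $f_1'$ has only $\hat\Gamma_0$ and $\hat\Gamma_1'$ as non-exceptional components, and $f_2'$ has a single $(-1)$-curve. Blowing each degenerate fiber down to a smooth fiber then produces the weight-zero zero continued fractions of Definition~\ref{def:hirzebruch-wahl-chain}.

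Your description of the $(\dagger)$ case is also off. The obstruction there is not how $\Gamma_1$ meets $B$, but the $(-1)$-curve $\Gamma_0\subset X^0$, which survives every deformation (Remark~\ref{rem:not-dagger}).
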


Theorem~\ref{ithm:marked} is proved in Theorem~\ref{thm:Hz-deformations}. Imposing $-K_X$ not nef is the case in which $\Gamma_1$ is available for extraction in families.
For $k\geq 3$ the hypothesis holds for every singular central fiber, so Theorem~\ref{ithm:marked} classifies the possible central fibers in that range.
The Hirzebruch Wahl chain of the marked surface determines the singularities of $X$.
Its index is governed by the smoothing index of $X$ rather than by $k$ itself.
For $k\leq 1$ the hypothesis is a genuine restriction, and the remaining central fibers are reached by Theorem~\ref{ithm:antiflip} below.

\subsection{Degenerations of $\ff_k$ with $k\leq 1$}\label{ss:k01}

For $k\leq 1$ the general fiber is a del~Pezzo surface, but the central fiber need not have $-K_X$ nef, see for example \cite{UZ24}.
Our strategy for a classification is therefore to remove the obstruction to nefness first.

That obstruction is the extremal curve $\Gamma_1$ with $K_X\cdot\Gamma_1>0$.
Here the $3$-fold viewpoint enters.
Removing $\Gamma_1$ is a small birational modification on the total space of the degeneration, namely a terminal $K$-antiflip.
An antiflip replaces the central fiber by a new one with the same general fiber.
Its existence is not automatic: it is controlled by the extremal $P$-resolutions of the cyclic quotient singularity to which $\Gamma_1$ contracts, and by the local data of the smoothing along $\Gamma_1$ \cite[Corollary~3.23]{HTU17}.
In Theorem~\ref{thm:admissible-smoothing} we construct smoothings for which the antiflip is available.
The resulting sequence of antiflips terminates, because these operations are log flips of a klt $3$-fold pair.

\begin{introthm}\label{ithm:antiflip}
Let $\pi\colon (X \subset \mathcal{X}) \to (0 \in \mathbb{D})$ be a $W$-surface with general fiber $\mathcal{X}_t \cong \ff_k$ and suppose that $-K_X$ is not nef.
Set $l_0\equiv k\bmod 2$.
Then $X$ admits a $\qq$-Gorenstein smoothing $\mathcal{X}_0\to\mathbb{D}$ with general fiber $\ff_{l_0}$ and a finite sequence of terminal antiflips over $\mathbb{D}$
\begin{equation}\label{eq:antiflips}
\mathcal{X}_0\drar\mathcal{X}_1\drar\cdots\drar\mathcal{X}_N,
\end{equation}
whose final central fiber $X_N$ satisfies that $-K_{X_N}$ is nef.
\end{introthm}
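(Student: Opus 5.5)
We argue by induction on a termination invariant, producing the smoothing and one antiflip at a time. First we replace the given smoothing by one with general fiber $\ff_{l_0}$. By Theorem~\ref{thm:consc-def}, a $\qq$-Gorenstein smoothing $\ff_k\rightsquigarrow X$ yields smoothings $\ff_{k-2d}\rightsquigarrow X$ for all admissible $d$. Taking $k-2d=l_0$ gives a smoothing $\mathcal{X}_0'\to\mathbb{D}$ of $X$ with general fiber $\ff_{l_0}$. This one need not be suitable for antiflipping, so we then invoke Theorem~\ref{thm:admissible-smoothing}. It should produce, among the smoothings of $X$ with general fiber $\ff_{l_0}$, one (call it $\mathcal{X}_0$) whose local smoothing data along the extremal curve $\Gamma_1$ of Proposition~\ref{prop:moriray} makes the extremal neighbourhood $(\Gamma_1\subset\mathcal{X}_0)$ admit an antiflip. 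Concretely, the axial multiplicities at the Wahl points on $\Gamma_1$ should match an extremal $P$-resolution of the cyclic quotient singularity obtained by contracting $\Gamma_1$, as in \cite[Corollary~3.23]{HTU17}. Since $K_X\cdot\Gamma_1>0$, $\Gamma_1$ spans a $K$-positive extremal ray of the small contraction $\mathcal{X}_0\to\mathcal{Y}$. The antiflip $\mathcal{X}_0\drar\mathcal{X}_1$ is the $K$-negative small modification over $\mathcal{Y}$. It is terminal with $\qq$-Gorenstein central fiber $X_1$ having Wahl singularities and the same general fiber $\ff_{l_0}$, so $X_1$ is again a $W$-surface.

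Next we iterate. If $-K_{X_1}$ is nef we stop. Otherwise $X_1$ again satisfies the hypotheses, with general fiber $\ff_{l_0}$ and $-K_{X_1}$ not nef, and $\rho(X_1)=2$ (Picard rank one surfaces have $-K$ ample). So Proposition~\ref{prop:moriray} again gives a curve $\Gamma_1$ with $K\cdot\Gamma_1>0$. We would like to reapply Theorem~\ref{thm:admissible-smoothing} to $X_1$. However, we must keep a single threefold sequence over $\mathbb{D}$, so we need the given smoothing $\mathcal{X}_1$ itself to be admissible, not merely some replacement. We will check that admissibility is preserved by the antiflip; otherwise we will rechoose $\mathcal{X}_0$ so that all subsequent steps are admissible, by running Theorem~\ref{thm:admissible-smoothing} in a form that fixes the general fiber and only perturbs along $\Gamma_1$. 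If admissibility only holds up to $\qq$-Gorenstein deformation, we reinterpret the sequence as living in a suitable family. The step of matching the local data of $\mathcal{X}_i$ along the new $\Gamma_1$ to an extremal $P$-resolution is where we expect the main obstacle.

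For termination we use the observation recorded before the statement: each antiflip is a log flip for a klt threefold pair. Take $\Delta=(1-\epsilon)X_i$ (or $\mathcal{X}_i$ plus a small multiple of an ample divisor), chosen so that $K+\Delta$ is negative on $\Gamma_1$ in a way compatible with the antiflip. Each step is then a flip for $K_{\mathcal{X}}+\Delta'$ for a suitable klt boundary $\Delta'$. Termination of threefold log flips (Shokurov; Kollár--Mori) then gives finiteness. As a cheaper alternative, we can control termination directly through the difficulty, or through the total discrepancy count of the Wahl singularities, which should strictly decrease under each antiflip, since each antiflip replaces the singularities on $\Gamma_1$ by those of the $P$-resolution. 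The process can only stop when no curve $\Gamma_1$ with $K\cdot\Gamma_1>0$ remains, so the final central fiber $X_N$ has $-K_{X_N}$ nef, by the two-ray description of $\overline{NE}(X_N)$ in Proposition~\ref{prop:moriray}.
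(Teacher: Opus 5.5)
Your opening step matches the paper: pass to $\ff_{l_0}$ with Theorem~\ref{thm:consc-def}, then use Theorem~\ref{thm:admissible-smoothing} to get a smoothing admitting the first antiflip at $\Gamma_1$. The termination argument, however, has a genuine gap.

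\textbf{The proposed boundary does not work.} It cannot turn a $K$-antiflip into a log flip. Since $X_i\sim_{\mathbb D}0$ we have $X_i\cdot\Gamma_1=0$, and an ample $H$ has $H\cdot\Gamma_1>0$. Hence $(K_{\mathcal X}+\Delta)\cdot\Gamma_1\geq K_{\mathcal X}\cdot\Gamma_1>0$, and $\Gamma_1$ stays $(K+\Delta)$-positive. To invoke termination of threefold log flips you need one fixed klt pair whose flips are all the steps. That forces $K_{\mathcal X}+\Delta$ to be a positive multiple of $-K_{\mathcal X}$, i.e.\ a klt boundary $\Delta\sim_{\qq}-(1+\epsilon)K_{\mathcal X}$.

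Constructing this boundary is the real content of the paper's proof (Corollary~\ref{cor:antiflip-termination}). Since $-K_X\cdot\Gamma_1<0$, every member of $|-mK_X|$ contains $\Gamma_1$, so a general member will not do. The paper instead proceeds as follows:
\begin{itemize}
\item it uses the Zariski decomposition $-K_X=P+\nu\Gamma_1$ of Proposition~\ref{prop:zariski-decomposition} and the near-toric boundary $\Gamma_1+A+B$ lifted by Theorem~\ref{thm:admissible-smoothing};
\item it forms $B_X=(\nu+a_0)\Gamma_1+aA+bB$ with $(X,B_X)$ klt and $-(K_X+B_X)$ ample;
\item it takes a klt $m$-complement $B_X+\frac1m M$ and extends it, by inversion of adjunction and \cite[Proposition~3.7]{PS09}, to $\mathcal B^+\sim_{\qq}-K_{\mathcal X}$;
\item it sets $\Delta=(1+\epsilon)\mathcal B^+$, so that $K_{\mathcal X}+\Delta\sim_{\qq}-\epsilon K_{\mathcal X}$.
\end{itemize}

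\textbf{Your fallback invariants run the wrong way.} An antiflip is the inverse of a flip, so discrepancies of divisors centred on the antiflipped curve drop. The difficulty therefore strictly increases, and nothing forces the Wahl singularities to become simpler.

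\textbf{The same boundary resolves your iteration obstacle, which the proposal leaves open.} With $K_{\mathcal X}+\Delta\sim_{\qq}-\epsilon K_{\mathcal X}$, the $(K+\Delta)$-negative extremal rays over $\mathbb D$ are exactly the $K$-positive ones. They lie in the central fiber, and their contractions are small because $-K_{\ff_{l_0}}$ is ample. So each step is a log flip of a klt threefold pair. You need not re-establish admissible local data of $\mathcal X_i$ along each new $\Gamma_1$ by hand. Nor do you need to ``rechoose $\mathcal X_0$'' or pass to another family, which would not give a single sequence over $\mathbb D$ anyway.

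Since $X_i\sim_{\mathbb D}0$, each step is also a $(K+X_i+\Delta_i)$-flip. Hence $(\mathcal X_i,X_i+\Delta_i)$ stays plt, $X_i$ stays klt, and $\mathcal X_i$ stays terminal by \cite[Corollary~3.6]{KSB88}. The process stops exactly when $K_{\mathcal X_N}+\Delta_N$ is nef over $\mathbb D$, i.e.\ when $-K_{X_N}$ is nef.
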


Together with Theorem~\ref{ithm:toric}, this places every degeneration of $\ff_0$ and of $\ff_1$ in a clear framework.
Slides degenerate the final central fiber $X_N$ to a toric surface with nef anticanonical divisor.
For $k=1$ the conclusion sharpens to $-K_{X_N}$ ample, see the remark following Corollary~\ref{cor:antiflip-termination}.
For $k=0$ the non-ample case does occur, and it is treated in Subsection \ref{subsec:F0}.

We are thus reduced to the toric central fibers with $-K_X$ ample.
Since $K_X^2=8$, these are the toric del~Pezzo surfaces of degree $8$ with T-singularities.
The remaining problem is combinatorial.

Such surfaces are described by Fano polygons.
A Fano polygon $P$ determines a toric del~Pezzo surface $X_P$. For Fano polytopes the notion of combinatorial mutation was defined in \cite{ACGK12} in the realm of mirror symmetry, see also \cite{ACC16} and \cite{KNP17}.
By \cite[Theorem~1.3]{I12}, a mutation $P\rightsquigarrow Q$ is realized by a pencil $\pi\colon\mathcal{X}\to\pp^1$ with fibers $X_P$ and $X_Q$ over $0$ and $\infty$.
We call it the \emph{Ilten pencil} of the mutation, see also \cite[Definition 15]{C25}.
By \cite[Theorem~6]{KNP17}, every toric degeneration $\ff_k\rightsquigarrow X_P$ with $-K_{X_P}$ ample satisfies $P\in[P_{\ff_k}]$, the mutation-equivalence class of the polygon of $\ff_k$.
It remains to describe the two classes $[P_{\ff_0}]$ and $[P_{\ff_1}]$ geometrically.

We study the Ilten pencil via birational geometry.
We work with the explicit form of the pencil due to Petracci \cite[Theorem~7.3]{P21}. We find two types of birational modifications that govern the whole geometric picture.
In the first one, the pencil becomes a mutation of Fano triangles after a divisorial contraction.
In the second one, the pencil fixes the cyclic quotient singularity obtained by contracting an extremal curve.
It then moves the surface one step along the universal family of extremal neighborhoods over that singularity, see \cite[Section~3.4]{HTU17}. In \cite[Section~5]{UZ24} this family was also denoted as \emph{Mori trains}. Thus, the mutated polygon describes the next term of such a family. We call a mutation of the second kind a \emph{Mori mutation}, see \cite[Definition 4.21]{UZ25}. 

\begin{introthm}\label{ithm:mutations}
Let $P\in[P_{\ff_k}]$ be a singular Fano quadrilateral with no pair of parallel edges, let $(w,F)$ be mutation data for an edge $E$ of $P$, and set $Q=\operatorname{mut}_w(P,F)$.
Let $\pi\colon\mathcal{X}\to\pp^1$ be the pencil with $\pi^{-1}(0)=X_P$ and $\pi^{-1}(\infty)=X_Q$ associated to the mutation.
Then, according to a numerical criterion on $E$, exactly one of the following holds:
\begin{enumerate}
\item[\textnormal{(i)}] \textnormal{(Mutations of Fano triangles.)}
A divisorial contraction identifies $\pi\colon\mathcal{X}\to\pp^1$ with a one-step mutation of a Fano triangle, and the singularities of $X_Q$ are determined by those of $X_P$.
\item[\textnormal{(ii)}] \textnormal{(Mori mutations.)}
The curves $\Gamma\subset X_P$ and $\Gamma'\subset X_Q$ contract to one and the same cyclic quotient singularity $\frac{1}{\Delta}(1,\Omega)$, and the resulting extremal neighborhoods are two consecutive $k2A$ in the universal family over it.
\end{enumerate}
If $P$ has a pair of parallel edges, then $P\in[P_{\ff_0}]$, and a mutation along either of these edges produces Fano triangles $Q_1$ and $Q_2$.
The two triangles induce the weighted projective planes $X_{Q_1}=\pp(a^2,b^2,2c^2)$ and $X_{Q_2}=\pp(a^2,b^2,2(c')^2)$, where $c'=2ab-c$ and both $(a,b,c)$ and $(a,b,c')$ are solutions of
\begin{equation}\label{eq:F_0}
x^2+y^2+2z^2=4xyz .
\end{equation}
\end{introthm}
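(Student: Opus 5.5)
We work with Petracci's explicit form of the Ilten pencil \cite[Theorem~7.3]{P21}. For a mutation of $P$ along the edge $E$ with factor $F$ and weight $w$, the total space $\mathcal{X}$ is a toric threefold (or a hypersurface in one), projective over $\pp^1$. Its fan is built from $P$, $Q$ and the segment $F$: the cone over $E$ is the one that gets ``stretched''. The extremal curves of $\mathcal{X}$ over $\pp^1$ can then be read off from the fan. A curve $\Gamma\subset X_P$ corresponding to a vertex adjacent to $E$ deforms along the pencil to a curve $\Gamma'\subset X_Q$ in the same class.

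The plan is to run a relative MMP on $\mathcal{X}/\pp^1$ contracting the ray $R$ spanned by these curves. The \emph{numerical criterion} on $E$ will be the sign of a determinant. Let $v_-,v_+$ be the vertices of $E$, and let $u_\pm$ be the vertices of $P$ adjacent to them. Because $P$ is a quadrilateral with no parallel edges, the two rays through $u_\pm$ and $v_\pm$ meet on exactly one side. The criterion records whether the relevant ray class $R$ contracts a divisor or only the curve family.

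We first treat the divisorial case, case (i). Here the exceptional divisor is the toric divisor corresponding to a vertex of $E$ (say $v_+$). Removing the ray $v_+$ from the fan of $P$ gives a triangle $P'$, and removing the corresponding ray from $Q$ gives a triangle $Q'$. One checks directly from Petracci's equations that $\operatorname{mut}_w(P',F)=Q'$, with the same factor. The contraction $\mathcal{X}\to\mathcal{X}'$ over $\pp^1$ is then the Ilten pencil of a one-step mutation of Fano triangles. The singularities of $X_Q$ are recovered from those of $X_{P'}$ and $X_{Q'}$ by re-inserting the extracted ray. For triangles, the singularities of a mutation are determined by Markov-type equations, as in \cite{HP10}.

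In the small case, case (ii), the fibres of the contraction are the curves $\Gamma_t$. On $X_P$ and on $X_Q$ they contract to a cyclic quotient singularity. Its type $\frac{1}{\Delta}(1,\Omega)$ is computed from the cone spanned by $u_-$ and $u_+$ (or $v_\pm$). This cone is unchanged by the mutation, since the mutation acts piecewise linearly and fixes the half-plane containing it up to a shear. This shows that the singularity is the same on both sides. It remains to identify the two extremal neighbourhoods $(X_P\supset\Gamma)$ and $(X_Q\supset\Gamma')$ as $k2A$ neighbourhoods. Each $\Gamma$ passes through exactly two T-singular points, the torus-fixed points on it. We then compare their Wahl chains with the recursion of the universal family \cite[Section~3.4]{HTU17}. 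The mutation changes the edge $E$ by the height and width shift $w$, and this is precisely the ``next term'' step in the Mori train \cite[Section~5]{UZ24}. The main obstacle is this last identification: matching the continued-fraction recursion of consecutive $k2A$ with the change of the cone singularities of $X_P$ at $E$ under mutation. I would first verify it on the primitive mutation and then use the linearity in $w$. Exclusivity of (i) and (ii) follows because a $K$-negative ray on a $\qq$-factorial toric threefold is either divisorial or small.

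For the parallel-edge case, parallel edges force the polygon to be a trapezoid with central symmetry. Combined with $K^2=8$, this pins the class down to $[P_{\ff_0}]$; the class $[P_{\ff_1}]$ has odd singularity content, which rules it out. Mutating along either parallel edge collapses the opposite edge to a vertex, giving triangles $Q_1$ and $Q_2$. Their weights are computed directly as $(a^2,b^2,2c^2)$. Finally, the degree condition $K^2=8$ for $\pp(a^2,b^2,2c^2)$ reads
\[
\frac{(a^2+b^2+2c^2)^2}{2a^2b^2c^2}=8,
\]
which is equivalent to $a^2+b^2+2z^2=4abz$. As a quadratic in $z$, this has the two roots $c$ and $c'=2ab-c$ by Vieta, giving the two triangles.
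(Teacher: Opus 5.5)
Your overall strategy (Petracci's pencil, contract an extremal ray of the toric ambient, then recognise either a triangle mutation or a Mori-train step) matches the paper's. However, the key identifications are wrong.

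The dichotomy is not ``divisorial versus small''. In Petracci's construction $X_P$ and $X_Q$ are hypersurfaces in a toric threefold $\widetilde X$ with $\rho(\widetilde X)=2$. The relevant ray $\widetilde R_1$ gives a divisorial contraction to a point for \emph{every} edge (Theorem~\ref{contractionteo}). The two cases differ in whether $X_1+X_3+X_5$ lies in $\sigma_{123}$ or in $\sigma_{134}$. In the first case the contracted divisor is $V(X_2)$, coming from the vertex $u_4\notin E$ with $h_{\min}<w(u_4)<h_{\max}$. In the second it is $V(X_4)$ with $X_4=(\mathbf 0,1)$, coming from the factor $F$. In both cases a family of curves is contracted fibrewise, so your exclusivity argument proves nothing.

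The numerical criterion is also never actually produced. In standard orientation, $E_{23}$ always gives (i), $E_{41}$ always gives (ii), and for $E_{34},E_{12}$ it is the sign of $m_{23}-(m_{41}-\Delta_0-\Delta_1)$ (Lemma~\ref{lem:sign}, Proposition~\ref{prop:extremal}). Equivalently, (i) occurs exactly when $V(u_4)$ is one of the extremal curves $\Gamma_0,\Gamma_1$, so that it can be contracted.

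Your case (i) cannot work as stated. If you delete a vertex $v_+$ of $E$, the triangle $P'$ meets height $h_{\min}$ only at $v_-$. Then no $G_{h_{\min}}$ with $v_-\in G_{h_{\min}}+|h_{\min}|F\subseteq\{v_-\}$ exists, so $\operatorname{mut}_w(P',F)$ is undefined. The correct triangle is $\operatorname{conv}\{u_1,u_2,u_3\}\supset E$, obtained by deleting $u_4$. A vertex of $E$ is what shows up in case (ii) (e.g.\ $\Gamma_0=V(v_1)$ for $E_{41}$), so you have essentially swapped the two pictures. These triangles are fake weighted projective planes that in general carry the non-T point $\frac1{\Delta_j}(1,\Omega_j)$. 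Markov-type equations from \cite{HP10} therefore do not govern them; the paper uses \cite[Proposition~1.1]{AK16} (Theorem~\ref{fake}).

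In case (ii), the singularity $\frac1\Delta(1,\Omega)$ is not $\operatorname{cone}(u_-,u_+)$ or $\operatorname{cone}(v_-,v_+)$, which are cones of the fan (Wahl points). It is the diagonal cone spanned by the two neighbours of the contracted vertex, e.g.\ $\operatorname{cone}(v_4,v_2)$ for $E_{41}$. The shear $\theta(v)=v+w(v)f$ carries it to $\operatorname{cone}(v_1,\theta(v_2))$, which is the contraction of the curve $\Gamma_0'=V(v_2)$ of a \emph{different} vertex of $Q$.

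The step you defer as the ``main obstacle'' is the whole content of (ii), and ``linearity in $w$'' will not produce a quadratic recursion. The paper's argument is short. The cone $\tau_{12}$ is untouched, while $\tau_{41}$ (index $n_0$) is replaced by $\tau_{41}'$ of multiplicity $w_{41}(v_2)^2=\bigl((m_{12}-\Delta_0)/n_0\bigr)^2=(\delta_0n_1-n_0)^2$, using $\Delta_0=n_0^2+n_1^2-\delta_0n_0n_1$. This is Mori's recursion $n_2=\delta n_1-n_0$, and \cite[Proposition~2.4]{HTU17} identifies the two k2A as consecutive.

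For parallel edges, several claims fail:
\begin{itemize}
\item A trapezoid need not be centrally symmetric.
\item $K^2=8$ holds in both classes.
\item Singularity content does not separate $[P_{\ff_0}]$ from $[P_{\ff_1}]$; both are $(4,\varnothing)$.
\end{itemize}
The paper's reason is that mutating along a parallel Wahl edge collapses that edge itself (since $\ell(E)=r_E$), not the opposite one, and yields a triangle. That means $\rho=1$, which is impossible for degenerations of $\ff_1$ by \cite[Theorem~4.1]{HP10}.

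Finally, Vieta only shows that $(a,b,2ab-c)$ is again a solution. To get $X_{Q_2}=\pp(a^2,b^2,2(c')^2)$ you must show two things. The two non-parallel Wahl cones (indices $a,b$) survive both mutations up to $\theta$. The parallel edges of heights $r,r'$ turn into the $\frac1{2r'^2}$- and $\frac1{2r^2}$-points. With that added, and \cite[Theorem~4.1]{HP10} to see that both triangles are of the form $\pp(a^2,b^2,2z^2)$, your Vieta argument is a clean alternative to the paper's explicit construction of $Q$ from $\pp(a^2,b^2,2c^2)$ (Theorem~\ref{thm:F0-Mori-mutation}, Remark~\ref{rem:next-mut}).
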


The two cases of Theorem~\ref{ithm:mutations} are exhaustive for the edges of $P$.
The criterion separating them is numerical, and is stated in Theorems~\ref{thm:triangle-mutations} and~\ref{thm:Mori-mutations}.
The hypothesis that $P$ be a quadrilateral is where Picard rank two enters.
The parallel-edge case is exactly where our classification meets the Picard rank one surfaces of \cite[Theorem~4.1]{HP10}.
There the passage between the two triangles is not a single mutation using Definition \ref{def:mut}.

\subsection*{Organization of the paper}

The paper is organized as follows.
Section~\ref{sec:prelims} collects the preliminary material on singularities of pairs, complements, T-singularities and Wahl singularities, $W$-surfaces and their minimal model program, and Fano polygon mutations.
In Section~\ref{sec:complements} we prove the existence of $1$-complements for degenerations of Hirzebruch surfaces, Theorem~\ref{thm:1-compldegHz}, and deduce Theorem~\ref{ithm:toric}.
In Section~\ref{sec:degen-k3} we treat the range $k\geq 3$, introduce Hirzebruch Wahl chains and marked surfaces, and prove Theorem~\ref{ithm:marked}.
In Section~\ref{sec:degen-k01} we treat the range $k\leq 1$ and prove Theorem~\ref{ithm:antiflip}.
We then analyze the classes $[P_{\ff_0}]$ and $[P_{\ff_1}]$ through the Ilten pencils, proving Theorem~\ref{ithm:mutations} and describing the degenerations of $\ff_0$ in Subsection \ref{subsec:F0}.

\subsection*{Acknowledgements}
This work forms part of the author's PhD thesis. The author is grateful to Joaquín Moraga for his hospitality and for many useful conversations during visits to UCLA in Summer 2025 and Winter 2026. The author also thanks Giancarlo Urzúa, Vicente Monreal and Audric Lebovitz for their helpful comments, and in particular Audric Lebovitz for pointing out an error in an earlier version of the manuscript. The author was supported by the ANID National Doctoral Scholarship 2022--21221224.

\section{Preliminaries}\label{sec:prelims}

We work over the field of complex numbers $\cc$. All varieties are assumed to be projective and normal unless otherwise stated. The degenerations considered in this paper are $\qq$-Gorenstein and are taken over a smooth curve germ $(0\in\mathbb{D})$, with reduced central fiber. In this section we collect the preliminary material on singularities of pairs, T-singularities and Wahl singularities, degenerations of smooth rational surfaces, and Fano polygon mutations that will be used throughout.

\subsection{Singularities of pairs} 
In this subsection, we recall some basic definitions of singularities of pairs. 
For standard definitions of singularities of the MMP, for example, klt and lc pairs, we refer the reader to~\cite{KM98,Kol13}. In this paper, we consider pairs $(X,B)$ with rational coefficients, i.e., $B$ is a $\qq$-divisor. Furthermore, given a pair $(X,B)$ and a divisor $E$ over $X$, we write $a_E(X,B)$ for the log discrepancy of $(X,B)$ with respect to $E$. 

\begin{definition}
{\em 
A pair $(X,B)$ is said to be {\em purely log terminal} or {\em plt} for short if the pair $(X,B)$ is log canonical
and $a_E(X,B)>0$ for every divisor $E$ over $X$ which is exceptional over $X$. 
}
\end{definition}

Observe that klt pairs are plt. However, there are plt pairs which are not klt. For instance, $(\mathbb{A}^2, H_0)$ where $H_0$ is a hyperplane in $\mathbb{A}^2$.

\begin{definition}
{\em  
A {\em log Calabi--Yau pair} is a pair $(X,B)$ which has log canonical singularities
for which $K_X+B\sim_{\qq} 0$.
}
\end{definition}

\begin{definition}
The complexity of a $\log$ Calabi-Yau pair $(X,B)$ is the following value
$$c(X, B):=\operatorname{dim}X+\operatorname{dim} \mathrm{Cl}_{\qq}(X)-|B|,$$
where $|B|$ stands for the sum of the coefficients of $B$. 
\end{definition}
The following is a numerical criterion to detect when a log Calabi Yau pair $(X,B)$ corresponds exactly to a toric pair, i.e., $X$ is a toric variety and $B=\sum_{\rho\in\Delta(1)}D_\rho$, where $D_\rho$ is the torus invariant divisor associated to the ray $\rho$.
\begin{theorem}[{\cite[Theorem 1.2]{BMSZ18}}]
Let $(X, B)$ be a log Calabi-Yau pair. Then, the inequality $c(X, B)\geq 0$ holds. Furthermore, if $c(X,B)<1$, then the pair $(X,\lfloor B\rfloor)$ is toric.
\end{theorem}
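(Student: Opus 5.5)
Since the statement is quoted from \cite{BMSZ18}, I only sketch how I would reconstruct its proof. The plan is an induction on $n=\dim X$. The first step is to reduce to a $\qq$-factorial dlt pair. Let $(X',B')\to (X,B)$ be a dlt modification, with $K_{X'}+B'$ the pullback of $K_X+B$. Every exceptional divisor appears in $B'$ with coefficient $1$, and $\dim \Cl_\qq(X')$ exceeds $\dim\Cl_\qq(X)$ by the number of exceptional divisors. Hence $c(X',B')\le c(X,B)$. Toricity of $(X',\lfloor B'\rfloor)$ descends to $(X,\lfloor B\rfloor)$, because $X'\to X$ only contracts torus-invariant divisors. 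So I may assume $X$ is $\qq$-factorial and $(X,B)$ is dlt.

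For the inequality $c(X,B)\ge 0$, I would argue by contradiction, using linear relations among the components. Write $B=\sum b_iB_i$ and suppose $\sum b_i>n+\rho$, where $\rho=\dim\Cl_\qq(X)$. Then there are more than $\rho$ components, so there is a nontrivial relation $\sum r_iB_i\sim_\qq 0$. I would move $B$ along this relation, $B_t=B+t\sum r_iB_i$, keeping $K_X+B_t\sim_\qq 0$ and arranging $|B_t|\ge|B|$. I push $t$ until either some coefficient reaches $1$ or some coefficient reaches $0$. In the first case the component $S$ with coefficient $1$ lets me use adjunction $(K_X+B_t)|_S=K_S+B_S$. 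The expected estimate is $c(S,B_S)\le c(X,B_t)$, since the components of $B_t$ other than $S$ restrict to divisors on $S$ and the Picard rank drops by at most the number of contracted components. I would first run a $(K_X+B_t-S)$-MMP so that the estimate is clean, and then apply induction. In the second case a component drops out, and I repeat the argument with fewer components. The base case is curves, where $\deg B=2$ on $\pp^1$ gives $c\ge0$ directly.

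For the toric statement, suppose $c<1$. Since $\sum b_i< n+\rho+1$, the same perturbation argument shows that $\lfloor B\rfloor$ has at least $n+\rho$ components, and all of them can be taken to carry coefficient exactly $1$. By adjunction, each component $S$ of $\lfloor B\rfloor$ gives a pair $(S,B_S)$ with $c(S,B_S)<1$. By induction, $(S,\lfloor B_S\rfloor)$ is toric. I would then run a $K_X$-MMP, which is $-B$-negative, ending in a Mori fibre space $X\to Z$. I would check that $c$ does not increase along the steps, and that the boundary components stay torus-invariant candidates.

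I expect the main obstacle to be the reconstruction step: proving that $X$ itself is toric once it has $n+\rho$ boundary components whose strata are toric. In Picard rank one, I would try to show that $U=X\setminus\lfloor B\rfloor$ satisfies $\oo(U)^*/\cc^*\cong\zz^{n}$. Such units come from the $n+1$ boundary components with one relation. The resulting map $U\to\mathbb{G}_m^n$ would then be shown to be an isomorphism using the log Calabi--Yau condition, and the full birational picture recovered by undoing the MMP step by step. This step is where I would spend most of the effort, and where I expect the genuine input from \cite{BMSZ18} to be needed.
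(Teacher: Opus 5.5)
Your reduction to the $\qq$-factorial dlt case is fine: in fact $c(X',B')=c(X,B)$, and toricity descends because the torus action descends along a contraction. The gap is in the inductive step for $c\ge 0$. It rests on the estimate $c(S,B_S)\le c(X,B_t)$, and for the complexity $\dim X+\dim\Cl_\qq(X)-|B|$ this estimate is false. The reason is that $\dim\Cl_\qq(S)$ can exceed $\dim\Cl_\qq(X)$, and components of $B$ can restrict to several prime divisors. Take $X=\pp^3$, a smooth quadric $S$, general planes $H_1,\dots,H_4$, and $B=S+\frac12\sum H_i$. The pair is snc, $K_X+B\sim 0$ and $c(X,B)=3+1-3=1$. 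On $S\cong\pp^1\times\pp^1$ the different is $\frac12\sum H_i|_S$, four irreducible conics, so $c(S,B_S)=2+2-2=2$. Since $\rho(X)=1$, your preliminary $(K_X+B-S)$-MMP changes nothing. So with the statement as formulated the induction does not close. The missing idea is the strengthening made in \cite{BMSZ18}, which the paper simply cites. One proves the theorem for decompositions $\sum a_iS_i\le B$ into effective, not necessarily prime, Weil divisors, with $\dim\Cl_\qq(X)$ replaced by the dimension of the span of the classes of the $S_i$. Restricting such a decomposition to a coefficient-one component $S$ gives a decomposition of the different whose span lies in the image of the original span. At the same time $\dim X$ and $\sum a_i$ both drop by exactly one, so the complexity cannot increase under adjunction; in the example the four conics span a line and one gets $1$. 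There is also a smaller omission. Moving $B$ along a relation can destroy log canonicity before any coefficient reaches $0$ or $1$. You must then stop at the lc threshold and extract the new lc place by a dlt modification, which leaves $c$ unchanged.

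The toric part has a false step before the reconstruction you flag. The hypothesis $c<1$ means $\sum b_i>n+\rho-1$, not $\sum b_i<n+\rho+1$, and it does not yield $n+\rho$ components of coefficient one, even after perturbation. Consider $(\pp^2,L_1+L_2+\frac12C)$ with $C$ a smooth conic meeting $L_1$ and $L_2$ transversally away from $L_1\cap L_2$. Here $c=\frac12$, but $K+B_t\sim_\qq 0$ forces degree $3$, so at most two of $L_1,L_2,C$ can ever have coefficient one, while $n+\rho=3$. The toric boundary is $L_1+L_2+L_3$ with $L_3\not\subset\operatorname{Supp}B$. So the argument has to produce torus-invariant divisors that are not components of $B$. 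This is why \cite{BMSZ18} only assert the existence of a toric boundary $D\ge\lfloor B\rfloor$, all but $\lfloor 2c\rfloor$ of whose components come from the decomposition. The statement above must therefore be read as ``$X$ is toric and $\lfloor B\rfloor$ is torus-invariant''; with the paper's definition of a toric pair it already fails for this example. Finally, two steps are the substance of the proof and are not supplied: the recognition step (that $U\to\mathbb{G}_m^n$ is an isomorphism) and the undoing of the MMP. In particular, undoing a divisorial contraction or flip preserves toricity only if the extracted divisor or flipped locus is torus-invariant, and you give no argument for this.
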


\subsection{T-singularities and Wahl singularities}\label{subsec:T-sing}
 
\begin{definition}[{\cite[Definition 3.7, Proposition 3.10]{KSB88}}]\label{def:T-sing}
A \emph{T-singularity} is a 2-dimensional quotient singularity that admits a $\qq$-Gorenstein smoothing. It is either a Du Val singularity or a cyclic quotient singularity of the form $\frac{1}{dn^2}(1,dna-1)$, where $d\geq 1$ and $0<a<n$ are coprime integers.

\medskip
A \emph{Wahl singularity} is a T-singularity with $d=1$, i.e., a cyclic quotient singularity of the form $$\frac{1}{n^2}(1,na-1).$$
\end{definition}

For $P\in X$ a $T$-singularity the \emph{Milnor number} is $\mu_P:=b_2(M_P)$, where $M_P$ is the Milnor fiber of its $\qq$-Gorenstein smoothing.
One has $\mu_P=r$ for a Du Val singularity of type $A_r$, $D_r$ or $E_r$, and $\mu_P=d-1$ for $\frac{1}{dn^2}(1,dna-1)$; see \cite[Section~3]{Man91}.
In particular Wahl singularities have $\mu_P=0$. For normal projective surfaces with at most T-singularities, we have the following Noether's formula.
\begin{proposition}[{\cite[Proposition~3.6]{HP10}}]\label{prop:Noether}
Let $X$ be a normal projective surface with T-singularities.
Then
\[
K_X^2+\chi_{\operatorname{top}}(X)+\sum_{P\in\operatorname{Sing}(X)}\mu_P=12\chi(\mathcal{O}_X).
\]
\end{proposition}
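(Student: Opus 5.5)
The statement is Noether's formula for a normal projective surface $X$ with T-singularities, $K_X^2+\chi_{\operatorname{top}}(X)+\sum_P\mu_P=12\chi(\mathcal{O}_X)$. I would prove it by reducing to the smooth case through a $\qq$-Gorenstein smoothing: all three terms on the left, after suitable corrections, are constant in $\qq$-Gorenstein families. Concretely, the local $\qq$-Gorenstein deformation functor of a T-singularity is unobstructed enough to smooth each singular point locally, but a global smoothing of $X$ need not exist. So I would argue locally instead.

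Let $\tilde X\to X$ be the minimal resolution. Noether's formula holds on $\tilde X$, and $\chi(\mathcal{O}_{\tilde X})=\chi(\mathcal{O}_X)$ because quotient singularities are rational. It then suffices to show, for each singular point $P$, the local identity
\[
(K_X^2-K_{\tilde X}^2)_P+(\chi_{\operatorname{top}}(X)-\chi_{\operatorname{top}}(\tilde X))_P+\mu_P=0 .
\]
Here $K_X^2-K_{\tilde X}^2=-\bigl(\sum a_iE_i\bigr)^2$, where $K_{\tilde X}=\pi^*K_X+\sum a_iE_i$, and the topological difference is $-r$ for an exceptional chain of $r$ curves.

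To verify the local identity I would use the topological model instead of the continued fraction. Replace a neighbourhood of $P$ by the Milnor fiber $M_P$ of its $\qq$-Gorenstein smoothing, which has the same boundary lens space. Because the smoothing is $\qq$-Gorenstein, $K^2$ localizes: the quantity $K^2$ of the glued surface equals $K_X^2$. Additivity of Euler characteristic gives $\chi_{\operatorname{top}}(M_P)=1+\mu_P$, since $M_P$ is connected with $b_1=0$ (a rational homology ball when $d=1$). Running the same cut-and-paste argument with $\tilde X$ and comparing reduces everything to the classical identity for the fiber. I expect this localization step to be the main subtlety. If a global smoothing is available, the cleaner route is Manetti's: on the general fiber $X_t$ of a $\qq$-Gorenstein smoothing, Noether gives $K_{X_t}^2+\chi_{\operatorname{top}}(X_t)=12\chi(\mathcal{O}_{X_t})$. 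One then uses $K_{X_t}^2=K_X^2$ ($\qq$-Gorenstein), $\chi(\mathcal{O})$ constant by flatness, and $\chi_{\operatorname{top}}(X_t)=\chi_{\operatorname{top}}(X)+\sum\mu_P$ from the Milnor fiber decomposition.

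For a direct check avoiding topology, I would compute case by case. For Du Val points, $a_i=0$, and $\mu_P=r$ equals the number of exceptional curves, so the identity reads $0-r+r=0$. For $\frac{1}{dn^2}(1,dna-1)$, I would use the known fact, proved by induction along the T-chain algorithm ($[b_1,\dots,b_r]\mapsto[2,b_1,\dots,b_r+1]$ and its mirror, with base chains $[4]$ and $[3,2,\dots,2,3]$), that $-\bigl(\sum a_iE_i\bigr)^2=r-d+1$. Substituting gives $(r-d+1)-r+(d-1)=0$. The induction is routine: each step adds one curve and increases the discrepancy square by one, while the base cases are checked directly.
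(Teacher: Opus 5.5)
The paper gives no proof of this statement; it quotes it from \cite[Proposition~3.6]{HP10}. Taken on its own, your proposal contains a complete and correct proof, namely your second paragraph together with your last one. The ingredients are:
\begin{itemize}
\item Noether's formula on the minimal resolution $\tilde X$;
\item $\chi(\oo_{\tilde X})=\chi(\oo_X)$, by rationality of the singularities;
\item $K_{\tilde X}^2=K_X^2+\sum_P Z_P^2$, where $Z_P=\sum a_iE_i$ is the local discrepancy divisor;
\item $\chi_{\operatorname{top}}(\tilde X)-\chi_{\operatorname{top}}(X)=\sum_P r_P$.
\end{itemize}
These reduce everything to the local identity $-Z_P^2=r_P-\mu_P$. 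At Du Val points this is trivial; for $D_r,E_r$ the exceptional locus is a tree rather than a chain, but the Euler count is the same. At $\frac1{dn^2}(1,dna-1)$ it becomes $-Z_P^2=r-d+1$.

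Your cut-and-paste paragraph, by contrast, is not an argument as written. Without a global smoothing, the glued $4$-manifold carries no canonical class whose square you could compare with $K_X^2$. What that step would actually need is $\mu_P=r_P+Z_P^2$, a $\qq$-Gorenstein Laufer-type formula, which is exactly the local identity your direct computation proves. The global-smoothing route is valid only for smoothable $X$. That covers every surface to which this paper applies the formula, but not the statement as given. So rely on the resolution argument, not on that paragraph.

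The one step you wave through is the claim that each step of the T-chain algorithm increases $-Z^2$ by one. Under $[b_1,\dots,b_r]\mapsto[2,b_1,\dots,b_r+1]$ both $m$ and every discrepancy change, so tracking the $a_i$ directly is not routine. A cleaner route is a closed formula. For $\frac1m(1,q)$ with $m/q=[b_1,\dots,b_r]$ and $qq'\equiv1 \pmod m$, use the standard sequences: $\alpha_0=m$, $\alpha_1=q$, $\alpha_r=1$, $\alpha_{r+1}=0$, and $\beta_0=0$, $\beta_1=1$, $\beta_r=q'$, $\beta_{r+1}=m$. Then $a_i=-1+(\alpha_i+\beta_i)/m$. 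Since $Z\cdot E_i=b_i-2$ and $b_i\alpha_i=\alpha_{i-1}+\alpha_{i+1}$ (and likewise for $\beta$), telescoping gives
\[
-Z^2=\sum_{i=1}^r(b_i-2)-2+\frac{q+q'+2}{m}.
\]
For $q=dna-1$ one has $q'=dn(n-a)-1$, so $q+q'+2=m$.

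The T-chain algorithm gives $\sum_i(b_i-2)=r-d+2$ at once. Each step appends a $2$ and raises one entry by one, and the base chains $[4]$ and $[3,2,\dots,2,3]$ (the latter with $r=d$) satisfy the identity. Hence $-Z^2=r-d+1$ follows without any induction on discrepancies.
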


\subsection{Degenerations of smooth rational surfaces}

\begin{definition}\label{def:W-surface}
A \textit{$W$-surface} is a proper flat morphism
$\pi:\mathcal{X}\to \mathbb{D}$
satisfying the following conditions:
\begin{enumerate}
\item $\mathcal{X}$ is a normal complex threefold and $K_{\mathcal{X}}$ is $\mathbb{Q}$-Cartier;
\item the central fiber $\mathcal{X}_0:=\pi^*\{0\}$ is a reduced normal surface with at worst Wahl singularities;
\item the general fiber $\mathcal{X}_t$ is smooth for every $t\neq 0$.
\end{enumerate}
\end{definition}

If $\mathcal{X}\rightarrow \mathbb{D}$ is a $W$-surface then the central fiber $\mathcal{X}_0$ is klt and the equality 
\[
(K_{\mathcal{X}}+\mathcal{X}_0)|_{\mathcal{X}_0}\sim_\qq K_{\mathcal{X}_0} 
\]
holds. Indeed, as $\pi^*\{0\}=\mathcal{X}_0$ the variety $\mathcal{X}$ does not have singularities of codimension two along $\mathcal{X}_0$. For such degenerations we denote $\mathcal{X}_t\rightsquigarrow X$.

Let $(X\subset\mathcal{X})\to (0\in\mathbb{D})$ be a $W$-surface where $X$ has Wahl singularities $P_1,\ldots,P_s$ of respective types $\frac{1}{n_i^2}(1,n_ia_i-1)$.
Let $L_i$ denote the link of $P_i$ and $M_i$ the corresponding Milnor fiber.
For Wahl singularities, the link $L_i$ is a lens space with $H_1(L_i,\mathbb{Z})\cong \mathbb{Z}/n_i^2$, and the Milnor fiber satisfies $H_1(M_i,\mathbb{Z})\cong \mathbb{Z}/n_i$ and $H_2(M_i,\mathbb{Z})=0$; see \cite[Section~4.1]{UCBM25}.
The smooth surgery $\mathcal{X}_t\rightsquigarrow X$ induces the following long exact sequence in integral homology, see \cite[Section~4.1]{UCBM25} and \cite[Lemma 2.2.3]{DHL13}:
\begin{equation}\label{eq:long-exact}
0\to H_2(\mathcal{X}_t,\mathbb{Z})\to H_2(X,\mathbb{Z})\to \bigoplus_{i=1}^s H_1(M_i,\mathbb{Z})\to H_1(\mathcal{X}_t,\mathbb{Z})\to H_1(X,\mathbb{Z})\to 0.
\end{equation}

We now restrict to the case of $\qq$-Gorenstein degenerations of smooth rational surfaces with $K^2\geq 1$.

\begin{theorem}[{\cite[Lemma~1.2]{Man91}; \cite[Theorem~1]{B85}}]\label{thm:rational-degeneration}
Let $\mathcal{X}\to\mathbb{D}$ be a normal projective degeneration with general fiber $\mathcal{X}_t$. Assume that
$p_g(\mathcal{X}_t)=q(\mathcal{X}_t)=0$. Then
$\rho(\mathcal{X}_0)\leq \rho(\mathcal{X}_t)$.

Moreover, if $\mathcal{X}_t$ is rational and $K^2_{\mathcal{X}_t}\geq 0$, and $\mathcal{X}_0$ has at most rational singularities, then $\mathcal{X}_0$ is rational.
\end{theorem}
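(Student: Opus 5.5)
The plan is to prove the two assertions separately. For the Picard number inequality I will compare cohomology of the general fiber and the central fiber through the specialization. For rationality I will apply Castelnuovo's criterion after resolving the singularities of $\mathcal{X}_0$.

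\emph{Picard inequality.} Since $p_g(\mathcal{X}_t)=q(\mathcal{X}_t)=0$, we have $h^1(\mathcal{O}_{\mathcal{X}_t})=h^2(\mathcal{O}_{\mathcal{X}_t})=0$. Upper semicontinuity together with flatness and constancy of $\chi$ then gives $h^1(\mathcal{O}_{\mathcal{X}_0})=h^2(\mathcal{O}_{\mathcal{X}_0})=0$. Shrinking $\mathbb{D}$, the exponential sequence and these vanishings give isomorphisms $\operatorname{Pic}(\mathcal{X}_t)\cong H^2(\mathcal{X}_t,\zz)$ and $\operatorname{Pic}(\mathcal{X}_0)\cong H^2(\mathcal{X}_0,\zz)$. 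The latter uses $H^1(\mathcal{X}_0,\mathcal{O})=H^2(\mathcal{X}_0,\mathcal{O})=0$ on the possibly singular but normal surface. I then use the retraction $\mathcal{X}\simeq\mathcal{X}_0$ onto the central fiber, which holds after shrinking by the Clemens/Łojasiewicz triangulation. This gives a specialization map $H^2(\mathcal{X}_0,\qq)\cong H^2(\mathcal{X},\qq)\to H^2(\mathcal{X}_t,\qq)$. I will show this map is injective on the image of $\operatorname{Pic}$.

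Injectivity argument: take a line bundle $L$ on $\mathcal{X}_0$. Because $H^2(\mathcal{X}_0,\mathcal{O})=0$, the obstructions to extending $L$ vanish, so $L$ extends to a line bundle $\mathcal{L}$ on the formal, and then on the analytic germ of, $\mathcal{X}$. If $L$ is nonzero in $\operatorname{Pic}\otimes\qq$, pick an ample $H$ with $L\cdot H\neq0$ or $L^2\neq 0$. By flatness, the intersection numbers of the extensions on $\mathcal{X}_t$ agree with those on $\mathcal{X}_0$. The restriction $\operatorname{Pic}(\mathcal{X}_0)_\qq\to\operatorname{Pic}(\mathcal{X}_t)_\qq$ then preserves a nondegenerate pairing on the numerical classes, so it is injective and $\rho(\mathcal{X}_0)\le\rho(\mathcal{X}_t)$. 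I expect this extension/injectivity step to be the main technical point; the cleanest route is probably to cite the topological specialization and the exponential sequence as Manetti does.

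\emph{Rationality.} Let $f\colon \tilde X\to\mathcal{X}_0$ be the minimal resolution. Since the singularities are rational, $q$ and $p_g$ are preserved, so $q(\tilde X)=p_g(\tilde X)=0$. For Castelnuovo I also need $P_2(\tilde X)=0$, and it suffices to show $H^0(\mathcal{X}_0, 2K_{\mathcal{X}_0})=0$, since $f_*\omega^{\otimes 2}_{\tilde X}\subset\mathcal{O}(2K_{\mathcal{X}_0})$. Suppose to the contrary that $P_2(\mathcal{X}_0)>0$.
\begin{itemize}
\item Take $m$ with $mK$ Cartier along the family. Upper semicontinuity of $h^0(mK_{\mathcal{X}_s})$ does not directly give what we need, so I argue numerically instead.
\item $K_{\mathcal{X}_0}^2=K_{\mathcal{X}_t}^2\ge0$.
\item If $K_{\mathcal{X}_t}^2>0$, then $-K_{\mathcal{X}_t}$ is big because $\mathcal{X}_t$ is rational. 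Riemann–Roch on the family with $\chi(\mathcal{O})=1$ gives $h^0(-mK_{\mathcal{X}_0})>0$ for large $m$. This contradicts $h^0(2K_{\mathcal{X}_0})>0$, since $\mathcal{X}_0$ is projective and both $K$ and $-K$ would be effective and nonzero.
\item If $K^2=0$, then $\chi(-K)\ge1$ still gives an effective anticanonical divisor on $\mathcal{X}_t$, which specializes by semicontinuity. If both $K$ and $-K$ are effective then $K\equiv0$. Then Noether's formula, or $\chi(\mathcal{O})=1$, forces $\mathcal{X}_0$ to be an Enriques-type surface, whose $2K\sim0$ would have to deform. That contradicts $P_2(\mathcal{X}_t)=0$ via semicontinuity of $h^0(2K)$ in the $\qq$-Gorenstein family.
\end{itemize}
Hence $P_2(\tilde X)=0$, and Castelnuovo gives that $\tilde X$, and hence $\mathcal{X}_0$, is rational.
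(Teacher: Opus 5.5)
\textbf{There are genuine gaps in both halves, though each is repairable.}

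\emph{The vanishing step.} Your first step is not justified, and much of the proof rests on it. Upper semicontinuity only gives $h^i(\oo_{\mathcal X_0})\ge h^i(\oo_{\mathcal X_t})=0$, which is the useless direction. Constancy of $\chi$ only gives $h^1(\oo_{\mathcal X_0})=h^2(\oo_{\mathcal X_0})$, not that both vanish. Yet you use this vanishing in three places: for $\operatorname{Pic}(\mathcal X_0)\cong H^2(\mathcal X_0,\zz)$; for extending $L$ over $\mathcal X$, whose obstructions live precisely in $H^2(\oo_{\mathcal X_0})$; and later for $q(\tilde X)=p_g(\tilde X)=0$.

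For the Picard inequality you can avoid the vanishing altogether. The specialization $H^2(\mathcal X_0,\qq)\cong H^2(\mathcal X,\qq)\to H^2(\mathcal X_t,\qq)$ is a composite of restriction maps, hence multiplicative. It also preserves degree on $H^4$: test on $c_1(\mathcal H)^2$ for a relatively ample $\mathcal H$. So it is an isometry on the Chern classes of line bundles on $\mathcal X_0$. Your Hodge-index remark shows the intersection form is nondegenerate on $N^1(\mathcal X_0)$, and $H^2(\mathcal X_t,\qq)=\operatorname{NS}(\mathcal X_t)_\qq$ because $p_g(\mathcal X_t)=0$. Hence $\rho(\mathcal X_0)\le b_2(\mathcal X_t)=\rho(\mathcal X_t)$, with no extension of line bundles needed. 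Alternatively, $b_2(\mathcal X_0)\le b_2(\mathcal X_t)$ follows from the vanishing of $b_1$ of Milnor fibres of smoothings (Greuel--Steenbrink).

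\emph{The rationality half.} This has two further problems.
\begin{itemize}
\item It silently assumes a $\qq$-Gorenstein family: $mK$ Cartier along $\mathcal X$, $K_{\mathcal X_0}^2=K_{\mathcal X_t}^2$, and $h^0(-mK)$ specializing. The statement does not grant this.
\item The $K^2=0$ case ends by using semicontinuity of $h^0(2K)$ backwards. The value $h^0(2K_{\mathcal X_0})=1$ is perfectly compatible with $h^0(2K_{\mathcal X_t})=0$.
\end{itemize}
Even granting $\qq$-Gorenstein, the Enriques detour is unnecessary. Since $-K_{\mathcal X_t}$ is effective and nonzero, $K_{\mathcal X_0}\cdot\mathcal H_0=K_{\mathcal X_t}\cdot\mathcal H_t<0$ excludes $K_{\mathcal X_0}\equiv 0$ at once.

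\emph{The missing idea.} Specialize an anticanonical divisor as a Weil divisor; this needs no $\qq$-Gorenstein assumption and no case split.
\begin{itemize}
\item Since $K^2_{\mathcal X_t}\ge 0$, Riemann--Roch on $\mathcal X_t$ gives $h^0(-K_{\mathcal X_t})\ge 1$.
\item Pick a section of $\pi_*\oo_{\mathcal X}(-K_{\mathcal X/\mathbb D})$ not divisible by $t$, and let $\mathcal D$ be its divisor.
\item $\mathcal X$ is smooth along the smooth locus of $\mathcal X_0$, since $\mathcal X_0$ is Cartier there. Adjunction on that locus, plus normality of $\mathcal X_0$, gives $-K_{\mathcal X_0}\sim\mathcal D|_{\mathcal X_0}\ge 0$.
\item This divisor is nonzero, because its $\mathcal H$-degree equals $-K_{\mathcal X_t}\cdot\mathcal H_t>0$.
\end{itemize}
Thus $h^0(nK_{\mathcal X_0})=0$ for all $n\ge 1$, so $P_n(\tilde X)=0$. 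Rational singularities give $h^i(\oo_{\tilde X})=h^i(\oo_{\mathcal X_0})$. Then $\chi(\oo_{\mathcal X_0})=1$ together with $p_g(\tilde X)=0$ forces $q(\tilde X)=0$, which also repairs the first gap in this setting. Castelnuovo's criterion then finishes the proof.
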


Since $T$-singularities, and in particular Wahl singularities are rational, we always end up with a rational central fiber $X=\mathcal{X}_0$ by \cite[Theorem 1]{B85}. Consequently, the minimal resolution $Y\to X$ is obtained via a sequence of blow-ups over a Hirzebruch surface $\ff_k$.
We now combine the topological and algebraic information to reduce the long exact sequence~\eqref{eq:long-exact} to a short exact sequence.
Since $\mathcal{X}_t$ is a smooth rational surface, we have $H_1(\mathcal{X}_t,\mathbb{Z})=0$ and $H_2(\mathcal{X}_t,\mathbb{Z})\cong \operatorname{Pic}(\mathcal{X}_t)$. Exactness of (\ref{eq:long-exact}) implies $H_1(X,\mathbb{Z})=0$.
The conditions $H^1(X,\mathcal{O}_X)=H^2(X,\mathcal{O}_X)=0$ yield the identification $H_2(X,\mathbb{Z})\cong \operatorname{Cl}(X)$; see \cite[Proposition 4.11]{K05}. Substituting these identifications into \eqref{eq:long-exact}, we obtain the short exact sequence
\begin{equation}\label{eq:short-exact}
0\to \operatorname{Pic}(\mathcal{X}_t)\to \operatorname{Cl}(X)\to \bigoplus_{i=1}^s \mathbb{Z}/n_i\mathbb{Z}\to 0.
\end{equation}

On the other hand, the function $t\mapsto h^i(\mathcal{X}_t,nK_{\mathcal{X}_t})$ is upper semi-continuous for any $n\in\zz$, see \cite[Lemma 1.3]{B85}. Thus, that the central fiber $X=\mathcal{X}_0$ satisfies $h^0(X,-K_X)\geq 1$. The following result shows that $X$ is always unobstructed. See also \cite[Theorem 3.1]{HP10}.

\begin{proposition}[{\cite[Proposition 5.3]{M96}}]\label{prop:no-obstruction}
Let $X \subset \pp^n$ be a normal projective surface with $H^2(X,\oo_X)=H^1(X,\oo_X)=0$, $h^0(X,-K_X)\geq 1$ with at most rational singularities. Then $H^2\left(X,T_X\right)=H^1\left(X,\mathcal{O}_X(1)\right)=0$.
\end{proposition}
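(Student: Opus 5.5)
The plan is to handle the two vanishings separately. In both cases I use Serre duality on the normal Cohen--Macaulay surface $X$ to turn the group into a space of sections, and then use an anticanonical section to kill it. Throughout, fix a nonzero section $s\in H^0(X,-K_X)$ with divisor $D\in|-K_X|$, an effective Weil divisor. Let $f\colon Y\to X$ be a resolution. Since the singularities are rational, $R^1f_*\oo_Y=0$, so $H^i(Y,\oo_Y)=H^i(X,\oo_X)=0$ for $i=1,2$.

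\emph{Vanishing of $H^2(X,T_X)$.} By Serre duality (the dualizing sheaf is $\omega_X=\oo_X(K_X)$ and $T_X$ is reflexive),
\[
H^2(X,T_X)^\vee\cong \operatorname{Hom}(T_X,\omega_X)\cong H^0\bigl(X,(\Omega_X\otimes\omega_X)^{\vee\vee}\bigr).
\]
Multiplication by $s$ gives an injection of reflexive sheaves $(\Omega_X\otimes\omega_X)^{\vee\vee}\hookrightarrow \Omega_X^{[1]}$, so it suffices to show $H^0(X,\Omega^{[1]}_X)=0$. For rational (in particular quotient) surface singularities, reflexive $1$-forms extend holomorphically to the resolution; this is a Flenner/Steenbrink--van Straten type extension result. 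Hence
\[
H^0(X,\Omega_X^{[1]})=H^0(Y,\Omega^1_Y),
\]
which has dimension $h^{1,0}(Y)=h^{0,1}(Y)=q(Y)=0$ by Hodge symmetry. This part is routine once the extension theorem is quoted.

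\emph{Vanishing of $H^1(X,\oo_X(1))$.} Again by duality, $H^1(X,\oo_X(1))^\vee\cong H^1(X,\omega_X(-1))$. Twisting the sequence $0\to\omega_X\xrightarrow{s}\oo_X\to\oo_D\to0$ by $\oo_X(-1)$ gives
\[
0\to\omega_X(-1)\to\oo_X(-1)\to\oo_D(-1)\to0 .
\]
Here $H^0(X,\oo_X(-1))=0$. Also $H^1(X,\oo_X(-1))$ is dual to $H^1(X,\omega_X(1))$, which vanishes: by Grauert--Riemenschneider, $f_*\omega_Y=\omega_X$ and $R^1f_*\omega_Y=0$, and Kawamata--Viehweg applied to the nef and big divisor $f^*H$ on $Y$ does the rest. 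Therefore the long exact sequence gives $H^1(X,\omega_X(-1))\cong H^0(D,\oo_D(-1))$. The remaining task is to show that $\oo_D(-1)$, the restriction of an anti-ample sheaf to the effective curve $D$, has no sections. For $D$ reduced this is immediate, since the degree is negative on every component. For nonreduced $D$, I would filter $\oo_D$ by the successive quotients $\oo_{C}(-\text{(effective)})$ over components $C$, so that each graded piece has negative degree.

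I expect this second vanishing to be the main obstacle. The nonreduced filtration requires controlling the $\qq$-Cartier twists at the singular points of $X$. If that step fails, the fallback is to work on $Y$ with a suitable $\qq$-divisor $f^*H-K_Y-\lfloor\cdot\rfloor$ and apply Kawamata--Viehweg there, using $h^0(-K_X)\ge1$ to arrange bigness.
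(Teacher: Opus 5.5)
Your vanishing of $H^2(X,T_X)$ is correct. Duality gives $H^2(X,T_X)^\vee\cong\operatorname{Hom}(T_X,\omega_X)$, multiplication by $s$ embeds this into $H^0(X,\Omega^{[1]}_X)$, and $H^0(Y,\Omega^1_Y)=0$. The extension of reflexive $1$-forms holds: for the quotient singularities used in the paper it is elementary via the local smooth cover, and for general rational singularities it is the Kebekus--Schnell extension theorem. Flenner's theorem does not reach $1$-forms on surfaces. This is the half the paper actually uses. The paper gives no proof of the statement and only quotes Manetti. Your reduction $H^1(X,\oo_X(1))^\vee\cong H^0(D,\oo_D(-1))$ is also correct, and so is the reduced case.

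The gap is the non-reduced case, which you cannot avoid. The hypothesis is only $h^0(-K_X)\ge 1$, so the unique anticanonical member may be non-reduced. A filtration of $\oo_D$ whose graded pieces all have negative degree does not exist for an arbitrary ordering, and you give no rule for choosing one. For example, on $\ff_3$ take $D=2\Delta_0+F_1+\dots+F_5\in|-K|$ and the very ample $H=\Delta_0+4F$. Peeling off $\Delta_0$ twice produces the graded piece $\oo_{\Delta_0}(-\Delta_0-H)$, of degree $3-1=2>0$, which has sections. The conclusion $H^0(\oo_D(-H))=0$ still holds there, because $H^1(\ff_3,H)=0$, but only because another ordering (fibres first) happens to work. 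Finding such an ordering is a global connectedness property of $D$ relative to $H$, and that is the missing idea. It is also exactly where $q=0$ and the rationality of the singularities have to enter; your sketch uses neither in this step. The Kawamata--Viehweg fallback does not apply as stated either: $f^*H-K_Y\sim f^*H+D_Y$ is big but in general not nef.

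Here is one way to close the gap. Let $f\colon Y\to X$ be the minimal resolution. Then $-K_Y=f^*(-K_X)+\sum a_iE_i$ with $a_i\ge 0$ is effective; pick $D_Y\in|-K_Y|$. By Leray, $H^1(X,\oo_X(1))\hookrightarrow H^1(Y,f^*\oo_X(1))$. Let $\tilde H\in|f^*\oo_X(1)|$ be general and smooth, and set $M=\tilde H+D_Y$. Suppose $M=(\tilde H+A)+B$ with $A+B=D_Y$ and $B>0$. Then $A\cdot B=-K_Y\cdot B-B^2=2-2p_a(B)$. Moreover $p_a(B)\le 1$, because $h^1(\oo_B)\le h^1(\oo_{D_Y})\le h^2(\oo_Y(K_Y))=1$, using $q=0$. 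If $\tilde H\cdot B=0$, then $B$ is $f$-exceptional and Artin's criterion gives $p_a(B)\le 0$. Hence $(\tilde H+A)\cdot B\ge 1$ in all cases, so $M$ is numerically $1$-connected. Ramanujam's lemma then gives $h^0(\oo_M)=1$, hence $H^1(Y,\oo_Y(K_Y-f^*H))=H^1(Y,\oo_Y(-M))=0$. Serre duality yields $H^1(Y,f^*\oo_X(1))=0$. Working on $Y$ also removes the non-Cartier twists at the singular points that your filtration on $X$ would have to control.
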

In the range $K_X^2\geq 1$, the minimal resolution $Y$ also carries a $\pp^1$-fibration $Y\to\pp^1$, which is the combinatorial framework for the exceptional divisor of $Y\to X$ used in \cite{Man91,HP10,UZ25,MZ25}.
For coprime integers $0<a<n$ we write $\frac{n}{a}=[e_1,\dots,e_r]$ for the Hirzebruch--Jung continued fraction of the cyclic quotient singularity $\frac1n(1,a)$, abbreviated c.q.s., whose entries are the negatives of the self-intersections of the exceptional curves of its minimal resolution; see \cite[Definition 2.1]{UZ24}.
 
\begin{definition}[{\cite[Definition 1.2]{UZ25}}]\label{def:zcf}
A chain $[f_1,\dots,f_r]$ with $f_i\geq 2$ \emph{admits a zero continued fraction of weight $\lambda$} if there are indices $i_1<\dots<i_v$ and integers $d_{i_j}\geq 1$ such that
\begin{equation}\label{eq:zcf-def}
[\,\dots,f_{i_1}-d_{i_1},\dots,f_{i_v}-d_{i_v},\dots\,]=0
\qquad\text{and}\qquad
\lambda+1=\sum_{j=1}^{v}d_{i_j}.
\end{equation}
\end{definition}
 
Zero continued fractions govern the versal deformation space of a c.q.s. $\frac1\Delta(1,\Omega)$.
By \cite[Theorem~3.9]{KSB88} the irreducible components of $\operatorname{Def}\bigl(\frac1\Delta(1,\Omega)\bigr)$ are in bijection with its \emph{P-resolutions}, that is, with the partial resolutions having only T-singularities and relatively ample canonical class, and these are enumerated by zero continued fractions in \cite{Chr91,Ste91}; see also \cite{BC94}.
This framework turns the study of deformations of algebraic surfaces into $3$-fold birational geometry.
 
Let $(X\subset\mathcal{X})\to(0\in\mathbb{D})$ be a $W$-surface with $K_X$ not nef, and let $\Gamma\subset X$ be a $K_X$-negative extremal curve with $\Gamma^2<0$.
The contraction of $\Gamma$ extends to an extremal neighborhood of $\mathcal{X}$ over a $cA_n$ germ, hence is semistable in the sense of \cite[Section~2.2]{KM92}, of type $k1A$ or $k2A$ according as $\Gamma$ passes through one or two Wahl singularities of $X$.
Its numerical data, and in particular whether it is divisorial or flipping, is computed by the algorithm of Mori \cite[Section~2]{M02}; see also \cite{HTU17,U16,UZ24}.
Both a divisorial contraction and a flip return a $W$-surface, see for example \cite[Section~5]{UZ24}.
 
The flip is a morphism $(\Gamma^+\subset X^+\subset\mathcal{X}^+)\to(P\in\bar{X}\subset\bar{\mathcal{X}})$ with $\mathcal{X}^+_t\cong\mathcal{X}_t$ for $t\neq 0$, whose restriction $(\Gamma^+\subset X^+)\to(P\in\bar{X})$ is a P-resolution with $X^+$ carrying at most two singular points, both Wahl, by \cite[Lemma 3.14]{KSB88}.
Such a P-resolution is called an \emph{extremal P-resolution}, see \cite[Section~4]{HTU17}.
We record these partial resolutions in the notation of \cite[Notation~1.7]{TU22}:
\[
\left[\binom{n_0}{a_0}\right]-(c_1)-\left[\binom{n_1}{a_1}\right]\longrightarrow\tfrac1\Delta(1,\Omega),
\]
where $\left[\binom{n_i}{a_i}\right]$ is the Wahl singularity $\frac{1}{n_i^2}(1,n_ia_i-1)$,  $\Gamma$ is a nonsingular rational curve with $P_0,P_1\in\Gamma$ forming a toric boundary at each $P_i$, and $(c_1)$ records that the proper transform of $\Gamma$ in the minimal resolution has self-intersection $-c_1$.
When $P_i$ is a smooth point we set $n_i=1$ and omit the bracket.
The possible outcomes of this semistable minimal model program for $W$-surfaces are described in \cite[Section 2]{Urz16a}.

\subsection{Deformation theory of degenerations of Hirzebruch surfaces}

The $\qq$-Gorenstein deformation theory of a normal projective surface $X$ with T-singularities is governed by the local-to-global exact sequence
\begin{equation}\label{eq:loc-glob-QG}
0 \to H^1(X, T_X) \to T^1_{\qq\mathrm{G},X} \to \bigoplus_{P \in \operatorname{Sing}(X)} T^1_{\qq\mathrm{G},P} \to H^2(X, T_X) \to T^2_{\qq\mathrm{G},X} \to 0,
\end{equation}
where $H^1(X,T_X)$ parametrizes equisingular deformations, $T^1_{\qq\mathrm{G},P}$ is the tangent space to the local $\qq$-Gorenstein deformation space at $P$, and $T^2_{\qq\mathrm{G},X}$ is the space of obstructions to $\qq$-Gorenstein deformations; see \cite[Section~3]{H04} and \cite[Section 3.2]{UCBM25}.

Suppose $X$ admits a $\qq$-Gorenstein smoothing to $\ff_k$.
Then $\rho(X)\leq 2$ by Theorem~\ref{thm:rational-degeneration}. Consequently, by Proposition~\ref{prop:Noether} it follows that $$\rho(X)+\sum_{P\in\operatorname{Sing}(X)}\mu_P=2.$$
Hence $\rho(X)=2$ if and only if every singularity of $X$ is Wahl, and $\rho(X)=1$ if and only if $X$ carries a single non-Wahl singularity $P$ with $\mu_P=1$.
The latter case occurs only for $k=0$ and is classified by \cite[Theorem~4.1]{HP10}.

\begin{theorem}\label{thm:consc-def}
Let $X$ be a normal projective surface with rational singularities and admitting a $\qq$-Gorenstein smoothing $\pi:(X\subset\mathcal{X})\to (0\in\mathbb{D})$ where $\mathcal{X}_t\cong\ff_k$ with $k\geq 2$. Then, for each $d\geq 0$ satisfying $k-2d\geq 0$ there exists another $\qq$-Gorenstein smoothing of $X$, $\pi^\prime:(X\subset\mathcal{X}^\prime)\to (0\in\mathbb{D})$ such that $\mathcal{X}^\prime_t\cong \ff_{k-2d}$ for $t\neq 0$. 
\end{theorem}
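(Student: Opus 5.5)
Since $X$ has a $\qq$-Gorenstein smoothing to $\ff_k$, Proposition~\ref{prop:no-obstruction} gives $H^2(X,T_X)=0$, so $T^2_{\qq\mathrm{G},X}=0$ by \eqref{eq:loc-glob-QG}. Hence the $\qq$-Gorenstein deformation functor of $X$ is unobstructed and has a smooth versal base $\mathrm{Def}^{\qq G}(X)$. The local-to-global map $T^1_{\qq\mathrm{G},X}\to\bigoplus_P T^1_{\qq\mathrm{G},P}$ is surjective, so the versal family is smooth over the product of the local $\qq$-Gorenstein deformation spaces. The given smoothing $\pi$ is the pullback of the versal family along a curve germ $\gamma\colon\mathbb{D}\to\mathrm{Def}^{\qq G}(X)$ whose image meets the open dense locus $U$ of smooth fibers.

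The key step is to show that every fiber $\ff_m$ with $m\equiv k \bmod 2$ and $0\le m\le k$ occurs over $U$, arbitrarily close to $0$. A smooth fiber $\mathcal{X}_u$ near $\gamma(t)$ is a small deformation of $\mathcal{X}_{\gamma(t)}\cong\ff_k$. The standard fact is that the Kuranishi family of $\ff_k$ has base $H^1(\ff_k,T_{\ff_k})\cong\cc^{k-1}$, which is unobstructed since $H^2(\ff_k,T_{\ff_k})=0$. Its fibers realize all $\ff_{k-2d}$, and the locus of fibers $\cong\ff_{k-2d}$ is a nonempty stratum containing $\ff_k$ in its closure. For example, one can view $\ff_k=\pp(\oo\oplus\oo(-k))$ and use extensions $0\to\oo(-k+d)\to E\to\oo(-d)\to0$ deforming to $\oo(-k+d)\oplus\oo(-d)$ versus $\oo\oplus\oo(-k)$. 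Since the versal family over $\mathrm{Def}^{\qq G}(X)$ restricted to a neighbourhood of $\gamma(t)$ in $U$ induces a versal (indeed, by openness of versality, a complete) family for $\ff_k$, every $\ff_{k-2d}$ appears as a fiber over points $u\in U$ near $\gamma(t)$.

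Next, I choose $d$ and a sequence $t_n\to 0$. For each $n$ I pick $u_n\in U$ near $\gamma(t_n)$ with $\mathcal{X}_{u_n}\cong\ff_{k-2d}$. The set $S_d=\{u\in U:\mathcal{X}_u\cong\ff_{k-2d}\}$ is constructible, because jumping of $h^0(-K)$ or of the fibration splitting type is governed by semicontinuity. So $0$ lies in the closure of the constructible set $S_d$, and there is a curve germ $\gamma'\colon(\mathbb{D},0)\to(\mathrm{Def}^{\qq G}(X),0)$ with $\gamma'(\mathbb{D}\setminus 0)\subset S_d$, by the curve selection lemma. Pulling back the versal family along $\gamma'$ gives $\pi'$, with central fiber $X$ and general fiber $\ff_{k-2d}$. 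It is $\qq$-Gorenstein by construction.

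The main obstacle is the versality step: proving that near a point of $U$ the versal $\qq$-Gorenstein family of $X$ induces a surjection onto $H^1(\ff_k,T_{\ff_k})$. At smooth fibers, $\qq$-Gorenstein deformations are ordinary deformations. I would argue that the Kodaira--Spencer map of the versal family is surjective at the nearby point by openness of versality for the unobstructed functor, which needs $H^2(\mathcal{X}_u,T)=0$ there. Alternatively, I could argue via the upper-semicontinuity of $h^1(T)$, comparing the dimensions of the image and the target. If this becomes delicate, the fallback is a direct dimension count. The locus $S_0\subset U$ of fibers $\cong\ff_k$ has codimension $h^1(T_{\ff_k})$, which is positive for $k\ge2$. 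So a generic curve through $0$ avoids it and lands in the other strata. Induction on $d$, passing each time from $\ff_{k-2d}$ to $\ff_{k-2d-2}$, then reaches all the required values.
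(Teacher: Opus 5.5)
Your main line is essentially the paper's proof: versality of the $\qq$-Gorenstein miniversal family at $p=\gamma(t)$ by openness of versality, the Kuranishi family of $\ff_k$ realizing every $\ff_{k-2d}$ arbitrarily near $0$, hence $0$ in the closure of the constructible locus $C_d$ of fibres $\cong\ff_{k-2d}$, and then curve selection. For the step $p\in\overline{C_d}$, your completeness argument suffices; the paper phrases it via the classifying map $\Phi$ being a submersion.

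The differences are minor. The paper gets constructibility from Artin algebraization, the Isom scheme and Chevalley's theorem, not from semicontinuity, where $h^0(-K)$ would not work since it equals $9$ on both $\ff_0$ and $\ff_2$. Your dimension-count fallback fails as stated, because a generic curve through $0$ lands in the generic stratum $\ff_{k \bmod 2}$ rather than in $\ff_{k-2}$, but your main argument never needs it.
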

\begin{proof}
Suppose that there exists a $\qq$-Gorenstein smoothing $\pi:(X\subset\mathcal{X})\to (0\in\mathbb{D})$ with general fiber $\ff_k$ with $k\geq 2$. Since $S:=\operatorname{Def}^{\qq G}(X)$ is a miniversal $\qq$-Gorenstein deformation space of $X$, there exists a miniversal family $\mathcal{X}^{\text{univ}}\to S$ with distinguished origin $0_S$ corresponding to $X$. The smoothing $\pi$ is induced by a base change from the miniversal family. Indeed, there exists a morphism of germs $\phi:(\mathbb{D},0)\to (S,0_S)$ such that $\mathcal{X}\cong \mathcal{X}^{\text{univ}}\times_S \mathbb{D}$.
Let $p = \phi(t)$ for some $t \neq 0$ sufficiently small. Since $\mathcal{X}_t\cong\ff_k$, the fiber of the miniversal family over $p$ is given by $\mathcal{X}_p^{\text{univ}}\cong\ff_k$.
Since $\mathcal{X}^{\text{univ}}\to S$ is miniversal, it is versal at $0_S$. By openness of versality (see for example \cite{A74}), there exists an open subset $U\subset S$ of $0_S$ such that $\mathcal{X}^{\text{univ}}\to S$ is versal in every point of $U$. In particular, the family is versal at $p$. 
This implies the existence of a holomorphic map of germs:
$$ \Phi: (S, p) \longrightarrow (\operatorname{Def}(\ff_k), 0) $$
such that $\mathcal{X}|_{(S,p)} \cong \Phi^* \mathcal{F}$, where $\mathcal{F}$ is the Kuranishi family of $\ff_k$. The versality of $\mathcal{X}$ at $p$ corresponds analytically to the surjectivity of the induced differential of $\Phi$,
$$ d\Phi_p: T_p S \twoheadrightarrow T_0 \operatorname{Def}(\ff_k) \cong H^1(\ff_k, T_{\ff_k}). $$
Since $H^2(\ff_k, T_{\ff_k}) = 0$, this vanishing of obstructions implies that the base space $\operatorname{Def}(\ff_k)$ is smooth of dimension $h^1(\ff_k, T_{\ff_k})=k-1$, and particularly analytically isomorphic to $(\mathbb{C}^{k-1}, 0)$. 
A holomorphic map to a smooth target is a submersion if and only if it is surjective on tangent spaces. Therefore, the combination of versality and unobstructedness implies that $\Phi$ is a submersion around $p$. Consequently, $\Phi$ is an open map in a neighborhood of $p$.

We define $$C_d:=\{s\in S|\hspace{0.1cm} \mathcal{X}^{\text{univ}}_s\cong \ff_{k-2d}\}.$$ 
Since $X$ is projective, by \cite[Theorem~1.6]{Art69} the miniversal deformation algebraizes, i.e., there exist an algebraic scheme $S^{\mathrm{alg}}$ of finite type over $\cc$ and a projective flat family $\mathcal{X}^{\mathrm{alg}}\to S^{\mathrm{alg}}$ whose formal completion at a closed point $0\in S^{\mathrm{alg}}$ recovers the germ of $\mathcal{X}^{\mathrm{univ}}\to S$. We then identify $(S,0)\cong ((S^\mathrm{alg})^{\mathrm{an}},0)$. The Isom-scheme $\mathrm{Isom}_{S^{\mathrm{alg}}}(\ff_{k-2d}\times S^{\mathrm{alg}}, \mathcal{X}^{\mathrm{alg}})\to S^{\mathrm{alg}}$ is of finite type, so by Chevalley's theorem \cite[Th\'eor\`eme~1.8.4]{EGAIV} its image is constructible. Hence $C_d$ is constructible near $0_S$.
By the explicit description of the $\operatorname{Def}(\ff_k)$ in \cite[Proposition 1.5]{C06} (see also \cite[Section~II.3]{M04}), the locus $$A_d=\{t\in \operatorname{Def}(\ff_k)\,|\,\mathcal{F}_t\cong \mathbb{F}_{k-2d}\}$$ is a locally closed algebraic subset $B_d\setminus B_{d-1}$, where $B_d$ is a determinantal cone and $0\in\overline{A_d}$.
For some neighborhood $V$ of $p$, we have $C_d\cap V=\Phi^{-1}(A_d)\cap V$. As $\Phi$ is open, it follows that $p\in\overline{C_d}$. Since this holds for $p=\phi(t)$ with $t\neq 0$ arbitrarily close to $0_S$, we conclude that $0_S=\lim_{t\to 0}\phi(t)\in\overline{C_d}$.

Since $C_d$ is constructible, we choose an irreducible component $Z$ of the germ $(\overline{C_d},0_S)$ such that $C_d\cap Z$ is dense in $Z$. After shrinking, there is a possibly empty proper closed analytic subset $A\subsetneq Z$ such that $Z\setminus A\subset C_d$. Choose a holomorphic germ $h$ vanishing on $A$ but not identically on $Z$, taking $h=1$ if $A=\varnothing$. By the curve selection lemma \cite[Theorem~III.25]{M04}, there exists a holomorphic map $\gamma:(\mathbb{D},0)\to(Z,0_S)$ such that $h\circ\gamma$ is not identically zero. Hence, after shrinking $\mathbb{D}$, we have $\gamma(\mathbb{D}\setminus\{0\})\subset C_d$. Pulling back the miniversal $\qq$-Gorenstein family $\mathcal{X}^{\mathrm{univ}}\to S$ along $\gamma$ gives a $\qq$-Gorenstein smoothing $\pi':(X\subset\mathcal{X}')\to(0\in\mathbb{D})$ with $\mathcal{X}'_t\cong\ff_{k-2d}$ for every $t\neq 0$.
\end{proof}

As a corollary of Theorem \ref{thm:consc-def}, we construct a series of examples of degenerations of Hirzebruch surfaces. For the classification of degenerations of $\pp(1,1,k)$ with $k\geq 3$, see \cite[Theorem 3]{MZ25}.

\begin{corollary}\label{cor:1-1-n-cons}
Let $X$ be a normal projective klt surface admitting a $\qq$-Gorenstein deformation $(X\subset\mathcal{X})\to (0\in\mathbb{D})$ with general fiber $\mathcal{X}_t\cong \pp(1,1,k)$ where $k\geq 3$. Then the surface obtained by resolving the singularity $\frac{1}{k}(1,1)$ admits a $\qq$-Gorenstein smoothing to every Hirzebruch surface $\ff_{k-2d}$ with $k-2d\geq 0$.
\end{corollary}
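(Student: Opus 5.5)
The plan is to lift the given degeneration $\pp(1,1,k)\rightsquigarrow X$ to a degeneration $\ff_k\rightsquigarrow \widetilde X$, where $\widetilde X$ is the surface obtained from $X$ by resolving the point $\frac1k(1,1)$, and then to apply Theorem~\ref{thm:consc-def}.

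First, I would locate the singularity $\frac1k(1,1)$ on $X$. For $k\geq 3$, the point $\frac1k(1,1)$ is not a T-singularity (it is not of the form $\frac{1}{dn^2}(1,dna-1)$ unless $k=4$, where $\frac14(1,1)$ is Wahl). So in a $\qq$-Gorenstein family it cannot be smoothed and must persist on every fiber. Hence a section of such points extends to the central fiber $X$, and locally along this section the family is a product, $\mathcal{X}\cong \frac1k(1,1)\times\mathbb{D}$. For $k=4$ I would argue that the general fiber has the singularity, so the locus $\mathrm{Sing}(\mathcal{X}/\mathbb{D})$ contains a curve through the central fiber. I would then use that a $\qq$-Gorenstein deformation of a Wahl point which is not smoothing is locally trivial, since $T^1_{\qq G}$ is one-dimensional.

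Second, I would resolve the point in families. Take the weighted blow-up of $\mathcal{X}$ along this section, i.e.\ the product of the minimal resolution of $\frac1k(1,1)$ with $\mathbb{D}$. This gives $\widetilde{\mathcal{X}}\to\mathbb{D}$ whose general fiber is the resolution of $\pp(1,1,k)$, namely $\ff_k$, and whose central fiber is $\widetilde X$. Because the modification is a product near the exceptional locus and an isomorphism elsewhere, flatness, the $\qq$-Cartier property of $K$, and the $\qq$-Gorenstein condition at the remaining singular points of $X$ are all inherited. Moreover $\widetilde X$ has rational singularities, being klt.

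Third, Theorem~\ref{thm:consc-def} applied to $\widetilde X$ yields smoothings to $\ff_{k-2d}$. If one wants $\widetilde X$ to be a $W$-surface, it additionally has only T-singularities. The $d=0$ case is the family just constructed.

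The main obstacle is the first step: showing that the $\frac1k(1,1)$ point on the general fiber specializes to an isolated point of the same type on $X$, with locally trivial deformation there. Here I would use the klt hypothesis and the finiteness of local $\qq$-Gorenstein deformations, i.e.\ that the local index-one cover deformation is trivial for non-T cyclic quotients; the case $k=4$ needs separate care.
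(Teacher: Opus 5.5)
Your skeleton is the paper's: find a $\frac1k(1,1)$ point on $X$, resolve it in the family to get $\ff_k\rightsquigarrow\widetilde X$, then apply Theorem~\ref{thm:consc-def}. The paper gets the first two steps from \cite[Theorem~5.7]{MZ25} (every orbifold degeneration of $\pp(1,1,k)$ carries a $\frac1k(1,1)$ point) and \cite[Lemma~4.3]{MZ25} (extraction of the $(-k)$-curve in families). Your Steps 2 and 3 are fine once Step 1 is known.

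\textbf{The gap is in Step 1: your argument runs in the wrong direction.} That $\frac1k(1,1)$ is not a T-singularity (in fact it is $\qq$-Gorenstein rigid for $k\neq 4$) only says that \emph{if} $X$ had such a point, the family would be trivial there. It says nothing about which singular point $P\in X$ the $\frac1k(1,1)$ point of $\mathcal X_t$ specializes to. A priori $P$ can be any quotient singularity admitting a $\qq$-Gorenstein deformation whose Milnor fiber contains a $\frac1k(1,1)$ point, and such singularities exist:
\begin{itemize}
\item $\frac1{12}(1,7)$ is not a T-singularity. It has index $3$, and its index-one cover is $A_3=\{xy=z^4\}$ with $\mu_3$ acting by weights $(1,1,2)$. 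The equivariant deformation $xy=z^4+tz$ has general fiber equal to $\frac13(1,1)$ at the origin and smooth elsewhere.
\item For $k=4$, the T-singularity $\frac18(1,3)$ partially smooths to $\frac14(1,1)$ via $xy=z^2(z^2+t)$ modulo $\mu_2$. So the one-dimensionality of $T^1_{\qq G}$ at a Wahl point does not help either.
\end{itemize}
In particular, your assertion that non-T cyclic quotient singularities have trivial local $\qq$-Gorenstein deformations is false; the first example is a counterexample.

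\textbf{How to close it.} Step 1 needs global input, since no purely local argument can work. Either cite \cite[Theorem~5.7]{MZ25} as the paper does, or argue topologically:
\begin{itemize}
\item Theorem~\ref{thm:rational-degeneration} gives $\rho(X)=1$, hence $e(X)=3=e(\pp(1,1,k))$.
\item Additivity gives $e(\mathcal X_t)=e(X)+\sum_P\bigl(e(M_P)-1\bigr)$ over the local Milnor fibers. Since each $e(M_P)\geq 1$, this forces $e(M_P)=1$ for every $P$.
\item Both examples above have $e(M_P)=2$, so they are excluded.
\item More generally, at a cyclic quotient point the deformation of the index-one cover has the form $xy=h(z)$ with $h$ monic. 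Each free $\mu_r$-orbit of nonzero roots of $h$ adds $1$ to $e(M_P)$. Hence $e(M_P)=1$ forces the deformation at $P$ to be trivial, so $P\cong\frac1k(1,1)$.
\end{itemize}
Non-cyclic quotient points still need a separate check. Only after this is the product construction in your Step 2 justified.
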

\begin{proof}
By \cite[Theorem~5.7]{MZ25}, it follows that any orbifold degeneration of $\pp(1,1,k)$ contains a singular point $P=\frac{1}{k}(1,1)$, for which we obtain a birational projective morphism $Y\to X$ resolving $P$. By \cite[Lemma~4.3]{MZ25} it follows there exists a family $\mathcal{Y}\to\mathbb{D}$ that extracts a horizontal divisor  $E$ over $\mathcal{X}$ such that $E_t^2=-k$. Hence $\mathcal{Y}\to\mathbb{D}$ provides a $\qq$-Gorenstein smoothing of $Y$ to the Hirzebruch surface $\ff_k$. Indeed, by Theorem \ref{thm:consc-def} it follows that $Y$ admits a $\qq$-Gorenstein smoothing to every Hirzebruch surface $\ff_{k-2d}$ where $k-2d\geq 0$.
\end{proof}

\begin{definition}\label{def:Xkxy}
Let $k\geq 3$ and $(x,y)$ be positive integers satisfying
\begin{equation}
x^2+y^2+k=(k+2)xy.
\end{equation}
We denote by $X_{k,x,y}\to\pp(k,x^2,y^2)$ the toric morphism resolving the singularity $\frac{1}{k}(1,1)$.
\end{definition}

By Theorem~\ref{thm:consc-def} the set of indices $k$ for which $\ff_k\rightsquigarrow X$ exists is determined by its largest element, which we now define.

\begin{definition}\label{def:smoothing-index}
Let $X$ be a normal projective surface admitting a $\qq$-Gorenstein smoothing to some Hirzebruch surface.
The \emph{smoothing index} of $X$ is
\[
m(X):=\max\{k\geq 0\mid\text{there exists a $\qq$-Gorenstein smoothing }\ff_k\rightsquigarrow X\}.
\]
\end{definition}
\begin{remark}
The number $m(X)$ is well-defined. This is proved in Proposition \ref{prop:zariski-decomposition} and more generally by \cite[Theorem 3.4]{Urz16a}. The set is closed downwards mod $2$ by Theorem~\ref{thm:consc-def}. 
\end{remark}

\subsection{Fano polygon mutations}

In this subsection, we briefly review the notion of Fano polygons mutations. The following definitions are extracted from \cite{KNP17}. For extensive literature in this topic, see also \cite{ACGK12} and \cite{ACC16}.
\begin{definition}[{\cite[Section 1.1]{KNP17}}]
A Fano polygon $P$ is a convex polytope in $N_{\mathbb{R}}:=N \otimes_{\mathbb{Z}} \mathbb{R}$, where $N$ is a rank-two lattice, with primitive vertices $\mathcal{V}(P)$ in $N$ such that the origin is contained in its strict interior, $\mathbf{0} \in P^{\circ}$. A Fano polygon defines a toric surface $X_P$ given by the spanning fan of $P$.
\end{definition}
Let $P$ a Fano Polygon in $N_{\mathbb{R}}$. Let $w \in M:=\operatorname{Hom}(N, \mathbb{Z})$ be a primitive inner normal vector for an edge $E$ of $P$, so $w: N \rightarrow \mathbb{Z}$ induces a grading on $N_{\mathbb{R}}$ and $w(v)=$ $-r_E$ for all $v \in E$, where $r_E$ is the height of $E$. Define
$$
h_{\max }:=\max \{w(v) \mid v \in P\} \quad \text { and } \quad h_{\min }:=-r_E=\min \{w(v) \mid v \in P\} .
$$

We have that $h_{\max }>0$ and $h_{\min }<0$. For each $h \in \mathbb{Z}$ we define $w_h(P)$ to be the (possibly empty) convex hull of those lattice points in $P$ at height $h$,
$$
w_h(P):=\operatorname{conv}\{v \in P \cap N \mid w(v)=h\} .
$$

By definition $w_{h_{\text {min }}}(P)=E$ and $w_{h_{\text {max }}}(P)$ is either a vertex or an edge of $P$. Let $v_E \in N$ be a primitive lattice element of $N$ such that $w\left(v_E\right)=0$, and define $F:=\operatorname{conv}\left\{\mathbf{0}, v_E\right\}$, a line segment of unit length parallel to $E$ at height 0 . Notice that $v_E$, and hence $F$, is uniquely defined only up to sign.

\begin{definition}[{\cite[Section 2]{KNP17}}]\label{def:mut}
Suppose that for each negative height $h_{\min } \leqslant h<0$ there exists a (possibly empty) lattice polytope $G_h \subset N_{\mathbb{R}}$ satisfying
$$
\{v \in \mathcal{V}(P) \mid w(v)=h\} \subseteq G_h+|h| F \subseteq w_h(P),
$$
where ' + ' denotes the Minkowski sum, and we define $\varnothing+Q=\varnothing$ for any polytope $Q$. We call $F$ a factor of $P$ with respect to $w$, and define the mutation given by the primitive normal vector $w$, factor $F$, and polytopes $\left\{G_h\right\}$ to be:
$$
\operatorname{mut}_w(P, F):=\operatorname{conv}\left(\bigcup_{h=h_{\min }}^{-1} G_h \cup \bigcup_{h=0}^{h_{\max }}\left(w_h(P)+h F\right)\right) \subset N_{\mathbb{R}} .
$$
\end{definition}
\begin{remark}
$\operatorname{mut}_{n_E}{\left(P, F\right)}$ is independent of the choice for $G_h$. Alternatively if there is no possible choice of $G_h$, then the mutation with respect to $n_E$ does not exist.
\end{remark}
Let $E$ be an edge of a Fano polygon $P$ with primitive inner normal vector $n_E\in M$. $P$ not always admit a mutation with respect $n_E$. Indeed, by \cite[Lemma 1]{KNP17}, this is equivalent to $\ell(E):=|E\cap N|-1\geq r_E$. In particular, if $\operatorname{cone}(E)$ defines a c.q.s. written as $\frac{1}{kr}(1,kc-1)$ then $l(E)=k$ and $r_E=r$, so an edge whose cone is a $T$-singularity always mutates, and an edge whose cone is a Wahl singularity satisfies $l(E)=r_E$. Fano polygon mutation is reflected through $\qq$-Gorenstein deformations between toric del Pezzo surfaces.
\begin{theorem}[{\cite[Theorem 1.3]{I12} and \cite[Lemma 7]{ACC16}}]
Let $P, Q$ two Fano polygons associated to the toric varieties $X_P$ and $X_Q$. Suppose that there exists a mutation between the two polygons, then there exists a $\qq$-Gorenstein pencil $\pi: \mathcal{X} \rightarrow \pp^1$ with scheme-theoretic fibres $\pi^*(0)=X_P$ and $\pi^*(\infty)=X_Q$.
\label{Ildef}
\end{theorem}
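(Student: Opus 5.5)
The statement is the theorem of Ilten (with the $\qq$-Gorenstein refinement of Akhtar--Coates--Corti--et al.). The plan is to realize the mutation torically, as a one-parameter family inside a single higher-dimensional toric variety. We then check that the fibers over $0$ and $\infty$ are $X_P$ and $X_Q$, and that the total space is $\qq$-Gorenstein.

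First, we fix mutation data $(w,F,\{G_h\})$ and put $Q=\operatorname{mut}_w(P,F)$. We pass to the rank-three lattice $\widetilde N=N\oplus\zz$ and build a three-dimensional polytope $\widetilde P$ that interpolates between $P$ and $Q$. Concretely, $\widetilde P$ is the convex hull of the slices $w_h(P)\times\{0\}$ together with the shifted slices $(G_h+|h|F)$ and $w_h(P)+hF$, placed so that the ``Cayley trick'' structure exhibits $P$ and $Q$ as two projections along the direction of $F$. Equivalently, following Ilten, we consider the toric threefold $\mathcal{Y}$ associated to the cone over $\widetilde P$ (or its face fan). We then take the two degenerations given by the torus-invariant rational function $t=\chi^{(v_E^\ast,\cdot)}$, which cuts out a pencil. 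The key combinatorial step is to show that the projection of $\widetilde P$ along $(v_E,1)$, respectively along $(v_E,0)$, recovers $P$, respectively $Q$. This is where the Minkowski condition
\[
\{v\in\mathcal V(P)\mid w(v)=h\}\subseteq G_h+|h|F\subseteq w_h(P)
\]
enters: it guarantees that the vertices of $P$ at negative height survive and that no new vertices appear.

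Second, following \cite{I12}, we identify the pencil. The anticanonical rings are $\bigoplus_m H^0(X_P,-mK)$, graded by the dual polygon $P^\vee$ (whose lattice points give sections). We write the total space as $\operatorname{Proj}$ of a graded $\cc[s,u]$-algebra whose specialization at $[1:0]$ is the Cox-type ring of $X_P$ and at $[0:1]$ that of $X_Q$. To see this, we express the mutation on $M$ as the piecewise-linear map $u\mapsto u-\min(0,u(\cdot))\,w$. It sends lattice points of $P^\vee$ bijectively to those of $Q^\vee$, for all dilations. Hence the Hilbert functions of $-mK$ agree, and the graded family is flat over $\pp^1$. Scheme-theoretic fibers are then read off directly from the homogeneous coordinate rings.

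Third, for the $\qq$-Gorenstein property we show that some multiple $-rK$ of the relative anticanonical divisor of $\mathcal X\to\pp^1$ is Cartier. This is the relative $\mathcal{O}(1)$ of the graded family, which restricts to $-rK_{X_P}$ and $-rK_{X_Q}$ on the fibers. This is essentially \cite[Lemma 7]{ACC16}. We expect the main obstacle to be the flatness step: verifying that the piecewise-linear bijection respects multiplication well enough for the Rees-type algebra to be torsion-free over $\cc[s,u]$. We would handle this through a Gr\"obner/initial-ideal argument, in which both end rings arise as initial degenerations of a common ring.
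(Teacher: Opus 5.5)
\textbf{There is a genuine gap: the proposal never constructs the family, and Step~1 as written could not produce it.} The paper does not reprove this statement. It imports it from \cite{I12} and \cite{ACC16} and then works with Petracci's explicit model, Theorem~\ref{thm:petrdeform}.

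\emph{The toric picture is on the wrong side.} The threefold $\widetilde X$ has rays $(p,0)$ for $p\in\mathcal{V}(P)^{\geq 0}$, $(p',\langle w,p'\rangle)$ for $p'\in\mathcal{V}(Q)^{<0}$, and $(0,1),(f,1)$. So it is the factor $F$ that sits at height one, not slices of $P$ and $Q$ at unspecified heights. The surfaces $X_P$ and $X_Q$ are not obtained by projecting anything in $\widetilde N$. They are toric subvarieties of $\widetilde X$: closures of translates of the subtori $\ker\chi^{e_1^*}$ and $\ker\chi^{e_1^*-w}$. Their fans are the \emph{slices} $\widetilde\Sigma\cap(N\oplus 0)$ and $\widetilde\Sigma\cap\{(v,\langle w,v\rangle)\}$, which differ by the shear $v\mapsto v+\langle w,v\rangle f$ (compare the proof of Proposition~\ref{prop:mres-deformation}). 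The factor condition is used to check that these slices are exactly the face fans of $P$ and $Q$. Projecting along a vector of $\widetilde N$ describes a toric quotient or a boundary divisor, not a fiber. Projections occur only dually: $P^\vee$ and $Q^\vee$ are the images of the moment polyhedron $\widetilde Q\subseteq\widetilde M_\rr$ along $e_1^*$ and $e_1^*-w$.

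\emph{No single character $\chi^u$ can cut out the pencil.} This is the more serious problem. For $\lambda\in\cc^*$ the level sets $\{\chi^u=\lambda\}$ are torus translates of one another. Such a family is therefore isotrivial away from $0$ and $\infty$, and over those two points it only sees boundary strata. The missing idea is the trinomial
\[ s\prod_{p\in\mathcal{V}(P)^{\geq 0}}x_p^{\langle w,p\rangle}+t\prod_{p'\in\mathcal{V}(Q)^{<0}}x_{p'}^{-\langle w,p'\rangle}=x_{(0,1)}\,x_{(f,1)} \]
in the Cox coordinates of $\widetilde X$. Its general member is not toric. Exactly at $s=0$ and at $t=0$ it becomes a binomial whose zero locus is $X_P$, respectively $X_Q$.

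\emph{Step~2 does not fill this gap.} The piecewise-linear map $u\mapsto u-\min(0,\langle u,f\rangle)\,w$ does send $mP^\vee\cap M$ bijectively onto $mQ^\vee\cap M$. Hence $h^0(X_P,-mK_{X_P})=h^0(X_Q,-mK_{X_Q})$ for all $m$, but this is only a necessary condition.
\begin{itemize}
\item Flatness is a property of a family you already have; equal Hilbert functions of the two intended end fibers do not produce one.
\item The map is not additive across the wall $\langle u,f\rangle=0$. So it is not an isomorphism of the graded semigroups $\bigsqcup_m mP^\vee\cap M$ and $\bigsqcup_m mQ^\vee\cap M$ (otherwise $X_P\cong X_Q$), and it transports no multiplication.
\item The ``common ring'' of which both anticanonical rings are initial degenerations is the anticanonical ring of the general fiber of the pencil. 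That is exactly the object whose existence is being claimed.
\end{itemize}
The Gr\"obner argument you defer to has nothing to act on until that ring is written down, and the trinomial hypersurface is what writes it down. Once it exists, flatness over $\pp^1$ and the scheme-theoretic fibers are read off from the equation. Your Step~3 (an invertible reflexive power of the relative $\mathcal{O}(1)$ restricting to $-rK$ on every fiber) is then a reasonable route to the $\qq$-Gorenstein property supplied by \cite{ACC16}. On its own, however, it presupposes the family.
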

We call the pencil of Theorem~\ref{Ildef} the \emph{Ilten pencil} of the mutation. In the context of classifying toric log del Pezzo surfaces with a given singularity basket, the following definition is used in \cite{KNP17}.
\begin{definition}[{\cite[Definition 2]{ACC16}}]
Let $P, Q \subset N_{\mathbb{R}}$ be two Fano polygons. Then $P$ and $Q$ are mutation-equivalent if there exists a finite sequence of polygons $P_0, P_1, \dots, P_n$ such that $P_0 \cong P, P_n \cong Q$ and, $P_{i+1}=\operatorname{mut}_{n_i}{\left(P_i, F_i\right)}$ for some appropriate choice of $n_i$ and $F_i$, for all $i \in\{0, \dots, n-1\}$.
\end{definition}
In \cite{KNP17} Kasprzyk, Nill and Prince classify the mutation equivalence classes for del Pezzo surfaces with at most T-singularities.
\begin{theorem}[{\cite[Theorem 6]{KNP17}}]   
 There are precisely ten mutation-equivalence classes of Fano polygons such that the toric del Pezzo surface $X_P$ has only T-singularities. They are in bijective correspondence with the ten families of smooth del Pezzo surfaces.
\end{theorem}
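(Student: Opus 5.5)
The statement is \cite[Theorem~6]{KNP17}. I would reprove it in three steps: build mutation invariants that separate ten classes, exhibit one polygon in each class, and show that every Fano polygon with only T-singularities mutates to one of the chosen representatives.

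\textbf{Invariants.} By Theorem~\ref{Ildef}, a mutation $P\rightsquigarrow Q$ gives a $\qq$-Gorenstein pencil with fibers $X_P$ and $X_Q$. If $X_P$ has only T-singularities, then every edge cone is a T-cone, and by \cite[Lemma~1]{KNP17} it admits a mutation. Smoothing all of these singularities in a $\qq$-Gorenstein way is unobstructed by Proposition~\ref{prop:no-obstruction}, since $X_P$ is rational with $h^0(-K)\geq1$. So $X_P$ has a $\qq$-Gorenstein smoothing to a smooth del Pezzo surface $S_P$. The degree $K^2$ is constant in $\qq$-Gorenstein families, so $K_{X_P}^2=K_{S_P}^2\in\{1,\dots,9\}$.

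I would then show that the deformation type of $S_P$ is a mutation invariant. The Ilten pencil and the smoothings of its two ends all lie in the irreducible $\qq$-Gorenstein deformation space of a common general member, so $S_P$ and $S_Q$ are deformation equivalent. Smooth del Pezzo surfaces are classified by deformation type into exactly ten families: $\pp^2$, $\ff_0$, $\ff_1$, and $\mathrm{Bl}_r\pp^2$ for $2\leq r\leq 8$. The two families of degree~$8$ are separated by the parity of the intersection form, which the lattice sequence \eqref{eq:short-exact} detects directly from $\operatorname{Cl}(X_P)$. This gives a well-defined map from mutation classes to the ten families.

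\textbf{Surjectivity.} For each family I would write down an explicit polygon. The smooth polygons of $\pp^2$, $\ff_0$, $\ff_1$, $\mathrm{Bl}_2\pp^2$ and $\mathrm{Bl}_3\pp^2$ cover degrees $9$ through $6$. For degrees $5$ down to $1$ I would use polygons whose edge cones are T-singularities with the right total Milnor number. Noether's formula (Proposition~\ref{prop:Noether}) for a toric surface with $n$ edges reads
\[
K^2+n+\sum_P\mu_P=12 ,
\]
so these polygons are easy to write down. For instance, one can take $\frac14(1,1)$ cones together with $A_r$ cones.

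\textbf{Injectivity, the main obstacle.} I would define a complexity such as the number of boundary lattice points of the dual polygon $P^\vee$, or the sum of the T-indices $dn^2$ over all cones. The goal is to show that any $P$ not already among the representatives admits a mutation that strictly decreases this complexity. The first thing I would try is mutating along an edge of maximal height $r_E$. The T-condition $\ell(E)=d\,r_E$ should force the opposite side of $P$ to be short enough that the new vertices $w_h(P)+hF$ do not create a worse cone.

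Once the complexity has been reduced as far as possible, one arrives at a finite list of minimal polygons. These can be enumerated using the bound on the number of edges coming from Noether's formula above, and checked case by case to lie in a single class per degree, separating $\ff_0$ from $\ff_1$ in degree $8$. I expect this descent argument to be the hardest part. If a uniform monotone complexity cannot be found, I would fall back on the singularity-content framework of \cite{ACC16}, in which the residual basket is empty for T-polygons, and argue through the associated minimal polygons there.
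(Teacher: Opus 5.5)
The paper gives no proof of this statement; it is quoted from \cite{KNP17}, so your proposal has to be judged on its own. Your first two steps are sound in outline. Unobstructedness (Proposition~\ref{prop:no-obstruction}) gives a $\qq$-Gorenstein smoothing of $X_P$ to a smooth del Pezzo surface. The $\qq$-Gorenstein deformation spaces of these surfaces are smooth, so the deformation type of that smoothing is well defined, and it is constant along an Ilten pencil. Explicit polygons then give surjectivity. One minor point: the parity remark is unnecessary, and \eqref{eq:short-exact} was derived only for Wahl singularities, since for $d>1$ the Milnor fibre has $b_2=d-1$.

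The genuine gap is injectivity, which is the whole content of the cited theorem. Your first candidate complexity cannot work at all. The dual of $\operatorname{mut}_w(P,F)$ is the image of $P^\vee$ under a piecewise-linear bijection of $M_{\rr}$ that is a lattice shear on each of two half-planes. Hence $|\partial P^\vee\cap M|$ is a mutation invariant and never decreases. Your second candidate, $\sum dn^2$, is the normalized area of $P$, and it is the right one. By the boundary description $\partial Q=L\cup\theta(R)$ of Lemma~\ref{lem:mut-boundary}, the width of $P$ at height $h$ changes by exactly $h$. So a mutation along $E$ changes $\sum dn^2$ by $h_{\max}^2-r_E^2$, and descent stops at polygons with $h_{\max}\geq r_E$ at every edge.

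The step that fails is the enumeration of these terminal polygons. Noether's formula bounds the number of edges and the total Milnor number, not the size of $P$. A single class already contains infinitely many triangles: the Markov triangles of $\pp(a^2,b^2,c^2)$, or the $\pp(a^2,b^2,2c^2)$ of Subsection~\ref{subsec:F0}. What you need is a structure theorem showing that the inequalities $h_{\max}(E)\geq r_E$ at all edges bound $P$ in terms of $K^2$. The terminal polygons of each degree would then form an explicit finite list, and you would still have to exhibit mutations joining those of equal degree. There can be several per class: $\pp^1\times\pp^1$ and $\pp(1,1,2)$ are both terminal and are related by a mutation with $h_{\max}=r_E$.

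Your heuristic that the T-condition $\ell(E)=d\,r_E$ ``should force'' the far side to be short does not supply this. That condition is local to $E$, whereas $h_{\max}$ is global. Already for a Markov triangle with largest index $c>1$, the inequality $h_{\max}=(a^2+b^2)/c<c$ at that edge comes from the Markov equation (Vieta), not from the cone over $E$. The fallback does not help either. For T-polygons the singularity content of \cite{ACC16} is $(12-K^2,\varnothing)$, which records only the degree. Minimal polygons and their finiteness are exactly what \cite{KNP17} develops, so invoking them amounts to citing the theorem you set out to reprove.
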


\section{Existence of 1-complements and toric degenerations}\label{sec:complements}
The first part of this section is devoted to proving the existence of $1$-complements in any normal projective surface $X$ with klt singularities and that admits a $\qq$-Gorenstein smoothing to $\ff_k$ for some $k\geq 0$. We use the following definition of 1-complement. 
\begin{definition}[{\cite[Section 3.2]{P15}}]
 Let $X$ be a normal variety and let $D$ be a boundary on $X$ (an effective $\qq$-divisor with coefficients $\leq 1$). Let $D=S+B$, where $S:=\lfloor D\rfloor$ (resp. $B:=\{D\}$ ) is the integral (resp. fractional) part of $D$. A \textit{$1$-complement} of $(X,D)$ is a divisor $D^{+} \in\left|-K_X\right|$ such that $(X, D^{+})$ is log canonical and $D^{+} \geq S+\lfloor 2 B\rfloor$. In particular, if $D=0$, then \textit{$1$-complement} of $K_X$ is a divisor $D^{+} \in\left|-K_X\right|$ such that $(X, D^{+})$ is $\log$ canonical.
\end{definition}

The existence of $1$-complements plays a central role in measuring how far a normal projective surface is from being toric. Indeed, toric surfaces admit torus--invariant anticanonical elements, yielding natural $1$-complements. More generally, the existence of a $1$-complement imposes strong restrictions on the singularities and the configuration of curves on the surface.

To achieve the $1$-complement existence, we first establish a couple of statements that let us understand the Mori cone $\overline{\operatorname{NE}}(X)$ of such surfaces. First, we present a result about the geometry of the exceptional $\operatorname{Exc}(\phi)$, where $\phi:Y\to X$ is the minimal resolution of $X$. For the definition of weight of $X$, see \cite[Definition 2]{Man91}.

\begin{lemma}[{\cite[Lemma 5.1]{MZ25}}]\label{lem:Hz-minres}
Let $\pi:\mathcal{X}\to\mathbb{D}$ be a singular $W$-surface where $\mathcal{X}_t\cong\ff_k$. Let $\phi:Y\to X$ be the minimal resolution of $\mathcal{X}_0$, and let $\mu:Y\to \ff_d$ be the associated minimal model of $Y$ where $d$ is the weight of $X$. Then, $\mu(Exc(\phi))$ consists of the section $\Delta_0$ with at most two fibers $F_1,F_2$.
\end{lemma}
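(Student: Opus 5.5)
The approach follows Manetti's weight argument from \cite{Man91}, adapted to Picard rank two. First we set up the numerics. By the discussion after Proposition~\ref{prop:Noether}, a singular $X$ has $\rho(X)=2$ and only Wahl singularities $P_1,\dots,P_s$. Let $r_i$ be the length of the Wahl chain of $P_i$. Since each Wahl chain has $K^2$-defect $r_i-(\text{length})+\dots$, we get $K_Y^2=K_X^2-\sum_i r_i=8-\sum_i r_i$. Hence $\rho(Y)=2+\sum_i r_i$, and the number of blow-ups $Y\to\ff_d$ is exactly $\sum_i r_i=\#\operatorname{Exc}(\phi)$. In other words, the number of exceptional curves equals the number of blow-ups, which is the rank-two analogue of the rank-one equality used in \cite{Man91,HP10}.

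Next we fix the minimal model $\mu\colon Y\to\ff_d$ so that it is compatible with the $\pp^1$-fibration, as in \cite{Man91}. We choose $d$ to be the weight, meaning that $\Delta_0$ is the image of the component of $\operatorname{Exc}(\phi)$ of largest negativity. We then compare $\operatorname{Exc}(\phi)$ with the curves in $\mu(\operatorname{Exc}(\phi))$. The key observation is that $\operatorname{Exc}(\phi)$ is a disjoint union of chains of $(-m)$-curves with $m\ge2$ and contains no $(-1)$-curves. Therefore every $\mu$-exceptional $(-1)$-curve in $Y$ is not in $\operatorname{Exc}(\phi)$. Each blow-up center must lie on the total transform of the configuration, since otherwise its $(-1)$-curve would be disjoint from everything and could be contracted without changing $\operatorname{Exc}(\phi)$.

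We count with the usual inequality. Let $\Delta_0$ together with fibers $F_1,\dots,F_m$ and other horizontal curves $H_j$ be the components of $\mu(\operatorname{Exc}(\phi))$. Then
\[
\#\operatorname{Exc}(\phi)\le \#\{\mu\text{-exceptional components in }\operatorname{Exc}(\phi)\}+\#\mu(\operatorname{Exc}(\phi)).
\]
Here the first term is at most (number of blow-ups) minus (number of $(-1)$-curves), and there is at least one $(-1)$-curve over each fiber $F_j$ that has been blown up. Each fiber in the image must have been blown up, because a $(-2)$-or-worse curve in a fiber requires it. Combining this with $\#\operatorname{Exc}=\#\text{blow-ups}$ gives $1+m+\#\{H_j\}\le \#\{(-1)\text{-curves}\}$. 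We bound the latter using the fact that the configuration is a disjoint union of chains, so it is a forest whose trees are chains. From the fiber structure we obtain that each degenerate fiber contributes a $(-1)$-curve meeting at most two chain ends. This step, the Euler characteristic bookkeeping of the forest, is the main obstacle. The cleanest approach is Manetti's argument via $H_1$-torsion: use \eqref{eq:short-exact}, which forces $\operatorname{Cl}(X)$ to surject onto $\bigoplus\zz/n_i$. This gives an upper bound on the number of components that can be disjoint from $\Delta_0$.

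Finally, we rule out an extra horizontal curve $H\ne\Delta_0$, and a third fiber. For an extra horizontal curve $H$, note that $-K_Y$ is effective-ish ($h^0(-K_X)\ge1$) and is mapped to $2\Delta_0+(d+2)F$ on $\ff_d$. Intersecting a member of $|-K_Y|$ pulled from $X$ with $H$ and $\Delta_0$, using negativity $K_Y\cdot E\ge0$ on exceptional curves, leaves no room for two horizontal exceptional images. For three or more fibers, the count above gives more chains than Wahl singularities allowed by $\rho$ and the Noether equality, a contradiction. Alternatively, this is exactly \cite[Lemma 5.1]{MZ25}, which may be cited.
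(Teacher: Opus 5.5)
The paper gives no proof of this lemma; it imports it from \cite[Lemma~5.1]{MZ25}. Your closing sentence is therefore exactly the paper's route, but the self-contained argument before it has a genuine gap at its central step.

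Your set-up is fine. Noether's formula gives $K_Y^2=8-\sum_i r_i$, so the number of blow-ups in $\mu$ equals $\#\operatorname{Exc}(\phi)$. The count built on this, however, runs in the wrong direction. Every component of $\operatorname{Exc}(\phi)$ is either $\mu$-exceptional or maps onto one of the $1+m+h$ image curves. Hence $\#\operatorname{Exc}(\phi)=\#\{\text{blow-ups}\}$ says that \emph{exactly} $1+m+h$ of the $\mu$-exceptional curves lie outside $\operatorname{Exc}(\phi)$. This gives $\#\{(-1)\text{-curves}\}\le 1+m+h$, not $\ge$. Combined with ``each image fibre contains a $(-1)$-curve'', it yields $m\le 1+m+h$, which is vacuous.

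The correct form of this bookkeeping is the fibrewise identity $\#\operatorname{Exc}(\phi)_{\mathrm{hor}}=\sum_f(\nu_f-1)$. It follows from $\rho(Y)=2+\sum_f(c_f-1)=2+\#\operatorname{Exc}(\phi)$, where $\nu_f\ge1$ is the number of non-exceptional components of the fibre $f$. It too places no bound on how many fibres carry exceptional curves. So the counting never reaches the actual content of the lemma, namely $h=0$ and $m\le 2$.

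The devices you invoke to close the gap do not work.
\begin{itemize}
\item Noether's formula with $\rho(X)=2$ and $K_X^2=8$ only forces $\sum_P\mu_P=0$, i.e.\ that all singularities are Wahl. A Wahl chain of length $r_i$ raises $\rho(Y)-2$ and $\#\operatorname{Exc}(\phi)$ by the same amount, so nothing bounds the number of chains.
\item The bound $|\operatorname{Sing}(X)|\le4$ of Proposition~\ref{prop:toricity} is unavailable. It rests on Theorem~\ref{thm:1-compldegHz} through Proposition~\ref{prop:moriray}, which uses this lemma, so invoking it is circular.
\item The torsion remark is never turned into an inequality. Moreover, \eqref{eq:short-exact} concerns the indices $n_i$, not how the chains sit in the fibres.
\item The anticanonical argument gives no contradiction. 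With $-K_Y\sim_{\qq}\phi^*D+\sum a_iE_i$ and $0<a_i<1$, intersecting the pushforward with a fibre only yields $a_{\Delta_0}+a_H(H\cdot F)+(\text{non-negative terms})=2$, which is satisfiable.
\item That $\Delta_0$ is the image of a component of $\operatorname{Exc}(\phi)$ is asserted through a gloss on the weight rather than proved.
\end{itemize}

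The missing idea is a link between the image fibres and the chain through $\hat\Delta_0$. Suppose you know that $\hat\Delta_0\subset\operatorname{Exc}(\phi)$ and that no centre of $\mu$ lies on $\Delta_0$. Then $\hat\Delta_0$ meets the strict transform $\hat F_j\subset\operatorname{Exc}(\phi)$ of every image fibre. Since a member of a chain has at most two neighbours, there are at most two image fibres.

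The condition that no centre lies on $\Delta_0$ is where the choice of $d$ as the weight must be used. For instance, suppose $d$ is maximal among the $\ff_n$ dominated by $Y$. A centre on $\Delta_0$ would then let you contract that fibre onto the multiplicity-one component meeting $\hat\Delta_0$ and reach some $\ff_{d'}$ with $d'>d$, a contradiction. A separate argument is still needed to exclude a second horizontal component of $\operatorname{Exc}(\phi)$.
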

There always exist a degenerate fiber $f_1\subset Y$ that contains exactly two irreducible curves non contracted by $\phi$, see the proof of \cite[Lemma 5.1]{MZ25}. We denote their images as $\Gamma_0$ and $\Gamma_1$. Throughout the paper we assume $\Gamma_0$ to be $K_X$-negative.

\begin{proposition}\label{prop:moriray}
Let $(X\subset\mathcal{X})\to(0\in\mathbb{D})$ be a W-surface with $\mathcal{X}_t\cong \ff_k$, $X$ singular and $\rho(X)=2$. Then, the Mori cone $\mathrm{NE}(X)$ is generated by $[\Gamma_0]$ and $[\Gamma_1]$.
\end{proposition}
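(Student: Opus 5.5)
Since $\rho(X)=2$, the cone $\overline{\operatorname{NE}}(X)\subset N_1(X)_\rr\cong\rr^2$ has exactly two extremal rays. The plan is to show that $[\Gamma_0]$ and $[\Gamma_1]$ are not proportional, that each has negative or zero self-intersection in a way forcing extremality, and that every irreducible curve of $X$ is a nonnegative combination of them. We work on the minimal resolution $\phi\colon Y\to X$ with its fibration $Y\to\ff_d\to\pp^1$ from Lemma~\ref{lem:Hz-minres}. The key object is the degenerate fiber $f_1\subset Y$, whose only non-$\phi$-exceptional components are the strict transforms $\tilde\Gamma_0,\tilde\Gamma_1$.

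First, let $f$ be a general fiber of $Y\to\pp^1$. It is linearly equivalent to $f_1$, so
\[
\phi_*f\equiv a_0\Gamma_0+a_1\Gamma_1,\qquad a_0,a_1>0,
\]
where $a_i$ are the multiplicities of $\tilde\Gamma_i$ in $f_1$. Next, let $C\subset X$ be any irreducible curve other than $\Gamma_0,\Gamma_1$, with strict transform $\tilde C$. If $\tilde C$ is a fiber, then $[C]$ is a positive multiple of $\phi_*f$. Otherwise $\tilde C$ is horizontal. By Lemma~\ref{lem:Hz-minres}, $\mu(\operatorname{Exc}\phi)$ contains the section $\Delta_0$, so the $\phi$-exceptional locus contains a horizontal curve; let $\tilde\Delta_0$ be its strict transform. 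Since $\rho(X)=2$, the classes of $\phi_*f$ and one further class span $N_1(X)$.

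To locate $C$, I will use intersection numbers with two nef-type test classes.
\begin{enumerate}
\item $\phi_*f$ is nef on $X$, since $f$ moves, so $\phi_*f\cdot C\ge0$ and $\phi_*f\cdot\Gamma_i=0$.
\item Consider the pushforwards of $\tilde\Gamma_0$ and $\tilde\Gamma_1$. One of them, say $\tilde\Gamma_1$, meets the exceptional section chain while the other does not, or they meet the two branches of exceptional curves differently.
\end{enumerate}
Concretely, I would show that $\Gamma_0^2<0$ or $\Gamma_1^2<0$. Writing $\phi^*\Gamma_i=\tilde\Gamma_i+\sum(\text{exceptional})$, we get
\[
\Gamma_i^2=\tilde\Gamma_i^2+(\text{positive correction}).
\]
Both $\tilde\Gamma_i$ are $(-1)$- or more negative fiber components, and $\Gamma_0\cdot\Gamma_1$ is computed from $\phi^*\Gamma_0\cdot\Gamma_1$. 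Because $\Gamma_0$ and $\Gamma_1$ both lie in $\phi_*f_1$, which has self-intersection $0$, we have
\[
0=(a_0\Gamma_0+a_1\Gamma_1)^2 \quad\text{and}\quad (a_0\Gamma_0+a_1\Gamma_1)\cdot\Gamma_i=0.
\]
This gives $\Gamma_0^2=-\tfrac{a_1}{a_0}\Gamma_0\cdot\Gamma_1$ and $\Gamma_1^2=-\tfrac{a_0}{a_1}\Gamma_0\cdot\Gamma_1$. Once $\Gamma_0\cdot\Gamma_1>0$, which holds because they meet at a point of $X$ (the images of the adjacent components in the chain $f_1$), both $\Gamma_i$ have negative self-intersection. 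In particular they are not proportional and both are extremal in $\rho=2$. A negative curve on a surface spans an extremal ray of $\overline{\operatorname{NE}}$, and two distinct negative curves in rank $2$ must span the two different rays. Moreover, the nef class $\phi_*f$ lies strictly between them.

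It then follows that $\overline{\operatorname{NE}}(X)=\rr_{\ge0}[\Gamma_0]+\rr_{\ge0}[\Gamma_1]$. Indeed, any irreducible $C\neq\Gamma_i$ satisfies $C\cdot\Gamma_i\ge0$ for both $i$, and in rank $2$ the dual of the cone spanned by two negative curves is exactly the cone they span. Since $X$ is a Mori dream surface (rational with $-K_X$ effective), the cone is closed and equals $\operatorname{NE}(X)$.

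The main obstacle is justifying $\Gamma_0\cdot\Gamma_1>0$, that is, $\Gamma_0\ne\Gamma_1$ and the two images actually meet. If in $f_1$ they are separated by exceptional curves, they still meet at the singular point that is the image of that chain, since the fiber $f_1$ is connected and its exceptional part contracts. I would verify this via connectedness of $f_1$ together with the description in \cite[Lemma 5.1]{MZ25}.
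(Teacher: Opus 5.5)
Your reduction is fine: since $\rho(X)=2$, it suffices to show that the two distinct irreducible curves $\Gamma_0,\Gamma_1$ both have negative self-intersection. Your connectedness argument for $\Gamma_0\cdot\Gamma_1>0$ is also correct. The gap is in the step meant to deliver negativity.

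\textbf{The false step.} You claim $(\phi_*f)^2=0$ and $\phi_*f\cdot\Gamma_i=0$, i.e.\ you transport Zariski's lemma from $Y$ to $X$. This fails because of the very fact you quote from Lemma~\ref{lem:Hz-minres}: the section $\tilde\Delta_0$ is $\phi$-exceptional and horizontal. So every fiber $f$ meets $\operatorname{Exc}(\phi)$, and $\phi^*\phi_*f=f+Z$ with $Z\ge0$ nonzero. Here $Z$ is supported on the connected component of $\operatorname{Exc}(\phi)$ through $\tilde\Delta_0$, with positive coefficient along $\tilde\Delta_0$. Hence $(\phi_*f)^2=Z\cdot f>0$; all the curves $\phi_*f$ pass through the singular point $\phi(\tilde\Delta_0)$. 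Likewise $\phi_*f\cdot\Gamma_i=Z\cdot\tilde\Gamma_i$, which is positive whenever $\tilde\Gamma_i$ meets that component.

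\textbf{A counterexample to your formulas.} Take $X=\operatorname{Bl}_p\pp(1,1,4)$, a degeneration of $\ff_1$. The fiber $f_1=\tilde\Gamma_0+\tilde\Gamma_1$ consists of two $(-1)$-curves, with $\tilde\Gamma_1$ meeting the $(-4)$-curve $\tilde\Delta_0$. Then $\Gamma_0^2=-1$, $\Gamma_1^2=-\tfrac34$, $\Gamma_0\cdot\Gamma_1=1$, $(\phi_*f)^2=\tfrac14$ and $\phi_*f\cdot\Gamma_1=\tfrac14$. Your formula instead predicts $\Gamma_1^2=-1$.

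\textbf{Why this matters.} Nefness of $\phi_*f$ gives only $\phi_*f\cdot\Gamma_i\ge0$, hence $a_i\Gamma_i^2\ge-a_j\,\Gamma_0\cdot\Gamma_1$. That is a lower bound on $\Gamma_i^2$, the wrong direction, so your argument does not establish $\Gamma_i^2<0$. Negativity needs an independent input, and this is where the paper's proof does its work:
\begin{enumerate}
\item Using $-K_X$ effective and $K_X^2=8$, the Cone Theorem produces a $K_X$-negative extremal ray.
\item Its contraction is shown not to be of fiber type via \cite[Proposition~7.4]{HP10}. So its curve is negative, and its strict transform is a $(-1)$-curve in a degenerate fiber, hence one of $\tilde\Gamma_0,\tilde\Gamma_1$.
\item If $K_X\cdot\Gamma_1\ge0$, adjunction for the log canonical pair $(X,\Gamma_1)$ gives $\Gamma_1^2=-K_X\cdot\Gamma_1-\tfrac1{n_0^2}-\tfrac1{n_1^2}<0$. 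An exceptional non-lc configuration is treated separately.
\item If $K_X\cdot\Gamma_1<0$, the Cone Theorem is applied again.
\end{enumerate}

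\textbf{Smaller points.} The dual of the cone spanned by two negative curves is the nef cone, which is strictly smaller than that cone; only the inclusion is needed, so this is harmless. A rational surface with $-K_X$ effective need not be a Mori dream space, but you do not need this: once both extremal rays are spanned by curves, $\overline{\operatorname{NE}}(X)=\operatorname{NE}(X)$.
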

\begin{proof}
Let $(X\subset\mathcal{X})\to(0\in\mathbb{D})$ be a W-surface such that $\mathcal{X}_t\cong \ff_k$, satisfying the hypothesis above. By upper semicontinuity, it follows that $h^0(X,-K_X)\geq h^0(\ff_k,-K_{\ff_k})>0$, so $|-K_X|\neq \emptyset$. Pick an effective divisor $D\in |-K_X|$. Since $K_X^2=8$, we have that $D\cdot K_X=-8$. So, there is a curve $C \subset \operatorname{Supp}(D)$ such that $C\cdot K_X<0$. 
Since $X$ is klt, the Cone Theorem \cite[Theorem 3.7]{KM98} yields a $K_X$-negative extremal curve $\Gamma\subset X$. Let $R$ be the ray spanned by $[\Gamma]$ in $\operatorname{NE}(X)$ and $f$ the associated contraction. 
If $f:X\to T$ were a Mori fiber space, then $T$ must be smooth since $f_*\mathcal{O}_X=\mathcal{O}_T$ and $X$ is normal. Thus, by \cite[Proposition 7.4]{HP10}, a degenerate fiber $f^*(q)$ contains exactly two Wahl singularities, a contradiction. Therefore $f$ must be birational.

Let $\phi:Y\to X$ denote the minimal resolution of $X$ and $\hat{\Gamma}$ the strict transform of $\Gamma$. By the discrepancy formula $K_Y=\phi^*K_X+\sum_{i}d_iE_i$, it follows that $\hat{\Gamma}\cdot K_Y\leq \Gamma\cdot K_X<0$. We compute that $\hat{\Gamma}^2=-1$, and consequently $\hat{\Gamma}$ lies inside a degenerate fiber of $Y\to\ff_d$ where $d$ is the weight of $X$, by \cite[Lemma 5.1]{MZ25}.
By the structure of the minimal resolution described in Lemma \ref{lem:Hz-minres}, then $\hat{\Gamma}$ belongs to the degenerate fiber containing the curves $\Gamma_0$ and $\Gamma_1$. Thus, $\hat{\Gamma}\in\{\hat\Gamma_0,\hat\Gamma_1\}$, we assert that $\Gamma_0=\phi_*(\hat{\Gamma})$ is a $K_X$-negative extremal curve satisfying $\Gamma_0^2<0$.

The previous argument implies that $N_1(X)_{\mathbb{R}}$ is generated by the numerical classes $[\Gamma_i]$. Indeed, if $[\Gamma_1]=\lambda[\Gamma_0]$, then it follows that $\lambda=\frac{\Gamma_0\cdot\Gamma_1}{\Gamma_0^2}<0$. This contradicts the effectiveness of $\Gamma_1$. Thus, both classes $[\Gamma_i]$ are linearly independent.

We proceed to prove that $\Gamma_1$ spans an extremal ray of $\mathrm{NE}(X)$. Suppose $K_X\cdot \Gamma_1 \geq 0$. By Lemma~\ref{lem:Hz-minres}, the strict transform $\hat{\Gamma}_1$ intersects $\operatorname{Exc}(\phi)$ in at most two components: either transversely at two components, or at exactly one component, in which case $\hat{\Gamma}_1$ is the final component of the degenerate fiber of $Y\to\mathbb{P}^1$. The classification of log canonical pairs \cite[Proposition 3.2.7]{Ale92}, implies that $(X,\Gamma_1)$ is log canonical, except when $X$ is obtained as the blow-up of a non-toric surface $X^\prime$ admitting a $\qq$-smoothing to $\mathbb{P}^2$, with the blow-up point lying generically on the unique non contractible curve in a degenerate fiber of the minimal resolution of $X^\prime$, see \cite[Lemma 7.4]{HP10}.

When $(X,\Gamma_1)$ is log canonical, from the adjunction formula we obtain
$$\Gamma_1^2+K_X\cdot\Gamma_1=\operatorname{deg}(K_{\Gamma_1})+\operatorname{deg}(\operatorname{Diff}_{\Gamma_1}(0))=-\frac{1}{n_0^2}-\frac{1}{n_1^2},$$
for some $n_i^2\geq 1$ corresponding to the singular points of $X$ that $\Gamma_1$ contains. Implying that $\Gamma_1^2<0$ and hence $\Gamma_1$ spans an extremal ray of $\operatorname{NE}(X)$. In the exceptional case, $\Gamma_1^2<0$ holds as well.

In the situation where $\Gamma_1\cdot K_X<0$, it then follows that $\operatorname{NE}(X)$ is spanned by $[\Gamma_0]$ and some other extremal curve $[\Gamma]$ that satisfies $\Gamma\cdot K_X<0$. From the same argument, $\hat{\Gamma}^2=-1$, so $\hat{\Gamma}\in\{\hat{\Gamma}_0,\hat{\Gamma}_1\}$ by Lemma~\ref{lem:Hz-minres}. Since $\hat{\Gamma}_0$ is already accounted for, we conclude $\hat{\Gamma}=\hat{\Gamma}_1$, and hence $\Gamma=\Gamma_1$.
\end{proof}
\begin{remark}
In the same vein, we observe that $K_X\cdot \Gamma_1=0$ is only possible for $W$-surfaces that admit a $\qq$-Gorenstein smoothing to $\ff_0$ or $\ff_2$. If it is the case $K_X\cdot \Gamma_1=0$, we obtain that $0=\hat{\Gamma}_1\cdot K_Y+1-\frac{a_1}{n_1}+\frac{a_2}{n_2}$ for which $P_i=\frac{1}{n_i^2}(1,n_ia_i-1)$ are the singular points that $\Gamma_1$ must intersect. Therefore $n_1=n_2$ and $a_1=a_2$, which tells us that $\Gamma_1$ determines a crepant partial resolution of the T-singularity $\frac{1}{2n_1^2}(1,2n_1a_1-1)$ or $A_1$. If the $\Gamma_1$ curve lifts, this implies that the general fiber is $\ff_2$ and the contraction of such family $\mathcal{E}$ dominating $\mathbb{D}$ gives a degeneration of $\mathbb{P}(1,1,2)$, where the $A_1$ point degenerates to $A_1$ or $\frac{1}{2n_1^2}(1,2n_1a_1-1)$. If $\Gamma_1$ does not lift, by \cite[Theorem 1]{BC94} the blow-down deformation induces a $\qq$-Gorenstein degeneration $X^\prime$ of $\ff_k$ with $\rho(X^\prime)=1$, which is only possible if $X$ smooths to $\ff_0$.
\end{remark}

\begin{theorem}\label{thm:1-compldegHz}
Let $\pi:(X\subset\mathcal{X})\to (0\in\mathbb{D})$ be a $W$-surface with general fiber $\mathcal{X}_t\cong\ff_k$. Then, $X$ admits a $1$-complement.
\end{theorem}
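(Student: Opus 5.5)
We split according to $\rho(X)$ and whether $X$ is singular. If $X$ is smooth, then $X\cong\ff_j$ for some $j\equiv k \bmod 2$. The toric boundary of $\ff_j$ is an anticanonical cycle of four smooth rational curves, and it gives a reduced $1$-complement with $(X,B)$ log canonical. If $\rho(X)=1$, then $X$ carries a single non-Wahl T-singularity and is one of the surfaces classified in \cite[Theorem~4.1]{HP10}. For these we invoke \cite[Theorem~7.1]{HP10}, whose proof constructs a nodal anticanonical curve through the singular points; this gives the $1$-complement. The substantive case is therefore $X$ singular with $\rho(X)=2$ and only Wahl singularities. There we aim to build $B$ as a cycle of rational curves passing through every singular point.

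For the main case we work on the minimal resolution $\phi\colon Y\to X$ together with the $\pp^1$-fibration $Y\to\ff_d\to\pp^1$ from Lemma~\ref{lem:Hz-minres}. The image of $\operatorname{Exc}(\phi)$ in $\ff_d$ consists of the negative section $\Delta_0$ and at most two fibers $F_1,F_2$. The candidate divisor is
\[
B=\phi_*\bigl(\hat\Delta_\infty+\hat F'\bigr)+\Gamma_0+\Gamma_1 ,
\]
assembled in three steps.
\begin{enumerate}
\item Let $\hat\Gamma_0,\hat\Gamma_1$ be the two non-contracted curves in the degenerate fiber $f_1$. By Lemma~\ref{lem:Hz-minres} they meet the exceptional chains at chain ends or transversally.
\item Choose a section $\hat\Delta_\infty$ of $Y\to\pp^1$ disjoint from $\Delta_0$, passing through the appropriate end components of the chains lying in the degenerate fibers. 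Its image is a curve through the remaining singular points.
\item If there is a second degenerate fiber, take its non-contracted component(s) $\hat F'$. Otherwise take a general fiber through the last singular point, or through the chain end on $\Delta_0$.
\end{enumerate}
By construction the images form a cycle of smooth rational curves meeting at the Wahl points in a toric-boundary (chain-end) fashion.

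To verify the construction, first check $B\sim -K_X$. On $Y$ we compare $\phi^*B$ with $-K_Y$ minus the discrepancy divisor, and use the surjection $\operatorname{Pic}(\ff_k)\hookrightarrow \operatorname{Cl}(X)$ from \eqref{eq:short-exact}. Since $\operatorname{Cl}(X)$ is generated by curve classes, it suffices to compare intersection numbers with $\Gamma_0$ and $\Gamma_1$, which span $N_1(X)$ by Proposition~\ref{prop:moriray}, and then rule out torsion using $H_1(X,\zz)=0$. Log canonicity of $(X,B)$ follows because at each Wahl point $B$ consists of two branches meeting the first and last curves of the Hirzebruch--Jung chain. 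This is locally the toric boundary of $\frac{1}{n^2}(1,na-1)$ and hence lc; at smooth points $B$ is nodal. As a consistency check, $(K_X+B)\cdot\Gamma_i=0$ can be confirmed via the adjunction computation already used in the proof of Proposition~\ref{prop:moriray}.

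I expect the main obstacle to be the configuration in which a single Wahl chain meets $\Gamma_1$ in its interior rather than at an end. This is the exceptional non-lc case of that proof: a blow-up of a non-toric degeneration of $\pp^2$, later called type $(\dagger)$. There the cycle above fails. My plan is to pass to the contraction $X\to X'$ of the $(-1)$-curve, where $X'$ is a degeneration of $\pp^2$. By \cite[Theorem~7.1]{HP10}, $X'$ has a nodal $1$-complement $B'$ through its singular points, and we take $B$ to be the strict transform of $B'$ plus the exceptional curve. This requires the blown-up point to lie on $B'$, so we must choose $B'$ in $|-K_{X'}|$ passing through that point. This should be possible since $h^0(-K_{X'})$ is large, but checking that the chosen member stays nodal and lc is the delicate part.
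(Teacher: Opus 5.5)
Your main-case recipe has a genuine gap. It assumes that $\hat\Gamma_0$, $\hat\Gamma_1$ and the non-contracted component $\hat F'$ of $f_2$ meet the Wahl chains only at end components, and Lemma~\ref{lem:Hz-minres} gives no such thing. A counting argument shows this cannot hold in general. Your $B$ always has at least four distinct components: $\Gamma_0$, $\Gamma_1$, $\phi_*\hat F'$ (or a fiber), and $\phi_*\hat\Delta_\infty$. If $(X,B)$ were lc with $K_X+B\sim 0$, then $c(X,B)=2+\dim\operatorname{Cl}_\qq(X)-|B|\le 0$, hence $c(X,B)=0$, and $X$ would be toric by \cite[Theorem~1.2]{BMSZ18}. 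So the recipe can only succeed for toric $X$.

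In the non-toric case the failure is local. One of these $(-1)$-curves meets an interior component $E_i$, $1<i<r$, of a Wahl chain; this is exactly the configuration defining a slide (Definition~\ref{slide}; compare the proof of Lemma~\ref{lem:Hz-toric-deg}). At such a point $(X,\Gamma)$ is not lc: by \cite[Theorem~4.15]{KM98} this configuration is lc only for the chain $[2,e,2]$, which is never a Wahl chain. Concrete instances:
\begin{itemize}
\item the marked surfaces of Definition~\ref{def:marked-surface}; in Example~\ref{ex:primitive-Hz-2} the entry $e_4$ is the one lowered to $1$, so $\hat\Gamma_0$ meets $E_4$;
\item the examples of \cite{UZ25} with one Wahl point, two degenerate fibers and $-K_X$ ample, where $\Gamma_0$, $\Gamma_1$ and $\Gamma$ are all slide curves, so none of them can be a component of any $1$-complement.
\end{itemize}
So type $(\dagger)$ is not the only obstacle; the failure is generic outside the toric case.

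What is missing is an existence argument that produces the components of $B$ not visible in the boundary of the fibration as members of a linear system. The paper never builds $B$ from $Y$. For $-K_X$ ample it quotes \cite[Theorem~4.1]{P15}. When $K_X\cdot\Gamma_1\geq 0$ it proceeds as follows:
\begin{itemize}
\item contract $\Gamma_0$ to $\bar X$ with $\rho(\bar X)=1$, and use that $(\bar X,\bar\Gamma_1)$ is plt;
\item take the $1$-complement $P_0+P_1$ of $(\bar\Gamma_1,\operatorname{Diff}_{\bar\Gamma_1}(0))$;
\item lift it by Kawamata--Viehweg vanishing to $D\in|-(K_{\bar X}+\bar\Gamma_1)|$, chosen so that $(\bar X,\bar\Gamma_1+D)$ is lc \cite[Proposition~3.7]{PS09};
\item pull back crepantly to $X$.
\end{itemize}

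Two smaller slips. First, \eqref{eq:short-exact} gives an injection $\operatorname{Pic}(\ff_k)\hookrightarrow\operatorname{Cl}(X)$ with cokernel $\bigoplus_i\zz/n_i\zz$, not a surjection. Second, in your $(\dagger)$ patch, the strict transform of $B'$ plus the exceptional curve is anticanonical only when the blown-up point is a node of $B'$. If it is a smooth point of $B'$, the strict transform alone is the complement.
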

\begin{proof}
If $-K_X$ is ample the result follows by \cite[Theorem 4.1]{P15}. In the notation of Proposition \ref{prop:moriray}, we assume that $\Gamma_1\cdot K_X\geq 0$. If $X$ is smooth, then $X\cong \ff_{k+2d}$ for some $d\geq 0$ and the result follows trivially. Then, assume $X$ is singular. 

By Proposition \ref{prop:moriray} there exists a birational contraction $\varphi:(\Gamma_0\subset X)\to (P\in \bar{X})$ that contracts the extremal $K_X$-negative curve $\Gamma_0$. Let $\overline{\Gamma}_1=\varphi(\Gamma_1)$, following \cite[Proposition 7.3]{HP10} we observe that the pair $(\bar{X},\overline{\Gamma}_1)$ is plt. We observe that $\overline{\Gamma}_1$ contains at most $2$ singular points $P_i$ of $\bar{X}$, with different $$\operatorname{Diff}_{\bar{\Gamma}_1}(0)=\left(1-\frac{1}{m_0}\right)P_0+\left(1-\frac{1}{m_1}\right)P_1.$$

Hence, the divisor $B=\{P_0\}+\{P_1\}$ forms a 1-complement for the pair $(\overline{\Gamma}_1,\operatorname{Diff}_{\overline{\Gamma}_1}(0))$. 
Since $\rho(\bar{X})=1$, then $-(K_{\bar{X}}+\overline{\Gamma}_1)$ is
either ample or numerically trivial. By adjunction formula, we observe that $\overline{\Gamma}_1\cdot (K_{\bar{X}}+\overline{\Gamma}_1)<0$, and consequently $-(K_{\bar{X}}+\overline{\Gamma}_1)$ is an ample divisor.
By the Kawamata--Viehweg vanishing theorem the restriction map $$H^0(\bar X,-(K_{\bar {X}}+\overline\Gamma_1))\to H^0(\overline\Gamma_1,-(K_{\bar{X}}+\overline\Gamma_1)|_{\overline\Gamma_1})$$ is surjective, so the $1$-complement lifts to a divisor $D\in|-(K_{\bar{X}}+\overline\Gamma_1)|$, where $\operatorname{Diff}_{\bar{\Gamma_1}}(D)=\{P_0\}+\{P_1\}$. Following \cite[Proposition~3.7]{PS09}, we may choose $D$ so that $(\bar X,\overline\Gamma_1+D)$ is log canonical.

Now we proceed to show that $\Gamma_0$ is a log canonical center of the pair $(\bar{X},\overline{\Gamma}_1+D)$, i.e., $a_{\Gamma_0}(\bar{X},\overline{\Gamma}_1+D)=0$. 

Since $\operatorname{Diff}_{\overline{\Gamma}_1}(D) = \{P_0\}+\{P_1\}$ by construction, the point
$P = P_0$ is a log canonical center of $(\overline{\Gamma}_1,
\operatorname{Diff}_{\overline{\Gamma}_1}(D))$. By inversion of adjunction applied to the inclusion $\overline{\Gamma}_1 \subset\bar{X}$ \cite[Theorem~5.50]{KM98}, the point $P$ is a log canonical center of $(\bar{X},\overline{\Gamma}_1+D)$.
We apply the connectedness lemma \cite[Theorem~5.48]{KM98} to the contraction $\varphi\colon X\to\bar{X}$. Define the pullback pair on $X$ by
$$K_X + B_X :=\varphi^*(K_{\bar{X}}+\overline{\Gamma}_1+D),$$
so that $-(K_X+B_X)\sim 0$ is trivially $\varphi$-nef. 
The connectedness lemma then asserts that the non-klt locus $\operatorname{Nklt}(X,B_X)$ is connected over every point of $\bar{X}$, and in particular over $P$. We now identify two components of $\operatorname{Nklt}(X,B_X)$ that lie over $P$ on opposite sides of $\Gamma_0$. The boundary $B_X$  contains the strict transforms $\Gamma_1$ and $\varphi^{-1}_*D$ each with coefficient $1$, so both belong to $\operatorname{Nklt}(X,B_X)$. 
Since $\overline{\Gamma}_1$ passes through $P$, the curve $\Gamma_1$ meets $\Gamma_0$ in $X$. Since $\operatorname{Diff}_{\overline{\Gamma}_1}(D)=\{P_0\}+\{P_1\}$ and $\operatorname{Diff}_{\bar{\Gamma_1}}(0)$ has coefficient $1-\frac{1}{m_0}$ at $P_0$, the divisor $D$ also passes through $P=P_0$, so $\varphi^{-1}_*D$ meets
$\Gamma_0$ as well. Since $(\bar X,\overline{\Gamma}_1+D)$ is log canonical at $P$, the curves $\overline{\Gamma}_1$ and $D$ are not tangent there, so $\Gamma_1$ and $\varphi^{-1}_*D$ are two distinct components of $\operatorname{Nklt}(X,B_X)$ meeting $\Gamma_0$ at two distinct points of $\varphi^{-1}(P)$. By connectedness of $\operatorname{Nklt}(X,B_X)$ over $P$, the curve $\Gamma_0$ itself must belong to $\operatorname{Nklt}(X,B_X)$. Otherwise $\Gamma_1$ and $\varphi^{-1}_*D$ would be disconnected from each other in the fiber over $P$. Hence
$a_{\Gamma_0}(\bar{X},\overline{\Gamma}_1+D)\leq 0$, and since the pair is log
canonical equality holds, $a_{\Gamma_0}(\bar{X},\overline{\Gamma}_1+D)=0$.
Consequently, we observe that $\varphi^{-1}_*D+\Gamma_1+\Gamma_0\sim-K_{X}$. By the crepant invariance of discrepancies, \cite[Lemma 2.30]{KM98}, we conclude that if $B=\varphi^{-1}_*D+\Gamma_1+\Gamma_0$  then $(X,B)$ is a log canonical pair and consequently $B$ is a 1-complement for $X$.
\end{proof}

\begin{proposition}\label{prop:toricity}
Let $\pi:(X\subset\mathcal{X})\to(0\in\mathbb{D})$ be a $W$-surface with general fiber $\mathcal{X}_t\cong \ff_k$. Then, $|\operatorname{Sing}(X)|\leq 4$, and if the equality holds, then $X$ must be toric.    
\end{proposition}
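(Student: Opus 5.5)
We argue with the $1$-complement of Theorem~\ref{thm:1-compldegHz} and the complexity criterion of \cite[Theorem 1.2]{BMSZ18}. If $X$ is smooth there is nothing to prove. If $\rho(X)=1$, then by the discussion following \eqref{eq:loc-glob-QG} the surface $X$ has a single non-Wahl singularity together with possibly Wahl points, and the classification \cite[Theorem~4.1]{HP10} gives at most three singular points. So we may assume $X$ is singular and $\rho(X)=2$. In that case all singularities are Wahl, and hence non-Gorenstein of index $n_i\geq 2$.

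Let $B\in|-K_X|$ be a reduced $1$-complement, so that $(X,B)$ is log canonical and $K_X+B\sim 0$. The first step is to show that every singular point of $X$ lies on $B$. Suppose a Wahl point $P$ were not on $B$. Then near $P$ we would have $K_X=-B$, which is Cartier at $P$. This contradicts the fact that $K_X$ has local index $n_i\geq 2$ at a Wahl singularity. Hence $\operatorname{Sing}(X)\subset \operatorname{Supp}(B)$, which is the statement in the introduction that $B$ passes through every singular point.

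The second step bounds the number of singular points in terms of the components of $B$. At a non-Gorenstein klt point lying on a reduced lc boundary $B$, the classification of lc surface pairs \cite[Proposition 3.2.7]{Ale92} gives two options: either $B$ has one branch and $(X,B)$ is plt there, or $B$ has two branches and the pair is locally toric (a node of $B$). A single smooth rational component of $B$ carries at most two points of the first kind, since $\deg \operatorname{Diff}$ on it is less than $2$ when $(K_X+B)\cdot C$ is computed by adjunction on a component of a connected $B$ with $K_X+B\sim0$. Combining this with the complexity inequality $c(X,B)=2+\rho(X)-|B|\geq 0$ and $\rho(X)=2$ gives $|B|\leq 4$ components. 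Moreover $B$ is either a chain or a cycle of rational curves, since $p_a(B)=1$ and $B$ is connected. A careful count, in which each component contributes at most two special points and the special points are shared in a cycle, bounds $|\operatorname{Sing}(X)|$ by the number of nodes of a cycle with at most $4$ components, that is, by $4$. This count is the main obstacle. It is also where the alternative of $B$ being an irreducible nodal curve or a chain must be ruled out, or shown to give fewer points. I would handle it by adjunction: $\deg(\operatorname{Diff}_C(B-C))=2$ on each component $C$, and each Wahl point contributes either $1-\tfrac1{n^2}$ (a plt point) or $1$ (a node).

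The final step treats equality. Four singular points force $B$ to be a cycle of exactly four rational curves, each meeting the others at Wahl points. Then $|B|=4$, so $c(X,B)=2+2-4=0<1$. By \cite[Theorem 1.2]{BMSZ18}, the pair $(X,\lfloor B\rfloor)=(X,B)$ is toric, and in particular $X$ is toric.
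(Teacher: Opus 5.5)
\textbf{There is a genuine gap in Step 2, at the point you flag as ``the main obstacle''.} Your overall plan (a $1$-complement $B$, all singular points on $B$, $|B|\le 4$ from $c(X,B)\ge 0$, counting nodes, and $c(X,B)=0$ in the equality case) is the paper's. But the bound $|\operatorname{Sing}(X)|\le\#\{\text{nodes of }B\}=|B|$ needs \emph{every} Wahl point to be a node of $B$, and you never prove this.

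Instead you explicitly keep the alternative that $B$ has a single plt branch at $P$. Such points would not be nodes of the cycle, so counting nodes says nothing about them. Summing $\deg\operatorname{Diff}$ over rational components only gives $2\#\{\text{nodes}\}+\tfrac34\#\{\text{plt points}\}\le 2|B|\le 8$, which does not bound the number of singular points by $4$. The heuristic ``each component carries at most two special points, shared in a cycle'' is not justified. Your remark that $\deg\operatorname{Diff}<2$ on a component also conflicts with your later, correct, $\deg\operatorname{Diff}_C(B-C)=2$, which follows from $(K_X+B)\cdot C=0$.

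\textbf{The missing idea is the one you already used in Step 1: $K_X+B\sim 0$ is Cartier at $P$.} Suppose $B$ had one branch $B_1$ at $P=\tfrac{1}{n^2}(1,na-1)$ with $(X,B)$ plt there. Then $B_1$ is a toric axis, and locally $K_X+B_1\sim -B_2$, which generates the local class group $\mathbb{Z}/n^2$. So $K_X+B$ would not be Cartier. Equivalently, the coefficient $1-\tfrac{1}{n^2}$ of the different would have to be an integer. Your adjunction bookkeeping, done exactly, also excludes it: $2=a+\sum_{i=1}^m\bigl(1-\tfrac{1}{n_i^2}\bigr)$ with $a\in\mathbb{Z}_{\ge 0}$ and $n_i\ge 2$ forces $m=0$.

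Hence every singular point is a node of $B$. This is what the paper extracts from \cite[Proposition 3.2.7]{Ale92}. A chain is already excluded by $p_a(B)=1$, so $B$ is a cycle of $|B|$ smooth rational curves or an irreducible nodal rational curve. Therefore $|\operatorname{Sing}(X)|\le |B|\le 4$. Equality forces $|B|=4$, hence $c(X,B)=0$, and $X$ is toric by \cite[Theorem 1.2]{BMSZ18}. With this fix the ``careful count'' becomes unnecessary.
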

\begin{proof}
By Theorem \ref{thm:1-compldegHz}, it follows that there exists a 1-complement $B\in |-K_X|$ for $X$, then for the boundary $B$, we obtain the log Calabi-Yau pair $(X,B)$. First assume that $\rho(X)=2$. Since $X$ contains only Wahl singularities $P_i$ and consequently non-Gorenstein points, by the classification of the log canonical pairs \cite[Proposition 3.2.7]{Ale92} then the 1-complement must pass over them. Moreover, $B$ must be nodal near each $P_i$, i.e., $$(P_i,B)=\left(\frac{1}{n_i^2}(1,n_ia_i-1),uv=0\right)$$ and consequently, $B$ must have two analytic branches near $P_i$. We observe that $|B|\leq 4$, otherwise $c(X,B)<0$ that contradicts \cite[Theorem 1.2]{BMSZ18}. Also, since $B$ is reduced, by the adjunction formula we get  $$2p_a(B)-2=(B+K_X)\cdot B=0.$$
We then observe that $B$ must be a cycle of smooth rational curves. Indeed, from $B\sim -K_X$ and $h^1(X,\oo_X)=0$, the sequence $$0\to\oo_X(K_X)\to\oo_X\to\oo_B\to 0$$
gives $h^0(\oo_B)=1$, so $B$ is connected. Hence, the number of nodes of $B$ equals $|B|$ and consequently $|\operatorname{Sing}(X)| \leq |B|\leq 4$. In the situation where we have that $|\operatorname{Sing}(X)|=4$ it follows that $c(X,B)=0$. By \cite[Theorem 1.2]{BMSZ18}, this implies that $X$ must be a toric surface. The case $\rho(X)=1$ follows directly from \cite[Theorem 1.1]{HP10}.
\end{proof}

\subsection{Toric Degenerations}
In this section we use the constructions of \cite{UZ25}, to show that for any normal projective surface $X$ with Wahl singularities admitting a $\qq$-Gorenstein smoothing to a Hirzebruch surface $\ff_k$, it further degenerates to a toric surface. This is a $\qq$-Gorenstein family $\mathcal{X}\to\mathbb{D}$, where $\mathcal{X}_t\cong X$ and $\mathcal{X}_0$ is a toric surface with Wahl singularities. 

\begin{definition}[{\cite[Definition 4.16]{UZ25}}]
Let $Y$ be a nonsingular surface. Let $[e_1,\ldots,e_r]$ be a Wahl chain in $Y$ with corresponding exceptional curves $E_1,\ldots,E_r$. Let $X$ be its contraction. Assume that there is a $(-1)$-curve $\Gamma$ in $Y$ intersecting $E_i$ with $i>1$ transversally at one point. A \textit{left slide} of $\Gamma$ is a surface $Y'$ together with a Wahl chain $[f_1,\dots,f_{r'}]$, the Wahl chain $[e_1,\ldots,e_r]$, and a $(-1)$-curve $\Gamma'$ in between, so that they form the chain $$[f_1,\ldots,f_{r'},1,e_1,\ldots, e_r]$$ which contracts to $[e_1,\ldots,e_{i-1},e_i-1,e_{i+1},\ldots,e_r]$ and is the contraction of $\Gamma$ in $Y$. We denote the contraction of both Wahl chains in $Y'$ by $X'$, and the image of $\Gamma'$ in $X'$ and of $\Gamma$ in $X$ again by $\Gamma',\Gamma$. Similarly, we define the \textit{right slide} of $\Gamma$ with the notation $\Gamma''$, $X''$. 
\label{slide}    
\end{definition}

Slides always exist and are unique, see \cite[Lemma 4.18]{UZ25}. We now observe that the construction of slides provide a $\qq$-Gorenstein degeneration of a non log-canonical pairs $(X,\Gamma)$ into a plt pair $(X^\prime,\Gamma^\prime)$, as presented in the next lemma.

\begin{lemma}[{\cite[Lemma 5.11]{UZ25}}]
Let $X$ be a normal projective surface with at most Wahl singularities such that $H^2(X,T_X)=0$. Suppose $X$ has a singularity $P$ together with a curve $\Gamma$ defining a slide. Let $X'$ be the left (or right) slide of $\Gamma$. Then there is a $\qq$-Gorenstein degeneration of $X\rightsquigarrow X'$ which smooths the new singularity appearing in $X'$. 
\label{defslides}
\end{lemma}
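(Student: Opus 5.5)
This lemma is quoted from \cite[Lemma 5.11]{UZ25}. The plan is to realize the slide as a local $\qq$-Gorenstein deformation of a neighbourhood of $\Gamma'\cup\operatorname{Exc}$ in $X'$, and then globalize it using the vanishing of $H^2(X',T_{X'})$.

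\textbf{Local picture.} Let $Q\in X'$ be the new Wahl singularity, with chain $[f_1,\dots,f_{r'}]$. Contracting $\Gamma'$ together with both Wahl chains in $Y'$ gives a cyclic quotient singularity $\bar P$ with chain $[e_1,\dots,e_{i-1},e_i-1,e_{i+1},\dots,e_r]$. By Definition~\ref{slide}, this is also what one obtains from $Y$ by contracting $\Gamma$ together with $[e_1,\dots,e_r]$. Hence both $(\Gamma\subset X)$ and $(\Gamma'\subset X')$ are partial resolutions of the same germ $\bar P$ with Wahl singularities only.
\begin{itemize}
\item In $X'$, the curve $\Gamma'$ joins $Q$ and $P$ as a toric boundary, so $(\Gamma'\subset X')\to \bar P$ is an extremal P-resolution type configuration $[\binom{n'}{a'}]-(1)-[\binom{n}{a}]$, or its $K$-trivial analogue.
\item In $X$, the configuration consists of $P$ alone with $\Gamma$ attached in the middle of the chain.
\end{itemize}

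\textbf{Local deformation.} I would construct a one-parameter $\qq$-Gorenstein deformation of the germ $X'\supset\Gamma'$ that smooths $Q$ and keeps $P$ equisingular. Concretely, take a $\qq$-Gorenstein smoothing of $Q$, whose Milnor fiber is a rational homology ball, and note that the exceptional curve $\Gamma'$ deforms. One checks this, e.g.\ via the blow-down deformation of \cite{BC94}, or by a toric/Mori-train computation as in \cite[Section~3.4]{HTU17}: in the universal family of partial resolutions over $\bar P$ that preserve $P$, the general member is exactly the configuration $(\Gamma\subset X)$ near $\bar P$. Numerically this is consistent, since the Wahl smoothing of $Q$ has $\mu=0$, so $\rho$ and $K^2$ are preserved. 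The identification of the nearby fiber with $X$ uses the uniqueness of slides \cite[Lemma 4.18]{UZ25}.

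\textbf{Globalization.} I would use the local-to-global sequence \eqref{eq:loc-glob-QG}. We need $H^2(X',T_{X'})=0$. If the hypothesis is phrased on $X$, I would transfer it: $X'$ has $h^1(\oo)=h^2(\oo)=0$, rational singularities, and $h^0(-K_{X'})\geq1$ by semicontinuity, or else one argues directly as in Proposition~\ref{prop:no-obstruction}. Then $T^1_{\qq G,X'}\to\bigoplus_P T^1_{\qq G,P}$ is surjective and $T^2_{\qq G,X'}=0$. So there is an unobstructed global $\qq$-Gorenstein deformation of $X'$ inducing the smoothing at $Q$ and the trivial deformation at $P$. Its general fiber agrees with the local model near $\Gamma$, and since the complement of a neighbourhood deforms trivially, it is globally $X$. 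Reversing the parameter gives the degeneration $X\rightsquigarrow X'$.

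\textbf{Main obstacle.} The hardest step is the local statement that smoothing $Q$ while fixing $P$ yields exactly $(\Gamma\subset X)$ rather than some other partial resolution of $\bar P$. I would try first to compare Hirzebruch--Jung chains: the P-resolution data determine the deformation component through the zero continued fraction of $\bar P$ \cite{Chr91,Ste91}.
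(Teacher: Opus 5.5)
Your skeleton (smooth the new point $Q$ while keeping $P$, globalize, identify the nearby fibre) matches what the paper indicates, but the step that carries the weight fails as written.

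\textbf{The transfer of $H^2$-vanishing.} The lemma assumes only $H^2(X,T_X)=0$, so you must prove $H^2(X',T_{X'})=0$, and your argument does not do this. Proposition~\ref{prop:no-obstruction} needs $H^1(\oo)=H^2(\oo)=0$ and $h^0(-K_{X'})\geq1$, none of which the lemma assumes. Getting $h^0(-K_{X'})\geq1$ ``by semicontinuity'' is also circular: the only family linking $X'$ to $X$ (or to $\ff_k$) is the one you are constructing.

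The paper's route (the ``adding/deleting criteria'' invoked in the proof of Theorem~\ref{thm:slide-complement}) works on the resolutions instead:
\begin{enumerate}
\item By Lee--Park, $H^2(X,T_X)\cong H^2(Y,T_Y(-\log E))$.
\item Contracting the $(-1)$-curve $\Gamma$, which meets $E$ once, does not change this group. Hence $H^2(\bar Y,T_{\bar Y}(-\log\bar E))=0$, where $\bar Y$ is the common blow-down and $\bar E$ is the chain $[e_1,\dots,e_i-1,\dots,e_r]$.
\item $Y'\to\bar Y$ is a sequence of blow-ups centred on the reduced total transform of $\bar E$. These preserve $h^2$ of the log tangent sheaf, with the reduced total transform as boundary (Flenner--Zaidenberg).
\item Deleting $\Gamma'$ from the boundary cannot increase $h^2$.
\end{enumerate}
Hence $H^2(X',T_{X'})\cong H^2(Y',T_{Y'}(-\log(F+E')))=0$, with no hypothesis on $\oo_X$ or $-K_X$.

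\textbf{The local step.} This also contains errors that send your ``main obstacle'' in the wrong direction.
\begin{itemize}
\item With $K_Y=\phi^*K_X+\sum d_jE_j$ one has $K_X\cdot\Gamma=-1-d_i<0$. So $(\Gamma\subset X)$ is a k1A germ and $(\Gamma'\subset X')$ is a $K$-negative k2A germ; it is neither an extremal P-resolution nor $K$-trivial.
\item $\Gamma'$ does not deform when $Q$ is smoothed. A toric branch through $Q$ gives a generator of $H_1(L_Q)=\zz/n'^2$, which maps to a generator of $H_1(M_Q)=\zz/n'$ in \eqref{eq:long-exact}. So $[\Gamma']$ is not a specialization from the nearby fibre; rather, the nearby $\Gamma$ specializes to $n'\Gamma'$. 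For instance, for $[5,2]$ with $\Gamma$ meeting $E_2$, the left slide is $[7,2,2,2,1,5,2]$, and $K_X\cdot\Gamma=-2/3=5\,K_{X'}\cdot\Gamma'$.
\item Since $-K$ is relatively ample on both sides over $\bar P$, the KSB correspondence between P-resolutions and zero continued fractions \cite{KSB88,Chr91,Ste91} does not apply. The relevant family keeps $\bar P$ fixed, and the needed input is Mori's k2A$\rightsquigarrow$k1A deformation \cite[Proposition~2.4]{HTU17}, as used in the proof of Theorem~\ref{thm:Mori-mutations}.
\end{itemize}

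\textbf{The global identification.} Uniqueness of slides \cite[Lemma~4.18]{UZ25} runs the wrong way: it says $(X,\Gamma)$ determines $X'$, not that a deformation of $X'$ has general fibre $X$. Also, a deformation produced from the local-to-global sequence need not be trivial away from the singular germs. So identifying the general fibre with $X$ still needs its own argument.
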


Now, we gather the existence of $1$-complements and the deformations coming from performing slides, to study complexity of pairs.

\begin{theorem}\label{thm:slide-complement}
Let $X$ be a normal projective surface with Wahl singularities admitting a $1$-complement $B \in |-K_X|$. Suppose that $X$ has a Wahl singularity $P$ together with a curve $\Gamma$ defining a slide, $H^1(X,\mathcal{O}_X)=H^2(X,\mathcal{O}_X)=0$ and that $H^2(X,T_X)=0$. Let $X'$ be the left (resp., right) slide of $\Gamma$. Then $X'$ admits a $1$-complement $B' \in |-K_{X'}|$ satisfying
$$c(X',B^\prime)=c(X,B)-1.$$
\end{theorem}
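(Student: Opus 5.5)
The plan is to transport the $1$-complement through the common blow-down of the two minimal resolutions. Then I compare the two quantities that enter $c(X,B)=2+\dim \Cl_{\qq}(X)-|B|$.

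First the class group. By Lemma~\ref{defslides}, $X'$ degenerates to $X$ by $\qq$-Gorenstein smoothing the new Wahl singularity $[f_1,\dots,f_{r'}]$. That point has Milnor number $0$. The sequence \eqref{eq:long-exact}, used locally as in \eqref{eq:short-exact}, shows that smoothing a Wahl point does not change $\dim\Cl_{\qq}$. Hence $\dim\Cl_{\qq}(X')=\dim\Cl_{\qq}(X)$. (Alternatively, if $\rho(Y')=\rho(Y)+r'+1$, then the minimal resolutions give $\rho(X')=\rho(Y')-r'-r=\rho(Y)-r=\rho(X)$.) So it suffices to build $B'\in|-K_{X'}|$ with $(X',B')$ log canonical and $|B'|=|B|+1$.

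Next the boundary on the resolution of $X$. Let $\phi\colon Y\to X$ be the minimal resolution. As in the proof of Proposition~\ref{prop:toricity}, $B$ passes through $P$ and is nodal there, $(P,B)\cong(\frac1{n^2}(1,na-1),uv=0)$. Hence the crepant pullback $K_Y+B_Y=\phi^*(K_X+B)$ has $B_Y$ reduced. It consists of $\phi^{-1}_*B$ together with all of $E_1,\dots,E_r$ with coefficient $1$, the two branches of $B$ meeting the ends $E_1$ and $E_r$. Since $\Gamma$ defines a slide, $(X,\Gamma)$ is not lc ($\Gamma$ meets an interior curve $E_i$ with $i>1$). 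So $\Gamma\not\subset B$, and $\Gamma\cdot B_Y=-K_Y\cdot\Gamma=1$; the single intersection point is $\Gamma\cap E_i$.

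Now pass to the common model. Let $Y\to Z$ contract $\Gamma$, and push $B_Y$ to $B_Z$. Then $K_Z+B_Z\sim 0$, $B_Z$ is still a reduced nodal boundary, and it contains the chain $[e_1,\dots,e_i-1,\dots,e_r]$. By Definition~\ref{slide} and \cite[Lemma~4.18]{UZ25}, the minimal resolution $Y'$ of $X'$ is obtained from $Z$ by a sequence of blow-ups producing $[f_1,\dots,f_{r'},1,e_1,\dots,e_r]$. The key claim is that every such blow-up is centered at a node of the current boundary. These nodes are the intersection points of consecutive curves of the chain, and the point where $E_1$ meets the branch of $B$ attached to it. Then each exceptional curve has log discrepancy $0$ for $(Z,B_Z)$. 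Hence the crepant pullback $B_{Y'}$ is reduced and snc-nodal, contains $F_1,\dots,F_{r'},\Gamma'$, and satisfies $K_{Y'}+B_{Y'}\sim 0$.

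Finally, push $B_{Y'}$ forward to $B'$ on $X'$. The curves $F_j$ and $E_j$ are contracted, so $B'\in|-K_{X'}|$, and $(X',B')$ is lc by crepant invariance \cite[Lemma~2.30]{KM98}. Its components are those of $B$ plus $\Gamma'$, so $|B'|=|B|+1$.

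The main obstacle is the key claim that the slide blow-ups occur only at boundary nodes. The inserted chain sits at the end $E_1$, so the natural first step is to blow up $E_1\cap(\text{branch of }B)$. The worry is that the combinatorics of \cite[Lemma~4.18]{UZ25} might require blowing up a general point of a chain curve instead. I would try to verify the claim by induction on the length of the continued-fraction algorithm that produces $[f_1,\dots,f_{r'}]$. I would use the fact that the dual chain $[f_1,\dots,f_{r'},1,e_1,\dots,e_r]$ contracts to a chain, which forces each blow-up to be at an intersection of two chain curves or at an end node. If an end blow-up were instead at a non-node point, the inserted curve would meet $B_Z$ once and fail to be in the boundary. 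That would lose the extra component, so ruling this case out is exactly what the count $|B'|=|B|+1$ requires.
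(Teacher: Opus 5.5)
Your skeleton matches the paper's. You pass to the common blow-down $Z=\bar Y$, obtain the chain $\hat B_1-F_1-\cdots-F_{r'}-\Gamma'-E_1'-\cdots-E_r'-\hat B_2$, take $B'$ to be the old components plus $\Gamma'$, and use that smoothing a Wahl point does not change $\dim\Cl_{\qq}$. Your crepant transport of $K+B\sim 0$ along $Y\to Z\leftarrow Y'$ is a cleaner way than the paper's (local toric triviality at the singular points plus isotriviality on the smooth locus) to get $K_{X'}+B'\sim0$ and log canonicity together.

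However, everything rests on the step you leave open, and your sketch does not close it. That $[f_1,\dots,f_{r'},1,e_1,\dots,e_r]$ contracts to a chain only shows each centre is either a node between two chain curves or \emph{some} point of an end curve. It cannot tell the branch point $\bar E_1\cap\hat B_1$ from a general point of $\bar E_1$. A blow-up at a general point gives the new curve coefficient $0$ in the crepant pullback and breaks the count. Your guess about the first blow-up is also off. Take $[e_1,e_2]=[2,5]$ with $\Gamma$ meeting the $5$, so $i=r=2$; then $Z$ carries $[2,4]$. One blows up $\bar E_1\cap\bar E_2$ to get $[3,1,5]$, then the node between the $3$ and the new curve to get $[4,1,2,5]$. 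So $\bar E_1$ becomes $F_1$ (with $[f_1]=[4]$), and no end node is touched.

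The missing fact is that every centre is a node of the chain strictly between $\bar E_1$ and $\bar E_i$, so $\bar E_1$ and $\bar E_r$ survive as $F_1$ and $E_r'$. The paper states this by tracing the contraction algorithm. You can prove it by counting entries. Contracting an interior $(-1)$-curve of a chain lowers the sum of the entries by $3$; contracting an end one lowers it by $2$. Passing from $[f_1,\dots,f_{r'},1,e_1,\dots,e_r]$ to $[e_1,\dots,e_i-1,\dots,e_r]$ takes $r'+1$ contractions and lowers the sum by $\sum_j f_j+2$. Every Wahl chain of length $r'$ has $\sum_j f_j=3r'+1$, since each step of the Wahl algorithm adds $3$ to the sum and $1$ to the length, starting from $[4]$. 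So the drop is exactly $3(r'+1)$, and no end curve is ever contracted. Hence every centre is a node of $B_Z$, and your argument is complete.

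Two smaller points. First, $\rho(Y')=\rho(Y)-1+(r'+1)=\rho(Y)+r'$, not $\rho(Y)+r'+1$; with your premise the parenthetical chain of equalities does not hold. Second, the claim ``$(X,\Gamma)$ is not lc'' fails when $i=r$, as in the example above. So $\Gamma\not\subset B$ must come from the convention, implicit in the paper too, that a sliding curve is not the branch of $B$ at $P$. If $\Gamma$ were that branch, your construction would only give $|B'|=|B|$.
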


\begin{proof}
By Theorem~\ref{defslides}, the slide produces a $\qq$-Gorenstein 
deformation $\pi\colon\mathcal{X}\to\mathbb{D}$ with $\mathcal{X}_t \cong X$ 
for $t \neq 0$ and $\mathcal{X}_0 = X'$, which smooths the new Wahl singularity $P'$ appearing in $X'$ and keeps all original singularities unchanged. 
Since $H^2(X', T_{X'}) = 0$ (by the adding/deleting criteria, as in the proof of Lemma~\ref{defslides}), the local $\qq$-Gorenstein smoothing of $P'$ globalizes to a deformation of $X'$ preserving the remaining singularities. The deformation is isotrivial outside the 
singularity germs involved in the slide.

\medskip
We now proceed to construct a 1-complement in $X'$. Since $P=\frac{1}{n^2}(1,na-1)$ is non-Gorenstein, then $B$ passes through $P$. Since $(X,B)$ is log canonical with $B$ reduced, by \cite[Theorem~4.15(1)]{KM98} the pair  $(P\in X, B)$ has exactly two analytic branches at $P$, forming the toric boundary $(P,B)\cong (\frac{1}{n^2}(1,na-1), uv=0)$. In the minimal resolution $\phi\colon Y \to X$, the strict transforms $\tilde{B}_1$ and  $\tilde{B}_2$ meet the two ends $E_1$ and $E_r$ of the Wahl chain  $[e_1,\ldots,e_r]$, respectively, with $\Gamma$ the $(-1)$-curve meeting $E_i$ ($i>1$).

Both $Y$ and the minimal resolution $Y'$ of $X'$ share a common blow-down  $\bar{Y}$, obtained by contracting $\Gamma$ in $Y$ or by contracting  $[f_1,\ldots,f_{r'},1]$ in $Y'$, whose chain is  $[e_1,\ldots,e_{i-1},e_i-1,e_{i+1},\ldots,e_r]$. The passage from  $\bar{Y}$ to $Y'$ involves a sequence of blow-ups that create $\Gamma'$  and $F_1,\ldots,F_{r'}$. Tracing through the contraction algorithm, these  blow-ups occur at points on the chain between $E_1$ and $E_i$; the curve  $E_1$ in $\bar{Y}$ becomes $F_1$, while  the blow-ups create the remaining curves $F_2,\ldots,F_{r'}$ and $\Gamma'$  to its right. No blow-up occurs near $E_r$.  Consequently, in $Y'$ the full chain is 
\[ \hat{B}_1 - F_1 - \cdots - F_{r'} - \Gamma' - E_1' - \cdots - E_r'  - \hat{B}_2, \] 
where $\tilde{B}_1$ meets $F_1$ (the left end of $[f_1,\ldots,f_{r'}]$)  and $\tilde{B}_2$ meets $E_r'$ (the right end of $[e_1,\ldots,e_r]$).  
We then observe that $B_1':= \phi'(\tilde{B}_1)$ is a toric branch at $P'$ and $B_2' := \phi'(\tilde{B}_2)$ is a toric branch at $P$.

\medskip
Since away from $P$, the deformation is isotrivial by construction of slides, then all other components of $B$ are unchanged. The node between $B_1$ and $B_2$ at $P$ in the  cycle $B$ is replaced by the chain of curves $B_1' - \Gamma' - B_2'$ with nodes at $P'$ and $P$. By defining $$B^\prime:=B_1^\prime+B_2^\prime+\Gamma^\prime+\sum_{B_i\in\operatorname{Supp}(B)\setminus\{B_1,B_2\}}B_i\in \operatorname{WDiv(X^\prime)},$$ we observe that $|B'|=|B|+1$. Note that $(X',B')$ is log canonical, indeed at each of the points $P,P^\prime$, the germs $(P\in X^\prime, B^\prime)$ and $(P^\prime\in  X^\prime,B^\prime)$ we have the structure of \cite[Theorem~4.15(1)]{KM98}.

\medskip
We show that $B'+K_{X'}\sim 0$. At each singularity $Q$ of $X'$, the boundary $B'$ forms the toric boundary of the cyclic quotient singularity, which satisfies $D_1+D_2+K\sim 0$ locally. On the smooth locus $(X')^\circ$, the isotriviality of the deformation gives $(B'+K_{X'})|_{(X')^\circ} \sim 0$. Thus $B'+K_{X'}$ is Cartier and linearly trivial on $(X')^\circ$ and in a neighborhood of each isolated singularity. Since $X'$ is normal, it follows that $B'+K_{X'}\sim 0$ globally. Therefore, $B^\prime$ forms a 1-complement for $X^\prime$.

Now, we proceed to prove the complexity statement. Since the degeneration only adds Wahl singularities, by the Noether formula we have that $\rho(X)=\rho(X^\prime)$ and since $H^1(X,\mathcal{O}_X)=H^2(X,\mathcal{O}_X)=0$, we observe that $\dim\Cl_\qq(X)=\dim\Cl_\qq(X^\prime)$. Therefore, $$c(X',B') = 2+\dim\operatorname{Cl}_\qq(X')-|B'| 
= 2+\dim\operatorname{Cl}_\qq(X)-(|B|+1) = c(X,B)-1.$$
\end{proof}

In the following definition we define the unique class of surfaces arising as degenerations of $\ff_1$ that does not admit a toric degeneration by successive slides.

\begin{definition}\label{def:dagger-type}
Let $\bar{X}$ be a degeneration of $\pp^2$ with quotient singularities and let $\phi:\overline{Y}\to \bar{X}$ be its minimal resolution. Let $\bar{X}^0$ denote the smooth locus of $\bar{X}$.  

We say that a surface $X$ is of type $(\dagger)$ if $X$ is isomorphic to some $\operatorname{Bl}_p \bar{X}$ for some point $p\in \bar{X}^0\cap \Delta$, where $\Delta=\Gamma_0+\Gamma_1$ and $\Gamma_i$ denotes the image of a non-contractible curve contained in a degenerate fiber of $\overline{Y}\to\pp^1$. 
\end{definition}

\begin{lemma}\label{lem:Hz-toric-deg}
Let $\pi:(X\subset\mathcal{X})\to(0\in\mathbb{D})$ be a $W$-surface with general fiber $\mathcal{X}_t\cong \ff_k$. Additionally, assume that $X$ is not of $(\dagger)$ type. If no curve in the minimal resolution of $X$ defines a slide, then $X$ is a toric surface. 
\end{lemma}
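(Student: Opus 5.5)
We argue by contradiction on the complexity of the $1$-complement, aiming to reach complexity zero and then invoke \cite[Theorem 1.2]{BMSZ18}. If $X$ is smooth, then $X\cong\ff_{k'}$ is toric, so we assume $X$ singular. If $\rho(X)=1$, then $X$ is a degeneration of $\ff_0$ of the type classified in \cite[Theorem~4.1]{HP10}; these are partial smoothings of toric surfaces, and we check directly in the list that the absence of slides forces the toric member. The main case is therefore $\rho(X)=2$ with only Wahl singularities. By Theorem~\ref{thm:1-compldegHz} and the proof of Proposition~\ref{prop:toricity}, $X$ carries a $1$-complement $B\in|-K_X|$ that is a cycle of smooth rational curves. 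Its nodes include every singular point, and at each singular point $B$ is the toric boundary $(uv=0)$. Since $c(X,B)=4-|B|$, it suffices to show $|B|=4$, equivalently that $X$ has four singular points, or that $B$ has four components.

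The key step is to read off the minimal resolution $\phi\colon Y\to X$ with $\mu\colon Y\to\ff_d$ from Lemma~\ref{lem:Hz-minres}. The image $\mu(\operatorname{Exc}\phi)$ consists of the negative section $\Delta_0$ together with at most two fibers, and there is a degenerate fiber containing the $(-1)$-curves $\hat\Gamma_0,\hat\Gamma_1$. Every $(-1)$-curve $\hat\Gamma$ in a degenerate fiber meets the exceptional chains. We show that if such a curve meets a Wahl chain $[e_1,\dots,e_r]$ at an interior component $E_i$ with $1<i$ (or $i<r$, for the right slide), then it defines a slide, by Definition~\ref{slide} and \cite[Lemma 4.18]{UZ25}. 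The hypothesis then forces every $(-1)$-curve in the degenerate fibers to meet each Wahl chain only at an end curve $E_1$ or $E_r$. Combining this with the chain structure of degenerate fibers of a $\pp^1$-fibration, where the $(-1)$-curves are the only non-exceptional components, we deduce that $\phi^{-1}(B)\cup\operatorname{Exc}\phi$ is a cycle of rational curves on $Y$. This cycle has the shape of a toric boundary: two sections and two fiber-type chains. From this we conclude that $B$ has four components, namely $\Gamma_0$, $\Gamma_1$ (or their analogues in the second degenerate fiber), and the images of $\Delta_0$ and of a section disjoint from it. We then get $c(X,B)=0$, and \cite[Theorem 1.2]{BMSZ18} gives that $(X,B)$ is toric.

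The main obstacle is the configuration analysis. We must rule out the cases where a $(-1)$-curve meets an end of a chain but $B$ still has fewer than four components, for example when the second degenerate fiber is absent or when $\Gamma_1$ fails to make $(X,\Gamma_1)$ log canonical. By the proof of Proposition~\ref{prop:moriray}, this last failure happens exactly in the exceptional blow-up of a non-toric degeneration of $\pp^2$ at a point of $\Gamma_0+\Gamma_1$, which is precisely type $(\dagger)$ and is excluded by hypothesis. In the remaining sub-cases, the plan is to take a $(-1)$-curve attached at an end, contract it, and use the zero continued fraction and Wahl-chain structure of \cite{UZ25} to show that a non-end attachment appears elsewhere in the fiber. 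Such an attachment would produce a slide, a contradiction. This sub-case chase through the possible degenerate fibers is where we expect most of the work to lie.
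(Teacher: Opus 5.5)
You follow the same route as the paper: take the $1$-complement $B$ of Theorem~\ref{thm:1-compldegHz}, show $|B|=4$, conclude $c(X,B)=0$, and apply \cite[Theorem 1.2]{BMSZ18}. However, the step that carries all the content is not established, and as written it is partly wrong.

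Since $(X,B)$ has the toric structure at every Wahl point, $B_Y:=\phi^{-1}_*B+\operatorname{Exc}(\phi)$ is a reduced anticanonical cycle on $Y$. The strict transform of $\Delta_0$ lies in $\operatorname{Exc}(\phi)$ by Lemma~\ref{lem:Hz-minres}, and $-K_Y\cdot F=2$ on a general fibre. So $B_Y$ always has the form $\Delta_0+S+V_1+V_2$, with $S$ a section and $V_1,V_2$ vertical chains joining $\Delta_0$ to $S$ in two different fibres. The ``toric shape'' you derive is therefore automatic and says nothing about $|B|$. The components of $B$ are $\phi(S)$ together with whichever non-contracted fibre curves happen to lie on $V_1\cup V_2$. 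Hence $|B|=4$ requires both $\hat\Gamma_0$ and $\hat\Gamma_1$ to lie on the chain in $f_1$, and the other chain to contribute exactly one non-contracted curve ($\hat\Gamma$ in $f_2$, or a smooth fibre). In particular your list of components is wrong: the image of $\Delta_0$ is a point of $X$, and the missing fourth component is $\Gamma$.

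The missing idea is what forces $\hat\Gamma_0,\hat\Gamma_1,\hat\Gamma$ onto this cycle. ``Every $(-1)$-curve meets each Wahl chain only at an end'' is not enough. Two non-contracted curves of $f_1$ can meet the same end component of a chain. Since $(P\in X,B)$ is lc with $B$ reduced, \cite[Theorem~4.15]{KM98} allows exactly two branches of $B$ at $P$, meeting the opposite ends of the chain. So at most one of those two curves lies in $B$, and $c(X,B)\geq 1$.

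The paper uses the stronger conclusion that, when there are no slides and $X$ is not of type $(\dagger)$, the pairs $\Gamma_0,\Gamma_1$ and $\Gamma_0,\Gamma$ intersect torically. That is, they are the two branches at opposite ends of the chains they pass through. The lc (toric) structure of $B$ at each singular point then forces $\Gamma_0,\Gamma_1,\Gamma\subset\operatorname{Supp}B$, and $p_a(B)=1$ supplies exactly one more component.

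Your proposed repair is to contract an end-attached $(-1)$-curve and find an interior attachment elsewhere. It is only stated as a plan, and nothing shows it would work in the same-end configuration above. Two smaller points: $\hat\Gamma_1$ need not be a $(-1)$-curve, so ``the $(-1)$-curves are the only non-exceptional components'' is inaccurate. The $\rho(X)=1$ ``check in the list'' is also not carried out; the paper's proof only treats $\rho(X)=2$.
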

\begin{proof}
Assume $X$ is singular and $\rho(X)=2$. Let $\phi:Y\to X$ be the minimal resolution of $X$. By Lemma~\ref{lem:Hz-minres}, the $\mathbb{P}^1$-fibration $Y\to\mathbb{P}^1$ has a degenerate $f_1$. Again, let $\Gamma_0,\Gamma_1$ denote the images under $\phi$ of the non-contractible curves in $f_1$, and let $\Gamma = \phi_*(f_2)$ be the image of a possible degenerate fiber $f_2$. Assume without loss of generality that $\Gamma_0\cap\Gamma\neq\emptyset$. By hypothesis of $X$ not defining a slide nor $(\dagger)$, the curves $\Gamma_0,\Gamma_1$ and $\Gamma_0,\Gamma$ intersect torically. We consider a 1-complement $B\in|-K_X|$ by Theorem \ref{thm:1-compldegHz}. Since for each singular point $P\in X$, the pair $(P\in X,B)$ is lc, by \cite[Theorem 4.15]{KM98} and Lemma \ref{lem:Hz-minres}, it follows that the curves $\Gamma_0,\Gamma_1,\Gamma$ are contained in $\operatorname{Supp}(B)$. As $p_a(B)=1$, it follows that $B$ is a cycle of rational curves implying that $|B|=4$. Thus $c(X,B)=0$ and consequently $(X,B)$ is toric by \cite[Theorem 1.2]{BMSZ18}.
\end{proof}

As a consequence of Theorem \ref{thm:slide-complement}  we are able to show the main result of the present section. Following the same algorithm introduced in \cite[Section 5]{UZ25}, we start with a $\qq$-Gorenstein degeneration $\ff_k\rightsquigarrow X$. Then, we construct a toric surface $\widehat{X}$ via successive slides over $X$.

\begin{theorem}\label{thm:toric-degeneration}
Let $\pi:(X\subset\mathcal{X})\to(0\in\mathbb{D})$ be a $W$-surface with general fiber $\mathcal{X}_t\cong \ff_k$ and $X$ is not type $(\dagger)$. Then after a sequence of slides, there exists a $\qq$-Gorenstein family $\pi^\prime:\mathcal{X}^\prime\to\mathbb{D}$ such that the general fiber $\mathcal{X}_t^\prime$ is isomorphic to $X$ and the central fiber $\mathcal{X}'_0$ is a toric surface.
\end{theorem}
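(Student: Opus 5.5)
The plan is to induct on the complexity $c(X,B)$ of a $1$-complement, using slides to lower it until Lemma~\ref{lem:Hz-toric-deg} forces toricity.

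First, the case $\rho(X)=1$ is handled separately. By \cite[Theorem~1.1]{HP10} such an $X$ is a partial $\qq$-Gorenstein smoothing of a weighted projective plane, which is toric, and we are done. Likewise, if $X$ is smooth then $X\cong\ff_{k+2d}$, which is itself toric. So we assume $X$ is singular with $\rho(X)=2$ and only Wahl singularities.

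By Theorem~\ref{thm:1-compldegHz}, $X$ carries a $1$-complement $B$. The surface also satisfies $H^1(X,\oo_X)=H^2(X,\oo_X)=0$ (it is rational by Theorem~\ref{thm:rational-degeneration}) and $H^2(X,T_X)=0$ by Proposition~\ref{prop:no-obstruction}, since $h^0(-K_X)\geq1$. Hence the hypotheses of Theorem~\ref{thm:slide-complement} hold. The complexity is $c(X,B)=4-|B|\geq0$, and $|B|$ is at least the number of singular points.

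Next comes the induction on $c(X,B)\in\{0,1,2,3,4\}$. If no curve in the minimal resolution defines a slide, Lemma~\ref{lem:Hz-toric-deg} shows $X$ is toric, and we take the trivial family. Otherwise we perform a left or right slide $X\rightsquigarrow X'$. Lemma~\ref{defslides} gives a $\qq$-Gorenstein family with general fiber $X$ and central fiber $X'$. Theorem~\ref{thm:slide-complement} gives a $1$-complement $B'$ on $X'$ with $c(X',B')=c(X,B)-1$.

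To iterate, we must check that $X'$ again satisfies the hypotheses. Composing the degeneration $X\rightsquigarrow X'$ with the smoothing $\ff_k\rightsquigarrow X$, and using openness of versality for the unobstructed miniversal $\qq$-Gorenstein family of $X'$, shows that $X'$ is again a $W$-surface with general fiber $\ff_k$. In particular $H^2(X',T_{X'})=0$ again by Proposition~\ref{prop:no-obstruction}. Since complexity is nonnegative by \cite[Theorem~1.2]{BMSZ18}, the process stops after at most four slides, at a surface with no available slide. Such a surface is toric, either by Lemma~\ref{lem:Hz-toric-deg} or because $c=0$.

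Finally, we concatenate the finitely many one-parameter degenerations $X\rightsquigarrow X^{(1)}\rightsquigarrow\cdots\rightsquigarrow X^{(m)}$ into a single family over $\mathbb{D}$ with general fiber $X$ and toric central fiber. Each intermediate surface is unobstructed, so its miniversal $\qq$-Gorenstein deformation space is smooth. The deformation loci therefore nest: $X^{(j)}$ appears in the miniversal family of $X^{(j+1)}$ near the origin, and by transitivity the locus of fibers isomorphic to $X$ has the origin of $\operatorname{Def}^{\qq G}(X^{(m)})$ in its closure. The curve selection lemma, exactly as in the proof of Theorem~\ref{thm:consc-def}, then produces $\pi'\colon\mathcal{X}'\to\mathbb{D}$ with $\mathcal{X}'_t\cong X$ and $\mathcal{X}'_0=X^{(m)}$ toric.

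The main obstacle is the step ensuring the intermediate $X^{(j)}$ are not of type $(\dagger)$, since Lemma~\ref{lem:Hz-toric-deg} needs this to conclude toricity. I would first try to argue that a slide always produces a toric intersection at the new node. Since $\Gamma'$ joins $B_1'$ and $B_2'$ torically, $\Delta$ in the slid surface lies in $\operatorname{Supp}(B')$ through singular points, which is incompatible with the blow-up of a smooth point of $\Delta$ in Definition~\ref{def:dagger-type}. Failing that, any $(\dagger)$ surface arising along the way would have to be handled by the separate construction of Proposition~\ref{prop:E0-two-sings}.
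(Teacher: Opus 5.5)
Your proposal is correct and follows essentially the same route as the paper: you induct on $c(X,B)$ using Theorem~\ref{thm:slide-complement}, Lemma~\ref{defslides} and Lemma~\ref{lem:Hz-toric-deg}, and then assemble the successive slide degenerations into one family over $\mathbb{D}$ via openness of versality in the smooth miniversal $\qq$-Gorenstein space of the final toric surface, constructibility of the locus of fibers isomorphic to $X$, and curve selection. Your check that each intermediate surface is again a $W$-surface smoothing to $\ff_k$, and your flagging of the type $(\dagger)$ condition, spell out hypotheses of Lemma~\ref{lem:Hz-toric-deg} that the paper's proof uses only implicitly, though your heuristic for excluding $(\dagger)$ is a sketch rather than a proof.
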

\begin{proof}
We proceed by induction on $c(X,B)$, where $B \in |-K_X|$ is a $1$-complement of $X$. If no curve in the minimal resolution $Y \to X$ defines a slide, then by 
Lemma~\ref{lem:Hz-toric-deg}, $c(X,B)=0$ and $X$ is toric, so the trivial family suffices.

Otherwise, there exists a $(-1)$-curve $\Gamma$ in $Y$ defining a slide. Let $X_1$ be the corresponding slide. By Theorem~\ref{thm:slide-complement}, $X_1$ admits a $1$-complement $B_1 \in |-K_{X_1}|$ with $c(X_1,B_1)=c(X,B)-1$. Consequently, by Proposition~\ref{prop:no-obstruction}, $H^2(X_1,T_{X_1})=0$. Hence, by Lemma \ref{defslides} we
repeat this process, obtaining a sequence 
$X=X_0, X_1, \ldots, X_r$ where each $X_{j+1}$ is a slide of $X_j$ and $c(X_r,B_r)=0$. Hence, $\widehat{X}:=X_r$ is a toric surface.

The surface $\widehat{X}$ has singularities 
$P_1,\ldots,P_s,P_1^\prime,\ldots,P_{r'}'$, where $P_1,\ldots,P_s$ are the original singularities of $X$ and $P_1',\ldots,P_{r'}'$ are the new Wahl singularities created by the $r$ successive slides. It follows that $r'\leq r$. Let $e=s+r'$ denote the total number of singularities of $\widehat{X}$. By Proposition \ref{prop:no-obstruction} it follows that $H^2(\widehat{X},T_{\widehat{X}})=0$ and consequently $\hat{X}$ does not have local-to-global obstructions to deform.
Consider the local-to-global exact sequence for $\qq$-Gorenstein deformations of $\widehat{X}$:
\[
0 \to H^1(\widehat{X},T_{\widehat{X}}) \to T^1_{\qq\text{G},\widehat{X}} \to \bigoplus_{i=1}^{e} T^1_{\qq\text{G},Q_i}
\to H^2(\widehat{X},T_{\widehat{X}}) \to T^2_{\qq\text{G},\widehat{X}} \to \bigoplus_{i=1}^{e} T^2_{\qq\text{G},Q_i} \to 0,
\]
where $Q_1,\ldots,Q_e$ denotes the full list of singularities. Since each $Q_i$ is a Wahl singularity, $T^2_{\qq\text{G},Q_i}=0$. Combined with $H^2(\widehat{X},T_{\widehat{X}})=0$, the exact sequence yields $T^2_{\qq\text{G},\widehat{X}}=0$ and a short exact sequence
\[
0 \to H^1(\widehat{X},T_{\widehat{X}}) \to T^1_{\qq\text{G},\widehat{X}} \to \bigoplus_{i=1}^{e} T^1_{\qq\text{G},Q_i} \to 0.
\]
Let $m:=h^1(\widehat{X},T_{\widehat{X}})$. The vanishing $T^2_{\qq\text{G},\widehat{X}}=0$ implies that the miniversal $\qq$-Gorenstein deformation space $(S,0):=(\operatorname{Def}^{\qq\text{G}}(\widehat{X}),0)$ is smooth of dimension $e+m$. Let $\mathcal{X}^{\mathrm{univ}}\to S$ denote the miniversal family. By \cite[Lemma~1]{Man91}, every collection of local $\qq$-Gorenstein deformations of the singularities globalizes, and so the natural map
\[
\Phi\colon(S,0)\longrightarrow \prod_{i=1}^{s} \operatorname{Def}^{\qq\text{G}}(P_i) \times \prod_{j=1}^{r'} \operatorname{Def}^{\qq\text{G}}(P_j')\cong(\cc^e,0)
\]
is a submersion. Choose local coordinates $(t_1,\ldots,t_s,u_1,\ldots,u_{r'})$ on the target so that $t_i$ corresponds to the deformation parameter of $P_i$ and $u_j$ to that of $P_j'$.

Since $\widehat{X}$ is projective, by \cite[Theorem~1.6]{Art69} the formal miniversal deformation algebraizes. As in Theorem \ref{thm:consc-def}, there exist an algebraic scheme $S^{\mathrm{alg}}$ of finite type over $\cc$ and a projective flat family $\mathcal{X}^{\mathrm{alg}}\to S^{\mathrm{alg}}$ whose formal completion at a closed point $0\in S^{\mathrm{alg}}$ recovers the formal germ of $\mathcal{X}^{\mathrm{univ}}\to S$. We identify $(S,0)\cong ((S^\mathrm{alg})^{\mathrm{an}},0)$ and define 
\[
V:=\bigl\{p\in S^{\mathrm{alg}}\,|\, t_1(p)=\cdots=t_s(p)=0,\; u_1(p)\cdots u_{r'}(p)\neq 0,\; (\mathcal{X}^{\mathrm{alg}})_p\cong X\bigr\}.
\]

The Isom-scheme $\mathrm{Isom}_{S^{\mathrm{alg}}}(X\times S^{\mathrm{alg}},\mathcal{X}^{\mathrm{alg}})\to S^{\mathrm{alg}}$ is of finite type, so by Chevalley's theorem \cite[Th\'eor\`eme~1.8.4]{EGAIV} its image is constructible.
Since the zero set $\{t_1=\cdots=t_s,u_1=\cdots u_{r'}\neq0\}$ is constructible, then $V$ is constructible.

We claim that $0\in\overline{V}$, where $\overline{V}$ denotes the Zariski closure. Consider the last slide $X_{r-1}\rightsquigarrow\widehat{X}$. By Lemma~\ref{defslides}, there exists a $\qq$-Gorenstein family $\mathcal{Y}_r\to\mathbb{D}$ with $(\mathcal{Y}_r)_0\cong\widehat{X}$ and $(\mathcal{Y}_r)_t\cong X_{r-1}$ for $t\neq 0$. By versality, this family is defined by a holomorphic map $\gamma_r\colon(\mathbb{D},0)\to(S,0)$. For any sequence $t_n\to 0$ with $t_n\neq 0$, the points $p_n^{(r)}:=\gamma_r(t_n)\to 0$ satisfy $(\mathcal{X}^{\mathrm{univ}})_{p_n^{(r)}}\cong X_{r-1}$.
Now fix $n$. Since versality is an open condition for proper flat morphisms, the family $\mathcal{X}^{\mathrm{univ}}\to S$ is versal at $p_n^{(r)}$ after shrinking $S$ if necessary. Applying Lemma~\ref{defslides} to the penultimate slide $X_{r-2}\rightsquigarrow X_{r-1}$, we obtain a $\qq$-Gorenstein deformation of $X_{r-1}$ with general fiber $X_{r-2}$. By versality at $p_n^{(r)}$, this deformation is classified by a map through $p_n^{(r)}$, yielding a point $p_n^{(r-1)}$ arbitrarily close to $p_n^{(r)}$ with $(\mathcal{X}^{\mathrm{univ}})_{p_n^{(r-1)}}\cong X_{r-2}$.
Repeating for each remaining slide, we obtain $p_n:=p_n^{(1)}\in S$ satisfying $p_n\to 0$ and $(\mathcal{X}^{\mathrm{univ}})_{p_n}\cong X$. Each step smooths exactly one singularity $P_j'$ while preserving $P_1,\ldots,P_s$ and the previously smoothed singularities, hence $p_n\in V$ for $n\gg 0$.
Since the $p_n$ are closed points of $S^{\mathrm{alg}}$ accumulating analytically at $0$, every Zariski-open neighborhood of $0$ contains infinitely many of them, and so $0\in\overline{V}$.

Since $V$ is constructible and $0\in\overline V$, the same curve-selection argument as in Theorem~\ref{thm:consc-def}, using \cite[Theorem~III.25]{M04}, gives a holomorphic map $\gamma:(\mathbb D,0)\to(S,0)$ such that $\gamma(\mathbb D\setminus\{0\})\subset V$.
Pulling back the the miniversal $\qq$-Gorenstein family $\mathcal{X}^{\text{univ}}\to S$ along $\gamma$ gives a $\mathbb Q$-Gorenstein deformation $$\mathcal{X}'\to\mathbb{D}$$ with $\mathcal{X}_0'\cong\widehat{X}$ and $\mathcal{X}'_t\cong X$ for all $t\neq 0$.

\end{proof}

As a consequence of Theorem \ref{thm:slide-complement} and \ref{thm:toric-degeneration}, we can provide a bridge between the number of sliding curves and the pair complexity of surfaces $X$ admitting a $\qq$-Gorenstein smoothing to $\ff_k$.

\begin{corollary}
Let $\pi:(X\subset\mathcal{X})\to(0\in\mathbb{D})$ be a $W$-surface with general fiber $\mathcal{X}_t\cong\ff_k$, and assume that $X$ is not of type $(\dagger)$. Let $s$ denote the number of curves on $X$ defining a slide, and let $B\in|-K_X|$ be a $1$-complement. Then, $$c(X,B)=s.$$
In particular, if $k\leq 1$, then $c(X,B)\in\{0,1,2,3\}$. While for $k\geq 2$ it follows that $c(X,B)\in\{0,1,2\}$. Furthermore, every value in the corresponding range occurs.
\end{corollary}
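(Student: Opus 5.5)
The plan is to prove $c(X,B)=s$ by induction on $s$, using Theorem~\ref{thm:slide-complement} for the inductive step and Lemma~\ref{lem:Hz-toric-deg} together with \cite[Theorem~1.2]{BMSZ18} for the base case. Then we bound $s$ by reading off the configuration of Lemma~\ref{lem:Hz-minres}.

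\textbf{Base case.} Suppose $s=0$. By Lemma~\ref{lem:Hz-toric-deg}, $X$ is toric, and the proof of that lemma shows that the $1$-complement $B$ is the cycle $\Gamma_0+\Gamma_1+\Gamma+\cdots$ of four curves. Hence $c(X,B)=2+2-4=0$.

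We must also check that $c(X,B)$ does not depend on the choice of $1$-complement. By Proposition~\ref{prop:toricity} and its proof, any $1$-complement is a cycle of smooth rational curves passing through every singular point. Its number of components is therefore pinned down by the nodes, namely the singular points together with the smooth points where components meet. I would argue that this cycle must contain $\Gamma_0$, $\Gamma_1$ and the image of the second degenerate fiber whenever these pass through singular points.

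\textbf{Inductive step.} Suppose $s\ge1$ and pick a sliding curve. The slide $X'$ has a $1$-complement $B'$ with $c(X',B')=c(X,B)-1$ by Theorem~\ref{thm:slide-complement}. We must show that $X'$ has exactly $s-1$ sliding curves. In the proof of Theorem~\ref{thm:slide-complement}, the node of $B$ at $P$ is replaced by the chain $B_1'-\Gamma'-B_2'$, where $\Gamma'$ now meets both Wahl chains at their ends, i.e.\ torically. So the sliding curve $\Gamma$ is consumed, while the other slide configurations are untouched because the deformation is isotrivial away from $P$. It also has to be checked that $X'$ is again a $W$-surface with general fiber $\ff_k$ and is not of type $(\dagger)$. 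This follows because $X'$ degenerates from $X$ by smoothing only one new Wahl point, and $\rho$ is preserved.

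Rather than a literal induction, a cleaner route is direct counting. Each sliding curve corresponds to one of the non-toric intersections among $\Gamma_0$, $\Gamma_1$ and $\Gamma$ in Lemma~\ref{lem:Hz-minres}. The cycle $B$ uses $4-s$ components, since each non-toric meeting forces one fewer curve in $\operatorname{Supp}(B)$ for the cycle to close up. Hence $c(X,B)=4-|B|=s$.

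\textbf{Ranges and realizability.} By Lemma~\ref{lem:Hz-minres} there are at most two degenerate fibers meeting the section $\Delta_0$. This gives at most three potential sliding positions: at $\Gamma_0\cap\Gamma_1$, at $\Gamma_0\cap\Gamma$, and at $\Gamma_1\cap\Gamma$ or the section. For $k\ge2$, one of these positions is excluded, because $-K_X$ is not nef and $\Gamma_1$ meets the singularities torically (via the Mori ray analysis of Proposition~\ref{prop:moriray}); so $s\le2$. For $k\le1$ all three can occur, so $s\le3$. Realizability of each value follows by running the slide sequence of Theorem~\ref{thm:toric-degeneration} backwards: start from a toric degeneration and smooth one slide-created Wahl point at a time.

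\textbf{Main obstacle.} The delicate step is the exact count that each sliding curve lowers $|B|$ by exactly one, and in particular the independence of $c(X,B)$ from the choice of $B$. I would first try to show that the components of any $1$-complement are forced to be the $\Gamma$-curves that pass through singular points. The second hard point is excluding $s=3$ for $k\ge2$, which I would attack via the constraint on $K_X\cdot\Gamma_1$ from the remark after Proposition~\ref{prop:moriray}.
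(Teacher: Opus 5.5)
Your skeleton matches the paper's proof. There, $c(X,B)=s$ is obtained from Theorem~\ref{thm:slide-complement} (each slide lowers the complexity by exactly one) applied along the slide sequence of Theorem~\ref{thm:toric-degeneration}, and for $k\geq 2$ the curve $\Gamma_1$ is discarded because $(X,\Gamma_1)$ is log canonical.

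You were right to worry about the choice of $B$, but the step you propose there fails: the identity is false for an arbitrary $1$-complement. Take $X=\operatorname{Bl}_p\pp(1,1,4)$ with $p=[1:0:0]$, the toric degeneration of $\ff_1$ in Example~\ref{f1mut}. Its minimal resolution is $\ff_4$ blown up at a point off the negative section, and its only Wahl chain is $[4]$. So no curve defines a slide and $s=0$. It is not of type $(\dagger)$, since $\ff_4$ has no degenerate fibre.

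For a general quintic $h$, consider $C=\{z(x^2-y^2)+y\,h(x,y)=0\}\in|-K_{\pp(1,1,4)}|$. It is irreducible, being linear in $z$ with $\gcd(x^2-y^2,yh)=1$. On the orbifold chart it has an ordinary node at the $\frac14(1,1)$ point, with tangent cone $x^2-y^2$. It is smooth elsewhere and passes through $p$. Its strict transform $B\sim-K_X$ is therefore an irreducible $1$-complement, and $c(X,B)=2+2-1=3\neq s$. Even more simply, a smooth elliptic curve in $|-K_{\ff_2}|$ on $X=\ff_2$ breaks the bound $c\leq 2$ for $k\geq 2$.

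Consequently three of your steps cannot be proved as stated: the base case (which quotes the proof of Lemma~\ref{lem:Hz-toric-deg} for $|B|=4$), the count $|B|=4-s$, and the claim that every complement is built from $\Gamma$-curves. For an arbitrary $B$, Theorem~\ref{thm:slide-complement} gives only $c(X,B)=r+c(\widehat X,\widehat B)\geq r$. Here $r$ is the number of slides and $\widehat B$ is the transported complement on the terminal toric surface. So you must fix $B$ to be a complement whose transport is the toric boundary of $\widehat X$, and show such a $B$ exists, rather than try to prove independence.

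You also need $r=s$, and ``the deformation is isotrivial away from $P$'' does not give it. In the $s=3$ examples there is a single singular point, so all three sliding curves meet the same chain. The slide rebuilds that chain in the minimal resolution (the old $E_1$ becomes $F_1$), so you must track where the remaining $(-1)$-curves land.

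Realizability is the second gap. Lemma~\ref{defslides} only goes forward, from a surface carrying a sliding curve to its slide. Nothing you cite shows that smoothing a Wahl point of a toric surface produces a sliding curve, let alone three. So ``running the slides backwards'' does not produce the top value; you need an explicit surface attaining it and then slide forward. For $k\leq 1$ the paper uses the degenerations of \cite[Remark~5.3 and Theorem~5.8]{UZ25}: these have a unique Wahl singularity, $-K_X$ ample, and two degenerate fibres in the minimal resolution. In them $\Gamma_0$, $\Gamma_1$ and $\Gamma$ all slide, so $s=3$, and forward slides give $2,1,0$. For $k\geq 2$ an analogous example with two sliding curves is needed; the paper's proof leaves this implicit.

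Two smaller points. The candidate sliding curves are the three non-contracted curves $\widehat\Gamma_0,\widehat\Gamma_1,\widehat\Gamma$ of the degenerate fibres (Lemma~\ref{lem:Hz-minres}), not intersection points. For $k\geq 2$ the correct input is that $-K_X$ is not \emph{ample}, by openness of ampleness, rather than not nef; $-K_X$ can be nef when $k=2$. Non-ampleness gives $K_X\cdot\Gamma_1\geq 0$. The proof of Proposition~\ref{prop:moriray} then shows $(X,\Gamma_1)$ is log canonical, its exceptional case being the excluded type $(\dagger)$. Hence $\widehat\Gamma_1$ meets the Wahl chains only at their ends and cannot slide. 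That is the whole argument; the remark on $K_X\cdot\Gamma_1=0$ plays no role.
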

\begin{proof}
The identity $c(X,B)=s$ follows directly from Theorem \ref{thm:slide-complement} together with the sliding process described in Theorem \ref{thm:toric-degeneration}.

We now determine the possible values of $c(X,B)$. First, suppose that $k\leq 1$. Consider surfaces $X$ with a unique Wahl singularity and with $-K_X$ ample. By \cite[Remark 5.3 and Theorem 5.8]{UZ25}, there exist many such examples whose minimal resolution $Y\to X$ admits two degenerate fibers. In this case, there are exactly three curves on $X$ inducing slides, and hence $c(X,B)=3$. Performing successive slides, we obtain examples realizing all smaller values, so that $c(X,B)\in\{0,1,2,3\}$.

Next, suppose that $k\geq 2$. By Proposition \ref{prop:moriray}, we have the extremal curve $\Gamma_1$ such that $\Gamma_1\cdot K_X\geq 0$. It cannot induce a sliding since $(X,\Gamma_1)$ is lc. Thus, at most two curves on $X$ induce slides, and hence $c(X,B)\leq 2$. This shows that $c(X,B)\in\{0,1,2\}$.
\end{proof}

The surfaces constructed in Definition \ref{def:dagger-type} also admit toric degenerations. Rather than relying on the sliding process, the method exploits the extension of $\mathbb{G}_m$ equivariant actions on families $\mathcal{X}\to\mathbb{D}^*$. Let $X$ be a surface of $(\dagger)$ type. For simplicity let us assume that it is constructed by a blow-up of a toric surface $X\to\bar{X}=\pp(a^2,b^2,c^2)$. For the non-toric case the same method as for $\bar{X}$ in Proposition \ref{prop:E0-two-sings} applies. By performing subsequent slides as in Theorem \ref{thm:toric-degeneration}, a toric degeneration follows.

Let $(a,b<c)$ be a Markov triple, we consider homogeneous coordinates $[x:y:z]$ in $\bar{X}$ of degrees $(a^2,b^2,c^2)$, and let $P_z=[0:0:1]$ denote the singular point $\frac{1}{c^2}(1,cw_c-1)$.
Consider the point $p=[1:0:1]\in\{y=0\}$ and the $\mathbb{G}_m$-action $\lambda(t)\cdot[x:y:z]=[tx:y:z]$, so that the orbit of $p$ is $\{[t:0:1]\,|\,t\in\mathbb{C}^*\}$, which is dense in $\{y=0\}$.
Since $\gcd(a,c)=1$, the invariants of $\mu_{c^2}$ restricted to $\{y=0\}$ are generated by $u=x^{c^2}/z^{a^2}$, which is therefore a coordinate of $\{y=0\}\cap U_z$ centered at $P_z$, and $u(\lambda(t)\cdot p)=t^{c^2}$.
Accordingly we set $$S=\{([t:0:1],\,t^{c^2})\,|\,t\in\mathbb{D}\}\subset \pp(a^2,b^2,c^2)\times \mathbb{D},$$ which is a well-defined section of the projection, cut out by $\{y=0,\ u=t\}$, and we let $\mathcal{X}=\operatorname{Bl}_S(\bar{X}\times \mathbb{D})$.
On the orbifold cover $\mathbb{C}^2_{x,y}\to U_z$ with action $\frac{1}{c^2}(a^2,b^2)$, the reduced preimage of $S$ is then the smooth invariant curve $\tilde{S}=\{y=0,\ t=x^{c^2}\}$, which becomes the axis $\{y=t'=0\}$ after the change of coordinates $t^\prime=t-x^{c^2}$.
Blowing up $\tilde S$ we obtain coordinates $[Y:T']$ on the exceptional divisor satisfying $yT'=t'Y$, and the two charts $y=t'w$ and $t'=ys$ of the blow-up.
With this change of coordinates in mind, a local computation in these two charts produces the following result about the singularities of $\mathcal{X}_0$ lying on the exceptional curve $E_0$.
\begin{proposition}\label{prop:E0-two-sings}
The limit curve $E_0\subset\mathcal{X}_0$ passes torically through exactly two singular points, of types
\[
\tfrac1{c^2}(1,\,cw_c-1)\qquad\text{and}\qquad \tfrac1{c^4}(1,\,c^2\bar a-1),\quad \bar a\equiv(cw_c-1)^{-1}\!\!\pmod{c^2}.
\]
\end{proposition}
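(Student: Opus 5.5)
The plan is to compute directly in the two charts of the blow-up of $\tilde S$ on the orbifold cover $\mathbb{C}^2_{x,y}\times\mathbb{D}_t\to U_z\times\mathbb{D}$, and then descend by the $\mu_{c^2}$-action. Away from $P_z$ the section $S$ lies in the smooth locus of $\bar X\times\mathbb{D}$, and its limit point $S(0)=[0:0:1]$ is exactly $P_z$. Hence the only possible singularities of $\mathcal{X}_0$ on $E_0$ lie over $P_z$, and it suffices to work on $U_z=\mathbb{C}^2/\mu_{c^2}$ with weights $(a^2,b^2)$. Since $t=x^{c^2}$ must be invariant, $\mu_{c^2}$ acts trivially on $t$. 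Using $\gcd(a,c)=1$, I will normalize the action to $\frac{1}{c^2}(1,q)$ with $q\equiv b^2a^{-2}\pmod{c^2}$. The Markov relation should identify this with $\frac{1}{c^2}(1,cw_c-1)$.

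The central fiber of $\operatorname{Bl}_{\tilde S}$ is cut out by $t=t'+x^{c^2}=0$. It consists of the strict transform of $\mathbb{C}^2\times\{0\}$ together with the exceptional divisor over $\tilde S\cap\{t=0\}$, which is the line $\{y=t'=0\}$ restricted to the fiber. So $\tilde E_0$ is the fiber of the exceptional $\pp^1$-bundle over the origin $x=0$. I will then compute in the two charts.

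In the chart $t'=ys$, the coordinates are $(x,y,s)$, and the central fiber is $\{ys+x^{c^2}=0\}$. This is an $A_{c^2-1}$ hypersurface. Its quotient by $\mu_{c^2}$, where $s$ has weight $-b^2$ so that $ys$ is invariant, is the standard $\qq$-Gorenstein smoothing model of a T-singularity. I will check that the central-fiber germ at $s=0$, namely the $\mu_{c^2}$-quotient of $\{ys=-x^{c^2}\}$, is the quotient of $\mathbb{C}^2_{u,v}$ by $\mu_{c^2}\times\mu_{c^2}$ with $x=uv$, $y=u^{c^2}$, $s=-v^{c^2}$. This gives a cyclic quotient of order $c^4$. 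Computing the weights should give $\frac{1}{c^4}(1,c^2\bar a-1)$ with $\bar a$ the inverse of $cw_c-1$ modulo $c^2$, which is a Wahl singularity with $n=c^2$. $\tilde E_0$ is the $s$-axis $\{x=y=0\}$, i.e.\ $\{u=0\}$ or $\{v=0\}$ upstairs, so it passes torically. In the chart $y=t'w$, the coordinates are $(x,t',w)$, and the central fiber is $\{t'=-x^{c^2}\}\cong\mathbb{C}^2_{x,w}$, which is smooth. The weight of $w$ is $b^2$, so at $w=0$ the germ is $\frac{1}{c^2}(a^2,b^2)$, the original point $P_z$. Here $\tilde E_0=\{x=0\}$ is a coordinate axis, so again the intersection is toric. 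At intermediate points of $E_0$ (with $w\neq0$) the action has a stabilizer only if $\gcd(b,c)>1$, which fails for Markov triples. Thus the singularities on $E_0$ are exactly the two claimed points.

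The main obstacle is the bookkeeping for the index-$c^4$ point. I need to identify the group $\mu_{c^2}\times\mu_{c^2}$ (the $A_{c^2-1}$ structure together with the orbifold group) with a cyclic group of order $c^4$ acting with the stated weights. I also need to show that this group is cyclic, which uses $\gcd(b,c)=1$, and to verify the congruence $\bar a\equiv(cw_c-1)^{-1}$ via $b^2\equiv -a^2\pmod{c}$-type Markov identities. I would first compute a generator explicitly, checking with $(a,b,c)=(1,1,2)$ and $(1,2,5)$, and then prove the general congruence.
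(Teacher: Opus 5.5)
Your plan is correct and is essentially the paper's proof: the chart $y=t'w$ gives $\mathbb{C}^2_{x,w}$ with weights $\frac{1}{c^2}(a^2,b^2)$, and the chart $t'=ys$ gives the $A_{c^2-1}$ surface $\{ys=-x^{c^2}\}$, uniformized by $(u,v)\mapsto(uv,u^{c^2},-v^{c^2})$, whose quotient is $\frac{1}{c^4}(b^2,a^2c^2-b^2)\cong\frac{1}{c^4}(1,c^2a^2b^{-2}-1)$; hence $\bar a\equiv a^2b^{-2}\equiv(cw_c-1)^{-1}\pmod{c^2}$ follows directly, with no further Markov identity needed. One correction: the group acting on $\mathbb{C}^2_{u,v}$ is not $\mu_{c^2}\times\mu_{c^2}$, which is never cyclic, but an extension of the orbifold group $\mu_{c^2}$ by the deck group $\langle g_0\rangle$ with $g_0=(\zeta_{c^2},\zeta_{c^2}^{-1})$; it is cyclic of order $c^4$ because the lift $\hat g=(\zeta_{c^4}^{b^2},\zeta_{c^4}^{a^2c^2-b^2})$ satisfies $\hat g^{c^2}=g_0^{b^2}$ and $\gcd(b,c)=1$.
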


\begin{proof}
Write $r=c^2$, and recall $\gcd(a,c)=\gcd(b,c)=1$, so $a^2,b^2$ are units modulo $r$.
On the cover $\mathbb{C}^3_{x,y,t}$ of $U_z\times\mathbb{D}$ with action $\tfrac1r(a^2,b^2,0)$, the function $t'=t-x^r$ is invariant, so $(x,y,t')$ have weights $\tfrac1r(a^2,b^2,0)$ and $\widetilde{S}=\{y=t'=0\}$ is a smooth invariant axis.

Let $E$ be the exceptional divisor of its blow-up and $E_0=(E\cap\mathcal{X}_0)_{\mathrm{red}}$, computed in the two charts.

In the chart $y=t'w$, of weights $\tfrac1r(a^2,0,b^2)$ with $E=\{t'=0\}$, the fiber $t=0$ eliminates $t'=-x^r$ to the smooth surface $\mathbb{C}^2_{x,w}$ of weights $\tfrac1r(a^2,b^2)$, with $E_0=\{x=0\}$.
This action is free off the origin, which is thus the only singular point of $E_0$ here, of type $\tfrac1{c^2}(1,b^2(a^2)^{-1})=\tfrac1{c^2}(1,cw_c-1)$.

In the chart $t'=ys$, of weights $\tfrac1r(a^2,b^2,-b^2)$ with $E=\{y=0\}$, the fiber $t=0$ gives the $A_{r-1}$-singularity $\{ys=-x^r\}$, with $E_0=\{x=y=0\}$.
Uniformizing it by $(X,Y)\mapsto(x,y,s)=(XY,X^r,-Y^r)$, with deck group $g_0=\tfrac1r(1,-1)$ and fixing $\zeta_{r^2}^{\,r}=\zeta_r$, the action lifts to $\hat g=(\eta,\theta)$ solving $\eta\theta=\zeta_r^{a^2},\ \eta^r=\zeta_r^{b^2}$, e.g. $\hat g=(\zeta_{r^2}^{\,b^2},\zeta_{r^2}^{\,a^2r-b^2})$.
Then $\hat g^{\,r}=g_0^{\,b^2}$ with $\gcd(b,c)=1$ gives $\langle g_0,\hat g\rangle=\langle\hat g\rangle\cong\mathbb{Z}/r^2$, acting freely off the origin since $\gcd(b^2,r^2)=\gcd(a^2r-b^2,r^2)=1$, so the quotient is $\tfrac1{r^2}(b^2,a^2r-b^2)$.
\end{proof}
Both Theorem \ref{thm:toric-degeneration} and Proposition \ref{prop:E0-two-sings} establish Theorem \ref{ithm:toric}.

\section{Marked Degenerations}\label{sec:degen-k3}
Throughout this section we consider $W$-surfaces $\pi:(X\subset \mathcal{X})\to (0\in\mathbb{D})$ with general fiber $\mathcal{X}_t\cong \ff_k$. We write $\phi:Y\to X$ for the minimal resolution and $\mu:Y\to \pp^1$ the ruling of Lemma \ref{lem:Hz-minres}. Following Proposition \ref{prop:moriray}, let $f_1$ be the degenerate fiber containing the strict transforms $\hat{\Gamma}_0$, $\hat{\Gamma}_1$ of the generators of $\operatorname{NE}(X)$. Our goal is classifying such $X$ when $\Gamma_1\cdot K_X>0$. The cases where $-K_X$ is nef are devoted to the next section and were also studied in a similar manner in \cite{UZ25}. We proceed defining marked surfaces. 

\begin{definition}\label{def:hirzebruch-wahl-chain}
Let $k \geq 1$. A Wahl chain $[e_1,\ldots,e_r]$ is 
called \emph{Hirzebruch of index~$k$} of type~$\mathrm{(I)}$ or~$\mathrm{(II)}$ 
if one of the following holds:
\begin{itemize}
    \item[$\mathrm{(I)}$] There exists a Wahl chain $[f_1,\dots,f_t]$, possibly empty, such that
    \[
    [f_1,\dots,f_t,k,e_1,\ldots,e_{r-1}]
    \quad \text{or} \quad
    [e_2,\ldots,e_r,k,f_1,\dots,f_t]
    \]
   admits a zero continued fraction of weight~$0$. In the first case the \emph{central mark} is $e_r$, in the second it is $e_1$.
    
    \item[$\mathrm{(II)}$] There exist $i \in \{2,\ldots,r-1\}$ and a Wahl chain 
    $[f_1,\dots,f_t]$, possibly empty, such that
    \[
    [f_1,\dots,f_t,\,k,\,e_1,\ldots,e_{i-1}]
    \quad \text{and} \quad
    [e_{i+1},\ldots,e_r],
    \]
    or symmetrically
    \[
    [e_1,\ldots,e_{i-1}]
    \quad \text{and} \quad
    [e_{i+1},\ldots,e_r,\,k,\,f_1,\dots,f_t],
    \]
    admit zero continued fractions of weight~$0$. In both cases, the \emph{central mark} is $e_i$.
\end{itemize}
Its \emph{marking} is the data of the zero continued fraction(s) involved in its type. When every auxiliary Wahl chain is empty, we say that $[e_1, \ldots, e_r]$ is a \emph{primitive}; otherwise it is \emph{non-primitive}.
\end{definition}
\begin{remark}\label{rem:Hz-index}
For a Hirzebruch Wahl chain of index $k\geq 2$, a similar approach to \cite[Proposition 4.1]{UZ24} shows the following values for the central mark $e_i$:
\begin{center}
\begin{tabular}{c|c|c}
 & Primitive & Non-primitive \\
\hline
Type~I & $k+4$ & $k+5$ \\
Type~II & $k+7$ & $k+8$
\end{tabular}
\end{center}
\end{remark}
We now illustrate the definition with examples.

\begin{example}[Primitive, from $X_{k,x,y}$]\label{ex:primitive-Hz}
Let $\frac{n^2}{na-1}=[e_1,\ldots,e_r]$ be a Wahl chain arising from the surface $X_{k,x,y}$. By \cite[Proposition~5.6]{MZ25}, the degenerate fiber $f_1$ in the minimal resolution contains a subchain $[2,\ldots,2]$ of length~$k-1$.
Then $[k-1,1,2,\ldots,2]=0$ (where $[2,\ldots,2]$ has length $k-2$) is a zero continued fraction of weight~$0$, so the chain is primitive Hirzebruch of index~$k-1$.
\end{example}

\begin{example}[Primitive, not coming from $X_{k,x,y}$]\label{ex:primitive-Hz-2} 
Consider the Wahl chain with $n=246$ and $a=169$:
$$\frac{n^2}{na-1}=[2, 2, 7, 2, 2, 2, 2, 2, 2, 5, \mathbf{9}, 2, 2, 2, 2, 4].$$
This is primitive of Type (II) with $k=2$. Indeed, one verifies that $[\mathbf{2},2, 2, 7, 1, 2, 2, 2, 2, 2, 5]$ is a zero continued fraction of weight $0$. Similarly, $[2, 2, 2, 1, 4]=0$
\end{example}

\begin{example}[Non-primitive Hirzebruch Wahl chain of Type I]\label{ex:nonprimitive-Hz}
Consider the Wahl chain with $n=1799$, $a=246$:
$$\frac{n^2}{na-1}=[\mathbf{8}, 2, 2, 7, 2, 2, 2, 2, 2, 2, 5, 9, 2, 2, 2, 2, 5, 2, 2, 2, 2, 2, 2].$$
This is non-primitive of type~$\mathrm{(I)}$ with $k=3$.
The auxiliary Wahl chain has $n_1=19$, $a_1=13$ and continued fraction $[2,2,9,2,2,2,2,4]$.
Indeed, one verifies that $$[2, 2, 7, 2, 2, 2, 2, 2, 2, 5, 9, 2, 2, 2, 1, 5, 2, 2, 2, 2, 2, 2,\,\mathbf{3},\,2, 2, 9, 2, 2, 2, 2, 4]=0$$
\end{example}

\begin{example}[Non-primitive Hirzebruch Wahl chain of Type II]\label{ex:nonprimitive-Hz-II}
Consider the Wahl chain with $n=956$, $a=823$:
$$\frac{n^2}{na-1}=[2, 2, 2, 2, 2, 2, 7, 2, 2, \mathbf{11}, 2, 2, 2, 2, 2, 2, 5, 2, 2, 2, 2, 8].$$
This is non-primitive of type~$\mathrm{(II)}$ with $k=3$.
The auxiliary Wahl chain has $n_1=246$, $a_1=77$ and continued fraction $[4, 2, 2, 2, 2, 9, 5, 2, 2, 2, 2, 2, 2, 7, 2, 2]$.
Indeed, one verifies that $$[4, 2, 2, 2, 2, 9, 5, 2, 2, 2, 2, 2, 1, 7, 2, 2,\mathbf{3},2, 2, 2, 2, 2, 2, 7, 2, 2]=0$$
and 
$$[2, 2, 2, 2, 2, 2, 5, 1, 2, 2, 2, 8]=0.$$
\end{example}
We now describe the geometric procedure to construct a normal projective surface associated to a Hirzebruch Wahl chain of index~$k$. The algorithm is analogous to \cite[Section~5]{UZ25}.
Let $e_i$ be its central mark, with $i\in\{2,\ldots,r\}$, where $i=r$ is type~$\mathrm{(I)}$ and $i<r$ is type~$\mathrm{(II)}$, and let $[f_1,\ldots,f_t]$ be the auxiliary Wahl chain of the marking, with $t=0$ permitted.
When $t=0$ we set $(n_1,a_1)=(1,0)$ and read $\left[{1 \choose 0}\right]$ as the empty chain, that is, as a smooth germ.
\begin{itemize}
    \item Consider the Hirzebruch surface $\ff_{e_i}$, with its ruling $\pi \colon \ff_{e_i} \to \pp^1$ and negative section $\Delta_0$ satisfying $\Delta_0^2 = -e_i$.
    \item Choose a single fiber $F_1'$ of $\pi$ if $i=r$, and two distinct fibers $F_1'$ and $F_2'$ if $i<r$.
    Blow up along each chosen fiber, away from $\Delta_0$, to produce the chains
    \[
    [f_1,\dots,f_t,\,k,\,e_1,\ldots,e_{i-1}]
    \quad\text{and}\quad
    [e_{i+1},\ldots,e_r]
    \]
    corresponding to the zero continued fractions, starting at the intersections with $\Delta_0$; the second chain is empty when $i=r$.
    We write $\Gamma_1$ for the curve corresponding to the entry $k$, so that $\hat\Gamma_1^{\,2}=-k$.
    \item Since the zero continued fractions have weight~$0$, a unique blow-up at a general point of a single marked component is required over each chosen fiber.
    We write $\Gamma_0$, and $\Gamma$ when $i<r$, for the $(-1)$-curves so produced over $F_1'$ and over $F_2'$.
    \item Contract all the Wahl chains so produced.
    The resulting surface carries the Wahl singularities $\frac{1}{n^2}(1,na-1)$ from $[e_1,\ldots,e_r]$ and $\frac{1}{n_1^2}(1,n_1a_1-1)$ from $[f_1,\ldots,f_t]$, the latter being a smooth point exactly when $t=0$, and contains a partial resolution of the cyclic quotient singularity obtained by contracting $\Gamma_1$, with data
    \[
    \left[{n_1 \choose a_1}\right]-(k)-\left[{n \choose a}\right],
    \]
    up to ordering.
\end{itemize}

\begin{definition}\label{def:marked-surface}
The surface constructed above is the \emph{marked surface} associated to the given marking, denoted $X_*$ when $t=0$, $X_{**}$ when $t\geq 1$.
\end{definition}
\begin{proposition}\label{prop:markedunobstructed}
Let $X_\bullet$ be a marked surface associated to a Hirzebruch Wahl chain.
Then $H^2(X_\bullet,T_{X_\bullet})=0$, so $X_\bullet$ is unobstructed.
In particular $X_\bullet$ admits a $\qq$-Gorenstein smoothing, and the general fiber of every $\qq$-Gorenstein smoothing of $X_\bullet$ is a Hirzebruch surface.
\end{proposition}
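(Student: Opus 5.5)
We first show that $X_\bullet$ satisfies the hypotheses of Proposition~\ref{prop:no-obstruction}, and then identify the general fiber of a smoothing topologically and via Noether's formula.

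\textbf{Step 1: hypotheses of Proposition~\ref{prop:no-obstruction}.} The minimal resolution $Y_\bullet$ of $X_\bullet$ is, by construction, an iterated blow-up of $\ff_{e_i}$. Hence $Y_\bullet$ is rational, and so is $X_\bullet$. Wahl singularities are rational, so $H^1(X_\bullet,\oo)=H^2(X_\bullet,\oo)=0$.

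For $h^0(-K_{X_\bullet})\ge 1$ we exhibit an explicit anticanonical cycle, mirroring the $1$-complements of Theorem~\ref{thm:1-compldegHz}. On $\ff_{e_i}$ take $\Delta_0+\Delta_\infty+F_1'+F_2'\in|-K|$, with $\Delta_\infty$ a section disjoint from $\Delta_0$. Each blow-up in the construction is centred at a node of the total transform of this boundary, except the single blow-up over each chosen fiber, which is at a general point of a marked component. Consequently the cycle
\[
\Delta_0+\Delta_\infty+(\text{chains})
\]
pushes forward to a divisor $B\in|-K_{X_\bullet}|$. Through each Wahl point it has two toric branches, since $\Delta_0$ meets one end of each chain and $\Delta_\infty$ meets the other end through the $(-1)$-curves.

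This is the step I expect to need the most care. One must check that the blow-ups required to realise a zero continued fraction of weight $0$ can be taken toric apart from exactly one, so that adjunction gives $K_{Y_\bullet}+\tilde B=\phi^*(K_{X_\bullet}+B)$ up to the exceptional discrepancies, which cancel for toric boundaries. If this fails, I would instead argue $h^0(-K_{X_\bullet})\ge 1$ via Riemann--Roch on $Y_\bullet$: compute $K^2_{X_\bullet}=8$ (see Step 3), and then $\chi(-K_{X_\bullet})\ge 1+K^2_{X_\bullet}>0$ using $h^2(-K)=h^0(2K)=0$ by rationality.

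Granting Step~1, Proposition~\ref{prop:no-obstruction} gives $H^2(T_{X_\bullet})=0$. Since $T^2_{\qq G,P}=0$ for Wahl points, the sequence \eqref{eq:loc-glob-QG} yields $T^2_{\qq G,X_\bullet}=0$ and surjectivity onto the local $T^1$'s. By \cite[Lemma~1]{Man91} the local $\qq$-Gorenstein smoothings then globalize, so $X_\bullet$ admits a $\qq$-Gorenstein smoothing.

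\textbf{Step 2: Picard rank.} We count curves. $Y_\bullet$ is obtained from $\ff_{e_i}$ by as many blow-ups as there are curves in the chains plus the extra $(-1)$-curves, minus the original fibers' contributions. Thus $\rho(Y_\bullet)=2+\#\{\text{blow-ups}\}$.

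The contracted curves are all the chain components, namely $\Delta_0$, the $e_j$'s and the $f_j$'s. The curves not contracted are $\Gamma_1$, $\Gamma_0$, and $\Gamma$ when $i<r$. Over each chosen fiber, the fiber components are exactly the chain curves together with one $(-1)$-curve. A fiber with $m$ components contributes $m-1$ to $\rho$. Hence
\[
\rho(X_\bullet)=\rho(Y_\bullet)-\#\mathrm{Exc}=2 .
\]

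\textbf{Step 3: identifying the general fiber.} By Proposition~\ref{prop:Noether} with all $\mu_P=0$ and $\chi_{\mathrm{top}}(X_\bullet)=2+\rho=4$, we get $K^2_{X_\bullet}=8$. Let $X_t$ be the general fiber of a $\qq$-Gorenstein smoothing. It has $K^2=8$, and by Theorem~\ref{thm:rational-degeneration} (Bădescu) it is a smooth rational surface. From \eqref{eq:long-exact}, $b_2(X_t)=\operatorname{rank}\Cl(X_\bullet)=2$. A smooth rational surface with $\rho=2$ and $K^2=8$ is a Hirzebruch surface $\ff_l$, with $l\ne 1$ possible a priori; $\ff_1$ is also a Hirzebruch surface, so the statement holds in all cases.
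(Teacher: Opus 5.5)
Your Steps~1 and~2 and the Noether computation follow the paper's route: an anticanonical section gives Proposition~\ref{prop:no-obstruction}, the curve count gives $\rho(X_\bullet)=2$, and Noether gives $K_{X_\bullet}^2=8$. Your explicit boundary cycle is a good concrete substitute for the paper's appeal to ``the same method as Theorem~\ref{thm:1-compldegHz}''. The only non-toric blow-up over each chosen fiber is at a smooth point of the boundary, so the strict transform $\tilde B$ stays in $|-K_{Y_\bullet}|$. Then $\phi_*\tilde B\sim\phi_*(-K_{Y_\bullet})=-K_{X_\bullet}$ already gives $h^0(-K_{X_\bullet})\ge 1$. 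No crepancy or log canonicity is needed, so the difficulty you anticipate, and the Riemann--Roch fallback, are unnecessary.

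The gap is in Step~3. Theorem~\ref{thm:rational-degeneration} goes in the opposite direction. It says that if the general fiber is rational with $K^2\ge 0$ and the central fiber has rational singularities, then the central fiber is rational. It does not say that a smoothing of the rational surface $X_\bullet$ has rational general fiber. The numerics you have cannot replace it. The invariants $K^2=8$, $b_2=2$, and even $p_g=q=0$ are also realized by surfaces of general type, for instance surfaces isogenous to a product $(C_1\times C_2)/G$ with $p_g=q=0$.

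The missing ingredient is the one the paper uses: semicontinuity of plurigenera in the $\qq$-Gorenstein family.
\begin{itemize}
\item Since $-K_{X_\bullet}$ is effective and nonzero, $h^0(X_\bullet,mK_{X_\bullet})=0$ for every $m\ge 1$.
\item For $m$ divisible by the index of $K_{\mathcal X}$, the divisor $mK_{\mathcal X}$ is Cartier and restricts to $mK_{X_\bullet}$ on the central fiber. Upper semicontinuity then gives $P_m(X_t)=0$ for all such $m$, hence $P_m(X_t)=0$ for all $m\ge1$.
\item Semicontinuity of $h^1(\oo)$ gives $q(X_t)\le q(X_\bullet)=0$.
\end{itemize}
Castelnuovo's criterion now shows that $X_t$ is rational. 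Only after that does $K^2_{X_t}=8$ force $X_t\cong\ff_l$.
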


\begin{proof}
Following the same method as in Theorem \ref{thm:1-compldegHz} it follows that $X_{\bullet}$ admits a $1$-complement and consequently $h^0(X_{\bullet},-K_{\bullet})\geq 1$. Furthermore, since $X_{\bullet}$ is a rational surface with at most rational singularities, then by Proposition \ref{prop:no-obstruction} it follows that $H^2(X_\bullet,T_{X_\bullet})=0$ and consequently $X_{\bullet}$ does not have local-to-global obstructions to deform. Counting, the construction performs $t+i$ blow-ups over $F_1'$ and $r-i$ over $F_2'$, so $N=r+t$, while $\phi$ contracts the $r+t$ curves of the two Wahl chains. Hence $\rho(X_\bullet)=\rho(\ff_{e_i})=2$, and $K_{X_\bullet}^2=8$ by Proposition~\ref{prop:Noether}.

Let $\mathcal X_t$ be the general fiber of any $\qq$-Gorenstein smoothing of $X_\bullet$.
Since $X_\bullet$ is rational with klt singularities, $P_m(X_\bullet)=0$ for every $m\geq1$, so upper semicontinuity of $t\mapsto h^0(mK_{\mathcal X_t})$ (see \cite[Lemma~1.3]{B85}) gives $P_m(\mathcal X_t)=0$, implying that $\mathcal X_t$ is a rational surface. Since $K_{\mathcal X_t}^2=K_{X_\bullet}^2=8$, it follows that $\mathcal{X}_t\cong \ff_k$ for some $k\geq 0$.
\end{proof}
We now proceed to prove that the marked surface characterize the central fibers in $W$-surfaces $\pi:(X\subset\mathcal{X})\to(0\in\mathbb{D})$ where $\mathcal{X}_t\cong \ff_k$ and $-K_X$ is not nef.

\begin{theorem}\label{thm:Hz-deformations}
Let $X$ be a singular normal projective surface with only Wahl singularities admitting a $\qq$-Gorenstein smoothing to a Hirzebruch surface.
Assume that $-K_X$ is not nef and that $X$ is not of type~$(\dagger)$.
Then there is a $\qq$-Gorenstein deformation $X_{\bullet}\rightsquigarrow X$, where $X_{\bullet}$ is a marked surface associated to a Hirzebruch--Wahl chain.
\end{theorem}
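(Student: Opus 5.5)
We will read off the marking from the minimal resolution of $X$, build the corresponding marked surface, and then recover $X$ from it by slides. Let $\phi\colon Y\to X$ be the minimal resolution and $\mu\colon Y\to\pp^1$ the ruling of Lemma~\ref{lem:Hz-minres}, with degenerate fiber $f_1\ni\hat\Gamma_0,\hat\Gamma_1$ and possibly a second degenerate fiber $f_2$. Since $-K_X$ is not nef, Proposition~\ref{prop:moriray} gives $K_X\cdot\Gamma_1>0$. The log canonical analysis in that proof (excluding the $(\dagger)$ case) shows that $(X,\Gamma_1)$ is lc. Hence $\hat\Gamma_1$ meets $\operatorname{Exc}(\phi)$ torically, in at most two Wahl chains. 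Set $k':=-\hat\Gamma_1^2$. Writing $K_Y=\phi^*K_X+\sum d_jE_j$, the condition $K_X\cdot\Gamma_1>0$ forces $k'\geq 2$, and in fact $k'$ will be the index.

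\textbf{Extracting the chain.} Contracting the $(-1)$-curve $\hat\Gamma_0$ and its successive images in $f_1$ collapses $f_1$ to a smooth fiber. The $(-1)$-curves of $f_1$ are $\hat\Gamma_0,\hat\Gamma_1$ only if $\hat\Gamma_1^2=-1$, which is excluded, so $\hat\Gamma_0$ is the unique $(-1)$-curve in $f_1$. The chain of $f_1$ read from the section $\Delta_0$ therefore has the form
\[
[f_1,\dots,f_t,\,k',\,e_1,\dots,e_{i-1}]
\]
with one entry lowered by $1$ at the component meeting $\hat\Gamma_0$, and this collapses to $0$. This is exactly a zero continued fraction of weight $0$. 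Here $[e_1,\dots,e_r]$ is the Wahl chain containing the image of $\Delta_0$, with $e_i=-\Delta_0^2$, and $[f_1,\dots,f_t]$ is the Wahl chain on the other side of $\Gamma_1$ (empty if $\Gamma_1$ meets only one singularity). If $f_2$ exists, Lemma~\ref{lem:Hz-minres} and the 1-complement of Theorem~\ref{thm:1-compldegHz} show that it contains only the tail $[e_{i+1},\dots,e_r]$ together with one $(-1)$-curve $\hat\Gamma$. This again gives a weight-$0$ zero continued fraction, so we are in type (II). If $f_2$ does not exist, $i=r$ and we are in type (I). Proposition~\ref{prop:toricity} bounds the number of singularities, which guarantees that no further chains appear. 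We conclude that $[e_1,\dots,e_r]$ is Hirzebruch of index $k'$.

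\textbf{Comparing with the marked surface.} The marked surface $X_\bullet$ of Definition~\ref{def:marked-surface} with this marking is built by the same blow-ups, except that each $(-1)$-curve is placed at a general point of a marked component. In $Y$, by contrast, $\hat\Gamma_0$ and $\hat\Gamma$ sit at specific points, and these may be toric intersection points. The key point is that $X$ is obtained from $X_\bullet$ by moving these $(-1)$-curves to their special positions, which are limits of general positions. We would realize this motion as an equisingular degeneration: the blow-up center varies in a family of points on the marked component, specializing to the actual point. The Wahl chains are preserved, so the total space is $\qq$-Gorenstein and locally trivial near the singularities. When the special point is a node of the chain, this is no longer a blow-up of the same chain, and instead it is a left or right slide in the sense of Definition~\ref{slide}. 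Lemma~\ref{defslides} then provides the degeneration, and the roles are reversed from Theorem~\ref{thm:slide-complement}: the general position (marked surface) degenerates to the toric position. Proposition~\ref{prop:markedunobstructed} ensures that no obstructions arise. Composing the at most two such degenerations, one for $f_1$ and one for $f_2$, by the versality and curve-selection argument of Theorem~\ref{thm:toric-degeneration} yields a single family $X_\bullet\rightsquigarrow X$.

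\textbf{Main obstacle.} The hardest step is showing that every position of $\hat\Gamma_0$ compatible with Wahl singularities is a specialization of the general one within a $\qq$-Gorenstein family. In particular, this must hold when $\hat\Gamma_0$ meets a component adjacent to $\hat\Gamma_1$, where the entry $k'$ could be altered. The first approach would be to verify that the specialization keeps $-\hat\Gamma_1^2=k'$ and the discrepancies fixed, by computing $K_X\cdot\Gamma_1$ via the extremal P-resolution data $\left[{n_1\choose a_1}\right]-(k')-\left[{n\choose a}\right]$.
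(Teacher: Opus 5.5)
\textbf{The gap is in ``Extracting the chain'': the marking cannot be read off the ruling of $X$ itself.}

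You assume three things about the minimal resolution of $X$:
\begin{enumerate}
\item $\Gamma_1$ passes through the singular point whose chain contains $\Delta_0$;
\item $f_1$ consists only of $[f_1,\dots,f_t]$, $\hat\Gamma_1$, $[e_1,\dots,e_{i-1}]$ and $\hat\Gamma_0$;
\item $f_2$ contains only the tail $[e_{i+1},\dots,e_r]$ and $\hat\Gamma$.
\end{enumerate}
Proposition~\ref{prop:toricity} only bounds $|\operatorname{Sing}(X)|$ by four, whereas a marked surface has at most two singular points. So none of this follows, and it is false in general.

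Here is an explicit counterexample. Take a marked surface $X_{**}$ of index $k\geq 2$ in which $\hat\Gamma_0$ meets an interior component of $[e_1,\dots,e_r]$. Slide $\Gamma_0$ towards the end of that chain met by $\hat\Gamma_1$. By the proof of Theorem~\ref{thm:slide-complement}, the new chain $[f'_1,\dots,f'_{r'}]$ is inserted between $\hat\Gamma_1$ and that end. The fiber $f_1$ becomes $[f_1,\dots,f_t]-\hat\Gamma_1-[f'_1,\dots,f'_{r'}]-\hat\Gamma_0'-E_1-\cdots-E_{i-1}$, followed by $\Delta_0$.
\begin{itemize}
\item Since $\hat\Gamma_1^2=-k$ is unchanged, $K\cdot\Gamma_1>0$, and this surface satisfies every hypothesis of the theorem.
\item Yet $\Gamma_1$ misses the chain containing $\Delta_0$.
\item $f_1$ now contains two full Wahl chains besides part of $[e_1,\dots,e_r]$.
\item $\hat\Gamma_0'$ meets two components, so your ``one entry lowered'' reading does not apply.
\item Sliding $\Gamma$ as well puts a full chain into $f_2$ and produces four singular points.
\end{itemize}
Your recipe never says which singular points $X_\bullet$ should keep. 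Separately, the claim that $K_X\cdot\Gamma_1>0$ forces $-\hat\Gamma_1^2\geq2$ is unjustified. One has $K_X\cdot\Gamma_1=-2-\hat\Gamma_1^2+\sum_j|d_j|\,(E_j\cdot\hat\Gamma_1)$, so two end discrepancies below $-\tfrac12$ already make a $(-1)$-curve $K$-positive.

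\textbf{The missing idea} is to keep exactly the Wahl points lying on $\Gamma_1$. One must then smooth all the others simultaneously while preserving the germ of $X$ along $\Gamma_1$, and read the marking from the ruling of that partial smoothing, not from the ruling of $X$. The paper does this as follows:
\begin{enumerate}
\item Contract $\Gamma_1$ to $P=\frac1\Delta(1,\Omega)$ on $\bar X$.
\item Use $H^2(\bar X,T_{\bar X})=0$ to deform $\bar X$ trivially near $P$ while smoothing every other singularity.
\item Extract $\Gamma_1$ in the family by \cite[Lemma~3.1]{MZ25}, which applies because $a_{\Gamma_1}<1$ exactly when $K_X\cdot\Gamma_1>0$.
\item The general fiber inherits a $1$-complement and carries only the Wahl points of $\Gamma_1$. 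It is identified with a marked surface through its own ruling (Lemma~\ref{lem:Hz-minres}).
\end{enumerate}

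Your assertion that $-\hat\Gamma_1^2$ is the index also fails when $\Gamma_1\subset X^0$. For $X_{k,x,y}$ the marking has index $k-1$ (Example~\ref{ex:primitive-Hz}). Indeed $X_{k,x,y}$ arises from $X_*$ precisely by moving $\hat\Gamma_0$ from a general point of $E_1$ to the node $\hat\Gamma_1\cap E_1$. This turns the $(-(k-1))$-curve through the singular point into the $(-k)$-curve in the smooth locus. That is exactly the obstacle you flag at the end and leave unresolved; the paper treats this case separately via \cite[Example~6.10]{UZ25}.
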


\begin{proof}
By Proposition \ref{prop:moriray} it follows that $\operatorname{NE}(X)$ is generated by $[\Gamma_0]$ and $[\Gamma_1]$, the images of the non-contractible curves lying in the degenerate fiber $f_1$ of $\mu$. By assumption, we have that $K_X\cdot\Gamma_0<0$ and $K_X\cdot\Gamma_1>0$.

First, let us assume that $\Gamma_1 \not\subset X^0$, the smooth locus of $X$. Let $P_0$ be the singularity whose exceptional chain contains $\hat{\Delta}_0$. As in Proposition \ref{prop:moriray}, the pair $(X,\Gamma_1)$ is log-canonical. Moreover, we have the birational morphism $\psi\colon X\to\bar{X}$ contracting  $\Gamma_1$, with $\rho(\bar{X})=1$ and $P:=\psi(\Gamma_1)=\frac{1}{\Delta}(1,\Omega)$. By Theorem~\ref{thm:1-compldegHz} there is a $1$-complement $B\in|-K_X|$, so $\psi_*B\in|-K_{\bar{X}}|$. Consequently, since $H^2(\bar{X},T_{\bar{X}})=0$, there exists a $\qq$-Gorenstein deformation $\bar{\mathcal{X}}_t\rightsquigarrow \bar{X}$ smoothing every singularity in $\operatorname{Sing}(\bar{X})\setminus\{P\}$ and is trivial near $P$. We observe that $a_{\Gamma_1}(\bar{\mathcal{X}}_t)<1$ if and only if $\Gamma_1\cdot K_X>0$. Hence by \cite[Lemma 3.1]{MZ25}, we extract $\Gamma_1$ in families over $P$ yielding a $\qq$-Gorenstein deformation $\mathcal{X}\to\mathbb{D}$ with $\mathcal{X}_0=X$; set $X^\prime:=\mathcal{X}_t$ for $t\neq 0$.

It remains to identify $X^\prime$ with a marked surface. First, we construct a 1-complement to the nearby fibers of $\mathcal{X}\to \mathbb{D}$.
Since $\pi^*\{0\}=\bar{\mathcal{X}}_0$, the threefold $\bar{\mathcal{X}}$ has no singularities in codimension two along $\bar{\mathcal{X}}_0$, so $(K_{\bar{\mathcal{X}}}+\bar{\mathcal{X}}_0)|_{\bar{\mathcal{X}}_0}\sim K_{\bar{X}}$ and $\operatorname{Diff}_{\bar{\mathcal{X}}_0}(0)=0$ by \cite[Lemma~2.1]{EV85}, and $(\bar{\mathcal{X}},\bar{\mathcal{X}}_0)$ is plt by inversion of adjunction \cite[Theorem~5.50]{KM98}.
Write $L:=-(K_{\bar{\mathcal{X}}}+\bar{\mathcal{X}}_0)$, which is ample over $\mathbb{D}$ after shrinking $\mathbb{D}$, since $L|_{\bar{\mathcal{X}}_0}=-K_{\bar{X}}$ is ample.
As $\bar{\mathcal{X}}_0\sim_{\mathbb{D}}0$ we have $L-\bar{\mathcal{X}}_0\sim_{\mathbb{D}}K_{\bar{\mathcal{X}}}-2K_{\bar{\mathcal{X}}}$ with $-2K_{\bar{\mathcal{X}}}$ ample over $\mathbb{D}$, so Kawamata--Viehweg vanishing gives $R^1\pi_*\mathcal{O}_{\bar{\mathcal{X}}}(L-\bar{\mathcal{X}}_0)=0$.
The restriction sequence associated to $\bar{\mathcal{X}}_0$ therefore makes
\[
H^0(\bar{\mathcal{X}},L)\longrightarrow H^0(\bar{X},-K_{\bar{X}})
\]
surjective, so $\psi_*B$ lifts to a $1$-complement $\overline{\mathcal{B}}$ of the pair $(\bar{\mathcal{X}},\bar{\mathcal{X}}_0)$ dominating $\mathbb{D}$; see also \cite[Proposition~3.7]{PS09}.
For $t\neq 0$ the fiber $\bar{\mathcal{X}}_t$ is not contained in the non-klt locus of $(\bar{\mathcal{X}},\overline{\mathcal{B}})$, and $\bar{\mathcal{X}}_t\sim\bar{\mathcal{X}}_0$, so adjunction along $\bar{\mathcal{X}}_t$ yields $K_{\bar{\mathcal{X}}_t}+\overline{\mathcal{B}}_t\sim 0$ with $(\bar{\mathcal{X}}_t,\overline{\mathcal{B}}_t)$ log canonical.

Let $\psi_t\colon X^\prime\to\bar{\mathcal{X}}_t$ be the extraction of $\Gamma_1^\prime:=\mathcal{D}_t$, where $\mathcal{D}$ denotes the exceptional divisor of $\mathcal{X}\to\bar{\mathcal{X}}$, and consider the log pullback
\[
K_{X^\prime}+B^\prime=\psi_t^*\bigl(K_{\bar{\mathcal{X}}_t}+\overline{\mathcal{B}}_t\bigr)\sim 0 .
\]
The coefficient of $\Gamma_1^\prime$ in $B^\prime$ is $1-a_{\Gamma_1^\prime}(\bar{\mathcal{X}}_t,\overline{\mathcal{B}}_t)$, which is at most $1$ because the pair downstairs is log canonical, and non-negative because $a_{\Gamma_1^\prime}(\bar{\mathcal{X}}_t)<1$ as observed above.
Hence $B^\prime\in|-K_{X^\prime}|$ is effective and $(X^\prime,B^\prime)$ is log canonical by \cite[Lemma~2.30]{KM98}, so $B^\prime$ is a $1$-complement of $X^\prime$. In particular $h^0(X^\prime,-K_{X^\prime})\geq 1$, so by the same argument of Proposition \ref{prop:markedunobstructed}, it follows that $X'$ smooths to a Hirzebruch surface.

Let $\phi^\prime\colon Y^\prime\to X^\prime$ be the minimal resolution, with fibration $\mu^\prime\colon Y^\prime\to\pp^1$.
By Lemma~\ref{lem:Hz-minres} applied to $X^\prime$, the fiber $f_1^\prime$ has exactly two non-exceptional components, namely the strict transform of $\Gamma_0$ and the exceptional curve $\hat{\Gamma}_1^\prime$ of $\psi_t$, and $f_2^\prime$, if present, has exactly one, a $(-1)$-curve $\Gamma^\prime$.
Since the singularity supported in $f_2$ has been smoothed, after a slide as in Lemma~\ref{defslides} the curve $\Gamma^\prime$ meets the Wahl chain of $P_0$ at an interior component transversally at one point, and the same applies over $f_1$ to $\Gamma_0$.
Hence $X^\prime$ is of marked type, with $X^\prime=X_{**}$ when $\Gamma_1^\prime$ meets two Wahl chains and $X^\prime=X_*$ when there is a unique singular point.

\medskip
For $\Gamma_1\subset X^0$, we have that $X$ belongs to the class of Definition \ref{def:Xkxy}. The first step of the deformation presented in \cite[Example 6.10]{UZ25} gives the deformation $X_*\rightsquigarrow X$ where $X_*$ is the marked surface associated to the Wahl chains of Example \ref{ex:primitive-Hz}.
\end{proof}
\begin{remark}\label{rem:not-dagger}
For the surfaces of $(\dagger)$ type, the statement does not hold. By construction, such a surface $X$ satisfies that $\Gamma_0\subset (X)^0$ with $\Gamma_0^2=-1$. Hence, $H^1(\Gamma_0, \operatorname{N}_{\Gamma_0|X})=0$ and consequently $\Gamma_0$ is unobstructed. This contradicts the possibility of having a marked surface $X_{\bullet}$ in the general fiber, since every deformation of $X$ contains a $(-1)$-curve.
\end{remark}

\subsection{Bounds on the smoothing index}\label{ss:sbounds}
As mentioned in the introduction, for an explicit classification of the orbifold degenerations of Hirzebruch surfaces, for a given $\ff_k\rightsquigarrow X$ it is desirable to effectively compute the smoothing index $m(X)$. In the following proposition we compute a non-optimal bound for $m(X)$. For instance, for the surfaces of Definition \ref{def:Xkxy}, it follows that $m(X_{k,x,y})=k$ 

\begin{proposition}\label{prop:zariski-decomposition}
Let $X$ be a normal projective surface with klt singularities admitting a $\qq$-Gorenstein smoothing to a Hirzebruch surface. Assume $\rho(X)=2$, and let $c:=-\widehat{\Gamma}_1^{2}$.
Suppose that $-K_X$ is not nef. Then the Zariski decomposition of $-K_X$ is
\[
-K_X=P+\frac{K_X\cdot\Gamma_1}{-\Gamma_1^{2}}\,\Gamma_1,
\]
where $P$ is nef, $P\cdot\Gamma_1=0$ and
\[
P^{2}=8+\frac{(K_X\cdot\Gamma_1)^{2}}{-\Gamma_1^{2}}.
\]
In particular $\{P,\Gamma_1\}$ is an orthogonal basis of $N^1(X)_\qq$.
Moreover, if $m(X)\geq 2$, then
\begin{equation}\label{eq:bound}
(m(X)-2)^{2}<c\,m(X).
\end{equation}
Consequently $m(X)\leq c+3$, and $m(X)\leq 3$ when $c=1$.
\end{proposition}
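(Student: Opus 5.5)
The plan is to do everything numerically in $N^1(X)_\qq$, which is $2$-dimensional, using the two extremal curves of Proposition~\ref{prop:moriray}. By that proposition, $\Gamma_1^2<0$, $K_X\cdot\Gamma_0<0$, and here $K_X\cdot\Gamma_1>0$.

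\textbf{Zariski decomposition.} Set $\alpha:=\frac{K_X\cdot\Gamma_1}{-\Gamma_1^2}>0$ and $P:=-K_X-\alpha\Gamma_1$.
\begin{itemize}
\item \emph{Orthogonality.} Directly $P\cdot\Gamma_1=-K_X\cdot\Gamma_1-\alpha\Gamma_1^2=0$.
\item \emph{Self-intersection.} Expanding with $K_X^2=8$ gives $P^2=8+2\alpha K_X\cdot\Gamma_1+\alpha^2\Gamma_1^2=8+\frac{(K_X\cdot\Gamma_1)^2}{-\Gamma_1^2}>0$.
\item \emph{Nefness.} Since $\operatorname{NE}(X)$ is spanned by $[\Gamma_0],[\Gamma_1]$, it suffices to check $P\cdot\Gamma_0\ge 0$. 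As $h^0(-K_X)\geq 1$, write $-K_X\equiv x\Gamma_0+y\Gamma_1$ with $x,y\ge0$. We have $x>0$: otherwise $-K_X\equiv y\Gamma_1$ would force $K_X\cdot\Gamma_1=-y\Gamma_1^2\le 0$. Then $P\equiv x\Gamma_0+(y-\alpha)\Gamma_1$. Using $P\cdot\Gamma_1=0$ we get $P^2=x\,(P\cdot\Gamma_0)$, so $P\cdot\Gamma_0=P^2/x>0$.
\end{itemize}
Thus $P$ is nef (even big), the negative part is supported on $\Gamma_1$ with negative definite intersection matrix, and $P\perp\Gamma_1$. Hence this is the Zariski decomposition. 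Since $P^2>0$ and $\Gamma_1^2<0$, the classes $P,\Gamma_1$ are independent, so $\{P,\Gamma_1\}$ is an orthogonal basis.

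\textbf{Bound on $m(X)$.} Let $m=m(X)\ge2$ and take a smoothing $\ff_m\rightsquigarrow X$. Let $\Delta\subset\ff_m$ be the negative section, so $\Delta^2=-m$ and $K\cdot\Delta=m-2$. Via the specialization map $\operatorname{Pic}(\ff_m)\hookrightarrow\operatorname{Cl}(X)$ of \eqref{eq:short-exact}, the flat limit of $\Delta$ is an effective Weil divisor $D$ on $X$. Intersection numbers are preserved by the $\qq$-Gorenstein family, so $D^2=-m$ and $-K_X\cdot D=2-m$.
\begin{itemize}
\item \emph{Decompose $D$.} In the orthogonal basis, $D\equiv\frac{P\cdot D}{P^2}P+\frac{\Gamma_1\cdot D}{\Gamma_1^2}\Gamma_1$, so $-m=\frac{(P\cdot D)^2}{P^2}-\frac{(\Gamma_1\cdot D)^2}{-\Gamma_1^2}$.
\item \emph{Use nefness of $P$.} Since $D$ is effective, $P\cdot D\ge0$. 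From $(P+\alpha\Gamma_1)\cdot D=2-m<0$ we get $\Gamma_1\cdot D<0$ and $(\Gamma_1\cdot D)^2\ge (m-2)^2/\alpha^2$.
\item \emph{Eliminate $P\cdot D$.} Here $P\cdot D=2-m-\alpha\,\Gamma_1\cdot D$. Substituting into the Hodge-type identity and optimizing over the single remaining unknown $\Gamma_1\cdot D$ should yield an inequality of the form $(m-2)^2<\frac{(K_X\cdot\Gamma_1)^2}{-\Gamma_1^2}\cdot m$, up to the correction coming from $P^2$.
\end{itemize}

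\textbf{Comparing with $c$.} Write $K_Y=\phi^*K_X+\sum d_iE_i$ with $d_i\in(-1,0]$. Since $\hat\Gamma_1$ meets at most two exceptional components transversally (Lemma~\ref{lem:Hz-minres}), we get $K_X\cdot\Gamma_1=c-2-\sum d_iE_i\cdot\hat\Gamma_1<c$. Combining this with adjunction, $\Gamma_1^2+K_X\cdot\Gamma_1=-\frac1{n_0^2}-\frac1{n_1^2}$ as in Proposition~\ref{prop:moriray}, the aim is to show $\frac{(K_X\cdot\Gamma_1)^2}{-\Gamma_1^2}\le c$. This gives $(m-2)^2<cm$.

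\textbf{Main obstacle.} I expect the hardest step to be the elimination step together with this last comparison: tracking the $P^2$ term and the local discrepancies precisely enough to land exactly on $c\,m$. If the direct optimization is lossy, I would instead argue on $Y$ with $\phi^*D$ and $\hat\Gamma_1$ directly.

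\textbf{Consequences.} The inequality $(m-2)^2<cm$ reads $m^2-(c+4)m+4<0$. Hence $m<\frac{c+4+\sqrt{c^2+8c}}{2}<c+4$, so $m\le c+3$. For $c=1$ it reads $m^2-5m+4<0$, i.e. $1<m<4$, so $m\le3$.
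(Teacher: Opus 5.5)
Your overall route is the paper's: the decomposition of $-K_X$ in the orthogonal basis $\{P,\Gamma_1\}$, the limit $D$ of the negative section written in that basis, and a comparison of $\theta:=\frac{(K_X\cdot\Gamma_1)^2}{-\Gamma_1^2}$ with $c$. Your proof of the Zariski decomposition is fine. It is slightly more direct than the paper's, which invokes Sakai's decomposition and then identifies the negative part as a multiple of $\Gamma_1$. There is one sign slip. If $-K_X\equiv y\Gamma_1$, then $K_X\cdot\Gamma_1=y(-\Gamma_1^2)\geq 0$, which is no contradiction. Use $K_X^2=y^2\Gamma_1^2\leq 0<8$ instead to get $x>0$.

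The bound is not finished, and you put the strictness where it cannot come from. There is nothing to optimize. Put $u=K_X\cdot\Gamma_1$, $v=-\Gamma_1^2$ and $s=P\cdot D\geq 0$. Substitute $\Gamma_1\cdot D=(2-m-s)v/u$ into your expression for $D^2$ and use $P^2-\theta=8$. This gives the exact identity
\[
\theta m-(m-2)^2=2(m-2)s+\frac{8}{P^2}s^2\geq 0 .
\]
So the $P^2$ term works in your favour, but you only get $(m-2)^2\leq\theta m$. Equality holds exactly when $P\cdot D=0$, i.e. when $D$ is numerically a multiple of $\Gamma_1$, and nothing in your argument rules this out. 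Combined with your stated target $\theta\leq c$, this yields only $(m-2)^2\leq cm$. That still gives $m\leq c+3$, but for $c=1$ it gives only $m\leq 4$, not $m\leq 3$. (Your claim $\Gamma_1\cdot D<0$ also fails at $m=2$, though the identity above makes it unnecessary.)

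The strict inequality has to come from $\theta<c$, and your own ingredients give it. Adjunction gives $v-u=\frac{1}{n_0^2}+\frac{1}{n_1^2}>0$, so $\theta=u\cdot\frac{u}{v}<u$. Your discrepancy computation gives $u<c$. Hence $\theta<u<c$, and therefore $(m-2)^2\leq\theta m<cm$. This differs slightly from the paper, which proves $\theta<u<v\leq c$. There the step $v\leq c$ comes from writing $\phi^*\Gamma_1=\widehat{\Gamma}_1+\sum\alpha_iE_i$ with all $\alpha_i\geq 0$. Your step $u<c$ replaces it and works equally well.
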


\begin{proof}
Let $\pi:(X\subset \mathcal{X})\to (0\in\mathbb{D})$ be the corresponding $W$-surface, and write $k:=m(X)$, so that $\mathcal{X}_t\cong \ff_k$ for $t\neq 0$.
Upper semicontinuity applied to $\mathcal{O}_{\mathcal{X}}(-mK_{\mathcal{X}/\mathbb{D}})$ for $m$ divisible by the Gorenstein index of $\mathcal{X}$ gives $h^0(X,-mK_X)\geq h^0(\ff_k,-mK_{\ff_k})\geq 4m^2+O(m)$, so $-K_X$ is big.
By \cite[Corollary 7.5]{S84} there is a unique decomposition
\[
-K_X=P+N
\]
with $P$ nef, $N\geq 0$ with negative definite support, and $P$ orthogonal to every component of $N$.
If $\operatorname{Supp}N$ had two components, their classes would span $N^1(X)_\qq$ because $\rho(X)=2$, so the intersection form would be negative definite on all of $N^1(X)_\qq$, contradicting $(-K_X)^2=8>0$.
Moreover $N\neq 0$, since otherwise $-K_X$ would be nef while $-K_X\cdot\Gamma_1<0$.
Writing $N=n\,C$ with $C$ irreducible and $n>0$, orthogonality gives $-K_X\cdot C=nC^2<0$, hence $K_X\cdot C>0$ and therefore $C=\Gamma_1$ by uniqueness of the $K$-positive curve.
Then $P\cdot\Gamma_1=0$ forces $-K_X\cdot\Gamma_1=n\Gamma_1^{2}$, that is $n=(K_X\cdot\Gamma_1)/(-\Gamma_1^{2})$, and
\[
P^{2}=P\cdot(-K_X)=(-K_X)^{2}+n\,(K_X\cdot\Gamma_1)=8+\frac{(K_X\cdot\Gamma_1)^{2}}{-\Gamma_1^{2}}.
\]
Since $P^{2}>\Gamma_1^{2}$ and $P\cdot\Gamma_1=0$, the classes $P$ and $\Gamma_1$ are linearly independent, hence form an orthogonal basis of $N^1(X)_\qq$.

We now prove the numerical bound.
Set
\[
u:=K_X\cdot\Gamma_1>0,\qquad v:=-\Gamma_1^{2}>0,\qquad \theta:=\frac{u^{2}}{v},
\]
so that the decomposition above reads $-K_X=P+\frac{u}{v}\Gamma_1$ with $P^{2}=8+\theta$.

Now, we assume $k\geq 2$. Let $D\in\Cl(X)$ be the specialization in $X$ of $\Delta_0$, an effective divisor with $K_X\cdot D=k-2$ and $D^{2}=-k$.
Write $D=\lambda P+\mu\Gamma_1$ in the orthogonal basis above.
Since $P$ is nef and $D$ is effective, $\lambda P^{2}=P\cdot D\geq 0$, hence $\lambda\geq 0$.
From $K_X=-P-\frac{u}{v}\Gamma_1$ we obtain $k-2=-\lambda P^{2}+\mu u$ and $k=\mu^{2}v-\lambda^{2}P^{2}$, and eliminating $\mu$ gives
\[
\theta k-(k-2)^{2}=\lambda P^{2}\bigl(2(k-2)+8\lambda\bigr)\geq 0,
\]
so that $(k-2)^{2}\leq\theta k$.

It remains to prove $\theta<c$.
Let $n_0,n_1$ be the indices of the singular points of $X$ lying on $\Gamma_1$, with $n_i=1$ at a smooth point.
By adjunction, as in Proposition \ref{prop:moriray}, we have
\[
u=v-\frac{1}{n_0^{2}}-\frac{1}{n_1^{2}}<v .
\]
Writing $\phi^*\Gamma_1=\widehat{\Gamma}_1+\sum_i\alpha_iE_i$, the coefficients $\alpha_i$ are determined by $\phi^*\Gamma_1\cdot E_i=0$; since the intersection matrix $(E_i\cdot E_j)$ is negative definite and $\widehat{\Gamma}_1\cdot E_i\geq 0$, all $\alpha_i\geq 0$.
Intersecting with $\widehat{\Gamma}_1$ therefore gives
\[
v=-\phi^*\Gamma_1\cdot\widehat{\Gamma}_1=c-\sum_i\alpha_i\,(E_i\cdot\widehat{\Gamma}_1)\leq c .
\]
Hence $\theta=u^{2}/v<uv/v=u<v\leq c$, and combining, $(k-2)^{2}<ck$.

This inequality is equivalent to $k^{2}-(c+4)k+4<0$, which forces $k <\tfrac12\bigl(c+4+\sqrt{c^{2}+8c}\bigr)$. Since $\sqrt{c^{2}+8c}<c+4$, the larger root is less than $c+4$. Consequently, $k\leq c+3$.
For $c=1$ the larger root equals $4$, so $k\leq 3$.
\end{proof}
For the case of marked surfaces of Definition \ref{def:marked-surface} the bound $m(X_\bullet)$ can be further sharpened. However, it is still not optimal.

\begin{corollary}\label{cor:marked-bound}
Let $X_\bullet$ be a marked surface of index $k\geq 2$.
Then $m(X_\bullet)\ \leq\ k+i$,
where $i=1$ if $X_\bullet$ is primitive and $i=2$ if $X_\bullet$ is non-primitive.
\end{corollary}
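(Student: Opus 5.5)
The plan is to specialize Proposition~\ref{prop:zariski-decomposition} to $X_\bullet$, replacing the crude estimates $\theta<u<v\le c$ used there with exact values read off from the construction of Definition~\ref{def:marked-surface}. By construction $\hat\Gamma_1^{\,2}=-k$, so $c=k$. The curve $\Gamma_1$ meets the chain $[e_1,\dots,e_r]$ at the end adjacent to the entry $k$ in the marking. In the primitive case ($t=0$) its other end lies in the smooth locus, so $n_1=1$. In the non-primitive case it meets the end $f_t$ of the auxiliary Wahl chain, so $n_1\geq 2$. Thus the invariants
\[
u=K_{X_\bullet}\cdot\Gamma_1,\qquad v=-\Gamma_1^2,\qquad \theta=u^2/v
\]
are determined by the two Wahl chains and the single entry $k$.

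First I compute $v$. Write $\phi^*\Gamma_1=\hat\Gamma_1+\alpha E_{\mathrm{end}}+\cdots$; since $\hat\Gamma_1$ meets only the end curves, $v=k-\alpha_0-\alpha_1$. Here $\alpha_j$ is the coefficient of the end curve met by $\hat\Gamma_1$ in the pullback, and it has the standard form $a'_j/n_j^2$, where $a'_j$ is the inverse-type numerator attached to that end of the Wahl chain; in the primitive case $\alpha_1=0$. Then adjunction as in Proposition~\ref{prop:moriray} gives
\[
u=v-\frac1{n_0^2}-\frac1{n_1^2},
\]
so in the primitive case $u=v-1-1/n_0^2<k-1$, and in the non-primitive case $u<k$ with a controlled defect. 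Next I keep the exact identity from the proof of Proposition~\ref{prop:zariski-decomposition}, with $m=m(X_\bullet)$ in place of $k$:
\[
\theta m-(m-2)^2=\lambda P^2\bigl(2(m-2)+8\lambda\bigr)\geq 0,
\]
rather than only its sign.

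To exclude $m=k+2$ in the primitive case, and $m=k+3$ in the non-primitive case, I plug these values into the inequality. The target is an upper bound of the form $\theta\le k-2+\tfrac{4}{k+2}$, respectively $\theta\le k-1+\tfrac{9}{k+3}$. The estimate $\theta<u$ alone is too weak for large $k$: for instance, in the primitive case $(k-1)(k+2)>k^2$. So I will use $\theta=u^2/v=u\bigl(1-(v-u)/v\bigr)$, which gives
\[
\theta\le u-\frac{u\,(v-u)}{v}.
\]
I will then combine this with the refinement coming from $\lambda$. The specialization $D$ of $\Delta_0$ has a class that can be computed on $X_\bullet$, because the negative section of $\ff_{e_i}$ is contracted inside the central mark. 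I expect $\lambda>0$ to be bounded below explicitly, which makes the right-hand side of the identity strictly positive by a definite amount and yields the strict inequality needed.

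The main obstacle is this last quantitative step: controlling the pullback coefficients $\alpha_j$, equivalently $v$, uniformly over all Hirzebruch Wahl chains, and computing $\lambda$ from the class of $D$. I would first try to express $\alpha_0$ through the zero continued fraction $[f_1,\dots,f_t,k,e_1,\dots,e_{i-1}]=0$ of weight $0$. Its vanishing should force $\alpha_0$ to be close to $1$, making $v$ close to $k-1$ in the primitive case and close to $k-2$ in the non-primitive case. Remark~\ref{rem:Hz-index} (central mark $k+4$, $k+5$, $k+7$, $k+8$) serves as a consistency check for these values. If this works, substituting into the displayed identity and using integrality of $m$ gives $m\le k+1$ for primitive chains and $m\le k+2$ for non-primitive ones.
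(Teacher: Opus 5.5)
Your primitive case is the paper's argument, and you already have everything needed to finish it. The detour through $\lambda$ and the class of $D$ is unnecessary. Since $v-u=1+1/n_0^2>1$, you get $\theta=u^2/v<u^2/(u+1)$, which is increasing in $u$, so $u<k-1$ gives $\theta<(k-1)^2/k$. Then $(k-1)^2(k+2)=k^3-3k+2<k^3$ gives $\theta<k^2/(k+2)$. The function $x\mapsto (x-2)^2/x$ is increasing on $[2,\infty)$, so $(m-2)^2\le\theta m$ excludes every $m\ge k+2$ at once. You need this monotonicity, not just the value $m=k+2$, because nothing forces $m\equiv k \pmod 2$. The paper does the same computation through the convex function $h(x)=(k+2)x^2-k^2x-k^2$, which is negative on $[0,k-1]$.

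The genuine gap is the non-primitive case. There $v-u=1/n_0^2+1/n_1^2\le 1/2$, so $\theta$ lies only slightly below $u=k-a_0-a_1$. Here $a_0,a_1\in(0,1)$ are the log discrepancies of the two end curves met by $\hat\Gamma_1$.
\begin{itemize}
\item \textbf{What is needed.} Excluding $m\ge k+3$ requires $\theta<(k+1)^2/(k+3)=k-1+4/(k+3)$ (not $9/(k+3)$). This amounts to $K_{X_{**}}\cdot\Gamma_1\le k-1+4/(k+3)$, i.e.\ $a_0+a_1\ge 1-4/(k+3)$. Nothing in your proposal provides this bound.
\item \textbf{The heuristic on $\alpha_0$.} Your expectation that the zero continued fraction forces $\alpha_0$ close to $1$ is unsupported. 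End log discrepancies of Wahl chains can be small: the $-(n+2)$ end of $[n+2,2,\dots,2]$ has log discrepancy $1/n$. You give no mechanism by which the marking excludes such ends next to $\Gamma_1$.
\item \textbf{The $\lambda$-refinement.} It adds no information. Writing $D=\lambda P+\mu\Gamma_1$, the conditions $D^2=-m$ and $K_{X_\bullet}\cdot D=m-2$ already determine $\lambda,\mu$ from $m,u,v$. The identity you quote is just their elimination. A lower bound on $\lambda$ would need extra input, such as integrality or effectivity of $D$ in $\operatorname{Cl}(X_\bullet)$, which you do not have.
\item \textbf{The class of $D$.} $D$ is the limit of the negative section of the general fiber $\ff_m$. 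It is not read off from the negative section of $\ff_{e_i}$, whose image on $X_\bullet$ is a point.
\end{itemize}

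The paper does not use the Zariski-decomposition inequality for $X_{**}$ at all. It takes the central-mark values $k+5$ and $k+8$ from Remark~\ref{rem:Hz-index} and runs the $K$-MMP of the smoothing $(X_{**}\subset\mathcal{X})\to(0\in\mathbb{D})$. When this consists of flips, it ends in a smooth degeneration $\ff_m\rightsquigarrow\ff_{k+4}$ (resp.\ $\ff_{k+6}$), and the paper bounds $m$ from there using parity. To complete your route, you would have to prove the missing upper bound on $K_{X_{**}}\cdot\Gamma_1$ from the marking, or switch to such a threefold argument.
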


\begin{proof}
Write $m=m(X_\bullet)$ and we follow the same notation of Proposition \ref{prop:zariski-decomposition}. First, assume $X_\bullet=X_*$.
By construction $\Gamma_1$ meets the unique singularity at an end of its chain and $X_*$ is smooth along $\Gamma_1$ elsewhere, so adjunction gives $v>u+1$ and the log-discrepancy of the component met by $\widehat{\Gamma}_1$ lies in $(0,1)$, so $u<k-1$.
Assume for contradiction that $m\geq k+2$, then the expression $(m-2)^{2}\leq\theta m$ in Proposition \ref{prop:zariski-decomposition} along with the monotonicity of the function $f(x)=\frac{(x-2)^2}{x}$ on the interval $[2,\infty)$ provide the inequality $$(u+1)k^{2}<(k+2)u^2.$$ 
By defining $h(x)=(k+2)x^2-k^2x-k^2$, we observe that $h(u)>0$. However, $h$ is convex with $h(0)=-k^{2}<0$ and $h(k-1)=2-3k<0$, so $h<0$ on $[0,k-1]$, a contradiction.
Therefore, we conclude $m\leq k+1$.

Assume $X_\bullet=X_{**}$.
By Remark \ref{rem:Hz-index} the central mark is $k+5$ in type $\mathrm{(I)}$ and $k+8$ in type $\mathrm{(II)}$.
We run the $K$-MMP of $(X_{**}\subset\mathcal{X})\to(0\in\mathbb{D})$. Assume that it only consists of flips, otherwise, $m=1$. As in \cite[\S 2]{Urz16a}, this process terminates on a smooth deformation $\ff_m\rightsquigarrow\ff_{k+4}$, respectively $\ff_m\rightsquigarrow\ff_{k+6}$.
In either case $m\equiv k\pmod 2$ gives $m\leq k+2$.
\end{proof}
The inequality $m(X_\bullet)\leq m(X)$ established by Corollary \ref{cor:marked-bound} for the $\qq$-Gorenstein deformation $X_\bullet\rightsquigarrow X$ belongs to a more general framework, where the smoothing index function is upper semi-continuous.

\begin{theorem}\label{thm:usc-smoothing-index}
Let $X$ be a normal projective surface with only Wahl singularities admitting a $\qq$-Gorenstein smoothing to some Hirzebruch surface, and let $\pi:(X\subset\mathcal{X})\to(0\in\mathbb{D})$ be a $\qq$-Gorenstein deformation.
Then $m(\mathcal{X}_t)\leq m(X)$ for $t\neq 0$.
\end{theorem}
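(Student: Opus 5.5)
The plan is to compose deformations and reuse the versality argument from Theorem~\ref{thm:consc-def}. Fix $t\neq 0$ and set $X_t:=\mathcal{X}_t$ and $m:=m(X_t)$. By Definition~\ref{def:smoothing-index} there is a $\qq$-Gorenstein smoothing $\ff_m\rightsquigarrow X_t$. The aim is to build from it a $\qq$-Gorenstein smoothing $\ff_m\rightsquigarrow X$; then $m\leq m(X)$ follows by maximality. Parity needs no separate treatment, since any $\ff_m$ obtained in this way is automatically a smoothing of $X$.

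\textbf{Step 1 (miniversal family).} $X$ is rational with Wahl singularities and, by Theorem~\ref{thm:1-compldegHz}, has $h^0(-K_X)\geq 1$. Proposition~\ref{prop:no-obstruction} therefore gives $H^2(X,T_X)=0$. Since Wahl singularities have $T^2_{\qq G}=0$, the local-to-global sequence \eqref{eq:loc-glob-QG} shows that the miniversal $\qq$-Gorenstein space $(S,0)$ of $X$ is smooth. Let $\mathcal{X}^{\mathrm{univ}}\to S$ be the miniversal family, and let $\phi:(\mathbb{D},0)\to(S,0)$ classify $\pi$.

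\textbf{Step 2 (openness of versality).} Exactly as in the proofs of Theorems~\ref{thm:consc-def} and~\ref{thm:toric-degeneration}, versality is open, so $\mathcal{X}^{\mathrm{univ}}$ is versal at $p=\phi(t)$ for $t$ small. The given smoothing $\ff_m\rightsquigarrow X_t$ is a $\qq$-Gorenstein deformation of $X_t=\mathcal{X}^{\mathrm{univ}}_p$, so it is induced by a germ of curve $\gamma:(\mathbb{D},0)\to(S,p)$. Because $\qq$-Gorenstein deformations of $X_t$ are precisely those pulled back from $\mathcal{X}^{\mathrm{univ}}$ near $p$, the family is $\qq$-Gorenstein.

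\textbf{Step 3 (returning to $0$).} Let $C_m=\{s\in S:\mathcal{X}^{\mathrm{univ}}_s\cong\ff_m\}$. After algebraizing via \cite[Theorem~1.6]{Art69}, the Isom-scheme and Chevalley's theorem show that $C_m$ is constructible. Step~2 gives points of $C_m$ arbitrarily close to $\phi(t)$. Letting $t\to 0$ puts $0\in\overline{C_m}$. The curve selection lemma \cite[Theorem~III.25]{M04}, applied as in Theorem~\ref{thm:consc-def}, then produces $\gamma':(\mathbb{D},0)\to(S,0)$ with $\gamma'(\mathbb{D}\setminus\{0\})\subset C_m$. Pulling back along $\gamma'$ gives a $\qq$-Gorenstein smoothing $\ff_m\rightsquigarrow X$, which proves $m(X_t)\leq m(X)$.

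\textbf{Main obstacle.} The hard step is Step~3, specifically that the closure point really is $0$. We only know that $C_m$ accumulates near $\phi(t)$ for each fixed small $t$, and we need this uniformly as $t\to 0$. To handle it, I would first fix a neighborhood $U$ of $0$ on which versality holds, so that all the $\phi(t)$ and nearby points of $C_m$ lie in $U$. Then I would use that a constructible set's Zariski closure contains analytic accumulation points, as was done in Theorem~\ref{thm:toric-degeneration}.

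A secondary point is that $X_t$ might itself be smooth, i.e.\ isomorphic to some $\ff_j$. In that case $m(X_t)$ is governed by Theorem~\ref{thm:consc-def} and the same argument applies verbatim.
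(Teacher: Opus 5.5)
Your route is the paper's own: a smooth miniversal base, openness of versality at $\phi(t)$, constructibility of $C_m$ via \cite{Art69}, Isom and Chevalley, then curve selection. However, Step~3 has a genuine gap, exactly at the point you flag.

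The number $m=m(\mathcal{X}_t)$ and the smoothing $\ff_m\rightsquigarrow\mathcal{X}_t$ are attached to one fixed $t$. So Step~2 only produces points of $C_m$ accumulating at the single point $\phi(t)$. ``Letting $t\to0$'' changes $m$, and for $u\neq t$ you have no reason to believe $\mathcal{X}_u$ smooths to $\ff_m$.

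Your proposed remedy does not close this. Fixing a versal neighbourhood $U$ and comparing analytic and Zariski closures of constructible sets only handles the passage from analytic to Zariski closure, which is not the issue. It does not produce points of $C_m$ near $0$.

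The paper bridges this step by asserting that $\mathcal{X}^{\mathrm{univ}}_{\phi(u)}\cong\mathcal{X}_t$ for every small $u\neq0$. Then one fixed smoothing is available at every $\phi(u)$. This is an isotriviality statement that would itself need to be justified.

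You can close the gap without it by arguing by contradiction with a \emph{fixed} target index, using Theorem~\ref{thm:consc-def}.
\begin{enumerate}
\item Suppose there are $u_n\to0$, $u_n\neq0$, and $\qq$-Gorenstein smoothings $\ff_{k_n}\rightsquigarrow\mathcal{X}_{u_n}$ with $k_n>m(X)$.
\item Lower $k_n$ by an even number using Theorem~\ref{thm:consc-def}; there is nothing to do if $k_n=1$, since then $m(X)=0$. Each $\mathcal{X}_{u_n}$ then smooths to $\ff_{j_n}$ with $j_n\in\{m(X)+1,m(X)+2\}$. After passing to a subsequence, $j_n=j$ is constant.
\item Your Steps~2--3 now run verbatim with $C_j$. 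Versality at $\phi(u_n)$ gives $p_n\in C_j$ with $|p_n-\phi(u_n)|<|u_n|$. So $p_n\to0$ and $0\in\overline{C_j}$.
\item Curve selection yields $\ff_j\rightsquigarrow X$ with $j>m(X)$, contradicting the definition of $m(X)$.
\end{enumerate}
This shows that for all $t\neq0$ in a small enough disc, every Hirzebruch smoothing $\ff_k\rightsquigarrow\mathcal{X}_t$ has $k\le m(X)$. That gives both the finiteness of $m(\mathcal{X}_t)$ and the inequality.

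You should also check, as the paper does, that $\mathcal{X}_t$ admits some $\qq$-Gorenstein smoothing to a Hirzebruch surface at all, so that $m(\mathcal{X}_t)$ is defined. Do this by moving from $\phi(t)$ into the locus of $S$ where all singularities are smoothed. Its fibres are Hirzebruch surfaces by the argument of Proposition~\ref{prop:markedunobstructed}.
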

\begin{proof}
Fix $t\neq 0$ and write $X^\prime:=\mathcal{X}_t$.
By Proposition \ref{prop:Noether} it then follows that $\rho(X)=2$. By upper semicontinuity applied to the given smoothing of $X$ we have $h^0(X,-K_X)>0$, so $H^2(X,T_X)=0$ by Proposition~\ref{prop:no-obstruction}.
As in the proof of Theorem~\ref{thm:toric-degeneration}, the miniversal space $S:=\operatorname{Def}^{\qq\mathrm{G}}(X)$ is then smooth with coordinates $(t_1,\ldots,t_r,s_1,\ldots,s_h)$ for which $\{t_i=0\}$ is the locus where $P_i\in \operatorname{Sing}(X)$ remain in the deformation.
Let $\phi:(\mathbb{D},0)\to(S,0)$ be a morphism of germs inducing $\pi$, i.e., $\mathcal{X}\cong\mathcal{X}^{\mathrm{univ}}\times_S\mathbb{D}$, and put $q:=\phi(u)$ for $0<|u|\ll 1$, so that $X^\prime\cong\mathcal{X}^{\mathrm{univ}}_q$ and after reordering, $t_i(q)=0$ exactly for $i\leq l$.
The germ $\ell:(\cc,0)\to(S,q)$, $$\ell(v):=(v,\ldots,v,t_{l+1}(q),\ldots,t_r(q),s(q)),$$ pulls the miniversal family back to a $\qq$-Gorenstein smoothing of $X^\prime$, whose general fiber is some $\ff_{k^\prime}$ by Proposition~\ref{prop:markedunobstructed}. So $X'$ is unobstructed and consequently $m(X')$ is well defined.

Set $k^\prime:=m(X^\prime)$ and fix a $\qq$-Gorenstein smoothing $\ff_{k^\prime}\rightsquigarrow X^\prime$.
By openness of versality \cite[Theorem~1.5]{A74} we may shrink $S$ so that $\mathcal{X}^{\mathrm{univ}}\to S$ is versal at every point.
Since $\mathcal{X}^{\mathrm{univ}}_{\phi(u)}\cong X^\prime$ for every $u\neq 0$ small, versality at $\phi(u)$ produces $\psi_u:(\mathbb{D},0)\to(S,\phi(u))$ with $\mathcal{X}^{\mathrm{univ}}_{\psi_u(\tau)}\cong\ff_{k^\prime}$ for $\tau\neq 0$.
Choosing $u_n\to 0$ and then $\tau_n\neq 0$ with $\lvert\psi_{u_n}(\tau_n)-\phi(u_n)\rvert<\lvert u_n\rvert$, the points $p_n:=\psi_{u_n}(\tau_n)$ satisfy $p_n\to 0$ and $\mathcal{X}^{\mathrm{univ}}_{p_n}\cong\ff_{k^\prime}$.
Hence $0$ lies in the closure of $C:=\{p\in S:\mathcal{X}^{\mathrm{univ}}_p\cong\ff_{k^\prime}\}$, which is constructible by the argument of Theorem~\ref{thm:consc-def}, and the formal curve selection lemma gives a $\qq$-Gorenstein smoothing $\ff_{k^\prime}\rightsquigarrow X$.
Therefore $k^\prime\leq m(X)$.
\end{proof}

\section{Degenerations of $\ff_k$ with $k\leq1$}\label{sec:degen-k01}
Throughout this section we consider $W$-surfaces $\pi:(X\subset\mathcal{X})\to(0\in\mathbb{D})$ with general fiber $\mathcal{X}_t\cong\ff_k$ for $k\leq 1$, so that the general fiber is a del Pezzo surface. We assume that $\rho(X)=2$, unless otherwise stated. The cases with $\rho(X)=1$ are studied separately, see subsection~\ref{subsec:F0}.
As in the previous section we write $\phi:Y\to X$ for the minimal resolution and $\mu:Y\to\pp^1$ for the ruling of Lemma~\ref{lem:Hz-minres}, and we let $f_1$ be the degenerate fiber containing the strict transforms $\hat{\Gamma}_0,\hat{\Gamma}_1$ of the generators of $\operatorname{NE}(X)$ given by Proposition~\ref{prop:moriray}. If $-K_X$ is not ample, we assume that $\Gamma_1\cdot K_X\geq 0$.
We denote by $P_{\ff_k}$ the Fano polygon whose spanning fan defines $\ff_k$, and by $[P_{\ff_k}]$ its mutation-equivalence class in the sense of Definition~\ref{def:mut}.
In contrast with Section~\ref{sec:degen-k3}, here both the case $K_X\cdot\Gamma_1>0$ and the case $-K_X$ nef occur, and the first subsection shows that the former reduces to the latter.
\subsection{Antiflips and the $K$-MMP for degenerations of $\ff_0$ and $\ff_1$}\label{subsec:antiflips}
This subsection proves that a degeneration $X$ of $\ff_k$, with $k\leq 1$, carrying the extremal curve $\Gamma_1$ with $K_X\cdot\Gamma_1>0$ contracting to a cyclic quotient singularity admits a $\qq$-Gorenstein smoothing with prescribed axial multiplicities at the two Wahl singularities lying on $\Gamma_1$. This yields a terminal $K$-antiflip at $\Gamma_1$, by \cite[Corollary~3.23]{HTU17}.
By running a determined MMP on the total space, we show it terminates in a model whose central fiber $X_N$ satisfies that $-K_{X_N}$ is nef (Corollary~\ref{cor:antiflip-termination}); for $k=1$ this condition may be strengthened to $-K_{X_N}$ ample.

\begin{theorem}\label{thm:admissible-smoothing}
Let $X$ be a projective surface with $\rho(X)=2$ admitting a $\qq$-Gorenstein smoothing to $\ff_k$  with $k=0,1$.
Let $\Gamma_1\subset X$ be an extremal curve with $K_X\cdot\Gamma_1> 0$, contracting to a cyclic quotient singularity $\frac{1}{\Delta}(1,\Omega)$, and let $\delta$ be the invariant of the associated extremal neighborhood.
Then for every pair $(\alpha_0,\alpha_1)\in\nn_{>0}^2$ satisfying $\alpha_0^2-\delta\alpha_0\alpha_1+\alpha_1^2>0$, there exists a $\qq$-Gorenstein smoothing $(X\subset\mathcal{X})\to(0\in\mathbb{D})$ such that:
\begin{enumerate}
\item the general fiber is $\mathcal{X}_t\cong\ff_{l_0}$, where $l_0=k\bmod 2$;
\item A toric structure $B\subset X$ near $\Gamma_1$ lifts to the total space, i.e., there exists $D\in |-K_{\mathcal{X}}|$ such that $D|_X=B$; and
\item the axial multiplicities at the two Wahl singularities contained in $\Gamma_1$ are $\alpha_0$ and $\alpha_1$.
\end{enumerate}
In particular, the smoothing admits a terminal $K$-antiflip at $\Gamma_1$.
\end{theorem}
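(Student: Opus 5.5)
The plan is to build the smoothing from unobstructedness plus a local-to-global argument, in the style of the proofs of Theorem~\ref{thm:consc-def} and Theorem~\ref{thm:toric-degeneration}. Upper semicontinuity gives $h^0(X,-K_X)\geq 1$, and then Proposition~\ref{prop:no-obstruction} yields $H^2(X,T_X)=0$. The $\qq$-Gorenstein deformation space $S=\operatorname{Def}^{\qq G}(X)$ is therefore smooth, and, via \eqref{eq:loc-glob-QG}, the map $S\to\prod_P\operatorname{Def}^{\qq G}(P)$ to the local spaces is a submersion. For a Wahl singularity $P_j=\frac{1}{n_j^2}(1,n_ja_j-1)$ the local $\qq$-Gorenstein smoothing is $\{xy=z^{n_j}+t^{\alpha_j}\}/\mu_{n_j}$. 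The axial multiplicity $\alpha_j$ is the vanishing order of the local smoothing parameter along the chosen curve germ $\gamma:(\mathbb{D},0)\to S$.

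Step (3) is handled as follows. Choose $\gamma$ whose local coordinates at $P_0$ and $P_1$ are $t^{\alpha_0}$ and $t^{\alpha_1}$ up to units, and whose local coordinates at the remaining Wahl points are generic, for instance $t$. This is possible because $S$ surjects onto the product of local spaces, so we may lift the prescribed curve in $\prod_P\operatorname{Def}$ to $S$. The general fiber is then smooth with $K^2=8$ and is rational, as in Proposition~\ref{prop:markedunobstructed}, so it is some $\ff_{k'}$ with $k'\equiv k \bmod 2$. To force $k'=l_0$ we compose with a small generic perturbation in the $H^1(X,T_X)$ directions, which do not change the local orders. As in Theorem~\ref{thm:consc-def}, the locus in $S$ where the fiber is $\ff_{l_0}$ is open and dense among smooth fibers, since $\ff_{l_0}$ is the generic member of the versal family of any $\ff_{k'}$. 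A generic curve in the affine family of curves with the prescribed local orders therefore lands in it, and curve selection can be used if needed.

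Step (2) lifts the toric boundary. By Theorem~\ref{thm:1-compldegHz}, $X$ carries a $1$-complement $B$ which near $\Gamma_1$ is the toric chain $B_0+\Gamma_1+B_1$ through $P_0,P_1$. I would copy the lifting argument from the proof of Theorem~\ref{thm:Hz-deformations}. The pair $(\mathcal{X},X)$ is plt by inversion of adjunction, and $L=-(K_{\mathcal{X}}+X)$ restricts to $-K_X$. A vanishing $R^1\pi_*\mathcal{O}(L-X)=0$ then gives surjectivity of $H^0(\mathcal{X},L)\to H^0(X,-K_X)$. Here $-K_X$ is only big, not ample (Proposition~\ref{prop:zariski-decomposition}), so I would use Kawamata--Viehweg in its nef-and-big form after taking a small $\qq$-boundary multiple of $\Gamma_1$, or else appeal to $H^1(X,\mathcal{O}_X(-K_X))=0$ together with Grauert's base change directly. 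I must also check that this surjectivity survives the choice of $\gamma$ made in step (3), which holds because the vanishing is fiberwise.

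The final assertion comes from \cite[Corollary~3.23]{HTU17}. The contraction of $\Gamma_1$ in the family is an extremal neighborhood of type $k2A$ over $\frac{1}{\Delta}(1,\Omega)$ with invariant $\delta$. That corollary states that a $K$-antiflip exists precisely when the axial multiplicities satisfy the quadratic inequality $\alpha_0^2-\delta\alpha_0\alpha_1+\alpha_1^2>0$, given the lifted toric divisor $D$ that realizes the neighborhood as a $\qq$-Gorenstein smoothing of the appropriate extremal P-resolution. I expect the main obstacle to be step (1) combined with (3): showing that prescribing high axial multiplicities at $P_0,P_1$ is compatible with the general fiber being exactly $\ff_{l_0}$ rather than some $\ff_{l_0+2d}$. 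My first attempt would use the openness of the $\ff_{l_0}$-locus and the independence of the $H^1(X,T_X)$ directions from the local smoothing coordinates. If that fails, I would fall back on Theorem~\ref{thm:consc-def}, applied in the slice of $S$ with prescribed local orders.
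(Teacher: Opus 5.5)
Your construction for (1) and (3) is essentially the paper's. Step (2), however, has a genuine gap. You need the restriction $H^0(\mathcal{X},-K_{\mathcal{X}})\to H^0(X,-K_X)$ to be surjective, and this is false in general. In a $\qq$-Gorenstein family $\mathcal{O}_{\mathcal{X}}(-K_{\mathcal{X}})$ commutes with base change. Hence $\pi_*\mathcal{O}_{\mathcal{X}}(-K_{\mathcal{X}})$ is free of rank $h^0(\ff_{l_0},-K_{\ff_{l_0}})=9$, and its reduction modulo $t$ injects into $H^0(X,-K_X)$. So surjectivity holds if and only if $h^0(X,-K_X)=9=\chi(X,-K_X)$, i.e.\ if and only if $H^1(X,\mathcal{O}_X(-K_X))=0$.

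Nothing in the hypotheses prevents $X$ from also smoothing to some $\ff_m$ with $m\geq 4$. Such an $X$ smooths to $\ff_{l_0}$ by Theorem~\ref{thm:consc-def}, and its $-K_X$ is not nef. In that case semicontinuity gives $h^0(X,-K_X)\geq h^0(\ff_m,-K_{\ff_m})=m+6>9$, so $h^1(X,-K_X)\geq m-3>0$. For every smoothing to $\ff_{l_0}$ the restriction map then has image of dimension $9<h^0(X,-K_X)$, and nothing forces your particular $1$-complement $B$ into that image.

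Neither of your fallbacks helps. No Kawamata--Viehweg variant can prove a vanishing that is false, and $-2K_X$ is negative on $\Gamma_1$. On the threefold, $\Gamma_1$ is a curve, so it cannot be placed in a boundary. Your remark that the vanishing is fiberwise fails exactly on the central fiber.

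The missing idea is that \cite[Corollary~3.23]{HTU17} only needs the toric boundary lifted in a neighbourhood of $\Gamma_1$, which is a local problem. The paper works on the germ $U$ of $X$ along $\Gamma_1$. By \cite[Lemma~3.2]{TU22}, $\qq$-Gorenstein deformations of $U$ and of $(U,B_U)$ have no local-to-global obstructions. Both are therefore controlled by the local deformations at the Wahl points of $\Gamma_1$, where the toric boundary $\{z=0\}\subset\{xy=z^n\}/\mu_n$ deforms along any smoothing. So $\operatorname{Def}^{\qq G}(U,B_U)\to\operatorname{Def}^{\qq G}(U)$ is smooth, and $B_U$ lifts to the germ of whatever smoothing steps (1) and (3) produce. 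You should replace your global argument with this local one.

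On (1) and (3) you match the paper's Steps 1--3. It uses a monomial curve $\gamma(s)=(c_1s^{\alpha_0},c_2s^{\alpha_1},c_3s,\dots,c_Ns)$ in the smooth base, with constants chosen so that $\gamma$ meets the open set $C_{l_0}$ of $\ff_{l_0}$-fibers, which is open because $\ff_{l_0}$ is rigid. Two small points need care.

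First, the genericity must be in all the $c_j$, since $H^1(X,T_X)$ may vanish. Second, openness and density of $C_{l_0}$ do not by themselves stop $\gamma(s)$ from hitting the non-$\ff_{l_0}$ locus along a sequence $s_n\to 0$. Curve selection cannot fix this, because it would lose the prescribed orders. The paper settles it as follows: once $\gamma(s_0)\in C_{l_0}$, the set of $s$ where $-K_{\mathcal{Y}_s}$ is not ample is Zariski closed in the parameter line and misses $s_0$, hence is finite.
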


\begin{proof}
Let $r=|\operatorname{Sing}(X)|$, $h=h^1(X,T_X)$, and $N=r+h$. Since $H^2(X,T_X)=0$, the $\qq$-Gorenstein local-to-global exact sequence~\eqref{eq:loc-glob-QG} then yields a smooth miniversal base $S\cong(\cc^N,0)$ with coordinates $(t_1,\ldots,t_r,u_1,\ldots,u_h)$, where $t_i$ is the versal $\qq$-Gorenstein deformation parameter of the $i$-th Wahl singularity. After possibly shrinking $S$, the smooth locus is $S^{\mathrm{sm}}=\{t_1\cdots t_r\neq 0\}$, every fiber $\mathcal{X}_s$ for $s\in S^{\mathrm{sm}}$ is a smooth rational surface with $b_2=2$, hence isomorphic to $\ff_{m(s)}$ for a unique $m(s)\geq 0$ with $m(s)\equiv k\pmod{2}$. Set $C_{l_0}:=\{s\in S^{\mathrm{sm}}:m(s)=l_0\}$ and $Z:=S^{\mathrm{sm}}\setminus C_{l_0}$. We divide the proof into four steps.

\medskip
\emph{Step 1: $C_{l_0}$ is open and nonempty.}
By Theorem~\ref{thm:consc-def}, $X$ admits a $\qq$-Gorenstein smoothing to $\ff_{l_0}$, so $C_{l_0}\neq\emptyset$.
Since $l_0\in\{0,1\}$, the surface $\ff_{l_0}$ satisfies $h^1(\ff_{l_0},T_{\ff_{l_0}})=0$ and is therefore rigid, hence every fiber near a point of $C_{l_0}$ in $S^{\mathrm{sm}}$ is isomorphic to $\ff_{l_0}$.
This implies that $C_{l_0}$ is open and $Z$ is closed in $S^{\mathrm{sm}}$.

\medskip
\emph{Step 2: construction of the smoothing.}
Fix $(\alpha_0,\alpha_1)\in\nn_{>0}^2$ with $\alpha_0^2-\delta\alpha_0\alpha_1+\alpha_1^2>0$.
Label the Wahl singularities so that $P_1,P_2$ are contained in $\Gamma_1$, and define the morphism $\gamma\colon\mathbb{A}^1\to S$ by
\[
\gamma(s):=\bigl(\underbrace{s^{\alpha_0}\,c_1}_{t_1},\;\underbrace{s^{\alpha_1}\,c_2}_{t_2},\,\;\ldots,\;\underbrace{s\,c_r}_{t_r},\;\underbrace{s\,c_{r+1}}_{u_1},\;\ldots,\;\underbrace{s\,c_N}_{u_h}\bigr),
\]
where the constants $c_j\in\cc^*$ are to be determined.
By construction $\gamma(0)=0$, $\gamma(s)\in S^{\mathrm{sm}}$ for $s\neq 0$, and the axial multiplicities at $P_1,P_2$ are $\operatorname{ord}_s(t_1\circ\gamma)=\alpha_0$ and $\operatorname{ord}_s(t_2\circ\gamma)=\alpha_1$. We now choose the constants $c_j$.
By Theorem~\ref{thm:consc-def}, there exists $p\in C_{l_0}\subset S^{\mathrm{sm}}$.
Fix any $s_0\in\cc^*$ and define
\[
c_1:=\frac{p_1}{s_0^{\alpha_0}},\quad c_2:=\frac{p_2}{s_0^{\alpha_1}},\quad c_j:=\frac{p_j}{s_0}\;\;\text{for }j\geq 3,
\]
where $p=(p_1,\ldots,p_N)$.
Then $\gamma(s_0)=p\in C_{l_0}$ by construction, and  $c_1,\dots,c_r$ are nonzero since $p\in S^{\mathrm{sm}}$.

\medskip
\emph{Step 3: finiteness of the non-$\ff_{l_0}$ locus.}
Pull back the universal family via $\gamma$ to obtain a flat projective family $\mathcal{Y}:=\gamma^*\mathcal{X}^{\mathrm{univ}}\to\mathbb{A}^1$.
On $\mathcal{Y}$, the relative anticanonical divisor $-K_{\mathcal{Y}/\mathbb{A}^1}$ is $\qq$-Cartier. For $s\neq 0$, the fiber $\mathcal{Y}_s\cong \ff_{m(s)}$ satisfies that $m(s)=l_0$ if and only if $-K_{\mathcal{Y}_s}$ is ample, since $m(s)\equiv k \pmod{2}$ and $-K_{\ff_m}$ is ample exactly for $m\leq 1$. Thus
\[
\gamma^{-1}(C_{l_0})=\{s\in \mathbb{A}^1\setminus\{0\} : -K_{\mathcal{Y}_s}\ \text{is ample}\}.
\]
By openness of ampleness \cite[Proposition~1.41]{KM98}, the set $A:=\{s\in\mathbb{A}^1 : -K_{\mathcal{Y}_s}\ \text{is ample}\}$ is Zariski open. Since $s_0\in A$ and $0\notin A$, its complement is a nonempty proper closed subset of $\mathbb{A}^1$, hence finite. Therefore $\gamma^{-1}(Z)=(\mathbb{A}^1\setminus A)\setminus\{0\}$ is finite, and $\mathcal{Y}_s\cong \ff_{l_0}$ for all $0<|s|<\epsilon$ for some $\epsilon>0$. Restricting to $\mathbb{D}_\epsilon$, the family $\mathcal{Y}|_{\mathbb{D}_\epsilon}\to\mathbb{D}_\epsilon$ is a $\qq$-Gorenstein smoothing of $X$ with general fiber $\ff_{l_0}$ and axial multiplicities $(\alpha_0,\alpha_1)$.

\medskip
\emph{Step 4: the boundary condition.}
Let $U$ be the germ of $X$ along $\Gamma_1$ with boundary germ $B_U$.
By \cite[Lemma~3.2]{TU22} there are no local-to-global obstructions to $\qq$-Gorenstein deformations of $U$ nor of $(U,B_U)$, i.e. both maps
\[
\operatorname{Def}^{\qq\mathrm{G}}(U,B_U)\to\prod_i\operatorname{Def}^{\qq\mathrm{G}}_{P_i\in(U,B_U)},
\qquad
\operatorname{Def}^{\qq\mathrm{G}}(U)\to\prod_i\operatorname{Def}^{\qq\mathrm{G}}_{P_i}
\]
are smooth. It follows that $H^1(U,T_U)=H^1(\Gamma_1,N_{\Gamma_1/U})=0$, since $\deg N_{\Gamma_1/U}=-1$. So the second map above is injective, hence an isomorphism.
Therefore $\operatorname{Def}^{\qq\mathrm{G}}(U,B_U)\to\operatorname{Def}^{\qq\mathrm{G}}(U)$ is smooth and surjective, and $B_U$ lifts to $\mathcal X'$ near $\Gamma_1$. By \cite[Corollary 3.23]{HTU17} the statement follows.
\end{proof}

\begin{corollary}\label{cor:antiflip-termination}
Let $X$ be a normal projective surface with $\rho(X)=2$ admitting a $\qq$-Gorenstein smoothing to $\ff_k$ and set $l_0=k \bmod 2$. Let $\Gamma_1\subset X$ be an extremal curve with $K_X\cdot\Gamma_1> 0$, contracting to a cyclic quotient singularity $\frac{1}{\Delta}(1,\Omega)$.

Then there exists a $\qq$-Gorenstein smoothing $\pi\colon \mathcal{X}_0 \to \mathbb{D}$ with central fiber $\mathcal{X}_{0,0}\cong X$ and general fiber $\mathcal{X}_{0,t}\cong \ff_{l_0}$, together with a finite sequence of terminal $K$-antiflips
\[
\mathcal{X}_0 \dashrightarrow \mathcal{X}_1 \dashrightarrow \cdots \dashrightarrow \mathcal{X}_N,
\]
such that:
\begin{enumerate}
    \item each $\pi_i\colon \mathcal{X}_i \to \mathbb{D}$ is a $\qq$-Gorenstein deformation with general fiber $\mathcal{X}_{i,t}\cong \ff_{l_0}$;
    \item if $X_i:=\mathcal{X}_{i,0}$ denotes the central fiber, then $X_0=X$ and $X_{i+1}$ is obtained from $X_i$ by a $K$-antiflip;
    \item the final central fiber $X_N$ satisfies that $-K_{X_N}$ is nef.
\end{enumerate}
Moreover, the toric boundary lifts to each $\mathcal{X}_i$.
\end{corollary}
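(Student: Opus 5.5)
We build the sequence inductively, using Theorem~\ref{thm:admissible-smoothing} at each step.

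\emph{First step.} Suppose $X_i$ is a central fiber with $\rho(X_i)=2$ that smooths to $\ff_{l_0}$, and suppose $-K_{X_i}$ is not nef. By Proposition~\ref{prop:moriray}, $\operatorname{NE}(X_i)$ is spanned by $[\Gamma_0]$ and $[\Gamma_1]$, with $K_{X_i}\cdot\Gamma_1>0$. As in the proof of Theorem~\ref{thm:Hz-deformations}, $(X_i,\Gamma_1)$ is log canonical, so $\Gamma_1$ contracts to a cyclic quotient singularity $\frac1\Delta(1,\Omega)$. We then apply Theorem~\ref{thm:admissible-smoothing}, choosing axial multiplicities $(\alpha_0,\alpha_1)$ with $\alpha_0^2-\delta\alpha_0\alpha_1+\alpha_1^2>0$; for instance, take $\alpha_1=1$ and $\alpha_0\gg0$. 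This gives a smoothing $\mathcal{X}_i\to\mathbb{D}$ with general fiber $\ff_{l_0}$ and a lifted toric boundary $D_i\in|-K_{\mathcal{X}_i}|$. By \cite[Corollary~3.23]{HTU17} the terminal $K$-antiflip $\mathcal{X}_i\drar\mathcal{X}_{i+1}$ exists.

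\emph{Properties of the new model.} The antiflip is an isomorphism away from $\Gamma_1$, so the general fiber of $\mathcal{X}_{i+1}$ is still $\ff_{l_0}$, which gives (1). Its central fiber $X_{i+1}$ is the antiflipped extremal P-resolution: it has only Wahl singularities, is reduced, and by Noether's formula (Proposition~\ref{prop:Noether}) satisfies $\rho(X_{i+1})=2$. The strict transform of $D_i$ still lies in $|-K_{\mathcal{X}_{i+1}}|$, and $(\mathcal{X}_{i+1},D_{i+1}+X_{i+1})$ remains log canonical, because a $(K+D)$-trivial small modification is crepant. Hence the toric boundary lifts, which gives (2) and the final clause. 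If $-K_{X_{i+1}}$ is still not nef, we reapply Theorem~\ref{thm:admissible-smoothing} to $X_{i+1}$. This re-chooses the smoothing of $X_{i+1}$, but Theorem~\ref{thm:admissible-smoothing} keeps the general fiber $\ff_{l_0}$, so this is harmless for the conclusions.

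\emph{Termination (the main obstacle).} The issue is that reselecting smoothings breaks the usual termination argument for a single MMP. Our plan is to view each step as a log flip of the klt pair $(\mathcal{X}_{i+1},(1-\epsilon)D_{i+1})$ for small $\epsilon>0$. Since $K_{\mathcal{X}}+D\sim0$, a $K$-antiflip is a $(K+(1-\epsilon)D)$-flip up to sign, and more precisely a $(-\epsilon K)$-negative flip of a pair.

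If that is not enough, we fall back on a discrete invariant. Each antiflip along $\Gamma_1$ strictly lowers the difficulty, or equivalently the Hirzebruch--Jung length, of the singularity contracted by $\Gamma_1$, relative to the universal family of extremal neighborhoods over $\frac1\Delta(1,\Omega)$ in \cite[Section~3.4]{HTU17}. It also lowers $K_X\cdot\Gamma_1$ within a discrete set $\frac{1}{n_0n_1}\zz_{>0}$. We would bound this invariant by $\sum_P n_P^2$, which is finite. To make it monotone across different smoothings, we note that it depends only on the central fiber.

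\emph{Conclusion.} When the process stops, no curve is $K$-positive. By Proposition~\ref{prop:moriray}, this means $-K_{X_N}$ is nef, which gives (3).
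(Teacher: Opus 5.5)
Your construction of each individual antiflip is fine, but the termination step has a genuine gap. With $D\sim -K_{\mathcal{X}}$ you get $K_{\mathcal{X}}+(1-\epsilon)D\sim_{\qq}\epsilon K_{\mathcal{X}}$. This is \emph{positive} on the antiflipping curve, so a $K$-antiflip is an antiflip for this klt pair, and termination of log flips does not apply to it. Fixing the sign would require $K_{\mathcal{X}}+(1+\epsilon)D\sim_{\qq}-\epsilon K_{\mathcal{X}}$. But $D$ is reduced, so $(1+\epsilon)D$ has coefficients larger than $1$, and $(\mathcal{X},(1+\epsilon)D)$ is not even log canonical.

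The fallback invariant does not rescue this. An antiflip takes place over a fixed singularity $\frac1\Delta(1,\Omega)$: $X_i$ and $X_{i+1}$ are both partial resolutions of the same surface. The next $K$-positive curve then contracts to an unrelated singularity, so ``the difficulty of the singularity contracted by $\Gamma_1$'' is never compared meaningfully across steps. Likewise $\sum_P n_P^2$ and the lattice $\frac{1}{n_0n_1}\zz$ change as the singularities change, and you give no argument that anything is monotone. Finally, re-choosing the smoothing at every step does not produce what the statement asserts. The statement asks for one smoothing $\mathcal{X}_0$ and a chain $\mathcal{X}_0\drar\mathcal{X}_1\drar\cdots\drar\mathcal{X}_N$ in which each threefold is the antiflip of the previous one.

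The missing idea is to replace the reduced, merely lc, boundary by a klt complement on a single smoothing.
\begin{enumerate}
\item Using the Zariski decomposition $-K_X=P+\nu\Gamma_1$ of Proposition~\ref{prop:zariski-decomposition}, take $B_X=(\nu+a_0)\Gamma_1+aA+bB$ with small coefficients. Then $(X,B_X)$ is klt and $-(K_X+B_X)$ is ample.
\item Adding $\frac1m M$ for a general $M\in|-m(K_X+B_X)|$ gives a klt complement $B_X^+$.
\item By inversion of adjunction and \cite[Proposition~3.7]{PS09}, $B_X^+$ extends to $\mathcal{B}^+\sim_{\qq}-K_{\mathcal{X}}$ with $(\mathcal{X},X+\mathcal{B}^+)$ plt.
\item Set $\Delta=(1+\epsilon)\mathcal{B}^+$. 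The pair $(\mathcal{X},X+\Delta)$ stays plt and $K_{\mathcal{X}}+\Delta\sim_{\qq}-\epsilon K_{\mathcal{X}}$.
\end{enumerate}
Now every $K$-antiflip is a $(K_{\mathcal{X}}+\Delta)$-flip of one klt threefold pair. These flips exist at every step without re-choosing the smoothing. They preserve plt-ness of $(\mathcal{X}_i,X_i+\Delta_i)$, so $X_i$ is klt and $\mathcal{X}_i$ is terminal. The sequence terminates by \cite[Theorem~6.17]{KM98}.
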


\begin{proof}
By Theorem~\ref{thm:admissible-smoothing}, there exists a $\qq$-Gorenstein smoothing $\pi\colon\mathcal{X}\to\mathbb{D}$ of $X$ to $\ff_{l_0}$ with axial multiplicities $(\alpha_0,\alpha_1)$ at the singularities belonging to $\Gamma_1$.
The total space $\mathcal{X}$ is terminal and $\qq$-factorial \cite[Corollary~3.6]{KSB88}.

We reduce termination to the $3$-fold log MMP. Write $-K_X=P+\nu\Gamma_1$ as in Proposition~\ref{prop:zariski-decomposition}, and let $B_{\mathrm{nt}}=\Gamma_1+A+B$ be the near-toric boundary around $\Gamma_1$. Choose sufficiently small positive rational numbers $a_0,a,b$, with $\nu+a_0<1$, and set
$$B_X:=(\nu+a_0)\Gamma_1+aA+bB.$$
Since $P\cdot\Gamma_1=0$, $\Gamma_1^2<0$, and $P\cdot\Gamma_0>0$, for
sufficiently small coefficients the pair $(X,B_X)$ is klt and
$-(K_X+B_X)$ is ample. 

Let $\mathcal{B}$ be the corresponding boundary supported on the lifted near-toric boundary, so that $\operatorname{Diff}_X(\mathcal B)=B_X$. After shrinking $\mathbb{D}$, inversion of adjunction gives that $(\mathcal X,X+\mathcal B)$ is plt and $-(K_{\mathcal X}+X+\mathcal B)$ is $\pi$-ample.

Thus, for $m\gg0$ sufficiently divisible and a general
$M\in |-m(K_X+B_X)|$, the pair $(X,B_X^+:=B_X+\frac{1}{m}M)$ is klt and satisfies $m(K_X+B_X^+)\sim0$. 
By \cite[Proposition~3.7]{PS09}, this complement extends to a $m$-complement $K_{\mathcal X}+X+\mathcal B^+$ of $(\mathcal{X},X+\mathcal{B})$ with $\operatorname{Diff}_X(\mathcal B^+)=B_X^+$. 
Since $X\sim_{\mathbb{D}}0$, we have $\mathcal B^+\sim_{\qq}-K_{\mathcal X}$.

For $0<\epsilon\ll 1$, set $\Delta:=(1+\epsilon)\mathcal B^+$. Then:
\begin{enumerate}
\item $(\mathcal X,X+\Delta)$ is plt, and in particular $(\mathcal X,\Delta)$ is klt. Indeed, since $(X,B_X^+)$ is klt, inversion of adjunction \cite[Theorem~5.50]{KM98} gives that $(\mathcal X,X+\mathcal B^+)$ is plt near $X$. Hence, after shrinking
$\mathbb D$ and taking $\epsilon>0$ sufficiently small, $(\mathcal X,X+(1+\epsilon)\mathcal B^+)$ is plt.

\item
$K_{\mathcal{X}}+\Delta\sim_{\qq}
K_{\mathcal{X}}-(1+\epsilon)K_{\mathcal{X}} =-\epsilon K_{\mathcal X}$. Thus, a curve $\Gamma$ is $(K_{\mathcal X}+\Delta)$-negative if and only if $K_{\mathcal X}\cdot\Gamma>0$.
\end{enumerate}

Therefore every $K$-antiflip on the central fiber is a $(K_\mathcal{X}+\Delta)$-flip of the klt $3$-fold $(\mathcal{X},\Delta)$. By \cite[Theorem~6.17]{KM98}, any sequence of log flips of a $3$-dimensional klt pair terminates.
Starting from $\mathcal X_0=\mathcal X$, perform such an antiflip whenever $-K_{X_i}$ is not nef, and let $\Delta_i$ be the strict transform of
$\Delta$.
Since $X_i\sim_{\mathbb{D}}0$, each $(K_{\mathcal{X}_i}+\Delta_i)$-flip is also a $(K_{\mathcal{X}_i}+X_i+\Delta_i)$-flip. Hence
$(\mathcal{X}_i,X_i+\Delta_i)$ remains plt at every step. 
By adjunction formula, $(X_i,\operatorname{Diff}_{X_i}(\Delta_i))$ is klt, and therefore $X_i$ is klt.
Since $\mathcal X_i\to\mathbb D$ is again a $\qq$-Gorenstein deformation
with smooth general fiber, $\mathcal X_i$ is terminal by
\cite[Corollary~3.6]{KSB88}. Thus all the antiflips are terminal.
Hence for the sequence $$\mathcal{X}=\mathcal{X}_0\dashrightarrow\mathcal{X}_1\dashrightarrow\cdots,$$ there exists $N$ such that $K_{\mathcal{X}_N}+\Delta_N$ is nef over $\mathbb{D}$, or equivalently $-K_{X_N}$ is nef. Because each step of this process consists of a $K$-antiflip, it cannot end in a MFS.
Each $X_i$ admits a $\qq$-Gorenstein smoothing to $\ff_{l_0}$ and the toric structure near the anti-flipping curve lifts by Step~4 of Theorem~\ref{thm:admissible-smoothing}.
\end{proof}
\begin{remark}
Since every degeneration $\ff_k \rightsquigarrow X$ with $k\leq 1$ is obtained by a sequence of slides by Theorem~\ref{thm:toric-degeneration} (excluding those of Type ($\dagger$)), the $K$-MMP for a non-toric surface is controlled by its toric degeneration. In the case $k=1$, no degeneration $\ff_1 \rightsquigarrow X$ admits a curve $\Gamma_i$ with $K_X \cdot \Gamma_i = 0$ for $i=0,1$. It follows that the nef condition is strengthened to $-K$ ample.
\end{remark}

\subsection{Toric degenerations}\label{sub:toric-deg}
By Corollary~\ref{cor:antiflip-termination}, in this subsection we assume that $X$ is toric and $-K_X$ is ample. So $X=X_P$ for a Fano polygon $P$ with T-singularities and consequently $P\in[P_{\ff_k}]$ by \cite[Theorem 6]{KNP17}. If $\rho(X)=2$, by Proposition~\ref{prop:moriray} we have $\operatorname{NE}(X)=\rr_{\geq0}[\Gamma_0]\oplus\rr_{\geq0}[\Gamma_1]$ with $\Gamma_j^2<0$, so both extremal contractions $F_j\colon(\Gamma_j\subset X)\to Y_j$ are birational and $\rho(Y_j)=1$.
We first identify the surfaces $Y_j$.

\medskip
\begin{lemma}\label{lem:toric-contractions}
Let $\pi:(X\subset \mathcal{X})\to(0\in\mathbb{D})$ a $W$-surface with $X$ toric with $\rho(X)=2$ and general fiber $\mathcal{X}_t\cong\ff_k$. Suppose its singularity data is of the form
$$\Big[\binom{n_{34}}{a_{34}}\Big]-\Gamma_1-\Big[\binom{n_{14}}{a_{14}}\Big]-\Gamma_0-\Big[\binom{n_{12}}{a_{12}}\Big]-(1)-\Big[\binom{n_{23}}{a_{23}}\Big]$$
with corresponding contractions $F_i:(\Gamma_i\subset X)\to (\frac{1}{\Delta_i}(1,\Omega_i)\in Y_i)$. Then, the surfaces $Y_i$ are fake weighted projective planes of the form
\begin{itemize}
    \item $Y_0=\pp(\Delta_0,n_{34}^2,n_{23}^2)/(\zz/n^2)$ where $n=\operatorname{gcd}(n_{34},n_{23})$.

    \item $Y_1 = \pp(\Delta_1, n_{12}^2, n_{23}^2) / (\mathbb{Z} / {n^\prime}^2)$, where $n^\prime = \gcd(n_{12}, n_{23})$.
\end{itemize}
\end{lemma}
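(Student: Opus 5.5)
We will work on the toric side. Since $X$ is toric with $\rho(X)=2$ and four singular points, its fan has four rays $v_1,v_2,v_3,v_4$. We label them so that the cone $\sigma_{ij}=\operatorname{cone}(v_i,v_j)$ gives the Wahl singularity $\frac{1}{n_{ij}^2}(1,n_{ij}a_{ij}-1)$ (a smooth cone when $n_{ij}=1$). The displayed chain is then the cycle of torus-invariant curves: $\Gamma_1=D_4'$ lies between $\sigma_{34}$ and $\sigma_{14}$, $\Gamma_0$ lies between $\sigma_{14}$ and $\sigma_{12}$, and so on.

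Since $\Gamma_0$ and $\Gamma_1$ are extremal with negative self-intersection (Proposition~\ref{prop:moriray}), each contraction $F_j$ is toric. It deletes one ray of the fan: $F_0$ removes the ray of $\Gamma_0$ and merges the two adjacent cones into one cone, and $F_1$ removes the ray of $\Gamma_1$ in the same way. This leaves a complete fan with three rays, so each $Y_j$ is a fake weighted projective plane, with $\rho(Y_j)=1$.

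The main step is to read off the weights and the torsion. For a three-ray fan $\{u_1,u_2,u_3\}$ in $N$, let $N'=\zz u_1+\zz u_2+\zz u_3$, and let $(\lambda_1,\lambda_2,\lambda_3)$ be the primitive relation with $\sum\lambda_iu_i=0$. Then $Y\cong\pp(\lambda_1,\lambda_2,\lambda_3)/(N/N')$, and the cone opposite $u_i$ has multiplicity $|\det(u_j,u_k)|=\lambda_i\,[N:N']$. For $Y_0$ the three cones are the merged cone, of multiplicity $\Delta_0$, and the two untouched cones $\sigma_{34}$ and $\sigma_{23}$, of multiplicities $n_{34}^2$ and $n_{23}^2$. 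Hence the multiplicities are $(\Delta_0,n_{34}^2,n_{23}^2)=m\cdot(\lambda_i)$ with $m=[N:N']$, and $Y_0\cong\pp(\Delta_0/m,\,n_{34}^2/m,\,n_{23}^2/m)/(\zz/m)$ up to the structure of $N/N'$. I will write this as $\pp(\Delta_0,n_{34}^2,n_{23}^2)/(\zz/n^2)$ in the convention where the weights are the cone multiplicities (equivalently, I will check that the stated form matches the paper's normalization). The case of $Y_1$ is the same, with the surviving cones $\sigma_{12}$ and $\sigma_{23}$.

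The obstacle is showing $m=\gcd(n_{34},n_{23})^2$ and that $N/N'$ is cyclic. The index $m$ divides $\gcd(\Delta_0,n_{34}^2,n_{23}^2)$. To pin it down, I plan to use the short exact sequence \eqref{eq:short-exact}. There $\operatorname{Cl}(X)$ is an extension of $\zz^2$ by $\bigoplus\zz/n_{ij}$, and the torsion of $\operatorname{Cl}(Y_0)=\operatorname{Cl}(X)/\zz[\Gamma_0]$ equals $(N/N')^\vee$. For $X$ itself, $H_1$ of the smooth locus of $X$ vanishes, and the local class groups $\zz/n_{ij}^2$ each contribute to $\operatorname{Cl}$ through the Milnor-fiber quotient $\zz/n_{ij}$. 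Comparing the cyclic local class groups $\zz/n_{34}^2$ and $\zz/n_{23}^2$, which both survive in $Y_0$, the common quotient, and hence the torsion, is $\zz/\gcd(n_{34},n_{23})^2$: it is cyclic because it is a quotient of a cyclic local group, and it has the claimed order because the $\zz/n_{ij}$-torsion from the Wahl points is killed only up to the Markov-type coprimality. If this class-group argument is too slippery, the fallback is explicit: I will put $v_2=(0,1)$ and $v_3=(n_{23}^2,\,1-n_{23}a_{23})$ in normal form and compute $N/N'$ directly as $\zz^2/\langle v_2,v_3,v_4\rangle$ via Smith normal form. In that computation I will use the Wahl congruences $a^2\equiv\ldots$ to show that the gcd of the $2\times 2$ minors is $n^2$.
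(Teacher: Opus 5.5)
Your set-up matches the paper's. Contracting $\Gamma_0=V(v_1)$ deletes $v_1$, and the three surviving cones have multiplicities $(\Delta_0,n_{34}^2,n_{23}^2)$. Everything then reduces to computing $N/N'$ for $N'=\zz v_2+\zz v_3+\zz v_4$. Your reading of the normalization is also right. The paper's proof only establishes these multiplicities together with $[N:N']=n^2$, so in the usual convention the statement reads $Y_0\cong\pp(\Delta_0/n^2,(n_{34}/n)^2,(n_{23}/n)^2)/(\zz/n^2)$.

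However, the step you flag as the obstacle is never carried out, and it is the whole content of the lemma. The class-group paragraph is not an argument. Nothing in it explains why the torsion of $\operatorname{Cl}(Y_0)$ should be a ``common quotient'' of the local groups $\zz/n_{34}^2$ and $\zz/n_{23}^2$. The phrase ``killed only up to the Markov-type coprimality'' has no precise content, so that paragraph should be dropped. In the fallback you attribute the key input to Wahl congruences, which is not where it lies. You also only assert $m\mid\gcd(\Delta_0,n_{34}^2,n_{23}^2)$, whereas you need equality.

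The missing ingredient is elementary. For a rank-two $2\times 3$ integer matrix, Smith normal form gives that the index of its column lattice is the gcd of its $2\times 2$ minors. Here those minors are $\pm n_{23}^2,\pm n_{34}^2,\pm\Delta_0$, so $m=\gcd(\Delta_0,n_{34}^2,n_{23}^2)$ exactly.

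What remains is $n^2\mid\Delta_0$, and this follows from primitivity of the ray $v_3$ shared by the two surviving cones. In your coordinates, write $v_4=(x,y)$; then $\Delta_0=|x|$ and $n_{34}^2=|n_{23}^2y-(1-n_{23}a_{23})x|$. Reducing modulo $n^2$ gives $n^2\mid(1-n_{23}a_{23})x$. Since $n\mid n_{23}$, the number $1-n_{23}a_{23}$ is a unit modulo $n^2$, so $n^2\mid\Delta_0$ and $m=\gcd(n_{34}^2,n_{23}^2)=n^2$.

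Cyclicity is automatic: $v_2=(0,1)$ has an entry equal to $1$, so the first elementary divisor is $1$. Equivalently, $N/N'$ is a quotient of $N/\zz v_3\cong\zz$. For $Y_1$, apply the same argument to the primitive ray $v_2$ shared by $\sigma_{12}$ and $\sigma_{23}$.

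The paper reaches the same divisibility differently. It assumes $n_{34},n_{23}>1$ and reads an explicit formula $\Delta_0=n_{23}^2(\cdots)-n_{34}^2(\cdots)$ off the minimal resolution of $Y_0$ via Lemma~\ref{lem:Hz-minres}. The primitivity argument avoids that case reduction and uses nothing about Wahl singularities or the smoothing.
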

\begin{proof}
Since the contractions $F_i$ are toric morphisms, we observe that $Y_i$ is a toric surface with Picard rank $\rho(Y_i)=1$, hence a fake weighted projective plane. Without loss of generality, we assume that $n_{34},n_{23}>1$. By Lemma \ref{lem:Hz-minres}, the minimal resolution of $Y_i$ given by $Y_i^\prime\to \ff_{d_i}$ has two degenerate fibers $f_1,f_2$ consisting of only one $(-1)$ curve each. Then, we obtain the equations
$\Delta_0=n_{23}^2(n_{34}^2d_0-n_{34}(n_{34}-a_{34})-1)-n_{34}^2(n_{23}a_{23}+1)$ and $n_{12}^2=n_{23}^2(\Delta_1d_1-\Omega_1^*)-\Delta_1(n_{23}a_{23}+1)$, where $\Omega_1\Omega_1^*\equiv 1 \pmod {\Delta_1}$. For the $i=0$ case we note that $n^2|\Delta_0$ and consequently $\operatorname{gcd}(\Delta_0,n_{34}^2,n_{23}^2)=n^2$. This implies the sublattice $N_0$ generated by the vectors $\{v_2,v_3,v_4\}$ has index $[N:N_0]=n^2$. For $i=1$, we observe that ${n^\prime}^2|\Delta_1(n_{23}a_{23}+1)$. But since $\operatorname{gcd}(n_{23}^2,n_{23}a_{23}+1)=1$, it follows that ${n^\prime}^2|\Delta_1$, the claim follows similarly.
\end{proof}
By Theorem~\ref{Ildef} every mutation of $P$ induces a $\qq$-Gorenstein pencil between the associated toric surfaces; in order to describe the singularities occurring along $[P_{\ff_k}]$ we use the following explicit form of that pencil.

\begin{theorem}[{\cite[Theorem 7.3]{P21}}] 
 Let $P \subseteq N_{\mathbb{R}}$ be a Fano polygon, let $(w, F)$ be a mutation datum for $P$, and let $P^{\prime}=\operatorname{mut}_{w, F}(P)$ be the mutated polygon. Let $X_P$ (resp. $X_{P^\prime}$) be the Fano toric variety associated to the spanning fan of $P$ (resp. $P^\prime$) and let $\partial X_P$ (resp. $\partial X_{P^{\prime}}$ ) be the toric boundary of $X_P$ (resp. $X_{P^{\prime}}$ ). Set
$$
\begin{aligned}
\mathcal{V}(P)^{\geq 0} & =\mathcal{V}(P) \cap\{v \in N \mid\langle w, v\rangle \geq 0\}, \\
\mathcal{V}\left(P^{\prime}\right)^{<0} & =\mathcal{V}\left(P^{\prime}\right) \cap\{v \in N \mid\langle w, v\rangle<0\} .
\end{aligned}
$$

Consider the lattice $\tilde{N}=N \oplus \mathbb{Z} e_1$ and the polyhedron $\tilde{Q} \subseteq \tilde{M}_{\mathbb{R}}$ defined by
$$
\tilde{Q}=\left\{\begin{array}{l|l}
u+k e_1^* \in \tilde{M}_{\mathbb{R}} & \begin{array}{c}
\forall p \in \mathcal{V}(P) \geq 0, \quad\langle u, p\rangle+1 \geq 0 \\
\forall p^{\prime} \in \mathcal{V}\left(P^{\prime}\right)^{<0}, \quad\left\langle u, p^{\prime}\right\rangle+1+k\left\langle w, p^{\prime}\right\rangle \geq 0 \\
\forall f \in \mathcal{V}(F), \quad\langle u, f\rangle+k \geq 0
\end{array}
\end{array}\right\} .
$$

Then $\tilde{Q}$ is a full dimensional rational polytope and the primitive generators of the rays of the normal fan $\tilde{\Sigma}$ of $\tilde{Q}$ are

\begin{itemize}
    \item $p$ for $p \in \mathcal{V}(P)^{\geq 0}$ 
    \item $p^{\prime}+\left\langle w, p^{\prime}\right\rangle e_1$ for $p^{\prime} \in \mathcal{V}\left(P^{\prime}\right)^{<0}$
    \item $f+e_1$ for $f \in \mathcal{V}(F)$.
\end{itemize}

Let $\tilde{X}=X(\tilde{\Sigma})$ be the toric variety associated to $\tilde{\Sigma}$. Consider the reduced divisor $\mathcal{D}$ on $\tilde{X}$ defined by the following monomial in the Cox coordinates of $\tilde{X}$ :

\begin{equation}
\prod_{p \in \mathcal{V}(P) \geq 0} x_p \prod_{p^{\prime} \in \mathcal{V}\left(P^{\prime}\right)^{<0}} x_{p^{\prime}}
\end{equation}

Set $V=\pp_{\mathbb{C}}^2 \setminus\{[1: 0: 0],[0: 1: 0]\}$. Consider the closed subscheme $\mathcal{X}$ of $\tilde{X} \times_{\text {Spec } \mathbb{C}} V$ defined by the vanishing of the trinomial obtained by varying the three coefficients of

\begin{equation}
\prod_{p \in \mathcal{V}(P) \geq 0} x_p^{\langle w, p\rangle}+\prod_{p^{\prime} \in \mathcal{V}\left(P^{\prime}\right)^{<0}} x_{p^{\prime}}^{-\left\langle w, p^{\prime}\right\rangle}+\prod_{f \in \mathcal{V}(F)} x_f . 
\end{equation}

Then in the diagram

\begin{center}
\begin{tikzcd}
\mathcal{X}\cap (\mathcal{D}\times V) \arrow[rd] \arrow[r, hook] & \mathcal{X} \arrow[d] \\
                            & V                       
\end{tikzcd}
\end{center}
the two morphisms with target $V$ are flat and the base change of the diagram to the points $[0: 1:-1]$ and $[1: 0:-1]$ of $V$ are the closed embeddings $\partial X_P \hookrightarrow X_P$ and $\partial X_{P^{\prime}} \hookrightarrow X_{P^{\prime}}$ over $\operatorname{Spec} \mathbb{C}$
respectively.
\label{thm:petrdeform}
\end{theorem}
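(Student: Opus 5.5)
Since this statement is quoted from \cite[Theorem~7.3]{P21}, I would not reprove it from scratch but follow the toric construction of Ilten \cite{I12} as made explicit by Petracci. The plan has three steps: first the combinatorics of $\tilde Q$, then the hypersurface $\mathcal{X}$ cut out by the trinomial, then flatness and identification of the two special fibers.

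\emph{Combinatorics of $\tilde Q$.} Project $\tilde M_{\rr}\to\rr$ via $u+ke_1^*\mapsto k$. For $k=0$ the inequalities indexed by $\mathcal V(P)^{\geq 0}$, $\mathcal V(P')^{<0}$ and $\mathcal V(F)$ reduce to a subset of the inequalities defining the dual polygon $P^\vee$, together with the conditions coming from $F$. Using the mutation identity $P'=\operatorname{mut}_w(P,F)$, the slice at height $k$ should be $P^\vee$ modified by a Minkowski summand $kF^\vee$, where $F^\vee$ is the segment in $M$ dual to $F$. This slice is nonempty with nonempty interior on an interval of $k$ of positive length, so $\tilde Q$ is full dimensional.

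To show each listed inequality defines a facet, I would exhibit a vertex-supported face of the right dimension, which amounts to redundancy checks. The primitive ray generators of $\tilde\Sigma$ are then the inner normals of these inequalities, namely $p$, $p'+\langle w,p'\rangle e_1$ and $f+e_1$.

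\emph{The hypersurface $\mathcal X$.} The three monomials in the trinomial should have the same degree in $\operatorname{Cl}(\tilde X)$. I would check this by showing their exponent vectors differ by divisors of characters, exploiting $\langle w,\cdot\rangle$ and the $e_1^*$-coordinate. Concretely, the character $e_1^*$ gives the ratio of the $F$-monomial and the $P'$-monomial, and $w$ gives the ratio of the $P$-monomial and the $P'$-monomial. Hence the trinomial with coefficients $[\lambda_0:\lambda_1:\lambda_2]\in V$ is a well-defined section, and $\mathcal X\subset\tilde X\times V$ is a Cartier divisor on the product of a normal Cohen--Macaulay variety with a smooth base.

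\emph{Flatness and special fibers.} I would show that no fiber over $V$ contains a component of $\tilde X$; since $V$ excludes $[1:0:0]$ and $[0:1:0]$, at least two coefficients never vanish simultaneously in a bad way. Then miracle flatness applies, because all fibers are equidimensional of dimension $2$ in a Cohen--Macaulay total space. The same argument applies to $\mathcal X\cap(\mathcal D\times V)$, whose fibers are Cartier divisors of the fibers.

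At $[0:1:-1]$ the equation becomes the binomial $\prod x_{p'}^{-\langle w,p'\rangle}=\prod x_f$. A binomial in Cox coordinates cuts out the toric subvariety associated to the sublattice $\ker$ of the relevant character. Here that sublattice is identified with $N$ via $v\mapsto v+\langle w,v\rangle e_1$ on one side, and the fan induced on it is the spanning fan of $P$. So the fiber is $X_P$, and $\mathcal D$ restricts to $\partial X_P$. The point $[1:0:-1]$ is symmetric and yields $X_{P'}$ with its boundary.

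I expect the main obstacle to be this last identification: checking that the fan induced on the torus-invariant subvariety is exactly the spanning fan of $P$, respectively of $P'$. This is where the mutation condition $G_h+|h|F$ enters, and I would verify it cone by cone using the height decomposition with respect to $w$.
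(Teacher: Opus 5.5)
The paper gives no proof of this statement; it is quoted from Petracci, so your sketch has to stand on its own. The overall architecture is right, and your homogeneity check is correct: $m_F/m_{P'}=\chi^{e_1^*}$ and $m_P/m_{P'}=\chi^{w}$ for the three monomials $m_P,m_{P'},m_F$. Two load-bearing steps fail as written, however.

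\textbf{Flatness.} It is false in general that $\mathcal X$ is a Cartier divisor. The trinomial is a section of $\mathcal{O}_{\tilde X}(D_{e_1}+D_{v_E+e_1})$, and this sheaf is typically only $\qq$-Cartier.
\begin{itemize}
\item In the notation of Proposition~\ref{conelist}, all three monomials vanish at the fixed point of $\sigma_{134}=\operatorname{cone}\{(v_1,0),(v_2,h_{\min}),(\mathbf 0,1)\}$. There $D_{(\mathbf 0,1)}$ is Cartier iff $\det(v_1,v_2)$ divides $h_{\min}$.
\item For the paper's example with $v_1=(-1,-4)$, $v_2=(1,0)$, mutated along $E_{23}$, one has $h_{\min}=-1$ but $\det(v_1,v_2)=4$.
\end{itemize}
So the chain ``Cartier in a Cohen--Macaulay variety, hence CM, hence miracle flatness'' is not available. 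A correct route is to work on the Cox cover (or on the local covers $\cc^3\to U_\sigma$ when $\tilde\Sigma$ is simplicial). There the trinomial is a genuine polynomial that is nonzero on every fibre, so the slicing criterion gives flatness upstairs. Invariants of a linearly reductive group acting trivially on $V$ form a direct summand and commute with base change, so flatness and the fibres descend.

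You also misplace the role of $V$. Divided by $m_{P'}$, the trinomial is a combination of the distinct characters $\chi^{w},1,\chi^{e_1^*}$, so it is nonzero on the integral $\tilde X$ for all nonzero coefficients; $[1:0:0]$ and $[0:1:0]$ are harmless for $\mathcal X$ itself. They are removed because of the boundary family:
\begin{itemize}
\item $\{m_P=0\}$ contains the components $D_p$ of $\mathcal D$ with $\langle w,p\rangle>0$;
\item $\{m_{P'}=0\}$ contains the components $D_{p'}$;
\item hence $\mathcal X\cap(\mathcal D\times V)$ would acquire $2$-dimensional fibres at those two points.
\end{itemize}
Over $V$ the trinomial vanishes on no component of $\mathcal D$, which is exactly the regular-sequence condition the boundary family needs.

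\textbf{Identification of the special fibres.} A binomial cuts out the closure of a subtorus, and the fan induced on the sublattice describes only its normalization. For instance, the cusp $y^2=x^3$ is the closure of $t\mapsto(t^2,t^3)$ in $\mathbb{A}^2$, while the induced fan gives $\mathbb{A}^1$. So checking cones one by one cannot produce $X_P$ without a normality argument.

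Your lattices are also swapped. At $[0:1:-1]$ the relevant character is $e_1^*$, so the sublattice is $N\oplus 0$ and characters restrict by $(u,k)\mapsto u$. The sheared lattice $\{v+\langle w,v\rangle e_1\}$, with restriction $(u,k)\mapsto u+kw$, belongs to $[1:0:-1]$ and $P'$.

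The missing idea is to use projections of $\tilde Q$ rather than slices. Your slice guess is also off: $k\ge 0$ on $\tilde Q$, and the slice $k=0$ is $P^\vee\cap\{\langle u,v_E\rangle\ge 0\}$. The projection argument runs as follows.
\begin{enumerate}
\item Eliminating $k$, the image of $\tilde Q$ under $(u,k)\mapsto u$ is cut out by $\langle u,p\rangle\ge-1$, $\langle u,p'\rangle\ge-1$ and $\langle u,p'+|\langle w,p'\rangle|v_E\rangle\ge-1$.
\item This image is $P^\vee$, because the factor condition places every vertex of $P$ of negative height among the points $p'$ and $p'+|\langle w,p'\rangle|v_E$.
\item Likewise, the image under $(u,k)\mapsto u+kw$ is $(P')^\vee$.
\item Since the restricted fan is the normal fan of the projection, this gives the spanning fans of $P$ and $P'$.
\item Normality follows from lattice-point lifting, for $\tilde Q$ and for every dual cone $\sigma^\vee$. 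In both projections the only lower bounds on $k$ come from the rays $f+e_1$ and are the integers $-\langle u,f\rangle$ (resp.\ $-\langle u',f\rangle$); all other constraints are upper bounds. Hence an integral lift always exists.
\item Finally, $\mathcal D$ is the divisor of the support function of $\tilde Q$. It restricts to that of $P^\vee$ (resp.\ $(P')^\vee$), which gives $\partial X_P$ (resp.\ $\partial X_{P'}$).
\end{enumerate}
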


\subsection{Degenerations of $\ff_1$}

Let $P\in[P_{\ff_k}]$ for $k\leq 1$ a quadrilateral. We say $P$ does not have a pair of parallel edges, if for every edge $E$ of $P$ with associated mutation data $(w,f)$, the vertices outside $E$ have different height. For convenience, we apply a transformation $M\in GL_2(\mathbb{Z})$ such that the vertices of $P$ are ordered counterclockwise, satisfying $w(v_1)=h_{max}$ and $w(v_2)=w(v_3)=h_{min}$. Since $X_P$ contains only Wahl singularities, it follows that $|E\cap N|-1=-h_{min}$. This restricts the mutation data to $v_3-v_2=(-h_{min})f$.

In particular, any $P\in [P_{\ff_1}]$ does not have an edge $E^\prime$ parallel to $E$. If such edges existed, we would obtain $w(v_1)=w(v_4)=h_{max}$ and consequently $\operatorname{mut}_w(P,F)=\operatorname{conv}\{v_1,v_2,v_4+h_{max}f\}$. This contradicts the fact that degenerations of $\ff_1$ always satisfy $\rho=2$ by \cite[Theorem 4.1]{HP10}.

\begin{convention}\label{conv:orientation}
Let $P\in [P_{\ff_k}]$ for $k\leq 1$ be a quadrilateral with no pair of parallel edges. We pick a counterclockwise ordering, such that for the mutation data $(w,f)$ it satisfies:
\begin{itemize}
    \item $v_1$, such that $w(v_1)=h_{max}$.
    \item $v_2$ and $v_3$, such that $w(v_2)=w(v_3)=h_{min}$.
    \item $v_4$, such that $h_{min}<w(v_4)<h_{max}$.
\end{itemize}
 We denote the mutating edge $E:=\operatorname{conv}\{v_2,v_3\}$, and more generally for $P$ following an ordering as above, we define $E_{ij}:=\operatorname{conv}\{v_i,v_j\}$ for $j\equiv i+1 \pmod{4}$.

 For each edge $E_{ij}$, we compute mutations following the setting of \cite[Corollary 3]{KNP17}. Indeed, we denote the associated mutation data by $(w_{ij},f_{ij})$, where $f_{ij}$ is the primitive direction in which $E_{ij}$ is traversed by the ordering above, that is
$$f_{ij}=\frac{1}{-h_{ij}}(v_j-v_i) \quad\text{if that ordering is counterclockwise},\qquad
f_{ij}=\frac{1}{-h_{ij}}(v_i-v_j) \quad\text{if it is clockwise}.$$
Here $h_{ij}:=\min\{w_{ij}(v)\mid v\in P\}$ denotes the minimum height with respect to $w_{ij}$.
With this convention $f_{ij}$ is the vector along which the mutation $\operatorname{mut}_{w_{ij}}(P,F_{ij})$ is performed in all cases.
\end{convention}
Now, we apply the mutation described in Definition \ref{def:mut} and compute the vertices for $\operatorname{mut}_w(P,F)$. It follows that: 
\begin{equation}\label{mutpol}
    \operatorname{mut}_w(P,F)=\operatorname{conv}\{v_1,v_2,v_4+w(v_4)f,v_1+h_{max}f\}.
\end{equation}

Since $w(v_4+w(v_4)f)=w(v_4)$, this allows us to compute the equations defining the divisor $\mathcal{D}$ of Theorem \ref{thm:petrdeform}. In Cox coordinates of $\tilde{X}$, we make the $x_1$ corresponding to the vertex $v_1$ of $P$ and $x_2$ to the vertex $v_2$ of $P^\prime$. Also, $x_4$ corresponds to the vertex $v_4$ of $P$ if $w(v_4)\geq 0$, or to $v_4+w(v_4)f$ of $P^\prime$ if $w(v_4)<0$. So the divisor $\mathcal{D}$ is given by the equation $x_1x_2x_4=0$. 
In addition, the subscheme $\mathcal{X}$ of $\tilde{X} \times V$ of Theorem \ref{thm:petrdeform} defines the deformation established in Theorem \ref{Ildef} and it is given by the vanishing locus of the trinomial:

\medskip
If $w(v_4)\geq 0$,
\begin{equation}
 x_1^{h_{max}}x_4^{w(v_4)}+x_2^{-h_{min}}+z_0z_1
\end{equation}
and if $w(v_4)<0$,
\begin{equation}
x_1^{h_{max}}+x_2^{-h_{min}}x_4^{-w(v_4)}+z_0z_1,
\end{equation}
where the coordinates $z_i$ are given by the end points of the segment $F=\operatorname{conv}\{\mathbf{0},f\}$. In the condition $w(v_4)\geq 0$, the restriction of $V$ onto $\pp^1_{s,t}=\{[s:t:-1]\in \pp^2_{\mathbb{C}}\}\setminus\{[0:0:1]\}$ gives us the family 
\begin{equation}\label{eq:defpos}
\mathcal{X}_{s,t}=\{sx_1^{h_{max}}x_4^{w(v_4)}+tx_2^{-h_{min}}-z_0z_1=0\}\subseteq \tilde{X} \times \pp^1_{s,t}.
\end{equation}
Following Theorem \ref{thm:petrdeform}, the fibers over $[0:1]$ and $[1:0]$ are isomorphic to $X_P$ and $X_{P^\prime}$ respectively. Also, the subscheme $\mathcal{B}_{s,t}=\mathcal{X}_{s,t}\cap \{x_1x_2x_4=0\}$
gives the deformation of the toric boundaries $\partial X_P$ onto $\partial X_{P^\prime}$.
Similarly holds for the case $w(v_4)<0$, we obtain the family
\begin{equation}\label{eq:defneg}
\mathcal{X}_{s,t}=\{sx_1^{h_{max}}+tx_2^{-h_{min}}x_4^{-w(v_4)}-z_0z_1=0\}\subseteq \tilde{X} \times \pp^1_{s,t},
\end{equation}
and the deformation of the toric boundaries $\mathcal{B}_{s,t}$ as above.

\medskip
When $P\in [\ff_k]$ is a quadrilateral with no parallel edges. The identification of the edges of  $P^\prime=\operatorname{mut}_w(P,F)$ is straightforward by means of polyhedral defining inequalities. From now on, we establish the notation for the half space $K_{a,t}:=\{v\in \mathbb{R}^n:\langle a,v\rangle \geq t\}$. We leave to the reader the details of the following lemma.

\begin{lemma}
Let $P\in[P_{\ff_k}]$ for $k\leq 1$ with no pair of parallel edges and mutation data $(w,f)$. If $P^\prime=\operatorname{mut}(P,F)$, the list of its edges is
\begin{itemize}
    \item $\operatorname{conv}\{v_1,v_1+h_{max}f\}$
    \item $\operatorname{conv}\{v_1,v_2\}$
    \item $\operatorname{conv}\{v_2,v_4+w(v_4)f\}$
    \item $\operatorname{conv}\{v_1+h_{max}f,v_4+w(v_4)f\}.$
\end{itemize}
\label{pollist}
\end{lemma}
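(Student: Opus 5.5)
We argue directly from equation~\eqref{mutpol}, which gives $P'=\operatorname{mut}_w(P,F)=\operatorname{conv}\{v_1,v_2,v_4+w(v_4)f,v_1+h_{max}f\}$. The plan is to show that these four points are exactly the vertices of $P'$. We then read off the cyclic order by sorting them by height $w$ and by position along $f$. Finally we confirm each edge by producing a supporting half-space $K_{a,t}$ that contains $P'$ and meets it in precisely the claimed segment.

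\textbf{Heights and left/right position.} By Convention~\ref{conv:orientation}, $w(v_1)=h_{max}$, $w(v_2)=h_{min}$, and $h_{min}<w(v_4)<h_{max}$. Since $w(f)=0$, the mutated points keep their heights:
\[
w(v_1+h_{max}f)=h_{max},\qquad w(v_4+w(v_4)f)=w(v_4).
\]
So $P'$ has a top edge $\operatorname{conv}\{v_1,v_1+h_{max}f\}$ of lattice length $h_{max}$ at height $h_{max}$, and a single vertex $v_2$ at the bottom height $h_{min}$. The vertex $v_3=v_2+(-h_{min})f$ disappears, because in the mutation the Minkowski factor is removed at negative heights. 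The middle point $v_4+w(v_4)f$ lies at an intermediate height. It therefore contributes a vertex provided it is not in the convex hull of the other three points.

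\textbf{Convexity check.} We work in coordinates adapted to $(f,\,\text{height})$. Because the ordering is counterclockwise with $E=\operatorname{conv}\{v_2,v_3\}$ traversed in direction $f$, the point $v_4$ lies on the ``$f$-side'' of $P$, between $v_3$ and $v_1$. The line segment $v_2\to v_1$ bounds $P$ on the side opposite to $f$, and the map $v\mapsto v+w(v)f$ is a shear. We intend to show that this shear sends the boundary edges $E_{34}$ and $E_{41}$ to the edges $\operatorname{conv}\{v_2,v_4+w(v_4)f\}$ and $\operatorname{conv}\{v_4+w(v_4)f,v_1+h_{max}f\}$. Indeed, the shear maps $v_3\mapsto v_3+h_{min}f=v_2$, maps $v_1\mapsto v_1+h_{max}f$, and is linear, so it sends the segments $E_{34}$ and $E_{41}$ to segments. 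On the non-negative side the mutated polygon is $\bigcup_h (w_h(P)+hF)$, and its right boundary is exactly the shear of the right boundary of $P$. On the negative side, the width loss at height $h$ equals $|h|$ times the width of $F$, which is again the shear of the right boundary.

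Strict convexity at $v_4+w(v_4)f$ follows because the shear is a linear automorphism, and it preserves the strict convexity that $P$ has at $v_4$. The left boundary $\operatorname{conv}\{v_1,v_2\}=E_{12}$ is unchanged, since the left ends of the slices $w_h(P)$ are fixed.

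\textbf{Supporting half-spaces.} For each of the four segments we give an explicit $a$ with $P'\subset K_{a,t}$ and $P'\cap\{\langle a,v\rangle=t\}$ equal to that segment:
\begin{itemize}
\item for $\operatorname{conv}\{v_1,v_1+h_{max}f\}$, take $a=-w$;
\item for $E_{12}$, take the inner normal $w_{12}$ of $P$;
\item for the two sheared edges, take $w_{34}\circ\sigma^{-1}$ and $w_{41}\circ\sigma^{-1}$, where $\sigma(v)=v+w(v)f$.
\end{itemize}

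The main obstacle is the fact that $\sigma$ is applied uniformly across all heights only on the right-hand boundary. We therefore have to check that the left boundary of $P'$ remains the single edge $E_{12}$. In other words, no vertex of $P$ other than $v_1,v_2$ lies on the left side. This is exactly the absence of parallel edges together with the position of $v_4$ fixed by the convention. If $v_4$ could lie on the left chain, the convention would have to be adjusted by replacing $f\mapsto -f$.
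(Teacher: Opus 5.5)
Your proposal is correct and is essentially the argument the paper intends: the paper leaves this lemma to the reader as a check ``by means of polyhedral defining inequalities''. Your supporting functionals $-w$, $w_{12}$, $w_{34}\circ\sigma^{-1}$ and $w_{41}\circ\sigma^{-1}$ are exactly such inequalities, once one notes $w_{12}(f)>0$ and $w_{34}(f),w_{41}(f)<0$, and your shear $\sigma$ is the map $\theta$ of the paper's later Lemma~\ref{lem:mut-boundary}.

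One correction to your last paragraph. What must be checked, when $w(v_4)<0$, is that $\sigma(v_4)$ stays strictly on the $f$-side of the line through $E_{12}$. This is not a restatement of the convention or of the absence of parallel edges; it follows from strict convexity of $P$ at $v_4$. Indeed, $\sigma$ translates each horizontal slice and carries the diagonal $\operatorname{conv}\{v_3,v_1\}$ to $\operatorname{conv}\{v_2,v_1+h_{max}f\}$, and this segment lies strictly on the $f$-side of $E_{12}$ at every height above $h_{min}$.
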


\begin{remark}
Since mutation is invertible i.e, $\operatorname{mut}_{-w}(P^\prime,F)=P$, the mutation data $(-w,f)$ for $P^\prime$ satisfies $-w(v_4+w(v_4)f)<-h_{min}$ and $-w(v_4+w(v_4)f)\leq 0$ if and only if $w(v_4)\geq 0$. 
\label{invertingsign}
\end{remark}
Our main goal is to study the deformations of the T-invariant curves of $X$ under the pencil $\mathcal{X}_{s,t}$. For this end, we first analyze the birational geometry of the toric threefold $\tilde{X}$ from Theorem \ref{thm:petrdeform}.

We compute the vectors defining the normal fan $\tilde{\Sigma}$ associated to $\tilde{X}$. In our specific context where we assume that $w(v_4) \geq 0$, Theorem \ref{thm:petrdeform} establishes that $\tilde{\Sigma}$ is the spanning fan induced by the rational polytope
$$ \tilde{P} = \operatorname{conv}\{(v_1,0), (v_4,0), (v_2,h_{\min}),(\mathbf{0},1),(f,1)\}\subseteq (N\oplus \mathbb{Z})_\mathbb{R}.$$
We label these vertices $X_1,\dots,X_5$ in that order, and write $F_{ijk}:=\operatorname{conv}\{X_i,X_j,X_k\}$ with $i<j<k$. 

\begin{proposition}
Let $P\in[P_{\ff_k}]$ for $k\leq 1$ with no pair of parallel edges, with mutation data $(w,f)$ and $w(v_4)\geq 0$. Assume that $P$ is oriented as in Convention \ref{conv:orientation}. Then, the spanning fan $\tilde{\Sigma}$ is simplicial with maximal cones:

\begin{itemize}

    \item $\sigma_{145}=\operatorname{cone}\{X_1,X_4,X_5\}$, \hspace{0.45cm} $\sigma_{134}=\operatorname{cone}\{X_1,X_3,X_4\}$
    
    \item $\sigma_{125}=\operatorname{cone}\{X_1,X_2,X_5\}$, \hspace{0.45cm} $\sigma_{123}=\operatorname{cone}\{X_1,X_2,X_3\}$
    
    \item $\sigma_{235}=\operatorname{cone}\{X_2,X_3,X_5\}$, \hspace{0.45cm} $\sigma_{345}=\operatorname{cone}\{X_3,X_4,X_5\}.$
\end{itemize}
\label{conelist}
\end{proposition}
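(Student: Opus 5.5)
The plan is to identify the boundary of $\tilde P$ directly, by listing its facets and checking that each is a triangle. The spanning fan of $\tilde P$ consists of the cones over these facets, so the statement reduces to a convex-geometry computation. I choose coordinates adapted to the mutation data: write $N\oplus\zz=\zz f\oplus\zz u\oplus\zz e_1$ with $w(u)=1$. Then each vertex $X_i$ is recorded by the triple (position along $f$, $w$-height, last coordinate). The vertices are $X_1=(v_1,0)$ at height $h_{\max}>0$, $X_2=(v_4,0)$ at height $w(v_4)\in[0,h_{\max})$, $X_3=(v_2,h_{\min})$ at height $h_{\min}<0$, and $X_4=(\mathbf 0,1)$, $X_5=(f,1)$ at height $0$.

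First I check that all five points are genuine vertices and that the origin lies in the interior of $\tilde P$. For the origin, I would note that $\mathbf 0$ lies in the segment $[X_4,X_3']$ after projecting away from the $e_1$ direction. Concretely, I will write $\mathbf 0$ as a positive combination of all $X_i$, using that $\mathbf 0\in P^\circ$ and that $X_3$ has negative last coordinate. Since the polytope has five vertices in dimension three and is not a pyramid (I will verify that no four of the points are coplanar), it is a triangular bipyramid. In that case the spanning fan is simplicial with exactly six maximal cones.

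The main step is determining which pair is the apex pair of the bipyramid, equivalently which triangle is the ``equator''. The claimed list says that $X_1$ and $X_3$ lie on four facets each and $X_2,X_4,X_5$ on four as well. More precisely, the pairs $\{X_1,X_2\}$ and so on must be read off from the list. The missing edge is $\{X_2,X_4\}$, since no listed cone contains both $X_2$ and $X_4$. So the equator is $\{X_1,X_3,X_5\}$ and the apexes are $X_2$ and $X_4$. I would prove this by exhibiting, for each of the six triples, a linear functional $\ell_{ijk}$ on $M\oplus\zz$ that equals $1$ on the three points and is $<1$ on the remaining two. Equivalently, I would show that the segment $[X_2,X_4]$ crosses the interior of the triangle $F_{135}$. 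That amounts to solving $\lambda X_2+(1-\lambda)X_4=aX_1+bX_3+cX_5$ with all coefficients positive. Comparing last coordinates gives $1-\lambda=bh_{\min}+c$. Comparing $w$-heights gives $\lambda w(v_4)=ah_{\max}+bh_{\min}$. The $f$-component then gives the final equation.

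The hardest step is the positivity of this solution. Here I would use the hypotheses: $w(v_4)\ge0$, together with Convention \ref{conv:orientation} (counterclockwise order $v_1,v_2,v_3,v_4$ with $v_3-v_2=-h_{\min}f$). I would also use the convexity of $P$, which places $v_4$ on the same side of the line through $v_1$ and $v_2$ as the origin. I expect the sign of the $f$-coordinate inequality to follow from the counterclockwise orientation, through the condition $\det(v_1,v_2)>0$ and similar ones. Once the crossing is established, the facets are exactly the six triangles containing one apex and one equatorial edge, namely $\sigma_{125},\sigma_{123},\sigma_{235}$ from $X_2$ and $\sigma_{145},\sigma_{134},\sigma_{345}$ from $X_4$. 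This matches the list, and simpliciality is automatic.
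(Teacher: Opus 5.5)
Your route is correct, and it differs from the paper's. The paper certifies four facets, $F_{145},F_{345},F_{125},F_{123}$, by writing down explicit supporting half-spaces; two of these are built from the inner normal $w_{41}$ of the adjacent edge $E_{41}$. From these it infers that no four vertices are coplanar, hence that $\tilde P$ is simplicial. It then recovers $F_{134}$ and $F_{235}$ from the fact that every edge lies in exactly two facets. You instead compute the unique affine dependence among the five points, i.e.\ the Radon partition $\{X_2,X_4\}\mid\{X_1,X_3,X_5\}$. Since the points affinely span (the origin is interior), this single computation shows at once three things: all five points are vertices, no four are coplanar, and $\partial\tilde P$ is the bipyramid with apexes $X_2,X_4$. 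So all six facets come out together, and the primitive collections $S_1,S_2$ used later become transparent. The paper's route, in exchange, gives explicit facet normals.

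Two things to fix when you write it out.

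First, you must impose $a+b+c=1$, since a segment meeting a triangle means convex combinations on both sides. Your three coordinate equations alone leave a one-parameter family.

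Second, the sign check you defer does work, but the convexity input is not the side of the line $v_1v_2$. Take determinants in the orientation for which $v_1,\dots,v_4$ is counterclockwise; your basis has $\det(f,u)=1$. Set $r=-h_{\min}$, $H=h_{\max}$, $\eta=w(v_4)$ and $D=H(1+r)+r$. Then one gets $b=\lambda(H-\eta)/D$, $a=\lambda(r+\eta(1+r))/D$, $c=1-\lambda(H+r+r\eta)/D$ and $\lambda=D/K$, where
\[
K-D=\det(v_4-v_3,\,v_1-v_3)+r\det(v_4,v_1)>0 .
\]
The first term is positive by convexity of $P$ at $v_4$. The second is positive because $v_4,v_1$ are consecutive counterclockwise around the interior origin. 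Thus $0<\lambda<1$. Positivity of $a$ uses $w(v_4)\geq 0$, and positivity of $b$ and $c$ uses $w(v_4)<h_{\max}$.

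For the origin, write $0=\sum\alpha_iv_i$ with all $\alpha_i>0$ and substitute $v_3=v_2+rf$. This gives the strictly positive relation $\alpha_1X_1+\alpha_4X_2+(\alpha_2+\alpha_3)X_3+r\alpha_2X_4+r\alpha_3X_5=0$.

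Finally, a small correction: $X_2$ and $X_4$ lie on three facets each, not four.
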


\begin{proof}
We show that $F_{145}$, $F_{123}$, $F_{125}$ and $F_{345}$ are faces of $\tilde{P}$ by providing explicit half-spaces in $(M\oplus \mathbb{Z})_{\mathbb{R}}$. Indeed, by straightforward computations $F_{145}$ is determined by the half-space $K_{(-w,-h_{max}),-h_{max}}$ and $F_{345}$ by $K_{((1-h_{min})w,h_{min}),h_{min}}$. 

For $F_{125}$, we prove that the half-space $K_{(w_{41},h_{41}-w_{41}(f)),h_{41}}$ determines a face of $\widetilde{P}$. Let $\xi=(w_{41},h_{41}-w_{41}(f))$, then $\langle \xi, X_3\rangle = h_{41}h_{min}+w_{41}(v_3)>h_{41}(1+h_{min})\geq 0$. By the convention \ref{conv:orientation} it follows immediately that $w_{41}(v_3)<w_{41}(v_2)$. Therefore, $\langle \xi, X_4\rangle=h_{41}-w_{41}(f)>h_{41}$. 

For $F_{123}$ we show that $K_{\big(w_{41},\frac{h_{41}-w_{41}(v_2)}{h_{min}}\big),h_{41}}$ determines a face of $\widetilde{P}$. Let $\xi=(w_{41},\frac{h_{41}-w_{41}(v_2)}{h_{min}})$, it follows that $\langle \xi, X_4\rangle=\frac{h_{41}-w_{41}(v_2)}{h_{min}}>h_{41}$ and $\langle \xi, X_5\rangle=\frac{h_{41}-w_{41}(v_3)}{h_{min}}>h_{41}$.

\medskip
These four faces show that no four vertices of $\widetilde{P}$ are coplanar. Specifically, for any four vectors $X_{i_1}, \dots, X_{i_4}$, we can find a face $F$ that contains $X_{i_1}, X_{i_2}, X_{i_3}$ but not $X_{i_4}$. Also, it is easy to see that no three vertices lie on a line.
Then by \cite[Page 8]{Z12}, $\tilde{P}$ must be simplicial, implying all maximal dimensional faces are of the form $F_{ijk}$. Since the edge $\operatorname{conv}\{X_3,X_4\}$ of $\tilde{P}$ must separate two unique faces, this implies that $F_{134}$ is a face of $\tilde{P}$. Similarly, the edge $\operatorname{conv}\{X_3,X_5\}$ induces the face $F_{235}$, which gives the full list of faces of $\tilde{P}$. Indeed, the 3-dimensional cones of $\Sigma$ are the ones listed in the statement.
\end{proof}
\begin{remark}
For $w(v_4)<0$ the vector $(v_4,0)$ is replaced by $(v_4,w(v_4))$, and since $h_{min}<w(v_4)$ the combinatorial structure of $\tilde{P}$, hence the list of maximal cones of $\tilde{\Sigma}$, is unchanged; alternatively, this case is recovered by the inverse mutation of Remark \ref{invertingsign}.
\end{remark}

The origin lies in the strict interior of $\tilde{P}$ and the vertices of $\tilde{P}$ are primitive in $N\oplus\zz$, so $\tilde{X}=X_{\tilde{P}}$ is a projective toric threefold which is $\qq$-factorial, because $\tilde{\Sigma}$ is simplicial, and Fano, because $\mathbf{0}\in\operatorname{int}(\tilde{P})$.
Its Picard number is $\rho(\tilde{X})=2$ and $\operatorname{NE}(\tilde{X})$ is closed, so
\[
\operatorname{NE}(\tilde{X})=\operatorname{NE}(\tilde{X})_{K_{\tilde{X}}<0}=\rr_{\geq 0}[C_{\omega_1}]+\rr_{\geq 0}[C_{\omega_2}],
\]
where the $C_{\omega_i}$ are $T$-invariant curves induced by walls $\omega_i\in\tilde{\Sigma}(2)$, and our task is to determine those walls explicitly.
For this we follow \cite{CvR09}, where extremal rays are read off from primitive relations. We recall only what is used below, in the case of projective $\qq$-factorial toric varieties.

\begin{definition}[{\cite[Section 1.3]{CvR09}}]\label{primrrel}
Let $Y$ be a projective $\qq$-factorial toric variety with fan $\Delta$.
A subset $S=\{\rho_1,\dots,\rho_k\}\subseteq\Delta(1)$ is a \emph{primitive collection} for $\Delta$ if it is not contained in a single cone of $\Delta$ but every proper subset is; writing $\sigma_S\in\Delta$ for the unique cone with $\rho_1+\cdots+\rho_k\in\relint(\sigma_S)$, the resulting expression
$\rho_1+\cdots+\rho_k=\sum_{\rho\in\sigma_S(1)}a_\rho\rho$ with $a_\rho\in\qq^{+}$ is the \emph{primitive relation} $l_S$.
\end{definition}

Recall further that, under the identification of $N_1(Y)_{\rr}$ with the rational relations among the ray generators of $\Delta$, a wall $\omega\in\Delta(n-1)$ determines the wall relation of $[C_\omega]$, namely $\sum_{i=1}^{n+1}a_i\rho_i=0$ with $a_n,a_{n+1}>0$ and $a_i\in\qq$.

\begin{theorem}[{\cite[Proposition 2.1]{CvR09}}]\label{coxprim}
Let $\Delta$ be a projective simplicial fan with convex support of dimension $n$, and let $\omega$ be an extremal wall, so that $[C_\omega]$ generates an extremal ray $l_\omega\in\operatorname{NE}(Y)$.
Then $R=\{\rho_i:a_i>0\}$ is a primitive collection for $\Delta$, and in $\rr^{|\Sigma(1)|}$ the primitive relation $a_R$ of $R$ and the wall relation $a_\tau$ of $\omega$ agree up to a positive constant.
\end{theorem}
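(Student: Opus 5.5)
The approach is to follow Reid's description of the extremal contraction attached to $l_\omega$ and to read both relations off the combinatorics of the cones swept out by that contraction. Write $\omega=\operatorname{cone}(\rho_1,\dots,\rho_{n-1})$ and let $\sigma=\omega+\rho_n$, $\sigma'=\omega+\rho_{n+1}$ be the two maximal cones adjacent along $\omega$. The wall relation is then $\sum_{i=1}^{n+1}a_i\rho_i=0$ with $a_n,a_{n+1}>0$, and $a_i=D_{\rho_i}\cdot C_\omega$ up to a common positive factor. Split the index set as $R=\{a_i>0\}$, $Z=\{a_i=0\}$ and $L=\{a_i<0\}$. The relation rearranges to
\[
\sum_{i\in R}a_i\rho_i=\sum_{i\in L}(-a_i)\rho_i\in\operatorname{cone}(\rho_i:i\in L)=:\tau_L,
\]
and $\tau_L$ is a face of $\omega$, hence a cone of $\Delta$.

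The first step is to show that $R$ is not contained in any cone of $\Delta$. Suppose $R\subseteq\gamma$ for some cone $\gamma$. Since $\Delta$ is simplicial, the rays of $\gamma$ are linearly independent. The vector $\sum_R a_i\rho_i$ lies in $\relint\operatorname{cone}(R)$ and also in $\tau_L$. By the fan axioms, $\operatorname{cone}(R)\cap\tau_L$ is a common face of both cones. Because $R\cap L=\emptyset$, this forces $\sum_R a_i\rho_i=0$, which contradicts the linear independence of the rays of $\gamma$.

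The second step is to show that every proper subset of $R$ lies in a cone. Here I would use Reid's description of the exceptional locus of the contraction $\varphi_\omega$, which applies because $\omega$ is extremal and $\Delta$ is projective and simplicial. For each $j\in R$, the set $\bigl(R\setminus\{\rho_j\}\bigr)\cup Z\cup L$ spans a maximal cone $\sigma_j$ of $\Delta$. The walls $\sigma_j\cap\sigma_{j'}$ all have classes in the ray $l_\omega$, and the corresponding $T$-invariant curves are exactly the curves contracted by $\varphi_\omega$. I would establish this by induction on walls: flip across the walls of $\sigma$ that contain $\tau_L\cup Z$, and use that all these walls have numerically proportional classes, by extremality. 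Each proper subset of $R$ then sits inside some $\sigma_j$, so $R$ is a primitive collection.

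The third step, which I expect to be the main obstacle, is to identify the primitive relation with the wall relation. By the first step, the sum $\sum_{\rho\in R}\rho$ must land in the relative interior of the cone $\sigma_R$, and I will show that $\sigma_R=\tau_L$. The difficulty is that the wall relation a priori has unequal positive coefficients $a_i$, while $l_R$ is built from the unweighted sum. I would first try to compare the wall relations of all walls $\sigma_j\cap\sigma_{j'}$. By extremality they are positive multiples of one another, and they share the support $R\cup L$. Intersecting the curve $C_{\sigma_j\cap\sigma_{j'}}$ with the divisors $D_\rho$ for $\rho\in R$ should then force $a_j/a_{j'}$ to be independent of the wall, after normalizing the generators as in Definition~\ref{primrrel}. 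With this in hand, uniqueness of the cone containing $\sum_R\rho$ in its relative interior, together with linear independence of the rays in $R\cup L$ (the simplicial hypothesis), shows that $l_R$ and $a_\tau$ are proportional with a positive constant. Convexity of the support is what guarantees that each $\sigma_j$ exists, so that no wall of $\sigma$ containing $\tau_L$ lies on the boundary of $|\Delta|$.
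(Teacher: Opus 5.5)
Your Steps 1 and 2 are sound. Step 1 is a correct fan-axiom argument. Step 2, granting Reid's description that $\sigma_j=\operatorname{cone}(\rho_i : i\neq j)$ is a maximal cone of $\Delta$ for each $j\in R$, gives primitivity, since any proper $S\subsetneq R$ lies in $\sigma_j$ for any $j\in R\setminus S$.

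The gap is in Step 3. The target $\sigma_R=\tau_L$ is false for simplicial fans, because the coefficients $a_i$ with $i\in R$ need not be equal. Take $\mathbb{P}(1,1,2)$ with rays $v_0,v_1,v_2$ and $v_0+v_1+2v_2=0$. For the wall $\operatorname{cone}(v_2)$ one has $R=\{v_0,v_1,v_2\}$ and $L=\emptyset$. Yet $v_0+v_1+v_2=-v_2=\tfrac12(v_0+v_1)$, so $\sigma_R=\operatorname{cone}(v_0,v_1)\neq\{0\}=\tau_L$. The primitive relation $\tfrac12 v_0+\tfrac12 v_1+v_2=0$ is half the wall relation. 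The paper itself uses relations of this shape: in Proposition~\ref{flipray}, $X_2+X_4=aX_1+bX_5+(1-b)X_4$ with $b<1$ has $X_4\in R\cap\sigma_R$.

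Your proposed repair cannot work. Every wall $\sigma_j\cap\sigma_{j'}$ with $j,j'\in R$ separates $\sigma_j$ and $\sigma_{j'}$, whose rays together are exactly $\rho_1,\dots,\rho_{n+1}$. So its wall relation is literally the same linear relation among these $n+1$ vectors, unique up to scaling. Comparing these walls therefore says nothing about $a_j/a_{j'}$. Also, $R\cup L$ is not linearly independent, since it carries the wall relation; only sets such as $(R\setminus\{\rho_j\})\cup L$ are.

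The missing idea is to normalize by the largest positive coefficient. Let $a_{\max}=\max_{i\in R}a_i$, attained at some $j\in R$. The wall relation gives
\[
\sum_{i\in R}\rho_i=\sum_{i\in R}\Bigl(1-\frac{a_i}{a_{\max}}\Bigr)\rho_i+\sum_{i\in L}\frac{-a_i}{a_{\max}}\,\rho_i ,
\]
with non-negative coefficients supported on $(R\setminus\{\rho_j\})\cup L\subseteq\sigma_j(1)$. The rays with positive coefficient span a face $\sigma'$ of the simplicial cone $\sigma_j$, and the sum lies in $\relint\sigma'$. Since the relative interiors of the cones of a fan are pairwise disjoint, $\sigma_R=\sigma'$, and simpliciality makes the coefficients unique. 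Hence the primitive relation is the wall relation divided by $a_{\max}$, and
\[
\sigma_R=\operatorname{cone}\bigl(L\cup\{\rho_i\in R : a_i<a_{\max}\}\bigr)\supseteq\tau_L .
\]
Equality holds exactly when the $a_i$ are constant on $R$, as in the smooth case.
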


From the list of cones of Proposition \ref{conelist}, the fan $\tilde{\Sigma}$ has exactly two primitive collections, $S_1=\{X_1,X_3,X_5\}$ and $S_2=\{X_2,X_4\}$.
Since $\operatorname{NE}(\tilde{X})$ has exactly two extremal rays, any $T$-invariant curve $C_\omega$ whose wall relation is a multiple of some $l_{S_i}$ is extremal.
For notational convenience we write $\omega_{ij}:=\operatorname{cone}\{X_i,X_j\}$.

\begin{proposition}
Let $P\in[P_{\ff_k}]$ as in Convention \ref{conv:orientation} with mutation data $(w,f)$ and $w(v_4)\geq 0$. Then, the ray $R=\mathbb{R}_{\geq 0}[C_{\omega_{15}}]$ is extremal in $\operatorname{NE}(\widetilde{X})$.
\label{flipray}
\end{proposition}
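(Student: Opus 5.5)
The plan is to compute the wall relation of $\omega_{15}=\operatorname{cone}\{X_1,X_5\}$ explicitly and compare it with the primitive relations of $\tilde{\Sigma}$. By Proposition~\ref{conelist}, $\omega_{15}$ is the common wall of the two maximal cones $\sigma_{145}$ and $\sigma_{125}$. Its wall relation therefore involves only the four generators $X_1=(v_1,0)$, $X_2=(v_4,0)$, $X_4=(\mathbf{0},1)$ and $X_5=(f,1)$. Moreover, the coefficients of $X_2$ and $X_4$ (the rays not lying on the wall) are positive.

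\emph{Step 1: solve the linear relation.} Since $w(f)=0$ and $w(v_1)=h_{max}>0$, the vectors $v_1$ and $f$ form a basis of $N_\rr$. Write
\[
v_4=\tfrac{w(v_4)}{h_{max}}\,v_1+\beta f .
\]
Comparing the $N$- and $\zz$-components gives the relation
\[
X_2+\beta X_4=\tfrac{w(v_4)}{h_{max}}\,X_1+\beta X_5 .
\]
Convention~\ref{conv:orientation} fixes the counterclockwise order $v_1,v_2,v_3,v_4$ and the direction of $f$ along $E$. From this I will show that $\beta>0$. The only condition used is that $v_4$ lies on the same side of the line $\rr v_1$ as the direction $f$, and this follows from convexity of $P$ together with $w(v_2)=w(v_3)=h_{min}<0$. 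I will also use $w(v_4)\geq 0$. Consequently the positive part of the wall relation is exactly $\{X_2,X_4\}=S_2$, so it has the shape predicted by Theorem~\ref{coxprim} for an extremal wall.

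\emph{Step 2: deduce extremality.} I will not rely on matching the normalized primitive relation coefficient by coefficient, since that would require $\beta=1$. Instead I will exhibit a nonzero nef, non-ample $T$-Cartier divisor $D$ on $\tilde{X}$ with $D\cdot C_{\omega_{15}}=0$. Because $\rho(\tilde{X})=2$ and $\operatorname{NE}(\tilde{X})$ is spanned by two rays, such a $D$ cuts out an extremal face, and $[C_{\omega_{15}}]$ then spans that ray.

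To construct $D$, I take the support function that is linear on $\sigma_{145}\cup\sigma_{125}$; this is possible because the four generators satisfy a single relation. It is determined by a vector $m\in M_\qq\oplus\qq$ with $\langle m,X_1\rangle=\langle m,X_2\rangle=\langle m,X_4\rangle=\langle m,X_5\rangle=-1$ after rescaling. I then extend it to $X_3$ by declaring it linear on the remaining cones $\sigma_{123},\sigma_{134},\sigma_{235},\sigma_{345}$ with a single value at $X_3$. The value at $X_3$ must be chosen so that the function is convex across the walls $\omega_{12},\omega_{13},\omega_{14},\omega_{25},\omega_{35},\omega_{45},\omega_{23},\omega_{34}$. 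By the primitive-collection description, this amounts to checking the sign of its value on the other primitive relation $l_{S_1}$ (for $S_1=\{X_1,X_3,X_5\}$).

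\emph{Step 3: convexity check, the expected main obstacle.} I expect the hardest part to be verifying this convexity, i.e.\ computing the cone $\sigma_{S_1}$ containing $X_1+X_3+X_5$ in its relative interior and checking the resulting sign. For this I will use the explicit supporting half-spaces already found in the proof of Proposition~\ref{conelist}. For instance, $K_{(w_{41},h_{41}-w_{41}(f)),h_{41}}$ supports $F_{125}$, and the corresponding inequality $\langle\xi,X_3\rangle>h_{41}$ should give exactly the required strict inequality.

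If this direct route becomes messy, I have a fallback that uses Theorem~\ref{coxprim} in reverse. First, list the wall relations of all walls of $\tilde{\Sigma}$. Second, observe that each is a nonnegative combination of the relation from Step~1 and the relation attached to $S_1$. Third, since $\operatorname{NE}(\tilde{X})$ is generated by the classes of $T$-invariant curves, $[C_{\omega_{15}}]$ lies on a boundary ray.
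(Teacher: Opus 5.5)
Your Step~1 is exactly the paper's computation: your $\beta$ is the paper's $b$, with $a=w(v_4)/h_{max}$, and $\beta>0$ is right. The gap begins with your claim that identifying this wall relation with the primitive relation $l_{S_2}$ ``would require $\beta=1$''. That is false. In a simplicial, non-smooth fan the cone $\sigma_{S_2}$ may contain elements of $S_2$ itself. Your relation can be rewritten as $X_2+X_4=aX_1+\beta X_5+(1-\beta)X_4\in\sigma_{145}$ when $\beta\le 1$, and as $X_2+X_4=\frac{a}{\beta}X_1+X_5+(1-\frac{1}{\beta})X_2\in\sigma_{125}$ when $\beta>1$. This identifies $\sigma_{S_2}$, and after cancelling, $l_{S_2}$ is a positive multiple of the wall relation of $\omega_{15}$ as vectors in $\rr^5$. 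That observation is the paper's proof, and it finishes immediately. Since $\rho(\widetilde{X})=2$, $\operatorname{NE}(\widetilde{X})$ has two extremal rays. By Theorem~\ref{coxprim}, each is spanned by a $T$-invariant curve whose wall relation is a positive multiple of some primitive relation. Two distinct rays need two distinct primitive collections, and $S_1,S_2$ are the only ones, so $l_{S_2}$ spans an extremal ray. You had everything needed right after Step~1.

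The replacement argument you propose does not work as stated. A linear function taking the value $-1$ on $X_1,X_2,X_4,X_5$ must vanish on your relation, which forces $a=1$, i.e.\ $w(v_4)=h_{max}$. Convention~\ref{conv:orientation} excludes this. In effect you are asking the anticanonical support function to be linear across $\omega_{15}$, which is impossible because $\widetilde{X}$ is Fano, and no rescaling helps. Done correctly, any $T$-divisor whose support function is linear across $\omega_{15}$ is, modulo characters, $d_3D_3$ with $D_3=V(\rr_{\geq 0}X_3)$. So your route amounts to proving that $D_3$ is nef, i.e.\ $D_3\cdot C_\omega\geq 0$ for the four walls $\omega_{13},\omega_{23},\omega_{34},\omega_{35}$ through $X_3$. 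That is the real content, and you leave it as an expectation. Your proposed shortcut, testing only against $l_{S_1}$, presupposes that $\operatorname{NE}(\widetilde{X})$ is spanned by $l_{S_1}$ and $l_{S_2}$. That is exactly the fact that already gives the result directly. Computing $\sigma_{S_1}$ also requires the edge-dependent case analysis of Proposition~\ref{prop:extremal}. Your fallback likewise only names the missing work, the nine wall relations, without carrying it out.
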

\begin{proof}
We find the primitive relation for $S_2$. We make a change of coordinates $M\in GL_2(\mathbb{Z})$ such that $w=(0,1)$. Applying $w$ to $v_4=av_1+bf$ gives $a=\frac{w(v_4)}{h_{max}}\geq 0$.

Applying $\det(v_1,-)$ to $v_4=av_1+bf$ gives $b\det(v_1,f)=\det(v_1,v_4)$. With $w=(0,1)$
one has $f=(1,0)$, so $\det(v_1,f)=-h_{max}<0$, and $\det(v_1,v_4)<0$ because $v_4,v_1$ are
consecutive in the counterclockwise ordering. Hence $b>0$.
Depending on the value of $b$, we obtain the expression 

\begin{equation}
X_2+X_4 = \begin{cases} 
      aX_1+bX_5+(1-b)X_4 & b\leq 1 \\
      \frac{a}{b}X_1+X_5+(1-\frac{1}{b})X_2 & b>1.
   \end{cases}  
\end{equation}

This is the primitive relation $l_{S_2}$. We observe that the relation $l_{S_2}$ must be a multiple of the wall relation of $[C_{\omega_{15}}]$, where $\omega_{15}$ is the wall separating the cones $\sigma_{145}$ and $\sigma_{125}$. By Theorem \ref{coxprim}, $R$ is therefore an extremal ray of $\operatorname{NE}(\widetilde{X})$.
\end{proof}
We denote this extremal ray by $\tilde{R}_2$ and turn to the remaining ray $\tilde{R}_1$, whose description is more involved: the primitive relation attached to $S_1$ depends on the choice of mutation data.

\begin{definition}\label{def:std-orientation}
A Fano quadrilateral $P\in [P_{\ff_k}]$ with $k\leq 1$ is in \textit{standard orientation} if, after a
$\operatorname{GL}_2(\zz)$-transformation we obtain a counter-clockwise ordering such that for the mutation data $(w_{23},f_{23})$, the divisors $\Gamma_0=V(\mathbb{R}_{\geq 0}v_1)$ and
$\Gamma_1=V(\mathbb{R}_{\geq 0}v_4)$ are the extremal curves of $\operatorname{NE}(X_P)$.
\end{definition}
Every Fano quadrilateral $P\in[P_{\ff_k}]$ can be taken in standard orientation after a $\operatorname{GL}_2(\zz)$-change of coordinates, and we label the divisors $V(\rr_{\geq 0}v_2)$, $V(\rr_{\geq 0}v_3)$ as $A$, $B$ respectively.
From now on we assume that $P$ is in standard orientation and that $P\not\cong P_{\ff_k}$. The excluded case is treated in Example \ref{f1mut}.
The wall relations for $\rho_1=\rr_{\geq 0}v_1$ and $\rho_4=\rr_{\geq 0}v_4$ are
\begin{equation}
 \begin{cases} 
      v_4+t_1v_2=a_1v_1 \\
      v_3+t_2v_1=a_2v_4,
   \end{cases}
\label{curvextrel}
\end{equation}
with $t_i,a_i>0$ given by
$$t_1=\frac{\Gamma_0\cdot A}{\Gamma_0\cdot\Gamma_1}, t_2=\frac{\Gamma_0\cdot \Gamma_1}{\Gamma_1\cdot B}, a_1=-\frac{\Gamma_0^2}{\Gamma_0\cdot\Gamma_1} \hspace{0.1cm}, \hspace{0.1cm} a_2=-\frac{\Gamma_1^2}{\Gamma_1\cdot B}.$$ These curve intersections are computed purely in terms of cone multiplicities of simplicial cones (see for example \cite[Chapter 6]{CLS11}). 
Writing $\tau_{ij}:=\operatorname{cone}\{v_i,v_j\}$ for $j=i+1\pmod 4$, so that $m_{ij}:=\mult(\tau_{ij})=h_{ij}^2$, they are
\begin{itemize}
    \item $\Gamma_0\cdot A=\frac{1}{m_{12}}$, \hspace{0.15cm}  $\Gamma_0\cdot B=0$, \hspace{0.15cm} 
  $\Gamma_0\cdot\Gamma_1=\frac{1}{m_{41}}$, \hspace{0.15cm} $\Gamma_0^2=-\frac{\Delta_0}{m_{41}m_{12}}$,

  \item $\Gamma_1\cdot B=\frac{1}{m_{34}}$, \hspace{0.15cm}  $\Gamma_1\cdot A=0$, \hspace{0.15cm} 
  $A\cdot B=\frac{1}{m_{23}}$, \hspace{0.15cm} $\Gamma_1^2=-\frac{\Delta_1}{m_{34}m_{41}}$,
    \end{itemize}
where $\Delta_i>0$ is the orbifold index of the cyclic quotient singularity obtained by contracting $\Gamma_i$. From these we obtain $m_{23}A\equiv\Delta_1\Gamma_0+m_{34}\Gamma_1$ and $m_{23}B\equiv m_{12}\Gamma_0+\Delta_0\Gamma_1$ in $N_1(X)_{\rr}$, hence the relation
\begin{equation}
    \Delta_0\Delta_1=m_{12}m_{34}-m_{41}m_{23},
\label{singrel}
\end{equation}
for which, we compute $A^2=\frac{\Delta_1}{m_{12}m_{23}}$ and $B^2=\frac{\Delta_0}{m_{23}m_{34}}$.
\begin{remark}
Expanding $K_{X_P}^2$ for $-K_{X_P}\sim \Gamma_0+\Gamma_1+A+B$ gives the Diophantine relation
\begin{multline}\label{generalmarkovian}
8m_{12}m_{23}m_{34}m_{41}=\Delta_0(m_{12}m_{41}-m_{23}m_{34})+\Delta_1(m_{41}m_{34}-m_{12}m_{23}) \\
+2m_{23}m_{34}m_{41}+2m_{12}m_{34}m_{41}+2m_{12}m_{23}m_{41}+2m_{12}m_{23}m_{34}
\end{multline}
Setting $m_{41}=m_{34}^2$ and $\Delta_1=m_{34}$ so that $\Delta_0=m_{12}-m_{23}m_{34}$, Equation \ref{generalmarkovian} becomes Markov's equation $9m_{12}m_{34}m_{23}=(m_{12}+m_{34}+m_{23})^2$. This specialization corresponds to a weighted blow-up on a degeneration $\pp^2\rightsquigarrow\pp(m_{12},m_{34},m_{23})$.
\end{remark}
By Proposition \ref{prop:no-obstruction} we have $H^2(X_P,T_{X_P})=0$. Thus, $X_P$ does not have local-to-global obstructions to deform and there exist a $\qq$-Gorenstein smoothing $(X_P\subset \mathcal{X})\to (0\in\mathbb{D})$ with general fiber $\ff_k$ with $k\leq 1$. In particular, by Proposition \ref{prop:moriray} we have extremal neighborhoods $(\Gamma_i\subset \mathcal{X})\to\bigl(\frac1{\Delta_i}(1,\Omega_i)\in\mathcal Y_i\bigr)$, for which we set  $\delta_0=|h_{12}||h_{41}|\,(-K_{X_P}\cdot\Gamma_0)$ and $\delta_1=|h_{41}||h_{34}|\,(-K_{X_P}\cdot\Gamma_1)$. 

\begin{lemma}\label{lem:F1moritrain}
Let $P\in[P_{\ff_k}]$ have no pair of parallel edges and be in standard orientation.
Then
\begin{equation}\label{eq:KGamma}
m_{12}m_{41}(\Gamma_0\cdot K_{X_P})=-(m_{12}+m_{41}-\Delta_0),\qquad
m_{41}m_{34}(\Gamma_1\cdot K_{X_P})=-(m_{41}+m_{34}-\Delta_1),
\end{equation}
Moreover, $m_{34}\leq m_{41}$, and hence
$t_2\leq1$, $a_2\leq1$,
with equality $t_2=a_2=1$ if and only if $m_{34}=m_{41}$.
\end{lemma}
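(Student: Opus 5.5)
The two formulas in \eqref{eq:KGamma} will come from expanding $-K_{X_P}\sim\Gamma_0+\Gamma_1+A+B$ and intersecting with $\Gamma_0$ and $\Gamma_1$. For $\Gamma_0$, the intersection numbers listed before the statement give
\[
-K_{X_P}\cdot\Gamma_0=\Gamma_0^2+\Gamma_0\cdot\Gamma_1+\Gamma_0\cdot A+\Gamma_0\cdot B=-\frac{\Delta_0}{m_{41}m_{12}}+\frac{1}{m_{41}}+\frac{1}{m_{12}} .
\]
Multiplying by $m_{12}m_{41}$ gives the first identity. The second is the same computation with $\Gamma_1\cdot A=0$, $\Gamma_1\cdot B=1/m_{34}$ and $\Gamma_1^2=-\Delta_1/(m_{34}m_{41})$. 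This part is routine.

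For $m_{34}\leq m_{41}$, I would use the standing assumption of this section: if $-K_X$ is not ample, then $\Gamma_1\cdot K_{X_P}\geq 0$. Since $P$ is a Fano polygon, $-K_{X_P}$ is ample, so $\Gamma_1\cdot K_{X_P}<0$ and the second identity gives $\Delta_1<m_{34}+m_{41}$. This alone does not separate $m_{34}$ from $m_{41}$, so I would bring in the standard orientation. $\Gamma_0$ and $\Gamma_1$ are the extremal curves and $B$ does not meet $\Gamma_0$, so the relation $m_{23}B\equiv m_{12}\Gamma_0+\Delta_0\Gamma_1$ places $B$ inside the cone spanned by $\Gamma_0,\Gamma_1$ on the $\Gamma_1$-side. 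The plan is to read the second wall relation $v_3+t_2v_1=a_2v_4$ of \eqref{curvextrel} lattice-theoretically. The cones $\tau_{34}$ and $\tau_{41}$ are Wahl cones of multiplicities $h_{34}^2,h_{41}^2$, and $v_4$ is their common ray. Intersecting the relation with $\Gamma_1$ expresses $t_2=m_{34}/m_{41}$; this follows directly from the formulas $t_2=\Gamma_0\cdot\Gamma_1/(\Gamma_1\cdot B)=m_{34}/m_{41}$. So $t_2\leq1$ is literally the inequality $m_{34}\leq m_{41}$, and similarly $a_2=\Delta_1/m_{41}$.

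The main step is therefore to show $m_{34}\leq m_{41}$ and $\Delta_1\leq m_{41}$. I expect this to be the obstacle. My first attempt would use the convention that $w_{23}(v_4)$ lies strictly between $h_{\min}$ and $h_{\max}$, with $v_1$ at maximal height. Writing $v_3,v_4,v_1$ in coordinates where $w=(0,1)$ and $f=(1,0)$, the heights $h_{34},h_{41}$ equal the lattice distances from the origin to the edges $E_{34},E_{41}$. For Wahl cones, $\ell(E)=r_E$ holds, so each edge has lattice length equal to its height. The multiplicity is then $h^2$, which is twice the area of the triangle $\operatorname{conv}\{\mathbf{0},v_i,v_j\}$. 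The intended inequality would then follow by comparing the triangles $\{0,v_3,v_4\}$ and $\{0,v_4,v_1\}$: since $v_1$ sits at maximal height, the edge $E_{41}$ should be at least as long as $E_{34}$. I am unsure this comparison is forced by convexity alone. If it is not, I would fall back on Lemma \ref{lem:F1moritrain}'s context, namely the Mori-train structure along $\Gamma_1$ from Proposition \ref{prop:moriray}: $\Gamma_1$ is the curve of $k2A$ type with $\Gamma_1^2<0$, and the standard orientation chooses the labels so that $\Gamma_1$ connects the smaller Wahl point $v_3v_4$ to the larger one.

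Given $m_{34}\leq m_{41}$, I would get $a_2\leq1$ from $\Delta_1<m_{34}+m_{41}$ together with the identity \eqref{singrel}, or by reading $a_2=1+t_2-(\text{positive})$ from $-K\cdot\Gamma_1>0$. The equality case $t_2=a_2=1$ corresponds to $m_{34}=m_{41}$, which in turn gives $\Delta_1=m_{41}$ through the wall relation.
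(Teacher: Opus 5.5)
\textbf{Verdict.} Your derivation of the two identities in \eqref{eq:KGamma} is exactly the paper's, and $t_2=m_{34}/m_{41}$, $a_2=\Delta_1/m_{41}$ are correct. There is a genuine gap, though, in the inequality $m_{34}\leq m_{41}$, which is the real content of the lemma.

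\textbf{The missing inequality.} Your convexity comparison is a heuristic you yourself doubt. Your fallback is circular: Definition~\ref{def:std-orientation} with Convention~\ref{conv:orientation} only fixes that $V(\rr_{\geq0}v_1)$ and $V(\rr_{\geq0}v_4)$ are the extremal curves and that $v_1$ has maximal $w_{23}$-height. Nothing there says that $\Gamma_1$ runs from the smaller Wahl point to the larger one; that is precisely what must be proved.

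The height condition does matter. In $\operatorname{Bl}_p\pp(1,1,4)$ (Example~\ref{ex114}) the opposite labelling of the two extremal curves gives $m_{34}=4>1=m_{41}$, and it is ruled out only because it would make $w_{23}(v_4)>w_{23}(v_1)$. But the height condition alone does not suffice. Using the formulas of Lemma~\ref{lem:sign}\,i) together with \eqref{eq:KGamma}, one computes $-h_{23}\bigl(w_{23}(v_1)-w_{23}(v_4)\bigr)=|h_{41}|\bigl(\delta_0|h_{12}|-\delta_1|h_{34}|\bigr)$. So the condition is equivalent to $\delta_0|h_{12}|>\delta_1|h_{34}|$, which does not bound $|h_{34}|$ by $|h_{41}|$.

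The paper supplies the missing input from the geometry of a $\qq$-Gorenstein smoothing of $X_P$, arguing on the extremal neighbourhood of $\Gamma_0$:
\begin{enumerate}
\item If it is divisorial, the inequality is \cite[Proposition~7.4]{HP10}.
\item If it is flipping, Lemma~\ref{lem:Hz-minres} gives $m_{34}\leq\Delta_0\leq\max\{m_{12},m_{41}\}$. In the remaining case $m_{41}<m_{12}$, either the Mori recursion of the k2A neighbourhood (when $\delta_0|h_{41}|-|h_{12}|>0$) or the fibration structure of the flipped surface $X^+$ forces $|h_{34}|\leq|h_{41}|$.
\end{enumerate}
Some such input, using that $X_P$ smooths to a Hirzebruch surface, is needed.

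\textbf{The bound $a_2\leq 1$.} Even granting $m_{34}\leq m_{41}$, this does not follow as you state it. From $\Delta_1<m_{34}+m_{41}$ you only get $a_2<1+t_2\leq 2$, and \eqref{singrel} does not bound $\Delta_1$ by $m_{41}$. What is needed is integrality: $\delta_1=|h_{34}||h_{41}|(-K_{X_P}\cdot\Gamma_1)$ is a positive integer, because the discrepancies of the end curves of the two Wahl chains have denominators $|h_{34}|$ and $|h_{41}|$. Hence
\[
\Delta_1=m_{34}+m_{41}-\delta_1|h_{34}||h_{41}|\leq m_{41}-|h_{34}|\bigl(|h_{41}|-|h_{34}|\bigr)\leq m_{41}.
\]
Your alternative reading $a_2=1+t_2-(\text{positive})$ has the same problem. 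The positive term is $\delta_1|h_{34}|/|h_{41}|$, and it dominates $t_2=(|h_{34}|/|h_{41}|)^2$ only because $\delta_1\geq 1$.

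Integrality is also what settles the equality case. If $m_{34}=m_{41}=n^2$, then $\Delta_1=(2-\delta_1)n^2>0$ forces $\delta_1=1$ and $a_2=1$. Conversely, $a_2=1$ means $|h_{34}|/|h_{41}|=\delta_1$, and since $|h_{34}|/|h_{41}|\leq 1\leq\delta_1$, this forces $|h_{34}|=|h_{41}|$.
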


\begin{proof}
Expanding $-K_{X_P}\sim\Gamma_0+\Gamma_1+A+B$ gives
$$\Gamma_0\cdot(-K_{X_P})=\Gamma_0^2+\Gamma_0\cdot\Gamma_1+\Gamma_0\cdot A=\frac{m_{12}+m_{41}-\Delta_0}{m_{12}m_{41}},$$
and the second identity of \eqref{eq:KGamma} follows in the same way from $\Gamma_1^2$, $\Gamma_0\cdot\Gamma_1$ and $\Gamma_1\cdot B$.

We claim $m_{34}\leq m_{41}$, and argue according to the contraction induced by $\Gamma_0$.
If it is divisorial, the inequality is \cite[Proposition 7.4]{HP10}.
If it is a flipping contraction then $m_{34}\leq\Delta_0$ by Lemma~\ref{lem:Hz-minres}, so $m_{34}\leq\max\{m_{12},m_{41}\}$, and we may assume $m_{41}<m_{12}$.
If $\delta_0|h_{41}|-|h_{12}|>0$, the neighbourhood mutation of $(\Gamma_0\subseteq X_P)$ produces a surface carrying an extremal neighbourhood $(\Gamma_0'\subseteq X')$ in which the Wahl singularity of index $|h_{34}|$ appears, whence $m_{34}\leq m_{41}$.
If $\delta_0|h_{41}|-|h_{12}|<0$ and $m_{34}>1$, then the flip of $\Gamma_0$ yields $X^{+}$ with $(\Gamma_0^{+})^2>0$ in which the singularity of index $|h_{41}|$ contains the negative section contracted by $Y^{+}\to X^{+}$; since $X^{+}$ still carries the Wahl singularity of index $|h_{34}|$, the curve configuration of $Y^{+}$ described in Lemma~\ref{lem:Hz-minres} forces $|h_{34}|\leq|h_{41}|$.

Since \eqref{eq:KGamma} gives $\Delta_1=m_{34}+m_{41}-\delta_1|h_{34}||h_{41}|$, from the previous statement it follows that
$$\Delta_1\leq m_{34}+m_{41}-|h_{34}||h_{41}|=m_{41}-|h_{34}|\bigl(|h_{41}|-|h_{34}|\bigr)\leq m_{41},$$
that is $a_2=\Delta_1/m_{41}\leq1$; equality in either case forces $\delta_1=1$ and $|h_{34}|=|h_{41}|$.
\end{proof}

\begin{remark}
The equality $t_2 = m_{34}/m_{41} = 1$ holds if and only if $m_{34}=m_{41}$, which occurs precisely when $X_P\cong\operatorname{Bl}_p\pp(1,a^2,b^2)$ for a torus-invariant point $p$. In this case $a_2 = \Delta_1/m_{41} = 1$. Otherwise, $\Delta_1<m_{41}$ and $a_2 = \Delta_1/m_{41}<1$. 
\end{remark}

\begin{lemma}\label{lem:sign}
Let $P\in[P_{\ff_k}]$  with no pair of parallel edges and in standard orientation. The following hold:
\begin{enumerate}[label=\roman*)]
\item $w_{23}(v_1)=\frac{\Delta_1+m_{12}}{-h_{23}}$, $w_{23}(v_4)=\frac{\Delta_0+m_{34}}{-h_{23}}$ and $0<w_{23}(v_4)<w_{23}(v_1)$.\\

\item $w_{34}(v_1)=\frac{m_{41}-\Delta_1}{-h_{34}}$, $w_{34}(v_2)=\frac{\Delta_0+m_{23}}{-h_{34}}$. It follows that, $0\leq \min\{w_{34}(v_1),w_{34}(v_2)\}$ and $w_{34}(v_2)<w_{34}(v_1)$ if and only if $m_{23}<m_{41}-\Delta_0-\Delta_1$.\\

\item $w_{41}(v_2)=\frac{m_{12}-\Delta_0}{-h_{41}}$, $w_{41}(v_3)=\frac{m_{34}-\Delta_1}{-h_{41}}$ and $w_{41}(v_3)<w_{41}(v_2)$. \\

\item $w_{12}(v_3)=\frac{m_{23}+\Delta_1}{-h_{12}}$, $w_{12}(v_4)=\frac{m_{41}-\Delta_0}{-h_{12}}$ and $w_{12}(v_3)<w_{12}(v_4)$ if and only if $m_{23}<m_{41}-\Delta_0-\Delta_1$.
\end{enumerate}
\end{lemma}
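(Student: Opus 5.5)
Every value $w_{ij}(v)$ is a height of a lattice point relative to the edge $E_{ij}$, so each one can be read off from a determinant. For the edge $E_{ij}$ with primitive inner normal $w_{ij}$, I would use the identity
\[
w_{ij}(v)\,\det(v_i,v_j)=h_{ij}\,\det(v,\,v_j-v_i)\cdot(\text{sign}),
\]
or, more conveniently, write a vertex $v$ in terms of the neighbouring edge data using the wall relations \eqref{curvextrel}. Concretely, $w_{ij}$ vanishes on $v_j-v_i$ and takes the value $h_{ij}$ on $v_i$ and $v_j$. Hence, if a third vertex satisfies $v=\lambda v_i+\mu v_j$, then $w_{ij}(v)=h_{ij}(\lambda+\mu)$. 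The coefficients $\lambda,\mu$ come from the toric intersection numbers already listed: $\Gamma_0\cdot A$, $\Gamma_0\cdot\Gamma_1$, $\Gamma_0^2$, $\Gamma_1\cdot B$, $\Gamma_1^2$, $A\cdot B$, $A^2$, $B^2$. Equivalently, they come from the wall relations at each ray. For instance, the wall relation at $\rho_2$ is $v_1+\tfrac{A\cdot B}{A\cdot\Gamma_0}v_3=-\tfrac{A^2}{A\cdot\Gamma_0}v_2$ up to scaling. I would write down all four wall relations, one for each ray $\rho_1,\dots,\rho_4$. Their coefficients are ratios of the $m_{ij}$ and $\Delta_i$, with $A^2=\Delta_1/(m_{12}m_{23})$ and $B^2=\Delta_0/(m_{23}m_{34})$.

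The key steps, in order, are as follows.

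First, express $v_1$ and $v_4$ through $v_2,v_3$. Use the wall relation at $\rho_2$ (which involves $v_1,v_2,v_3$) and the one at $\rho_3$ (which involves $v_2,v_3,v_4$). This gives $w_{23}(v_1)=h_{23}\bigl(\tfrac{A^2}{A\cdot\Gamma_0}\cdot\ldots\bigr)$. Simplifying with $m_{ij}=h_{ij}^2$ should give $w_{23}(v_1)=(\Delta_1+m_{12})/(-h_{23})$, and symmetrically for $v_4$ using $B^2$.

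Second, repeat the computation for $E_{34}$, $E_{41}$ and $E_{12}$. For $E_{41}$ and $E_{34}$ the relevant relations are \eqref{curvextrel} themselves, which involve $\Gamma_0^2$ and $\Gamma_1^2$. This is where the differences $m_{41}-\Delta_1$, $m_{12}-\Delta_0$, and so on appear. I would track signs carefully: heights are measured so that $w_{ij}(v_i)=h_{ij}<0$, and the factor $-h_{ij}=|h_{ij}|$ appears in the denominator because $|h_{ij}|^2=m_{ij}$.

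Third, the inequalities. In (i), positivity is immediate because $\Delta_i,m_{ij}>0$. The comparison $w_{23}(v_4)<w_{23}(v_1)$ means $\Delta_0+m_{34}<\Delta_1+m_{12}$. I would derive this from $K_{X_P}\cdot\Gamma_0<0$ together with Lemma~\ref{lem:F1moritrain} and \eqref{eq:KGamma}, or directly from the standard orientation, which puts $v_1$ at height $h_{\max}$. In (ii), nonnegativity of $w_{34}(v_1)$ is $\Delta_1\le m_{41}$, which is exactly $a_2\le1$ from Lemma~\ref{lem:F1moritrain}. The equivalence with $m_{23}<m_{41}-\Delta_0-\Delta_1$ is a direct rearrangement of the two formulas. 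In (iii), the inequality reduces to $m_{34}-\Delta_1<m_{12}-\Delta_0$. I would obtain this from the relation \eqref{singrel} together with $m_{34}\le m_{41}$. Part (iv) is again a rearrangement, matching (ii).

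I expect the main obstacle to be bookkeeping rather than any single idea. There are two sources of difficulty. One is fixing consistent signs for the determinants and inner normals under the counterclockwise convention. The other is proving the strict inequality in (iii), $m_{34}-\Delta_1<m_{12}-\Delta_0$, which is not a pure rearrangement. For (iii) I would first try substituting $\Delta_0\Delta_1=m_{12}m_{34}-m_{41}m_{23}$ and using Lemma~\ref{lem:F1moritrain}. If that fails, I would use the fact that $v_3$ is the vertex adjacent to the edge $E_{41}$'s far side in the standard orientation, so that convexity of $P$ forces $v_3$ to lie strictly lower than $v_2$ with respect to $w_{41}$.
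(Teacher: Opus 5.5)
Your computation of the heights matches the paper's own proof. You write each off-edge vertex in terms of the two vertices of $E_{ij}$ using the wall relations at $\rho_1,\dots,\rho_4$, with $A^2=\Delta_1/(m_{12}m_{23})$ and $B^2=\Delta_0/(m_{23}m_{34})$ coming from \eqref{singrel}. Together with $w_{ij}(\lambda v_i+\mu v_j)=h_{ij}(\lambda+\mu)$ and $m_{ij}=h_{ij}^2$, this gives every formula in the statement. One small correction: $w_{34}(v_2)$ needs the wall relation at $\rho_3$, namely $m_{34}v_2+\Delta_0v_3+m_{23}v_4=0$, not \eqref{curvextrel}.

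Your handling of (i), (ii) and (iv) also agrees with the paper. In (i), the inequality $w_{23}(v_4)<w_{23}(v_1)$ comes from Convention~\ref{conv:orientation}. Only that route works: $K_{X_P}\cdot\Gamma_0<0$ gives just $\Delta_0<m_{12}+m_{41}$, which does not compare with $\Delta_1+m_{12}-m_{34}$. In (ii), nonnegativity of $w_{34}(v_1)$ is $a_2\le 1$. The equivalences in (ii) and (iv) are rearrangements.

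The gap is in (iii). You say that $m_{34}-\Delta_1<m_{12}-\Delta_0$ is ``not a pure rearrangement'', but it is exactly $\Delta_0+m_{34}<\Delta_1+m_{12}$, the inequality you already have from (i). Both $w_{41}(v_3)<w_{41}(v_2)$ and $w_{23}(v_4)<w_{23}(v_1)$ reduce to it after clearing the positive denominators $-h_{41}$ and $-h_{23}$. That one line is how the paper concludes.

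Neither of your proposed routes would have worked.
\begin{itemize}
\item The relations \eqref{singrel}, $m_{34}\le m_{41}$ and $\Delta_1\le m_{41}$ do not imply the inequality. The values $m_{12}=4$, $m_{23}=1$, $m_{34}=m_{41}=100$, $\Delta_1=1$, $\Delta_0=300$ satisfy all three, yet $m_{34}-\Delta_1=99>-296=m_{12}-\Delta_0$.
\item Convexity of $P$ alone cannot decide which of $v_2,v_3$ is farther from the line of $E_{41}$. That is determined by the side of the quadrilateral on which the lines through $E_{23}$ and $E_{41}$ meet. This is the same labelling information as $w_{23}(v_4)<w_{23}(v_1)$.
\end{itemize}
So the missing input in (iii) is the orientation convention. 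Once you use it through (i), (iii) is immediate.
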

\begin{proof}
The values $w_{ij}(v_k)$ are obtained by combining the system of equations in \ref{curvextrel} and \ref{singrel}. The assertion in i) $0<w_{23}(v_4)<w_{23}(v_1)$ follows by construction, which implies that $\Delta_0+m_{34}<\Delta_1+m_{12}$. Consequently, $w_{41}(v_3)<w_{41}(v_2)$ in iii) follows directly from this. 

For ii), we observe that if $w_{34}(v_2)<w_{34}(v_1)$, then $h_{34}+t_1w_{34}(v_2)=a_1w_{34}(v_1)$ implies that $0<w_{34}(v_2)$. Similarly, since $w_{34}(v_4)=h_{34}$, applying $w_{34}$ to $v_3+t_2v_1=a_2v_4$ gives $t_2w_{34}(v_1)=-h_{34}(1-a_2)$, so $0\leq w_{34}(v_1)$ because $a_2\leq1$, with $w_{34}(v_1)=0$ if and only if $a_2=1$. The equivalence of $w_{34}(v_2)<w_{34}(v_1)$ follows by direct computation. In the same manner iv) holds.
\end{proof}

The following proposition determines the T-invariant curve inducing $\Tilde{R}_1$. That computation distinguishes the corresponding deformation induced by the mutation data $(w_{ij},f_{ij})$.

\begin{proposition}\label{prop:extremal}
Let $P$ be as above, and let $(w_{ij},f_{ij})$ denote the mutation data with respect to the edge $E_{ij}$, the vectors determining $\tilde{P}$ for the corresponding deformation being denoted $X_i$. Then:
\begin{enumerate}[label=\roman*)]
    \item For $(w_{23},f_{23})$, we have $X_1+X_3+X_5\in\sigma_{123}$, and 
          for $(w_{41},f_{41})$, we have $X_1+X_3+X_5\in\sigma_{134}$.

    \item If $m_{23}<m_{41}-\Delta_0-\Delta_1$, then
          $X_1+X_3+X_5\in\sigma_{134}$ for both $(w_{34},f_{34})$
          and $(w_{12},f_{12})$.

    \item If $m_{41}-\Delta_0-\Delta_1<m_{23}$, then
          $X_1+X_3+X_5\in\sigma_{123}$ for both $(w_{34},f_{34})$
          and $(w_{12},f_{12})$.
\end{enumerate}
If $X_1+X_3+X_5\in\sigma_{123}$, the ray $\widetilde{R}_1$ is induced by any wall $\omega_{k2}\in\widetilde{\Sigma}(2)$ with $k\in\{1,3,5\}$.
If $X_1+X_3+X_5\in\sigma_{134}$, it is induced by any wall $\omega_{k4}\in\widetilde{\Sigma}(2)$ with $k\in\{1,3,5\}$.
\end{proposition}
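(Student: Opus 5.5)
For each mutation datum $(w_{ij},f_{ij})$ I would relabel $P$ as in Convention~\ref{conv:orientation}, so that the mutating edge becomes $\operatorname{conv}\{v_2,v_3\}$. The rays of $\tilde\Sigma$ are then $X_1=(v_1,0)$, $X_2=(v_4,0)$ (or $(v_4,w(v_4))$ when $w(v_4)<0$), $X_3=(v_2,h_{\min})$, $X_4=(\mathbf 0,1)$ and $X_5=(f,1)$. Here $v_1,\dots,v_4$ denote the relabelled vertices, which are the original ones permuted according to the edge.

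First I would compute $X_1+X_3+X_5=(v_1+v_2+f,\;1+h_{\min})$. The $e_1$-coordinate $1+h_{\min}$ is $\le 0$, so a positive combination of the cone generators must involve $X_3$. Hence the only candidate cones in the list of Proposition~\ref{conelist} are $\sigma_{123}$, $\sigma_{134}$, $\sigma_{235}$ and $\sigma_{345}$. I would solve $X_1+X_3+X_5=\alpha X_1+\beta X_2+\gamma X_3$, respectively $=\alpha X_1+\gamma X_3+\epsilon X_4$, and compare signs.

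Using $w(f)=0$ and $w(v_2)=h_{\min}$ in the $N$-part, the comparison reduces to writing $v_1+v_2+f$ in the basis $\{v_1,v_4\}$ (or $\{v_1,v_2\}$) and reading off a sign. The decisive sign is the position of $v_1+v_2+f$ relative to the ray through $v_4$. By the wall relations \eqref{curvextrel}, this sign is governed by the sign of $w(v_4)$ relative to the data $w(v_1)$, $w(v_2)$. Applying the $w$-linear functional together with $\det(v_1,-)$, as in the proof of Proposition~\ref{flipray}, the sign comes out as an inequality between $w(v_4)$ and the other heights of $P$ with respect to $w$.

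I would then apply Lemma~\ref{lem:sign} case by case.

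\textbf{Edge $E_{23}$.} Part i) gives $0<w_{23}(v_4)<w_{23}(v_1)$, which should place the sum in $\sigma_{123}$.

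\textbf{Edge $E_{41}$.} Part iii) gives $w_{41}(v_3)<w_{41}(v_2)$. The roles of the relabelled vertices change here, so the vertex at intermediate height sits on the other side, giving $\sigma_{134}$.

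\textbf{Edges $E_{34}$ and $E_{12}$.} Parts ii) and iv) make the ordering of the two off-edge heights equivalent to $m_{23}<m_{41}-\Delta_0-\Delta_1$. This yields the dichotomy between cases ii) and iii).

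The main obstacle is keeping the relabelling consistent: under Convention~\ref{conv:orientation} the vertex at intermediate height ("$v_4$") is a different original vertex for each edge. I would handle this by writing, once, a general criterion of the form "$X_1+X_3+X_5\in\sigma_{123}$ iff [explicit height inequality]", and then checking it against Lemma~\ref{lem:sign} for each edge.

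Finally, for the statement about walls, $S_1=\{X_1,X_3,X_5\}$ is a primitive collection. Its primitive relation is $X_1+X_3+X_5=\sum_{\rho\in\sigma_S(1)}a_\rho\rho$ with $\sigma_S=\sigma_{123}$ or $\sigma_{134}$. I would determine $\sigma_S$ exactly, namely whether $X_2$ (respectively $X_4$) appears with positive coefficient, so that $R=S_1$ together with the negative-coefficient rays matches a wall relation. Then, by Theorem~\ref{coxprim}, any wall $\omega_{k2}$ (respectively $\omega_{k4}$) with $k\in S_1$ that lies in $\tilde\Sigma(2)$ has wall relation proportional to $l_{S_1}$. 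Since $\operatorname{NE}(\tilde X)$ has exactly two rays and $\tilde R_2$ corresponds to $S_2$, such a wall induces $\tilde R_1$.
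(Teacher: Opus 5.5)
Your set-up matches the paper's: relabel for each edge, write $X_1+X_3+X_5=(v_1+v_2+f,\,1+h_{\min})$, solve for coefficients in $\sigma_{123}$ or $\sigma_{134}$, then use Theorem~\ref{coxprim} for the walls. The final paragraph on walls is fine. The gap is your central claim that the decisive sign ``comes out as an inequality between $w(v_4)$ and the other heights'', to be read off from Lemma~\ref{lem:sign}. It does not.

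\emph{Why heights fail for $E_{23}$.} The $e_1$-coordinate forces the $X_3$-coefficient to be $1+1/h_{23}$. What remains is $v_1+v_3/|h_{23}|$, not $v_1+v_2+f$, and it must lie in $\operatorname{cone}(v_1,v_4)$. By $v_3+t_2v_1=a_2v_4$ this is exactly $t_2\le|h_{23}|$, i.e.\ $m_{34}\le|h_{23}|\,m_{41}$, a comparison of cone multiplicities. Applying $w_{23}$ only rewrites it as $a_2\,w_{23}(v_4)+|h_{23}|\le|h_{23}|\,w_{23}(v_1)$, which still involves $a_2$.

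\emph{A counterexample.} Take the Fano quadrilateral $\operatorname{conv}\{(5,7),(-1,-1),(0,-1),(4,5)\}$, ordered counterclockwise, with $w=(0,1)$ and $f=(1,0)$. The heights $-1<0<5<7$ satisfy exactly the ordering of Lemma~\ref{lem:sign}\,i). Yet $X_1+X_3+X_5=(5,6,0)=-\tfrac13X_1+\tfrac53X_2$, so it is not in $\sigma_{123}$; here $t_2=4/3$ and $a_2=5/3$.

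\emph{The missing ingredient.} It is Lemma~\ref{lem:F1moritrain}: $m_{34}\le m_{41}$, hence $t_2\le1$ and $a_2\le1$. This is a property of the class $[P_{\ff_k}]$, proved through the geometry of the extremal neighbourhood of $\Gamma_0$ (\cite[Proposition 7.4]{HP10} and the analysis of the flip). It is not a convexity or height statement. The paper invokes it for $E_{23}$, for $E_{34}$ in case ii), and for $E_{12}$ in case iii), the last via $(-h_{12})t_2\ge1$ and $t_2\le w_{12}(v_3)$. Your proposal never brings it in.

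\emph{The other edges.} There too the needed inputs are quantitative bounds, not orderings:
\[
t_1<-h_{41},\qquad t_1(-h_{12})>1,\qquad t_1<-h_{34},\qquad a_1<1 .
\]
The paper gets these from the explicit formulas in Lemma~\ref{lem:sign} together with integrality of the maximal height, for example $w_{41}(v_2)=\frac{m_{12}-\Delta_0}{-h_{41}}\ge1$, which gives $-h_{41}\le m_{12}-\Delta_0$. Your case analysis closes only once these inequalities are supplied explicitly.

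\emph{Minor points.} If $h_{\min}=-1$, the sum has $e_1$-coordinate $0$ and need not involve $X_3$. If $w(v_4)<0$, then $X_2$ also has negative last coordinate. So your list of candidate cones is not quite right, though this becomes irrelevant once you exhibit explicit nonnegative coefficients.
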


\begin{proof}
In each case we express $X_1+X_3+X_5$ as a non-negative rational
combination of the $X_i$, possibly with some coefficients equal to zero, and identify the cone it belongs to.
Write $u_1,u_2,u_3,u_4$ for the ordering used for $E_{ij}$, so that $w_{ij}(u_1)$ is maximal and $E_{ij}=\operatorname{conv}\{u_2,u_3\}$. By Proposition~\ref{conelist} we have $X_1=(u_1,0)$, $X_3=(u_2,h_{ij})$ and $X_5=(f_{ij},1)$, while $X_4=(\mathbf{0},1)$ and $X_2$ is the ray attached to $u_4$. Hence in every case
$$X_1+X_3+X_5=(u_1+u_2+f_{ij},\,h_{ij}+1).$$

\medskip
\noindent\textit{i) Mutation $E_{23}$.}
The ordering is $v_1,v_2,v_3,v_4$, which is counterclockwise, so $u_1=v_1$ and $u_2=v_2$, and $X_1+X_3+X_5=(v_1+v_2+f_{23},\,h_{23}+1)$; moreover $X_2=(v_4,0)$ because $w_{23}(v_4)>0$ by Lemma~\ref{lem:sign}. Using $f_{23}=\frac{1}{-h_{23}}(v_3-v_2)$ and the relation $v_3+t_2v_1=a_2v_4$ from~\eqref{curvextrel}, we obtain
$$
X_1+X_3+X_5
  =\Bigl(1+\tfrac{t_2}{h_{23}}\Bigr)X_1
  +\Bigl(\tfrac{a_2}{-h_{23}}\Bigr)X_2
  +\Bigl(1+\tfrac{1}{h_{23}}\Bigr)X_3.
$$
Since $0<t_2\leq 1$, all coefficients are non-negative, so $X_1+X_3+X_5\in\sigma_{123}$.

\medskip
\noindent\textit{Mutation $E_{41}$.}
By Lemma~\ref{lem:sign} the maximum of $w_{41}$ on $P$ is attained at $v_2$, so the ordering is $v_2,v_1,v_4,v_3$, which is clockwise, and $u_1=v_2$, $u_2=v_1$; hence $X_1+X_3+X_5=(v_2+v_1+f_{41},\,h_{41}+1)$.
That maximum being a positive integer, Lemma~\ref{lem:sign} gives $w_{41}(v_2)=\frac{m_{12}-\Delta_0}{-h_{41}}\geq 1$, that is $-h_{41}\leq m_{12}-\Delta_0$.
Hence, with $t_1=m_{41}/m_{12}$ and $a_1=\Delta_0/m_{12}$,
$$t_1=\frac{h_{41}^2}{m_{12}}\leq (-h_{41})\,\frac{m_{12}-\Delta_0}{m_{12}}=(-h_{41})(1-a_1)<-h_{41},$$
the last inequality because $1-a_1<1\leq -h_{41}$. Using $f_{41}=\frac{1}{-h_{41}}(v_4-v_1)$ and $v_4+t_1v_2=a_1v_1$ from~\eqref{curvextrel} this yields
$$
X_1+X_3+X_5
  =\Bigl(1+\tfrac{t_1}{h_{41}}\Bigr)X_1
  +\Bigl(1+\tfrac{1-a_1}{h_{41}}\Bigr)X_3
  +a_1 X_4,
$$
with non-negative coefficients, so $X_1+X_3+X_5\in\sigma_{134}$.

\medskip
\noindent\textit{ii) Mutation $E_{34}$, assuming $m_{23}<m_{41}-\Delta_0-\Delta_1$.}
By Lemma~\ref{lem:sign}, $w_{34}(v_2)<w_{34}(v_1)$, so the maximum of $w_{34}$ is attained at $v_1$. The ordering is $v_1,v_4,v_3,v_2$, which is clockwise, and $u_1=v_1$, $u_2=v_4$. Hence, $X_1+X_3+X_5=(v_1+v_4+f_{34},\,h_{34}+1)$. Using~\eqref{curvextrel} we find
$$
X_1+X_3+X_5
  =\Bigl(1+\tfrac{t_2}{h_{34}}\Bigr)X_1
  +\Bigl(1+\tfrac{1-a_2}{h_{34}}\Bigr)X_3
  +a_2 X_4,
$$
which lies in $\sigma_{134}$, since $t_2\leq 1$ by Lemma~\ref{lem:F1moritrain}, $a_2\leq 1$ by the remark following it, and $-h_{34}\geq 1$.

\medskip
\noindent\textit{Mutation $E_{12}$, assuming $m_{23}<m_{41}-\Delta_0-\Delta_1$.}
By Lemma~\ref{lem:sign}, $w_{12}(v_3)<w_{12}(v_4)$, so the maximum of $w_{12}$ is attained at $v_4$. The ordering is $v_4,v_1,v_2,v_3$, which is counterclockwise, and $u_1=v_4$, $u_2=v_1$. Hence $X_1+X_3+X_5=(v_4+v_1+f_{12},\,h_{12}+1)$.
Lemma~\ref{lem:sign} gives $w_{12}(v_4)=\frac{m_{41}-\Delta_0}{-h_{12}}\geq 1$, hence $-h_{12}\leq m_{41}-\Delta_0$ and therefore $t_1(-h_{12})=\frac{m_{41}}{-h_{12}}>1$.
Using~\eqref{curvextrel} to eliminate $v_2$, we obtain
$$
X_1+X_3+X_5
  =\Bigl(1+\tfrac{1}{t_1 h_{12}}\Bigr)X_1
  +\Bigl(1+\tfrac{t_1-a_1}{t_1 h_{12}}\Bigr)X_3
  +\tfrac{a_1}{t_1}X_4,
$$
which lies in $\sigma_{134}$: the first coefficient is non-negative by the inequality just proved, the third is positive, and the second is non-negative because $t_1-a_1\leq t_1\leq t_1(-h_{12})$ if $t_1\geq a_1$, while it exceeds $1$ if $t_1<a_1$.

\medskip
\noindent\textit{iii) Mutation $E_{34}$, assuming $m_{41}-\Delta_0-\Delta_1<m_{23}$.}
By Lemma~\ref{lem:sign}, $w_{34}(v_1)<w_{34}(v_2)$, so the maximum of $w_{34}$ is attained at $v_2$, the ordering is $v_2,v_3,v_4,v_1$, which is counterclockwise, and $u_1=v_2$, $u_2=v_3$. Hence $X_1+X_3+X_5=(v_2+v_3+f_{34},\,h_{34}+1)$, and $X_2=(v_1,0)$ because $0\leq w_{34}(v_1)$ by Lemma~\ref{lem:sign}. By Lemma~\ref{lem:sign} the maximum of $w_{41}$ on $P$ is $w_{41}(v_2)=\frac{m_{12}-\Delta_0}{-h_{41}}$, so $\Delta_0\leq m_{12}-|h_{41}|$ and $a_1<1$.
Applying $w_{34}$ to $v_4+t_1v_2=a_1v_1$  gives $t_1w_{34}(v_2)=a_1w_{34}(v_1)-h_{34}$, where $w_{34}(v_1)<w_{34}(v_2)$ are non-negative integers by Lemma~\ref{lem:sign}. Hence $w_{34}(v_2)\geq1$, $w_{34}(v_1)\leq w_{34}(v_2)-1$ and
$t_1<1+\frac{-h_{34}-1}{w_{34}(v_2)}\leq -h_{34}.$
Using~\eqref{curvextrel} we obtain
$$
X_1+X_3+X_5
  =\Bigl(1+\tfrac{t_1}{h_{34}}\Bigr)X_1
  +\Bigl(\tfrac{a_1}{-h_{34}}\Bigr)X_2
  +\Bigl(1+\tfrac{1}{h_{34}}\Bigr)X_3,
$$
which lies in $\sigma_{123}$, since $t_1<-h_{34}$ and $a_1>0$.

\medskip
\noindent\textit{Mutation $E_{12}$, assuming $m_{41}-\Delta_0-\Delta_1<m_{23}$.}
By Lemma~\ref{lem:sign}, $w_{12}(v_4)<w_{12}(v_3)$, so the maximum of $w_{12}$ is attained at $v_3$, the ordering is $v_3,v_2,v_1,v_4$, which is clockwise, and $u_1=v_3$, $u_2=v_2$. We treat the two subcases separately.
If $0\leq w_{12}(v_4)$, then $X_2=(v_4,0)$ and $X_1+X_3+X_5=(v_3+v_2+f_{12},\,h_{12}+1)$. Applying $w_{12}$ to $v_3+t_2v_1=a_2v_4$ gives the identity $(-h_{12})t_2=w_{12}(v_3)-a_2w_{12}(v_4)$, and since $w_{12}(v_3)\geq 1$, $w_{12}(v_4)\leq w_{12}(v_3)-1$ and $a_2\leq 1$ we get
$$(-h_{12})t_2\geq w_{12}(v_3)\bigl(1-a_2\bigr)+a_2\geq 1.$$
With this, using~\eqref{curvextrel}, we find
$$
X_1+X_3+X_5
  =\Bigl(1+\tfrac{1}{t_2 h_{12}}\Bigr)X_1
  +\Bigl(\tfrac{a_2}{-h_{12}t_2}\Bigr)X_2
  +\Bigl(1+\tfrac{1}{h_{12}}\Bigr)X_3\;\in\;\sigma_{123}.
$$

If $w_{12}(v_4)<0$, the generator $X_2=(v_4+w_{12}(v_4)f_{12},\,w_{12}(v_4))$ has negative height. Using~\eqref{curvextrel} and $t_2\leq 1\leq w_{12}(v_3)$, we derive
$$
X_1+X_3+X_5
  =\Bigl(\tfrac{w_{12}(v_3)-1}{w_{12}(v_3)}\Bigr)X_1
  +\Bigl(\tfrac{a_2}{w_{12}(v_3)}\Bigr)X_2
  +\Bigl(1-\tfrac{t_2}{w_{12}(v_3)}\Bigr)X_3\;\in\;\sigma_{123}.
$$

\medskip
By Theorem~\ref{coxprim}, each primitive relation above is a positive multiple of the wall relation induced by some $[C_{\omega_{ij}}]$.
When $X_1+X_3+X_5\in\sigma_{123}$, the relation takes the form
\begin{equation}
    X_5+(1-a)X_1+(1-c)X_3-bX_2=0,
\end{equation}
and since $\sigma_{123}$ is simplicial this wall relation coincides for every $\omega_{k2}$ with $k\in\{1,3,5\}$, inducing $\widetilde{R}_1$. When $X_1+X_3+X_5\in\sigma_{134}$, the relation takes the form
\begin{equation}
    X_5+(1-a)X_1+(1-b)X_3-cX_4=0,
\end{equation}
and since $\sigma_{134}$ is simplicial it coincides for every $\omega_{k4}$ with $k\in\{1,3,5\}$, inducing $\widetilde{R}_1$.
\end{proof}

We now analyze the extremal contractions $\varphi_{R_\omega}:\tilde{X}\to X(\Sigma_i)$, which in the toric case are read off from wall relations; we follow \cite{R83}.
Let $X$ be a $\qq$-factorial complete toric variety of dimension $n$ and $R=\rr_{\geq 0}[C_\omega]$ an extremal ray, with $\omega=\operatorname{cone}(\rho_1,\dots,\rho_{n-1})\in\Sigma(n-1)$ separating the maximal cones $\operatorname{cone}(\rho_1,\dots,\rho_n)$ and $\operatorname{cone}(\rho_1,\dots,\rho_{n-1},\rho_{n+1})$, so that the $n+1$ primitive vectors satisfy a wall relation
$$\rho_{n+1}+a_n\rho_n+\sum_{i=1}^{n-1}a_i\rho_i=0, \qquad a_n\in\qq^+,\; a_i\in\qq.$$
Reordering so that $a_i<0$ for $1\leq i\leq\alpha$, $a_i=0$ for $\alpha+1\leq i\leq\beta$, and $a_i>0$ for $\beta+1\leq i\leq n+1$, the contraction $\varphi_{R_\omega}$ is of fiber type if $\alpha=0$, divisorial if $\alpha=1$, and small if $\alpha>1$.
In all cases, $\operatorname{codim}\operatorname{Locus}(R_\omega)=\alpha$, $\dim\varphi_{R_\omega}(\operatorname{Locus}(R_\omega))=\beta-1$, and the exceptional locus is $V(\rho_1,\ldots,\rho_\alpha)$. For the fiber-type and divisorial cases $(\alpha\leq 1)$ one has $\dim\varphi_{R_\omega}(\text{Locus}(R_{\omega}))=\beta-1$. For a small contraction $(\alpha>1)$ the locus $V(\rho_1,\dots,\rho_\alpha)$ is contracted to a point.

\medskip
We apply this to the two extremal rays of $\widetilde{X}$.
For $\widetilde{R}_2$, the primitive relation from Proposition~\ref{flipray} is
\[
X_2+X_4 = \begin{cases} aX_1+bX_5+(1-b)X_4 & b\leq 1,\\ \tfrac{a}{b}X_1+X_5+(1-\tfrac{1}{b})X_2 & b>1, \end{cases}
\]
where $a\in\qq^{\geq 0}$, $b\in\qq^+$, and $v_4=av_1+bf$.
The wall $\omega_{15}$ divides $\sigma_{125}$ and $\sigma_{145}$.
When $a=0$ (equivalently $w(v_4)=0$) and $b=1$, the relation becomes $X_2+X_4-X_5=0$, giving $\alpha=1$, $\beta=2$: a divisorial contraction $\varphi_{\widetilde{R}_2}:\widetilde{X}\to X(\Sigma_2)$ with exceptional divisor $V(\mathbb{R}_{\geq 0}X_5)$ mapping onto a curve.
When $a>0$, one has $\alpha=\beta=2$, so the contraction is small with exceptional locus $V(\mathbb{R}_{\geq 0}\{X_1,X_5\})$.

\medskip
We now turn to $\widetilde{R}_1$.
By Proposition~\ref{prop:extremal}, the primitive relation takes one of two forms depending on the mutation edge. This way we identify which ray of $\tilde{\Sigma}$ is contracted by $\varphi_{\widetilde{R}_1}$. Recall that the pencil $\mathcal{X}_{t,s}\to\pp^1$ induced by Theorem \ref{thm:petrdeform} has the explicit forms (\ref{eq:defpos}) and (\ref{eq:defneg}), according to the mutation data $(w_{ij},f_{ij})$.

\begin{theorem}\label{contractionteo}
Let $P\in[P_{\ff_k}]$ for $k\leq 1$ with no pair of parallel edges, in standard orientation with $P\ncong P_{\ff_k}$.
For every edge $E_{ij}$, the contraction $\varphi_{\widetilde{R}_1}:\widetilde{X}\to X(\Sigma_1)$ is divisorial with exceptional locus contracting to a point.
Moreover, the pushforward ${\varphi_{\widetilde{R}_1}}_*(\mathcal{X}_{s,t})$ gives an induced $\pp^1_{s,t}$-equivariant deformation
$${\varphi_{\widetilde{R}_1}}_*(\mathcal{X}_{s,t})\longrightarrow\pp^1_{s,t}.$$
\end{theorem}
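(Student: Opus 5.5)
The plan is to read the contraction $\varphi_{\widetilde{R}_1}$ off the wall relation obtained in Proposition~\ref{prop:extremal}, using the criterion of \cite{R83} recalled above. The number $\alpha$ of negative coefficients decides whether the contraction is of fiber type, divisorial or small. The number $\beta$ of non-positive coefficients gives $\dim\varphi(\operatorname{Locus})=\beta-1$.

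By Proposition~\ref{prop:extremal}, for every edge $E_{ij}$ the primitive relation of $S_1=\{X_1,X_3,X_5\}$ is of one of two forms. Either $X_1+X_3+X_5\in\relint$ of a face of $\sigma_{123}$, giving $X_5+(1-a)X_1+(1-c)X_3-bX_2=0$, or it lies in $\sigma_{134}$, giving $X_5+(1-a)X_1+(1-b)X_3-cX_4=0$. In both cases exactly one coefficient is negative: $-b$ with $b=a_2/(-h_{23})>0$, respectively $-c$ with $c=a_1$ or $a_2$ or $a_1/t_1$, which is positive. Thus $\alpha=1$, and the contraction is divisorial with exceptional divisor $V(\rr_{\geq0}X_2)$, respectively $V(\rr_{\geq0}X_4)$.

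To show that the image is a point, I need $\beta=\alpha$, i.e.\ no zero coefficients. This amounts to checking that the coefficients $1-a$ and $1-c$ (or $1-b$) on $X_1,X_3$ are strictly positive, equivalently that the explicit coefficients computed case by case in the proof of Proposition~\ref{prop:extremal} are strictly less than $1$ and nonzero. I will go through the six cases. For example, for $E_{23}$ the coefficients are $1+t_2/h_{23}$ and $1+1/h_{23}$. These are positive exactly when $-h_{23}>t_2$ and $-h_{23}>1$.

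The main obstacle is excluding the boundary cases $-h_{ij}=1$, or $t_2=-h_{23}$, and the analogues for the other edges. Here I would use the hypothesis $P\ncong P_{\ff_k}$: if some edge has height $1$ and the corresponding coefficient vanishes, then the cone over that edge is smooth. Combined with $t_2\le1$ and $a_2\le1$ from Lemma~\ref{lem:F1moritrain}, with the equality characterization there, and with the sign information of Lemma~\ref{lem:sign}, this should force $P$ to be the polygon of $\ff_k$. That is exactly the excluded case handled in Example~\ref{f1mut}. If for some edge a zero coefficient cannot be ruled out, I would instead show that $\sigma_S$ still has dimension $3$ after reordering, so that $\beta-1=0$.

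For the second assertion, I will observe that $\varphi_{\widetilde{R}_1}$ is a toric morphism. It therefore commutes with the torus action, and in particular with the $\cc^*$ acting on the $e_1$-direction that relates the fibers over $\pp^1_{s,t}$. The trinomial defining $\mathcal{X}_{s,t}$ in \eqref{eq:defpos} and \eqref{eq:defneg} is homogeneous in Cox coordinates. Since $\varphi$ only contracts the divisor $V(X_2)$ or $V(X_4)$ to a point, the image ${\varphi_{\widetilde{R}_1}}_*(\mathcal{X}_{s,t})$ is cut out by the same trinomial with the Cox variable $x_2$, respectively $x_4$, set to $1$, taken in the Cox ring of $X(\Sigma_1)$. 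This gives a flat family over $\pp^1_{s,t}$, by the same flatness argument as in Theorem~\ref{thm:petrdeform}, with fibers the images of $X_P$ and $X_Q$. Equivariance follows since $\varphi\times\mathrm{id}_{\pp^1}$ commutes with the projection.
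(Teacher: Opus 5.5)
Your strategy is the same as the paper's: read $\alpha,\beta$ off the wall relation from Proposition~\ref{prop:extremal}, then push the trinomial forward. However, the step you single out as the main obstacle rests on a misreading of the coefficients, and your plan for it would fail.

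\textbf{The coefficients.} In $X_5+(1-a)X_1+(1-c)X_3-bX_2=0$, the coefficients of $X_1$ and $X_3$ are $1-a$ and $1-c$. Here $a,c$ are the coefficients of $X_1+X_3+X_5$ in $\sigma_{123}$. A zero coefficient therefore needs $a=1$ or $c=1$, not $a=0$ or $c=0$.

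If $a$ or $c$ vanishes (for instance $h_{23}=-1$ or $t_2=-h_{23}$), then $X_1+X_3+X_5$ lies on a proper face of $\sigma_{123}$. The corresponding wall-relation coefficient is then $1$, not $0$.

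\textbf{The false lemma.} Your claim that these boundary cases force $P\cong P_{\ff_k}$ is false. Take $\operatorname{Bl}_p\pp(1,1,2^2)$ in Example~\ref{ex114} with the edge $E_{23}$. There $h_{23}=-1$ and $t_2=1$, so $a=c=0$, and in fact $X_1+X_3+X_5=X_2$. Thus $\sigma_S$ is a ray, so your fallback about $\dim\sigma_S=3$ fails too. Yet the contraction is the blow-down of $V(\rr_{\geq0}X_2)$ to a point.

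The hypothesis $P\ncong P_{\ff_k}$ only enters through the set-up of Proposition~\ref{prop:extremal}, namely the relations \eqref{curvextrel} with $t_i,a_i>0$. It does not exclude any boundary case.

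\textbf{What you actually need.} In each of the six cases you must check $a,c<1$ (resp.\ $a,b<1$).

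In the $\sigma_{123}$ cases this is immediate from the explicit formulas. For $E_{23}$ the wall-relation coefficients are $1-a=t_2/|h_{23}|$ and $1-c=1/|h_{23}|$, both positive since $h_{23}<0$ and $t_2>0$. The cases $E_{34}$ and $E_{12}$ under iii) are analogous.

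In the $\sigma_{134}$ cases the coefficient of $X_3$ is $(1-a_1)/|h_{41}|$, $(1-a_2)/|h_{34}|$ and $(t_1-a_1)/(t_1|h_{12}|)$ for $E_{41}$, $E_{34}$, $E_{12}$ respectively. So you need $\Delta_0<m_{12}$, $\Delta_1<m_{41}$ and $\Delta_0<m_{41}$. The first follows from $w_{41}(v_2)\geq 1$. The other two follow from the hypothesis $m_{23}<m_{41}-\Delta_0-\Delta_1$.

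The last check has real content. If $t_1<a_1$, the relation would have two negative coefficients and the contraction would be small.

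\textbf{A labelling slip in the second part.} The Cox variable to set to $1$ is the one attached to the contracted ray. In Case~1 this is $x_4$ (vertex $v_4$, ray $X_2$); in Case~2 it is $z_0$ (ray $X_4=(\mathbf{0},1)$). It is not ``$x_2$, respectively $x_4$'': setting $x_2=1$ in Case~1 gives the wrong family. With that correction your pushforward agrees with the paper's.
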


\begin{proof}

\noindent We split the proof in cases.

\textit{Case 1: $X_1+X_3+X_5\in\sigma_{123}$.}
Proposition~\ref{prop:extremal} yields the primitive relation
\begin{equation}
  X_5+(1-a)X_1+(1-c)X_3-bX_2=0,
\end{equation}
with $0\leq a,c<1$ and $b>0$, so $\alpha=\beta=1$: a divisorial contraction with exceptional divisor $E=V(\mathbb{R}_{\geq 0}X_2)$ and $\dim(\varphi_{\widetilde{R}_1}(E))=0$.
We focus on $E_{23}$; the other subcases are analogous. Here $h_{max}=w_{23}(v_1)$, $h_{min}=h_{23}$.
The fan $\Sigma_1$ is obtained from $\widetilde{\Sigma}$ by replacing the walls $\omega_{12}$, $\omega_{23}$, $\omega_{25}$ and their incident maximal cones with $\delta=\operatorname{cone}\{X_1,X_3,X_5\}$ and is generated by
$$Q=\operatorname{conv}\{(v_1,0),(v_2,h_{\min}),(\mathbf{0},1),(f_{23},1)\}\subseteq(N\oplus\mathbb{Z})_{\mathbb{R}}.$$
The morphism $\varphi_{\widetilde{R}_1}$ contracts the ray $\mathbb{R}_{\geq 0}X_2$, so the pushforward satisfies 
\[
{\varphi_{\widetilde{R}_1}}_*(\mathcal{X}_{s,t})=\{sx_1^{h_{\max}}+tx_2^{-h_{\min}}-z_0z_1=0\}\subseteq X(\Sigma_1)\times\pp^1_{s,t},
\]
yielding the $\pp^1_{s,t}$-equivariant diagram
\begin{equation}\label{diagramcontrE}
\begin{tikzcd}
  \mathcal{X}_{s,t} \arrow[rr]\arrow[rd] & & {\varphi_{\widetilde{R}_1}}_*(\mathcal{X}_{s,t}) \arrow[ld]\\
  & \pp^1_{s,t} &
\end{tikzcd}
\end{equation}

\medskip
\noindent\textit{Case 2: $X_1+X_3+X_5\in\sigma_{134}$.}
Proposition~\ref{prop:extremal} yields the primitive relation
\begin{equation}
  X_5+(1-a)X_1+(1-b)X_3-cX_4=0,
\end{equation}
with $0\leq a,b<1$ and $c>0$, so again $\alpha=\beta=1$: a divisorial contraction with exceptional divisor $E=V(\mathbb{R}_{\geq 0}X_4)$ and $\dim(\varphi_{\widetilde{R}_1}(E))=0$.
We focus on $E_{41}$; the other subcases are analogous. Here $h_{max}=w_{14}(v_2)$, $h_{min}=h_{41}$. The fan $\Sigma_1$ is obtained from $\widetilde{\Sigma}$ by removing $\omega_{14}$, $\omega_{34}$, $\omega_{45}$ and adding $\delta=\operatorname{cone}\{X_1,X_3,X_5\}$, and is generated by
$$Q=\operatorname{conv}\{(v_2,0),(v_3,0),(v_1,h_{\min}),(f_{41},1)\}$$
if $0\leq w_{41}(v_3)$, or by
$$Q=\operatorname{conv}\{(v_2,0),(v_3+w_{41}(v_3)f_{41},w_{41}(v_3)),(v_1,h_{\min}),(f_{41},1)\}$$
if $w_{41}(v_3)<0$.
The morphism contracts the ray $\mathbb{R}_{\geq 0}X_4$, so in both subcases 
\[
{\varphi_{\widetilde{R}_1}}_*(\mathcal{X}_{s,t})=\{sx_1^{h_{max}}x_4^{w_{41}(v_3)}+tx_2^{-h_{\min}}-z_1=0\}\subseteq X(\Sigma_1)\times\pp^1_{s,t}
\]
if $0\leq w_{41}(v_3)$, or
\[
{\varphi_{\widetilde{R}_1}}_*(\mathcal{X}_{s,t})=\{sx_1^{h_{max}}+tx_2^{-h_{\min}}x_4^{-w_{41}(v_3)}-z_1=0\}\subseteq X(\Sigma_1)\times\pp^1_{s,t}
\]
if $w_{41}(v_3)<0$. This yields an equivariant diagram as \ref{diagramcontrE}.
In both cases the deformation over $\pp^1_{s,t}$ is induced as in Theorem~\ref{thm:petrdeform}.
\end{proof}

Before constructing all surfaces $X_P$, where $P\in [P_{\ff_1}]$, we record two examples. One that begins from the polygon $P_{\ff_1}=\operatorname{conv}\{(1,0),(0,1),(-1,-1),(0,-1)\}$, which is the only case not covered by Proposition \ref{prop:extremal}.
\begin{example}
Following the standard orientation, we establish the ordering $v_1=(1,0)$, $v_2=(0,1)$, $v_3=(-1,-1)$ and $v_4=(0,-1)$ for $P_{\ff_1}$. We describe the mutations in the sense of Proposition \ref{prop:extremal}. 

\begin{itemize}
    \item The mutation with respect the edge $E_{23}=\operatorname{conv}\{v_2,v_3\}$ has mutation data $w_{23}=(2,-1)$ and $f=\frac{1}{-h_{23}}(v_3-v_2)=(-1,-2)$, for which we compute $w_{23}(v_1)=2$, $w_{23}(v_4)=1$ and $h_{23}=-1$. The mutation results in the polygon
$$P^\prime=\operatorname{conv}\{(1,0),(0,1),(-1,-3),(-1,-4)\}.$$
Which by Theorem \ref{thm:petrdeform} gives us the flat family, $$\mathcal{X}_{s,t}=\{sx_1^2x_4+tx_2-z_0z_1=0\}\subseteq \tilde{X} \times \pp^1_{s,t}$$
having $\mathcal{X}_{0,1}=\ff_1$ and $\mathcal{X}_{1,0}=\operatorname{Bl}_p\pp(1,1,2^2)$, with $p$ a smooth toric invariant point. By Proposition \ref{prop:extremal} and Theorem \ref{contractionteo}, the contraction $\varphi_{\widetilde{R}_1}$ has exceptional locus $E=V(X_2)$ or in Cox coordinates $V(x_4)$. Additionally, $\dim\varphi_{\widetilde{R}_1}(E)=0$. So, $\tilde{X}=\operatorname{Bl}_p\pp(1,1,1,2^2)$.
The induced contraction on the divisor $\mathcal{X}_{s,t}$ as seen in Diagram \ref{diagramcontrE} gives us the flat family ${\varphi_{\widetilde{R}_1}}_*(\mathcal{X}_{s,t})\times\pp^1_{s,t}\subseteq\pp(1,1,1,2^2) \times \pp^1_{s,t}$. This is the markovian degeneration $\pp^2\leadsto \pp(1,1,2^2)$ induced by the mutation of the edge $E_{23}$ of the Fano triangle $$P_0=\operatorname{conv}\{(1,0),(0,1),(-1,-1)\}.$$ See  \cite[Example 6.3]{H13} for explicit equations for the families in the deformation class of $\pp^2$.

\item The mutation with respect $E_{34}$ has mutation data $w_{34}=(0,1)$ and $f_{34}=\frac{1}{-h_{34}}(v_4-v_3)=(1,0)$ for which we compute $w_{34}(v_2)=1$, $w_{34}(v_1)=0$ and $h_{34}=-1$. The mutation results in the polygon
$$P^\prime=\operatorname{conv}\{(0,1),(-1,-1),(1,0),(1,1)\}.$$
Which by Theorem \ref{thm:petrdeform} gives us the flat family, $$\mathcal{X}_{s,t}=\{sx_1+tx_2-z_0z_1=0\}\subseteq \tilde{X} \times \pp^1_{s,t}$$
having $\mathcal{X}_{0,1}=\ff_1$. The primitive relation $X_1+X_3+X_5-X_4=0$ holds in $\widetilde{X}$. Then, the contraction $\varphi_{\widetilde{R}_1}$ has exceptional locus $E=V(X_4)$ or in Cox coordinates $V(z_0)$. In this case we obtain $\widetilde{X}=\operatorname{Bl}_p\pp^3$. The induced family ${\varphi_{\widetilde{R}_1}}_*(\mathcal{X}_{s,t})\times\pp^1_{s,t}\subseteq\pp^3 \times \pp^1_{s,t}$ is just a smooth deformation of $\pp^2$. This deformation is not induced by a mutation of Fano polygons. 

\item The mutation with respect to the edge $E_{41}$ is completely analogous to the $E_{34}$ case. Similarly, the mutation with respect to $E_{12}$ is analogous to $E_{23}$.
\end{itemize}
\label{f1mut}
\end{example}

\begin{example}
Let $P=\operatorname{conv}\{(1,0),(0,1),(-1,-3),(-1,-4)\}$. We make the numbering $v_1=(-1,-4)$, $v_2=(1,0)$, $v_3=(0,1)$ and $v_4=(-1,-3)$. Such that $v_1$ and $v_4$ correspond to the extremal curves of $X_P=\operatorname{Bl}_p\pp(1,1,2^2)$.
\begin{itemize}
    \item The mutation with respect the edge $E_{23}$ has mutation data $w_{23}=(-1,-1)$ and $f_{23}=\frac{1}{-h_{23}}(v_3-v_2)=(-1,1)$, for which we compute $w_{23}(v_1)=5$, $w_{23}(v_4)=4$ and $h_{23}=-1$. The mutation results in the polygon
$$P^\prime=\operatorname{conv}\{(-1,-4),(1,0),(-5,1),(-6,1)\}$$ that gives us the flat family, $$\mathcal{X}_{s,t}=\{sx_1^5x_4^4+tx_2-z_0z_1=0\}\subseteq \tilde{X} \times \pp^1_{s,t}$$
having $\mathcal{X}_{0,1}=\operatorname{Bl}_p\pp(1,1,2^2)$ and $\mathcal{X}_{1,0}=\operatorname{Bl}_p\pp(1,2^2,5^2)$. By Theorem \ref{contractionteo}, the contraction of $V(x_4)$ by $\varphi_{\widetilde{R}_1}$ induces the deformation $\pp(1,1,2^2)\leadsto \pp(1,2^2,5^2)$. In this situation the curve $\Gamma_1=V(x_4)\cap \mathcal{X}_{0,1}$ which is the $(-1)$ curve of $\operatorname{Bl}_p\pp(1,1,2^2)$ deforms onto $V(x_4)\cap \mathcal{X}_{1,0}$, the $(-1)$ curve of $\operatorname{Bl}_p\pp(1,2^2,5^2)$.

\item The mutation with respect $E_{34}=\operatorname{conv}\{v_3,v_4\}$ has mutation data $w_{34}=(4,-1)$ and $f_1=\frac{1}{-h_{34}}(v_4-v_3)=(-1,-4)$ for which we compute $w_{34}(v_2)=4$, $w_{34}(v_1)=0$ and $h_{34}=-1$. The mutation results in the polygon
$$P^\prime=\operatorname{conv}\{(1,0),(0,1),(-1,-4),(-3,-16)\}$$
that gives us the flat family, $$\mathcal{X}_{s,t}=\{sx_1^4+tx_2-z_0z_1=0\}\subseteq \tilde{X} \times \pp^1_{s,t}.$$
The surface $\mathcal{X}_{1,0}$ is the W-blowup of $\pp(1,1,2^2)$ over $\frac{1}{4}(1,1)$ with exceptional divisor $\Gamma^\prime_1$ having the form $[\binom{2}{1}]-(1)-[\binom{4}{1}]$. Since, this situation falls on the condition $w_{34}(v_1)<w_{34}(v_2)$ of Lemma \ref{lem:sign}, by Proposition \ref{prop:extremal} and Theorem \ref{contractionteo}, the contraction of $V(x_4)$ by $\varphi_{\widetilde{R}_1}$ induces a deformation of weighted projective planes $\pp(1,1,3)\leadsto \pp(1,3,4^2)$. Indeed, this deformation is reflected by a mutation of Fano triangles. In this situation the curve $\Gamma_0=V(x_4)\cap \mathcal{X}_{0,1}$ which has the structure $(1)-[\binom{2}{1}]$ in $\operatorname{Bl}_p\pp(1,1,2^2)$ deforms onto $V(x_4)\cap \mathcal{X}_{1,0}$, which preserves the same singularities.

\item For the mutation with respect $E_{41}$, we make the change of coordinates $M = (\begin{smallmatrix} -1 & 0 \\ 0 & 1 \end{smallmatrix})$ to make the mutation following Convention \ref{conv:orientation}. For $P=\operatorname{conv}\{(-1,0),(0,1),(1,-3),(1,-4)\}$, the mutation with respect to $E_{41}=\operatorname{conv}\{(1,-3),(1,-4)\}$ has mutation data $w_{41}=(-1,0)$ and $f_{41}=(0,1)$. The mutation in this coordinates is given by 
$$P^\prime=\operatorname{conv}\{(-1,0),(1,-4),(0,1),(-1,1)\}$$
that gives us the flat family, $$\mathcal{X}_{s,t}=\{sx_1+tx_2-z_0z_1=0\}\subseteq \tilde{X} \times \pp^1_{s,t}$$
having $\mathcal{X}_{0,1}=\operatorname{Bl}_p\pp(1,1,2^2)$. This deformation results in just a change of coordinates. But, in general the deformation associated to $E_{41}$ is a change of the extremal neighborhood $\Gamma_0$ to a successive mk2A in a Mori train over some cyclic quotient singularity $\frac{1}{\Delta_i}(1,\Omega_i)$.
\item The mutation with respect $E_{12}$ is the reverse mutation of the edge $E_{23}$ of the Example \ref{f1mut}.
\end{itemize}
\begin{figure}[h]
    \centering
    \includegraphics[height=5cm, width=7cm]{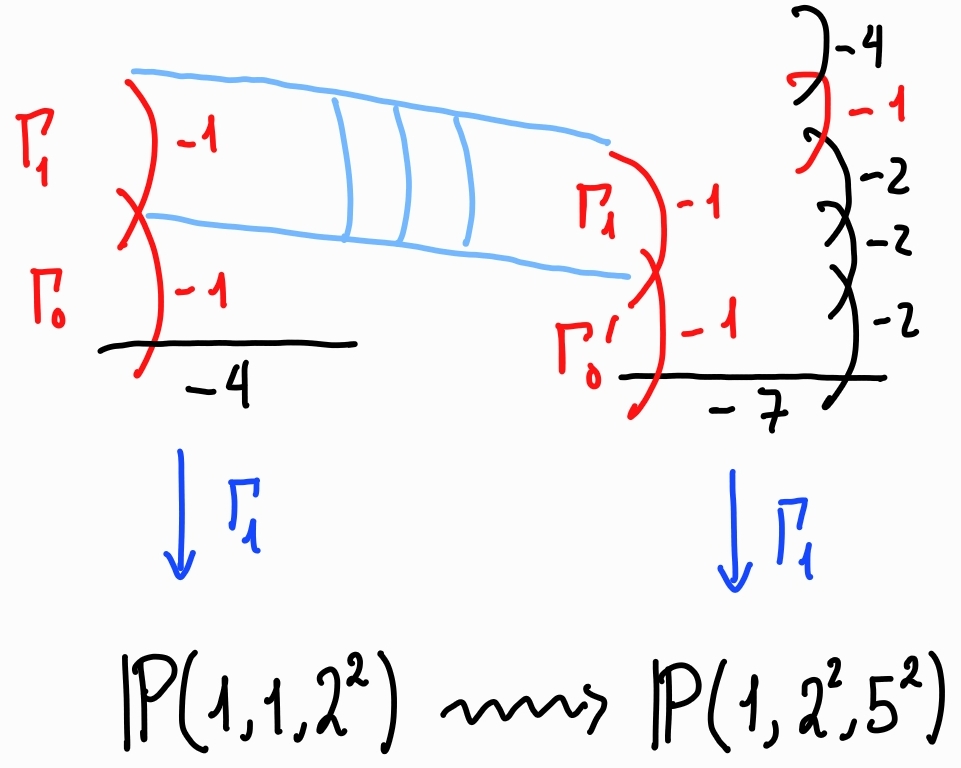}
    \caption{Mutation of $E_{23}$}
    \label{fig:Ex1}
\end{figure}

\label{ex114}
\end{example}
By Proposition \ref{prop:extremal}, the pencil $\mathcal{X}_{s,t}\to\pp^1$ falls into exactly two cases according to whether $X_1+X_3+X_5$ lies in $\sigma_{123}$ or in $\sigma_{134}$.
In the first case, which occurs for $E_{23}$ and for $E_{34}$, $E_{12}$ when $m_{41}-\Delta_0-\Delta_1<m_{23}$, the induced deformation after contracting $\tilde{R}_1$ is controlled by one-step mutations of fake weighted projective planes, and the singularities of $\operatorname{mut}_w(P,F)$ are determined by the following result (Theorem \ref{thm:triangle-mutations}).
In the second case, which occurs for $E_{41}$ and for $E_{34}$, $E_{12}$ when $m_{23}<m_{41}-\Delta_0-\Delta_1$, the pencil is a Mori mutation of an extremal neighborhood (Theorem \ref{thm:Mori-mutations}).

\begin{theorem}[{\cite[Proposition 1.1]{AK16}}]
Let $X=\pp\left(\lambda_0, \lambda_1, \lambda_2\right)$ be a weighted projective plane. Up to reordering of the weights, there exists a one-step mutation to a weighted projective plane $Y$ if and only if $1 / \lambda_0\left(\lambda_1, \lambda_2\right)$ is a $T$-singularity. When this is the case,
$
Y=\pp\left(\lambda_1, \lambda_2, \frac{\left(\lambda_1+\lambda_2\right)^2}{\lambda_0}\right)
$
More generally, there exists a one-step mutation from the fake weighted projective plane $X /(\mathbb{Z} / n)$ to the fake weighted projective plane $Y /\left(\mathbb{Z} / n^{\prime}\right)$ only if $n=n^{\prime}$ and $1 / \lambda_0\left(\lambda_1, \lambda_2\right)$ is a $T$-singularity.
\label{fake}
\end{theorem}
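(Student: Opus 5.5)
The statement is about Fano triangles, so I will work entirely on the lattice side. A weighted projective plane $\pp(\lambda_0,\lambda_1,\lambda_2)$ is given by a triangle $P=\operatorname{conv}\{u_0,u_1,u_2\}$ whose vertices satisfy $\lambda_0u_0+\lambda_1u_1+\lambda_2u_2=0$ and span $N$. A fake weighted projective plane $X/(\zz/n)$ corresponds to the same triangle in a superlattice of index $n$.

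\textbf{Step 1: only vertex mutations are possible.} A one-step mutation of a triangle must remove one vertex: take $w$ an inner normal of the edge $E=\operatorname{conv}\{u_1,u_2\}$, and let $u_0$ be the vertex at height $h_{\max}$. Since $\mathbf 0\in P^\circ$, the height function $w$ takes value $-r_E$ on $E$ and $h_{\max}$ at $u_0$. By the existence criterion recalled after Definition~\ref{def:mut} (\cite[Lemma 1]{KNP17}), the mutation exists iff $\ell(E)\geq r_E$. With the factor $F$, Definition~\ref{def:mut} gives
$$\operatorname{mut}_w(P,F)=\operatorname{conv}\{u_1,\;u_1+r_Ef,\;u_0+h_{\max}f\}$$
up to replacing $u_1$ by $u_2$. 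Choosing the $G_h$ removes the full length $r_Ef$ from $E$, so one endpoint of $E$ survives.

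\textbf{Step 2: translate to singularities and weights.} The cone over $E$ is the singularity $\frac1{\lambda_0}(\lambda_1,\lambda_2)$ when $P$ spans $N$, with $\ell(E)$ and $r_E$ as in the discussion after Definition~\ref{def:mut}.

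For the ``if'' direction, assume the cone over $E$ is a T-singularity $\frac1{dn^2}(1,dna-1)$. Then $\ell(E)=dn$ and $r_E=n$, so the mutation exists. Compute the new vertices and their linear relation. The relation $\lambda_0u_0+\lambda_1u_1+\lambda_2u_2=0$ together with $u_2-u_1=\ell(E)f$ expresses everything in the basis $\{u_1,f\}$. The mutated triangle then has weights $\lambda_1,\lambda_2$ and a third weight $\lambda'$ fixed by the height computation: $h_{\max}=\lambda_1r_E(\ldots)$, and then $\lambda'\lambda_0=(\lambda_1+\lambda_2)^2$, the standard Akhtar--Kasprzyk identity. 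To confirm that the new vertices span the same lattice, note that mutation preserves the lattice generated by vertices up to the index, since it is a piecewise-linear map in $GL_2(\zz)$ on each half-space.

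For the ``only if'' direction, $\ell(E)\geq r_E$ for a triangle forces the cone to be of class T. The reason is that for a triangle, $E$ must be fully consumed, i.e.\ $r_E\mid\ell(E)$ and the remainder vanishes, for the result to again be a triangle. This yields exactly the form $\frac1{dn^2}(1,dna-1)$ via Kollár--Shepherd-Barron's characterization (Definition~\ref{def:T-sing}).

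\textbf{Step 3: the fake case.} The index $[N:\langle u_i\rangle]$ is invariant because mutation acts by unimodular shears on each side of $\{w=0\}$. This gives $n=n'$.

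The main obstacle is the ``only if'' direction: proving that a triangle mutating to a triangle requires $r_E\mid\ell(E)$ in the T-form. I would try a direct computation of $G_h$ at heights $h=-r_E+1,\ldots,-1$. Since this is the cited result \cite[Proposition 1.1]{AK16}, I would ultimately invoke it.
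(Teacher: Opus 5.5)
The paper gives no proof of this statement; it is quoted from \cite[Proposition~1.1]{AK16}, so falling back on that reference is all the paper does. The argument you sketch in support of it, however, fails at the first step. With the unit factor $F=\operatorname{conv}\{\mathbf{0},f\}$ of Definition~\ref{def:mut} and $u_2=u_1+\ell(E)f$, the condition $\{u_1,u_2\}\subseteq G_{h_{\min}}+r_EF\subseteq E$ forces $G_{h_{\min}}=\operatorname{conv}\{u_1,u_1+(\ell(E)-r_E)f\}$. The top slice $u_0+h_{\max}F$ still contains $u_0$. Since the slices of a triangle vary affinely in the height,
\[
\operatorname{mut}_w(P,F)=\operatorname{conv}\{u_1,\;u_1+(\ell(E)-r_E)f,\;u_0,\;u_0+h_{\max}f\}.
\]
So a one-step mutation of a triangle is in general a quadrilateral, and it is a triangle exactly when $\ell(E)=r_E$, i.e.\ when the cone over $E$ is Wahl.

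Your Step~1 formula drops $u_0$ and is wrong. Your claim that $\ell(E)\geq r_E$ forces a T-cone is also false: the cone over $\operatorname{conv}\{(1,-3),(5,-3)\}$ has $\ell(E)=4\geq r_E=3$ but is not T, since $r_E\nmid\ell(E)$. With unit factors the ``if'' direction fails for every T-singularity with $d\geq 2$. The paper itself shows this. The edge $E_{23}$ of the triangle of $\pp(a^2,b^2,2c^2)$ has $\ell=2c$ and $r=c$, and its unit mutation is the quadrilateral $Q$ of Theorem~\ref{thm:F0-Mori-mutation}. The surface $\pp(a^2,b^2,2(c')^2)=\pp\bigl(a^2,b^2,(a^2+b^2)^2/(2c^2)\bigr)$ is reached only after a second unit mutation (Remark~\ref{rem:next-mut}).

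The result of \cite{AK16} uses factors $F_d=\operatorname{conv}\{\mathbf{0},df\}$ of arbitrary lattice length, equivalently $d$ successive unit mutations with the same data. For these the bottom slice has length $\ell(E)-dr_E$, and only in that convention does ``$E$ is fully consumed'' mean $r_E\mid\ell(E)$. You need to adopt this convention explicitly; as written, your Steps~1 and~2 describe different operations.

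Once that is fixed, the computations you leave implicit must actually be done.
\begin{itemize}
\item Applying $w$ to $\lambda_0u_0+\lambda_1u_1+\lambda_2u_2=0$ gives $\lambda_0h_{\max}=r_E(\lambda_1+\lambda_2)$, not your expression. Also, for $X$ itself, $\lambda_0=|\det(u_1,u_2)|=r_E\ell(E)=dr_E^2$.
\item The shear $\theta_d(v)=v+d\,w(v)f$ sends $u_2$ to $u_1$. So the new triangle $\operatorname{conv}\{u_1,u_0,\theta_d(u_0)\}$ has cones $\operatorname{cone}(u_0,u_1)$, $\theta_d(\operatorname{cone}(u_0,u_2))$ and $\operatorname{cone}(u_0,\theta_d(u_0))$. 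The last has index $dh_{\max}|\det(u_0,f)|=dh_{\max}^2=(\lambda_1+\lambda_2)^2/\lambda_0$, because $|\det(u,f)|=|w(u)|$.
\item For the converse, the fact that a cone with $\ell(E)=dr_E$ is $\frac{1}{dr_E^2}(1,dr_Ea-1)$ is not contained in Definition~\ref{def:T-sing}. It needs a normal-form computation: moving $u_1$ to $(0,1)$ by $\operatorname{GL}_2(\zz)$ gives $\operatorname{cone}\{(0,1),(dr_E^2,1+\alpha dr_E)\}$ with $\gcd(\alpha,r_E)=1$.
\item Step~3 is not justified. On the $N$-side a mutation is not a piecewise unimodular map of half-planes: by Lemma~\ref{lem:mut-boundary} one boundary chain is kept and only the other is sheared, so the new vertex set mixes the two. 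Compute the index instead as the gcd of the $2\times 2$ minors. In the fake case these are $n\lambda_1$, $n\lambda_2$ and $n(\lambda_1+\lambda_2)^2/\lambda_0$, whose gcd is $n$ once the last weight is integral, since $\gcd(\lambda_1,\lambda_2)=1$.
\end{itemize}
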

Let $g_j:(\Gamma_j\subset X_P)\to Y_j$ be the contractions of the curves $\Gamma_j$ with
$$Y_0=\pp(\Delta_0,m_{34},m_{23})/(\mathbb{Z}/n_0), \qquad Y_1=\pp(\Delta_1,m_{23},m_{12})/(\mathbb{Z}/n_1),$$
where $n_0=\operatorname{gcd}(m_{23},m_{34})$ and $n_1=\operatorname{gcd}(m_{23},m_{12})$ by Lemma \ref{lem:toric-contractions}. 

\begin{theorem}\label{thm:triangle-mutations}
Let $P\in[P_{\ff_k}]$ be a Fano quadrilateral with no pair of parallel edges, in standard orientation with $P\not\cong P_{\ff_k}$.
For each of the following edges, the divisorial contraction $\varphi_{\widetilde{R}_1}:\widetilde{X}\to X(\Sigma_1)$ identifies ${\varphi_{\widetilde{R}_1}}_*(\mathcal{X}_{s,t})$ with a one-step mutation of a Fano triangle.

\medskip
\begin{enumerate}[label=\roman*)]
\item \textit{Edge $E_{23}$.} Set $P_{23}=\operatorname{mut}_{w_{23}}(P,F_{23})$. Then  $\varphi_{\widetilde R_1*}(\mathcal{X}_{s,t})$ is identified with the mutation of $T=\operatorname{conv}\{v_1,v_2,v_3\}$ along $(w_{23},f_{23})$. The rays $v_1+w_{23}(v_1)f_{23}$ and $v_4+w_{23}(v_4)f_{23}$ correspond to extremal curves $\Gamma_0'$ and $\Gamma_1'$ on $X_{P_{23}}$, admitting contractions $F_i:(\Gamma_i'\subset X_{P_{23}})\to Y_i'$ with
\[
Y_1'=\mathbb P\left(\Delta_1,m_{12},\frac{(m_{12}+\Delta_1)^2}{m_{23}}\right)/(\mathbb Z/n_1),\qquad
Y_0'=\mathbb P(\Delta_0',m_{12},m_{34})/(\mathbb Z/n),
\]
with
\[
\Delta_0'=\frac{m_{34}(\Delta_1+m_{12})+m_{12}(\Delta_0+m_{34})}{m_{23}},
\qquad n=\gcd(m_{12},m_{34}).
 \]

\medskip
\item \textit{Edge $E_{34}$, assuming $m_{41}-\Delta_0-\Delta_1<m_{23}$.} Set $P_{34}=\operatorname{mut}_{w_{34}}(P,F_{34})$. Under $\varphi_{\widetilde R_1}$, the image $\varphi_{\widetilde R_1*}(\mathcal{X}_{s,t})$ is identified with the mutation of $T=\operatorname{conv}\{v_2,v_3,v_4\}$ along $(w_{34},f_{34})$.

The rays $v_1+w_{34}(v_1)f_{34}$ and $v_2+w_{34}(v_2)f_{34}$ correspond to extremal curves $\Gamma_0'$ and $\Gamma_1'$ on $X_{P_{34}}$, admitting contractions $F_i:(\Gamma_i'\subset X_{P_{34}})\to Y_i'$ with
\[
Y_0'=\mathbb P\left(\Delta_0,m_{23},\frac{(m_{23}+\Delta_0)^2}{m_{34}}\right)/(\mathbb Z/n_0),\qquad
Y_1'=\mathbb P(\Delta_1',m_{23},m_{41})/(\mathbb Z/n),
\]
with
\[
\Delta_1'=\frac{m_{41}(\Delta_0+m_{23})+m_{23}(m_{41}-\Delta_1)}{m_{34}},
\qquad n=\gcd(m_{41},m_{23}).
\]

\end{enumerate}
\end{theorem}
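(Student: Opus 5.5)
The plan is to combine Theorem~\ref{contractionteo} (Case 1) with the combinatorics of Lemma~\ref{lem:sign} and Lemma~\ref{lem:toric-contractions}, and then read off the resulting weighted projective planes via Theorem~\ref{fake}.

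\emph{Identification of the contracted pencil (edge $E_{23}$).} By Proposition~\ref{prop:extremal}(i), $X_1+X_3+X_5\in\sigma_{123}$, so Case~1 of Theorem~\ref{contractionteo} applies. The divisorial contraction removes the ray $X_2=(v_4,0)$, and the remaining generators are $(v_1,0)$, $(v_2,h_{23})$, $(\mathbf 0,1)$ and $(f_{23},1)$. These are exactly the rays of the Petracci threefold of Theorem~\ref{thm:petrdeform} attached to the triangle $T=\operatorname{conv}\{v_1,v_2,v_3\}$ with the same mutation data $(w_{23},f_{23})$. Indeed, for $T$ we have $\mathcal V(T)^{\ge0}=\{v_1\}$ and $\mathcal V(\operatorname{mut}T)^{<0}=\{v_2\}$, and the trinomial $sx_1^{h_{\max}}+tx_2^{-h_{\min}}-z_0z_1$ is exactly the pushforward computed there. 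Hence $\varphi_{\widetilde R_1*}(\mathcal X_{s,t})$ is the Ilten pencil of $T\rightsquigarrow\operatorname{mut}_{w_{23}}(T,F_{23})$. Its fibre over $[0:1]$ is $X_T$, which is the contraction $Y_1$ of $\Gamma_1=V(v_4)$ in $X_P$. This is consistent with Lemma~\ref{lem:toric-contractions}: deleting $v_4$ from the fan gives $Y_1=\pp(\Delta_1,m_{23},m_{12})/(\zz/n_1)$.

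\emph{Computing $Y_1'$.} Since $\mathcal V(T)$ and the heights are unchanged, $\operatorname{mut}T=\operatorname{conv}\{v_1,v_2,v_1+w_{23}(v_1)f_{23}\}$. Its toric surface is obtained from $X_{P_{23}}$ by contracting the ray $v_4+w_{23}(v_4)f_{23}$. To see this, use Lemma~\ref{pollist}: the edges of $P_{23}$ are those of $\operatorname{mut}T$, with the extra vertex $v_4+w_{23}(v_4)f_{23}$ inserted between $v_2$ and $v_1+h_{\max}f$. This shows that this ray gives $\Gamma_1'$ and that its contraction is $Y_1'=X_{\operatorname{mut}T}$.

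Now apply Theorem~\ref{fake} with $\lambda_0=m_{23}$, which is the weight of the vertex opposite the mutating edge. The cone on $E_{23}$ is Wahl, so the singularity $\frac1{m_{23}}(\Delta_1,m_{12})$ is a T-singularity. The theorem then gives $\pp(\Delta_1,m_{12},(m_{12}+\Delta_1)^2/m_{23})/(\zz/n_1)$, with the same $n_1$.

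\emph{Computing $Y_0'$.} The ray $v_1+w_{23}(v_1)f_{23}$ gives $\Gamma_0'$. Its contraction leaves the triangle on the rays $v_2$, $v_4':=v_4+w_{23}(v_4)f_{23}$ and $v_1':=v_1+w_{23}(v_1)f_{23}$. Two of the three cone multiplicities come for free: the cone $\operatorname{cone}(v_1',v_2)$ has multiplicity $m_{12}$, since it is the image of $E_{12}$ under the piecewise linear mutation map, and the cone $\operatorname{cone}(v_4',v_1')$ has multiplicity $m_{34}$, as the new edge parallel to the old one. The third, $\Delta_0'$, is $|\det(v_2,v_4')|$. I will compute it by writing $f_{23}=(v_3-v_2)/(-h_{23})$ and using \eqref{curvextrel} together with Lemma~\ref{lem:sign}(i), which gives $w_{23}(v_4)=(\Delta_0+m_{34})/(-h_{23})$ and $w_{23}(v_1)=(\Delta_1+m_{12})/(-h_{23})$. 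Expanding the determinant should produce
\[
\Delta_0'=\frac{m_{34}(\Delta_1+m_{12})+m_{12}(\Delta_0+m_{34})}{m_{23}}.
\]
The torsion $\gcd(m_{12},m_{34})$ then follows exactly as in the proof of Lemma~\ref{lem:toric-contractions}.

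\emph{Edge $E_{34}$.} Under $m_{41}-\Delta_0-\Delta_1<m_{23}$, Proposition~\ref{prop:extremal}(iii) again puts us in Case~1, with $u_1=v_2$ and $u_2=v_3$, and the contracted ray is $v_1$. So the same argument applies to $T=\operatorname{conv}\{v_2,v_3,v_4\}=X_{Y_0}$, using Lemma~\ref{lem:sign}(ii) for $w_{34}(v_1)$ and $w_{34}(v_2)$. The roles of $\Gamma_0$ and $\Gamma_1$ are swapped: Theorem~\ref{fake} gives $Y_0'$, and the determinant computation gives $\Delta_1'$ with $m_{41}-\Delta_1$ in place of $\Delta_1+m_{12}$.

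The main obstacle is the determinant bookkeeping for $\Delta_0'$ and $\Delta_1'$, together with checking that the new rays are indeed the extremal curves in the standard orientation of $P_{23}$ and $P_{34}$. For the latter I would verify negativity of self-intersection via the analogue of \eqref{singrel} for the new quadrilateral.
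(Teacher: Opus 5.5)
Your identification of ${\varphi_{\widetilde R_1}}_*(\mathcal X_{s,t})$ with the Ilten pencil of $T$ via Case~1 of Theorem~\ref{contractionteo} matches the paper's argument, and so does your derivation of $Y_1'$ from Theorem~\ref{fake} with $\lambda_0=m_{23}$. The computation of $Y_0'$, however, fails as written.

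Contracting $\Gamma_0'=V(\rr_{\geq0}v_1')$ deletes the ray $v_1'$. So the fan of $Y_0'$ has rays $v_1,v_2,v_4'$, not $v_2,v_4',v_1'$. Your multiplicity assignments are also wrong. By Lemma~\ref{lem:mut-boundary}, the edge $E_{12}$ lies in the unsheared chain, so it is $\operatorname{cone}(v_1,v_2)$ that keeps multiplicity $m_{12}$. Meanwhile $\operatorname{cone}(v_2,v_4')=\theta(\operatorname{cone}(v_3,v_4))$ has multiplicity $m_{34}$ and $\operatorname{cone}(v_4',v_1')=\theta(\operatorname{cone}(v_4,v_1))$ has multiplicity $m_{41}$. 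In particular, the determinant you propose to expand is $|\det(v_2,v_4')|=|\det(\theta v_3,\theta v_4)|=m_{34}$ identically, so no bookkeeping will turn it into $\Delta_0'$. Example~\ref{ex114} makes this concrete. There $v_2=(1,0)$, $v_4'=(-5,1)$ and $v_1'=(-6,1)$, so $|\det(v_2,v_4')|=|\det(v_1',v_2)|=1$, while $m_{12}=4$ and $\Delta_0'=21$.

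The missing multiplicity is that of $\operatorname{cone}(v_4',v_1)$. Normalize $w_{23}=(0,1)$ and $f_{23}=(1,0)$, so that $\det(f_{23},u)=w_{23}(u)$ and $\det(v_4,v_1)=m_{41}$. Then
\[
\Delta_0'=\det(v_4',v_1)=m_{41}+w_{23}(v_4)\,w_{23}(v_1)=m_{41}+\frac{(\Delta_0+m_{34})(\Delta_1+m_{12})}{m_{23}},
\]
and substituting $m_{41}m_{23}=m_{12}m_{34}-\Delta_0\Delta_1$ from \eqref{singrel} gives the stated formula. The paper obtains the same number differently. It writes the wall relation at $v_1'$, whose two adjacent cones have multiplicities $\mu_1=(m_{12}+\Delta_1)^2/m_{23}$ and $m_{41}$, and applies $w_{23}$ to it.

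The same correction is needed for $E_{34}$. Deleting $v_2+w_{34}(v_2)f_{34}$ leaves the rays $v_2$, $v_3$ and $v_1+w_{34}(v_1)f_{34}$. Their cones have multiplicities $m_{23}$, $m_{41}$ and $\Delta_1'=m_{12}+w_{34}(v_1)\,w_{34}(v_2)$, and the last reduces to the stated expression via \eqref{singrel}.
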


\begin{proof}
\noindent\textit{Case i).}
By Proposition~\ref{prop:extremal} and Theorem~\ref{contractionteo}, the pushforward ${\varphi_{\widetilde{R}_1}}_*(\mathcal{X}_{s,t})$ is the family induced by mutating the Fano triangle $T=\operatorname{conv}\{v_1,v_2,v_3\}$ with respect to $(w_{23},f_{23})$, as recorded in diagram~\eqref{diagramcontrE}.
In particular, the ray $v_4+w_{23}(v_4)f_{23}$ gives a curve $\Gamma_1^\prime$ of $X_{P_{23}}$ with contraction $F_1:(\Gamma_1^\prime\subset X_{P_{23}})\to Y_1^\prime$.
Since $\frac{1}{m_{23}}(\Delta_1,m_{12})$ is a T-singularity, Theorem~\ref{fake} gives $Y_1^\prime=\pp(\Delta_1,m_{12},(m_{12}+\Delta_1)^2/m_{23})/(\mathbb{Z}/n_1)$.
Setting $\mu_1=(m_{12}+\Delta_1)^2/m_{23}$, the ray $v_1+w_{23}(v_1)f_{23}$ of $\Gamma_0'$ meets the two adjacent cones of multiplicities $\mu_1$ and $m_{41}$. So the wall relation induced by $\Gamma_0^\prime$ gives
$$\mu_1(v_4+w_{23}(v_4)f_{23})+m_{41}v_1=\Delta_0'(v_1+w_{23}(v_1)f_{23}).$$
As expected, applying $w_{23}$ to both sides yields $(\Gamma_0^\prime)^2=-\Delta_0^\prime/(\mu_1m_{41})$. Hence $Y_0^\prime$ is the toric surface with rays $v_1$, $v_2$ and $v_4+w_{23}(v_4)f_{23}$, whose three cones have multiplicities $m_{12}$, $m_{34}$ and $\Delta_0^\prime$, so $Y_0^\prime=\pp(\Delta_0^\prime,m_{12},m_{34})/(\mathbb Z/n)$ by Lemma~\ref{lem:toric-contractions}.

\medskip
\noindent\textit{Case ii).}
This is analogous to Case i). Indeed, ${\varphi_{\widetilde{R}_1}}_*(\mathcal{X}_{s,t})$ is identified with the mutation of $T=\operatorname{conv}\{v_2,v_3,v_4\}$ along $(w_{34},f_{34})$.
The ray $v_1+w_{34}(v_1)f_{34}$ gives a curve $\Gamma_0^\prime$ with contraction $F_0:(\Gamma_0^\prime\subset X_{P_{34}})\to Y_0^\prime$, and Theorem~\ref{fake} gives $Y_0^\prime=\pp(\Delta_0,m_{23},(m_{23}+\Delta_0)^2/m_{34})/(\mathbb{Z}/n_0)$.
Setting $\mu_0=(m_{23}+\Delta_0)^2/m_{34}$, the wall relation induced by $\Gamma_1^\prime$ gives
$$\mu_0(v_1+w_{34}(v_1)f_{34})+m_{12}v_2=\Delta_1'(v_2+w_{34}(v_2)f_{34}).$$
Applying $w_{34}$ to both sides yields $(\Gamma_1^\prime)^2=-\Delta_1^\prime/(\mu_0 m_{12})$, and the structure for $Y_1^\prime$ follows as in Case i).
\end{proof}
\medskip

Now we prove a similar result for the mutation of the edges $E_{41}$ and $E_{34}$ satisfying $m_{23}<m_{41} - \Delta_1-\Delta_0$. Its associated deformation is expressed in the language of universal families of extremal neighborhoods (Mori mutations), as in \cite[Section 3.4]{HTU17} and \cite[Section 5]{UZ24}.

\begin{theorem}\label{thm:Mori-mutations}
Let $P\in[P_{\ff_k}]$ be a Fano quadrilateral with no pair of parallel edges, in standard orientation with $P\not\cong P_{\ff_k}$.
\begin{enumerate}[label=\roman*)]
\item Set $P_{41}=\operatorname{mut}_{w_{41}}(P,F_{41})$. Then the pencil $\mathcal{X}_{s,t}\to\pp^1$ is identified with a Mori mutation of the neighborhood $\Gamma_0$ containing the singularities of indices $|h_{41}|$ and $|h_{12}|$.
\item If $m_{23}<m_{41}-\Delta_0-\Delta_1$, set $P_{34}=\operatorname{mut}_{w_{34}}(P,F_{34})$. Then the pencil $\mathcal{X}_{s,t}\to \pp^1$ is identified with a Mori mutation of the neighborhood $\Gamma_1$ containing the singularities of indices $|h_{34}|$ and $|h_{41}|$.
\end{enumerate}
\end{theorem}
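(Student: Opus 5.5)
The plan mirrors the proof of Theorem~\ref{thm:triangle-mutations}, but contracts the \emph{other} extremal ray of $\widetilde{X}$. In cases i) and ii), Proposition~\ref{prop:extremal} gives $X_1+X_3+X_5\in\sigma_{134}$. Theorem~\ref{contractionteo} then shows that $\varphi_{\widetilde R_1}$ contracts $E=V(\rr_{\geq0}X_4)$, the divisor attached to $(\mathbf 0,1)$, to a point. In Cox coordinates this divisor is $z_0=0$, so the pushed-forward family is $\{sx_1^{h_{\max}}x_4^{w(u_4)}+tx_2^{-h_{\min}}-z_1=0\}$ (or its variant with $w(u_4)<0$). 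Since this is linear in $z_1$, every fibre of $\varphi_{\widetilde R_1*}(\mathcal X_{s,t})\to\pp^1_{s,t}$ is isomorphic to one fixed surface. The first step is to identify that surface with the contraction $Y_j$ of the relevant extremal curve: $Y_0$ in case i) and $Y_1$ in case ii). Concretely, eliminating $z_1$ projects $X(\Sigma_1)$ from the ray $(f,1)$, and the resulting two-dimensional fan is the fan of $P$ with the ray of $\Gamma_j$ removed. That fan is $Y_j=\pp(\Delta_j,\cdot,\cdot)/(\zz/n)$ of Lemma~\ref{lem:toric-contractions}. The upshot is that the pencil $\mathcal X_{s,t}$ is a family of partial resolutions of a \emph{constant} surface containing $\frac1{\Delta_j}(1,\Omega_j)$, isotrivial away from that point.

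Next, I would restrict $\varphi_{\widetilde R_1}$ to each fibre. On $X_P$ (the fibre over $[0:1]$), $E\cap X_P$ is the torus-invariant curve given by the ray $u_2$, which is $v_1$ for $E_{41}$ and $v_4$ for $E_{34}$; this is $\Gamma_0$, respectively $\Gamma_1$. On $X_{P'}$ (the fibre over $[1:0]$), $E\cap X_{P'}$ is the curve $\Gamma'$ given by the mutated ray $u_1+h_{\max}f$; Lemma~\ref{pollist} shows this is the new vertex adjacent to $u_2$. Hence $\Gamma\subset X_P$ and $\Gamma'\subset X_{P'}$ both contract to the same singularity $\frac1{\Delta_j}(1,\Omega_j)$. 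Both contractions are extremal neighbourhoods of type $k2A$: each curve passes through two Wahl points, namely those of indices $|h_{41}|,|h_{12}|$ (respectively $|h_{34}|,|h_{41}|$) for $\Gamma$, and the cone multiplicities of the new edges, computed from the primitive relation of Proposition~\ref{prop:extremal}, for $\Gamma'$.

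Finally, I would check that $\Gamma'$ is the next member after $\Gamma$ in the universal family over $\frac1{\Delta_j}(1,\Omega_j)$ of \cite[Section~3.4]{HTU17}. Writing the edge $E_{ij}$ Wahl singularity as $[\binom{n}{a}]$, the other Wahl point of $\Gamma$ keeps its index, and the new Wahl point has index $\delta n-n'$, where $\delta=\delta_j$ is as defined before Lemma~\ref{lem:F1moritrain} and $n'$ is the index of the Wahl point that disappears. This is exactly the Mori recursion $n_{i+1}=\delta n_i-n_{i-1}$ of \cite{HTU17,UZ24}. To verify it, I would use the wall relation of $\Gamma'$ in $P'$ together with \eqref{eq:KGamma}, expressing $h_{\max}=w(u_1)$ through Lemma~\ref{lem:sign}.

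The main obstacle is this last numerical verification, in particular showing that the step goes in the correct direction and does not return to the previous term. For case ii) this is where the hypothesis $m_{23}<m_{41}-\Delta_0-\Delta_1$ enters, through the sign $w_{34}(v_2)<w_{34}(v_1)$. I would first check it on Example~\ref{ex114} before doing the general computation.
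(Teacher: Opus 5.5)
Your setup is correct. In both cases $\varphi_{\widetilde R_1}$ contracts $E=V(X_4)=\{z_0=0\}$ to a point. The pushed-forward family is linear in $z_1$, so it has constant fibre: projecting along $X_5=(f,1)$ by $(v,k)\mapsto v-kf$ sends $X_1,X_2,X_3$ to $u_1,u_4,u_3$, which is the fan of $P$ with $u_2$ removed, i.e.\ $Y_j$. Also $E\cap X_P$ is the curve of the ray $u_2$, that is $\Gamma_0$, resp.\ $\Gamma_1$. This gives a more explicit reason than the paper does for why both curves contract to the same $\frac{1}{\Delta_j}(1,\Omega_j)$.

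The error is in identifying $E\cap X_{P'}$. For $w(u_4)\geq 0$ the fibre over $[1:0]$ is $\{x_1^{h_{\max}}x_4^{w(u_4)}=z_0z_1\}$; the case $w(u_4)<0$ is simpler. Since $S_2=\{X_2,X_4\}$ is a primitive collection, the Cox variables $x_4$ (ray $X_2$) and $z_0$ (ray $X_4$) never vanish together. So $z_0=0$ forces $x_1=0$, and $E\cap X_{P'}=X_{P'}\cap V(X_1,X_4)$. In lattice terms, inside $N_Q=\{(v,w(v))\}$ the ray $u_1$ of $P'$ is $(u_1,h_{\max})=X_1+h_{\max}X_4\in\relint\operatorname{cone}(X_1,X_4)$. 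By contrast, $u_1+h_{\max}f$ is $(u_1+h_{\max}f,h_{\max})=X_1+h_{\max}X_5\in\omega_{15}$, which does not involve $X_4$. Hence $\Gamma'$ is the curve of the unmoved ray $u_1$ (that is, $v_2$ for $E_{41}$ and $v_1$ for $E_{34}$), not of $u_1+h_{\max}f$.

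Your own projection confirms this. The map $(v,k)\mapsto v+(w(v)-k)f$ also kills $X_5$ and equals $\theta$ composed with $(v,k)\mapsto v-kf$. It sends $X_1,X_2,X_3$ to $u_1+h_{\max}f$, $u_4+w(u_4)f$, $u_2$, which is the fan of $P'$ with $u_1$ deleted. Also, by Lemma~\ref{pollist} the vertex $u_1+h_{\max}f$ is adjacent to $u_1$ and $u_4+w(u_4)f$, not to $u_2$.

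This is not cosmetic. The curve you chose lies on the cones $\operatorname{cone}(u_1,u_1+h_{\max}f)$ and $\theta(\operatorname{cone}(u_4,u_1))$. For $E_{41}$ it therefore passes through Wahl points of indices $w_{41}(v_2)$ and $|h_{23}|$. It does not pass through the point of $\tau_{12}$, which is the one a Mori mutation of $\Gamma_0$ must keep, and it does not contract to $Y_0$. In the $E_{41}$ item of Example~\ref{ex114} it is a $(-1)$-curve through two smooth points, while $\Delta_0=3$. So your last step cannot succeed as set up.

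Your recursion is also misstated. The Wahl point of the mutated edge $E_{ij}$ is exactly the one that disappears, so the recursion must read $n_2=\delta n_1-n_0$, with $n_0=|h_{ij}|$ and $n_1$ the index that is kept.

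With the correct $\Gamma'=V(\rr_{\geq0}u_1)$ the verification is one line, and there is no ``direction'' issue. Its neighbouring cones are the untouched $\operatorname{cone}(u_1,u_2)$ ($\tau_{12}$, resp.\ $\tau_{41}$) and the new $\operatorname{cone}(u_1,u_1+h_{\max}f)$ of multiplicity $h_{\max}^2$, since $|\det(u,f)|=|w(u)|$. Lemma~\ref{lem:sign} together with \eqref{eq:KGamma} then gives $h_{\max}=w_{41}(v_2)=(m_{12}-\Delta_0)/|h_{41}|=\delta_0|h_{12}|-|h_{41}|$, resp.\ $w_{34}(v_1)=(m_{41}-\Delta_1)/|h_{34}|=\delta_1|h_{41}|-|h_{34}|$. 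This is exactly the paper's computation. The paper also notes that the pencil is $\qq$-Gorenstein trivial at the kept Wahl point, which identifies it as the degeneration of the k1A over $Y_j$ into these two consecutive k2A.
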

\begin{proof}
For i) write $n_0=|h_{41}|$, $n_1=|h_{12}|$ and $\delta_0=\frac{m_{12}+m_{41}-\Delta_0}{n_0n_1}$ as in
Lemma~\ref{lem:F1moritrain}, and let $\frac1{n_1^2}(1,n_1a_1-1)$ be the singularity of $\tau_{12}$ and
$Y_0=\frac1{\Delta_0}(1,\Omega_0)$ the contraction of $\Gamma_0$. Since
$\Delta_0=n_0^2+n_1^2-\delta_0n_0n_1$, the neighborhood $(\Gamma_0\subset X_P)\to Y_0$ is the k2A
$v^0=\frac1{\Delta_0}(n_1^2,n_0^2)$ of \cite[Proposition~2.4]{HTU17}, in the notation of which
$(m_0,m_1,\delta,\Delta)=(n_0,n_1,\delta_0,\Delta_0)$.

By~\eqref{mutpol}, it follows that $$\mathcal{V}(P_{41})=\{v_1,\,v_2,\,v_2+w_{41}(v_2)f_{41},\,v_3+w_{41}(v_3)f_{41}\},$$
so $\tau_{12}$ is unchanged and $\tau_{41}$ is replaced by
$\tau_{41}'=\operatorname{cone}\bigl(v_2,v_2+w_{41}(v_2)f_{41}\bigr)$, both adjacent to
$\Gamma_0'=V(\rr_{\geq0}v_2)$. As $f_{41}$ is primitive with $w_{41}(f_{41})=0$ we have
$|\det(u,f_{41})|=|w_{41}(u)|$ for all $u\in N$, so by Lemma~\ref{lem:sign} and $n_0^2=m_{41}$
$$\mult(\tau_{41}')=w_{41}(v_2)^2=\Bigl(\tfrac{m_{12}-\Delta_0}{n_0}\Bigr)^2=n_2^2,
\qquad n_2:=\delta_0n_1-n_0 .$$
Hence $(\Gamma_0'\subset X_{P_{41}})\to Y_0$ is the k2A with indices $(n_1,n_2)$, that is
$v^1=\frac1{\Delta_0}(n_2^2,n_1^2)$.

Finally, $\tau_{12}$ is untouched by the mutation, so $\mathcal X_{s,t}$ is $\qq$-Gorenstein trivial at
$\frac1{n_1^2}(1,n_1a_1-1)$ and smooths the remaining singularity, $Y_0$ being fixed. Since the
$\qq$-Gorenstein deformation space of a Wahl singularity is smooth of dimension one, near $[0:1]$ and
near $[1:0]$ it is therefore the one-parameter deformation of the last assertion of
\cite[Proposition~2.4]{HTU17} for $v^0$ and for $v^1$, whose general fiber is in both cases the k1A
over $Y_0$ with singularity $\frac1{n_1^2}(1,n_1a_1-1)$. Thus $\mathcal X_{s,t}\to\pp^1_{s,t}$ is the
degeneration of that k1A into the two k2A $v^0$ and $v^1$ in its universal family $\mathbb{U}\to M$.

For ii) replace $(\Gamma_0,\tau_{41},\tau_{12},\Delta_0,m_{12},m_{41})$ by
$(\Gamma_1,\tau_{34},\tau_{41},\Delta_1,m_{41},m_{34})$; the hypothesis
$m_{23}<m_{41}-\Delta_0-\Delta_1$ places $E_{34}$ in the case $X_1+X_3+X_5\in\sigma_{134}$ of
Proposition~\ref{prop:extremal}, so that the contracted ray is again $\rr_{\geq0}X_4$.
\end{proof}

\begin{remark}
The mutation of $E_{12}$ behaves analogously to that of $E_{34}$. By Proposition~\ref{prop:extremal} and Lemma~\ref{lem:sign}, the condition $w_{12}(v_3)<w_{12}(v_4)$ is equivalent to $m_{23}<m_{41}-\Delta_0-\Delta_1$.
When this holds, the family $\mathcal{X}_{s,t}\to\pp^1$ is a Mori mutation of $\Gamma_0$ with singularities of indices $|h_{12}|$ and $|h_{41}|$. Otherwise, it is a mutation of Fano triangles as in Theorem~\ref{thm:triangle-mutations}.
In general, the mutation of $E_{12}$ is obtained as the reverse mutation of some $P'\in[P_{\ff_k}]$ along an edge $E_{jk}'\neq E_{12}'$.
\end{remark}

\subsection{Mutations on M-resolutions}\label{sub:M-res}
A non-Du Val T-singularity of the form $\frac{1}{dn^2}(1,dna-1)$ carries the crepant M-resolution 
\begin{equation}\label{eq:canonical}
\Big[\binom{n}{a}\Big]-(1)_0-\cdots-(1)_0-\Big[\binom{n}{a}\Big]\to \frac{1}{dn^2}(1,dna-1). 
\end{equation}
From the toric perspective, suppose the singularity germ of $\frac{1}{dn^2}(1,dna-1)$ is $X_\sigma=\operatorname{Spec}(\mathbb{C}[M\cap \sigma^\vee])$ for a rational polyhedral cone $\sigma$. The previous M-resolution is determined by a canonical refinement of $\sigma$.

\begin{lemma}\label{lem:canonical}
Let $\sigma\subset N_\mathbb{R}$ a rational polyhedral cone defining the cyclic quotient singularity $\frac{1}{dn^2}(1,dna-1)$. Suppose $\sigma=\operatorname{cone}(v,w)$ with $v,w$ primitive vectors in $N$ and define $v_k:=v+\frac{k}{d}(w-v)$ for $0\leq k\leq d$. Then, the fan induced by the cones $\sigma_k:=\operatorname{cone}(v_k,v_{k+1})$ for $k\leq d-1$ determines the map $X(\Sigma)\to X_\sigma$ which represents the chain \ref{eq:canonical}. 
\end{lemma}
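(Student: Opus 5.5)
The argument is a direct computation in toric coordinates. It has three parts: normalize the cone, show the points $v_k$ are lattice points, and check the resulting subdivision against the chain \eqref{eq:canonical}.

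\textbf{Normalization and integrality.} After a $\operatorname{GL}_2(\zz)$ change of coordinates we take $\sigma=\operatorname{cone}\bigl((0,1),(dn^2,-(dna-1))\bigr)$, so $v=(0,1)$ and $w=(dn^2,1-dna)$. Then
\[
w-v=(dn^2,-dna)=d\,(n^2,-na),
\]
hence $v_k=v+k(n^2,-na)=(kn^2,1-kna)$ is a lattice point for every $0\le k\le d$. Since $\gcd(kn^2,1-kna)=1$, each $v_k$ is primitive. All $v_k$ lie on the segment $[v,w]$, so the cones $\sigma_k$ subdivide $\sigma$ and define a proper toric birational morphism $X(\Sigma)\to X_\sigma$.

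\textbf{Local singularities.} For each $k$,
\[
\det(v_k,v_{k+1})=kn^2(1-(k+1)na)-(k+1)n^2(1-kna)=-n^2,
\]
so every $\sigma_k$ has multiplicity $n^2$. To identify its type, we map $v_k\mapsto(0,1)$ by an element of $\operatorname{SL}_2(\zz)$, which is possible because $v_k$ is primitive. In these coordinates $v_{k+1}$ goes to $(n^2,\ast)$, and the residue of $\ast$ modulo $n^2$ follows from $v_{k+1}=v_k+(n^2,-na)$. This computation shows $\sigma_k\cong\frac1{n^2}(1,na-1)$, the same type for every $k$, consistent with $\sigma_0$, which by the normal form is the cone spanned by $(0,1)$ and $(n^2,1-na)$. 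The step I expect to need the most care is matching the orientation of each $\sigma_k$ to the end of the chain, so that adjacent Wahl chains appear as $[\binom{n}{a}]$ and not as the dual $[\binom{n}{n-a}]$. I would settle this by comparing with the $d=1$ case, which is exactly $\sigma$.

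\textbf{The curves $V(v_k)$ and the final check.} For $1\le k\le d-1$, the interior ray $v_k$ gives a $T$-invariant curve $C_k=V(\rr_{\ge0}v_k)$. Its wall relation is
\[
v_{k-1}+v_{k+1}=2v_k .
\]
Two consequences follow. First, $K\cdot C_k=0$, so the morphism is crepant. Second, in the minimal resolution the strict transform of $C_k$ meets the ends of the two adjacent Wahl chains and has self-intersection $-1$, which is the $(1)_0$ notation. The self-intersection can be read off from the toric intersection formulas used earlier, $C_k^2=-\frac{\det\text{-data}}{m\,m'}$ with $m=m'=n^2$, together with the discrepancy computation for Wahl chains. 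Finally, since $v_0,\dots,v_d$ are all the primitive points of the subdivision along the segment, $X(\Sigma)$ has exactly $d$ Wahl points joined by $d-1$ such curves. This is the M-resolution \eqref{eq:canonical}, and it is unique by \cite[Theorem 3.9]{KSB88}/\cite{BC94}.
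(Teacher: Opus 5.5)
Your proposal is correct and follows essentially the paper's argument: normalize to $v=(0,1)$, $w=(dn^2,1-dna)$, check that $v_k=(kn^2,1-kna)$ are primitive and cut $\sigma$ into cones of type $\frac{1}{n^2}(1,na-1)$, and read $C_k^2=-2/n^2$ and $K\cdot C_k=0$ off the wall relation $v_{k-1}+v_{k+1}=2v_k$; you compute directly what the paper takes from \cite{AK16} and \cite{AK14}. Your orientation worry is already settled by your own $\operatorname{SL}_2(\zz)$ normalization, not by the $d=1$ comparison. Equivalently, the unimodular shear $x\mapsto x+k\,\langle (a,n),x\rangle\,(n,-a)$ carries $\sigma_0$ onto $\sigma_k$ with $v_0\mapsto v_k$ and $v_1\mapsto v_{k+1}$, and this is exactly what makes the two end discrepancies $-1+\frac{a}{n}$ and $-1+\frac{n-a}{n}$ at $C_k$ sum to $-1$, giving the $(-1)$-curves that the paper leaves implicit.
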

\begin{proof}
First, we begin noting that the cones $\sigma_k$ are rational. From \cite[Proposition 3.9]{AK16} if $E=\operatorname{conv}(v,w)$, then $|E\cap N|-1=dn$ implying that $\frac{1}{d}(w-v)\in N$. Moreover, from \cite[Proposition 3.2]{AK14}, it follows that the vectors $v_k$ are primitive in $N$ and the singularity type of $\sigma_k$ is $\frac{1}{n^2}(1,na-1)$. The last assertion follows directly by making the change of coordinates $v=(0,1)$ and $w=(dn^2,1-dna)$.

Now, let $\Gamma_k$ be the T-invariant curve associated to $\mathbb{R}_{\geq 0}v_k$. From the wall relation: $$\frac{1}{n^2}v_{k-1}-\frac{2}{n^2}v_k+\frac{1}{n^2}v_{k+1}=0$$ for $1\leq k\leq d-1$, we derive that $\Gamma_k^2=-\frac{2}{n^2}$. This implies $\Gamma_k\cdot K_X=0$ for a projective surface $X$ such that $X(\Sigma)\subset X$.
\end{proof}

For a toric del Pezzo surface $X_P$ with T-singularities, we consider the map $\bar{X}_P\to X_P$ arising after partially resolving the singularities of the form $\frac{1}{dn^2}(1,dna-1)$ with $d>1$. This construction follows the gluing of toric maps as described in Lemma \ref{lem:canonical}. We denote by $\overline{\Sigma}_P$ as the spanning fan of $\bar{X}_P$ (resp $\overline{\Sigma}_{Q}$ for $\bar{X}_Q$). Given mutation data $(w,f)$ and $Q=\operatorname{mut}_w(P,F)$, there exists a deformation $\pi:\mathcal{X}\to \pp^1$ such that $\pi^{-1}(0)=X_P$ and $\pi^{-1}(\infty)=X_Q$. 
We extend that construction to a deformation $\overline{\pi}:\mathcal{X^\prime}\to \pp^1$ such that ${\overline{\pi}}^{-1}(0)=\bar{X}_P$ and ${\overline{\pi}}^{-1}(\infty)=\bar{X}_Q$ and a morphism $f:\mathcal{X^\prime}\to \mathcal{X}$ satisfying $\overline{\pi}=\pi\circ f$ and the restrictions $f|_{\bar{X}_P}, f|_{\bar{X}_Q}$ coincide with the corresponding M-resolutions. We follow again the construction outlined in Theorem \ref{thm:petrdeform}. 

\begin{lemma}\label{lem:mut-boundary}
Let $P$ be a Fano polygon with $T$-singularities, let $(w,f)$ be mutation data with $F=\operatorname{conv}\{0,f\}$, and let $Q=\operatorname{mut}_w(P,F)$.
Write $P_h$ for the slice of $P$ at height $h=\langle w,\cdot\rangle$, and $P_h=\operatorname{conv}\{l_h,r_h\}$ with $r_h$ the endpoint in the direction of $f$.
Let $\theta\colon N\to N$ be given by $\theta(v)=v+\langle w,v\rangle f$, which is a lattice automorphism since $\langle w,f\rangle=0$.
Then $Q_h=\operatorname{conv}\{l_h,\theta(r_h)\}$ for every $h$.
Consequently every $v\in\overline{\Sigma}_P(1)$ satisfies $v\in\overline{\Sigma}_Q(1)$ or $\theta(v)\in\overline{\Sigma}_Q(1)$, and every $v'\in\overline{\Sigma}_Q(1)$ satisfies $v'\in\overline{\Sigma}_P(1)$ or $\theta^{-1}(v')\in\overline{\Sigma}_P(1)$.
\end{lemma}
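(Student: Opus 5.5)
The plan is to work directly from Definition~\ref{def:mut}, slice by slice, and then transfer the slice description to the ray generators of the refined fans $\overline{\Sigma}_P$ and $\overline{\Sigma}_Q$.

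First, the slices for $h\geq 0$. By Definition~\ref{def:mut}, the part of $Q$ at height $h\geq 0$ is $w_h(P)+hF$. Since $F=\operatorname{conv}\{0,f\}$, the Minkowski sum of the segment $\operatorname{conv}\{l_h,r_h\}$ with $hF$ is $\operatorname{conv}\{l_h,r_h+hf\}=\operatorname{conv}\{l_h,\theta(r_h)\}$, because $r_h$ is the endpoint in the direction of $f$.

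Second, the slices for $h<0$. Here $w_h(P)=G_h+|h|F$, so $G_h=\operatorname{conv}\{l_h,r_h-|h|f\}=\operatorname{conv}\{l_h,\theta(r_h)\}$, again using $\langle w,r_h\rangle=h$. The vertices of $P$ at negative height lie in $w_h(P)$, and at $h_{\min}$ the edge $E$ has lattice length at least $|h_{\min}|$, so $G_h$ is nonempty where needed.

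Third, compare the two descriptions. Both pieces are the slice at height $h$ of a convex set containing all these segments. One must check that taking the convex hull over all heights does not enlarge any slice. The cleanest route is to note that both sides are piecewise images of $P$ under the maps $\mathrm{id}$ on the left and $\theta$ on the right. The vertex list in \eqref{mutpol} already exhibits that the vertices of $Q$ are of the form $l_h$ or $\theta(r_h)$.

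I expect this convexity check to be the main obstacle. To handle it I would argue that the left boundary chain $\{l_h\}$ of $P$ is convex on its side, and that $\theta$ maps the right boundary chain $\{r_h\}$ to a convex chain. The latter holds because $\theta$ is linear, and the kink at $h=0$ is compatible since $\theta$ fixes the line $w=0$.

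Finally, the fan statement. Rays of $\overline{\Sigma}_P$ are the vertices of $P$ together with the interpolating points $v_k$ of Lemma~\ref{lem:canonical} on edges with $d>1$. These all lie on the boundary of $P$, hence on some left chain segment or right chain segment. The left part is unchanged in $Q$, and the right part is carried by $\theta$, a lattice automorphism that preserves primitivity and lattice-length ratios. Lemma~\ref{lem:canonical} therefore produces the same subdivision points on the corresponding edges of $Q$.

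The one subtle case is the edge $E$ at $h_{\min}$ and the top slice, where an edge is split between the left and right chains. There I would check directly that the points $v_k$ on $E$ either remain on a boundary edge of $Q$ or are sent by $\theta$ to one. This uses that $\theta$ shifts $E$ by multiples of $|h_{\min}|f$, which is compatible with the step $(w-v)/d$ because $E$ is a $T$-cone of lattice length $dn$.

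The reverse implication follows by symmetry, since $P=\operatorname{mut}_{-w}(Q,F)$ with the same factor, as in Remark~\ref{invertingsign}, and the corresponding automorphism is $\theta^{-1}$.
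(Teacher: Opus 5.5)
Your proposal is correct and follows the paper's proof essentially step for step: the slice computation for $h\geq0$ and $h<0$ giving $\partial Q=L\cup\theta(R)$, transport of the refinements of Lemma~\ref{lem:canonical} along the right chain by the lattice automorphism $\theta$, and a direct check on the horizontal edges at $h_{\min}$ and $h_{\max}$. At those horizontal edges, make explicit the fact the paper uses. A $T$-edge of type $\frac{1}{dn^2}(1,dna-1)$ has lattice length $dn$ and therefore lattice height exactly $n$, since the index equals height times length. Hence $\theta$ moves each subdivision point $v+knf$ by exactly one step, and mutation replaces $d$ by $d-1$ at $h_{\min}$ and by $d+1$ at $h_{\max}$. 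Your extra convexity check, that the convex hull in Definition~\ref{def:mut} does not enlarge slices, is a detail the paper leaves implicit. It needs only the linearity of $\theta$ and the nonemptiness of the slices for $h<0$; there is no kink at $h=0$ to reconcile.
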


\begin{proof}
For $h\geq0$, we have
\[
Q_h=P_h+hF=\operatorname{conv}\{l_h,r_h+hf\}.
\]
For $h<0$ when $G_h$ is non-empty, the factor condition gives
\[
Q_h=G_h=\operatorname{conv}\{l_h,r_h-|h|f\}
=\operatorname{conv}\{l_h,r_h+hf\}.
\]
Since $\langle w,r_h\rangle=h$, in both cases $r_h+hf=\theta(r_h)$, proving the first assertion.
Hence $\partial Q=L\cup\theta(R)$, where $L$ and $R$ are the chains of $\partial P$ determined by $l_h$ and by $r_h$. These two descriptions agree at height $0$ because $\theta$ restricts to the identity there.
As $\theta\in\operatorname{GL}(N)$, it maps each cone over an edge of $R$ to the corresponding cone over $\theta(R)$ by a lattice isomorphism, and hence carries the refinement of Lemma \ref{lem:canonical} to the corresponding
refinement.
The only edges of $\partial P$ lying in both chains are the horizontal ones, which occur only at $h_{\min}$ and $h_{\max}$.
An edge at lattice height $r$ and lattice length $\ell$ spans a cone of index $r\ell$, so an edge of type $\frac{1}{dn^2}(1,dna-1)$ has $\ell=dn$ by \cite[Proposition~3.9]{AK16} and therefore $r=n$.
Mutation replaces $\ell$ by $\ell-r$ at $h_{\min}$ and by $\ell+r$ at $h_{\max}$, that is, it replaces $d$ by $d-1$ and by $d+1$ respectively.
The rays of the fan in Lemma \ref{lem:canonical} over such an edge $\operatorname{conv}\{v,v+dnf\}$ are the equally spaced lattice points $v+knf$ with $0\le k\le d$, so the two lists correspond under the identity and under $\theta$.
\end{proof}

\begin{proposition}\label{prop:mres-deformation}
Let $P$ be a Fano polygon with $T$-singularities, let $(w,f)$ be a mutation data, and let $Q=\operatorname{mut}_w(P,F)$. Let $\pi\colon\mathcal{X}\to\pp^1$ be the pencil of Theorem~\ref{thm:petrdeform}.
Then there is a fan $\Lambda$ refining $\widetilde{\Sigma}$ such that the strict transform $\bar{\mathcal{X}}\subseteq X(\Lambda)\times\pp^1$ of $\mathcal{X}$ is flat over $\pp^1$, the induced family $\overline{\pi}\colon\bar{\mathcal{X}}\to\pp^1$ is a $\qq$-Gorenstein deformation with $\overline{\pi}=\pi\circ\overline{g}$, and the induced morphism $\overline{g}\colon\bar{\mathcal{X}}\to\mathcal{X}$ restricts over $0$ and over $\infty$ to the crepant M-resolutions $\bar{X}_P\to X_P$ and $\bar{X}_Q\to X_Q$.
\end{proposition}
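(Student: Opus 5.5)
The plan is to build $\Lambda$ by refining $\widetilde{\Sigma}$ using the rays supplied by Lemma~\ref{lem:canonical} and Lemma~\ref{lem:mut-boundary}, and then to check flatness and the behaviour of the fibers through the explicit trinomial of Theorem~\ref{thm:petrdeform}.

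\emph{Construction of $\Lambda$.} Recall that $\widetilde{\Sigma}(1)$ consists of three kinds of rays: the $p\in\mathcal{V}(P)^{\geq0}$ placed at height $0$, the lifts $(p',\langle w,p'\rangle)$ of $p'\in\mathcal{V}(Q)^{<0}$, and the two rays $(0,1)$ and $(f,1)$. For each edge of $P$ at nonnegative height whose cone is a non-Du Val T-singularity with $d>1$, I take the interior rays $v_k$ of Lemma~\ref{lem:canonical} and add them at height $0$. For each edge of $Q$ at negative height of this type, I add the lifted rays $(v'_k,\langle w,v'_k\rangle)$. Lemma~\ref{lem:mut-boundary} makes these two choices agree: every ray of $\overline{\Sigma}_P$ occurs in $\overline{\Sigma}_Q$, either itself or after applying $\theta$, and lifting by height is exactly the operation $\theta$ followed by the graph embedding. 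So there is one consistent set of new rays. I then let $\Lambda$ be the stellar subdivision of $\widetilde{\Sigma}$ along these rays, adding them one at a time. Each new ray lies in the relative interior of a $2$-face of $\widetilde{\Sigma}$ lying over a boundary edge, so the result is again a simplicial fan, and $X(\Lambda)\to X(\widetilde{\Sigma})$ is a proper toric birational morphism.

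\emph{Strict transform, flatness and the Cox description.} In Cox coordinates the pullback of the trinomial acquires extra factors $x_{v_k}^{\langle w,v_k\rangle}$ in the first monomial, or the analogous factors in the second monomial. The third monomial $\prod_f x_f$ is untouched, because the new rays have no $e_1$-component of the form $f+e_1$. I define $\bar{\mathcal{X}}\subseteq X(\Lambda)\times\pp^1_{s,t}$ by this homogenized trinomial. Flatness over $\pp^1$ should follow because $\bar{\mathcal{X}}$ is a Cartier hypersurface, at least locally on the $\qq$-factorial variety, and no fiber contains a component of the total space. That no fiber is contained in a torus-invariant divisor follows from the term $z_0z_1$ being a general product. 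I then identify the fibers over $[0:1]$ and $[1:0]$. Exactly as in the proof of Theorem~\ref{thm:petrdeform}, the specialization kills one monomial, and the remaining binomial cuts out the toric surface whose fan is the slice of $\Lambda$. By construction that slice is $\overline{\Sigma}_P$, respectively $\overline{\Sigma}_Q$. The restriction of $\overline{g}$ is then the toric morphism given by forgetting the new rays, which is the crepant M-resolution of Lemma~\ref{lem:canonical}.

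\emph{$\qq$-Gorenstein property.} This is the hard part of the argument. I will argue that $K_{\bar{\mathcal{X}}}=\overline{g}^*K_{\mathcal{X}}$: the refinement only inserts rays on the lattice segments of heights $\pm n$ over the edges, and this is crepant by Lemma~\ref{lem:canonical}. The identity should be checked by adjunction on the ambient toric variety, using that the rays lie on $\partial\widetilde{P}$. It then follows that $K_{\bar{\mathcal{X}}}$ is $\qq$-Cartier, and since the fibers are reduced and normal, the family is $\qq$-Gorenstein. The main obstacle I expect is showing that the new rays lie on the boundary of $\widetilde{P}$, so that the crepancy persists in the $3$-fold and not only on the two special fibers, together with checking that the stellar subdivision does not break the description of the general fiber. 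If a direct argument fails, I would fall back on the local models $\frac{1}{dn^2}(1,dna-1)\times\mathbb{A}^1$ near the relevant torus orbits.
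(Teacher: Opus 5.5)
\textbf{Gap: the construction of $\Lambda$.} Your rule only ever adds rays (the lifts $\iota_P(v_k)$, $\iota_Q(v'_k)$ of M-resolution rays) and then takes stellar subdivisions of what you assume is a simplicial fan. This cannot produce $\overline{\Sigma}_P$ or $\overline{\Sigma}_Q$ in two cases:
\begin{itemize}
\item the mutating edge of $P$ carries a T-singularity with $d=2$, which becomes Wahl in $Q$;
\item symmetrically, the top edge of $P$ is Wahl and becomes a $d=2$ point in $Q$.
\end{itemize}
In both cases the ray the M-resolution needs is already a vertex of the other polygon, so your rule adds nothing.

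This is exactly the situation of Theorem~\ref{thm:F0-Mori-mutation}, the only place the proposition is used. Take the triangle $T$ of $\pp(a^2,b^2,2c^2)$ and the edge $E_{23}$. Every edge of $T$ other than $E_{23}$, and every edge of $Q$, is Wahl, and $E_{23}$ lies at negative height. So your $\Lambda$ equals $\widetilde{\Sigma}$. The generators $\iota_Q(v_2)$, $\iota_Q(v_2+cf_{23})$, $e_1$, $f_{23}+e_1$ are coplanar, forming a trapezoid since $cf_{23}\parallel f_{23}$. They span a non-simplicial maximal cone of $\widetilde{\Sigma}$ whose intersection with $N_{P,\rr}$ is all of $\iota_P(\operatorname{cone}(v_2,v_3))$. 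Hence the fibre of your $\bar{\mathcal X}$ over $0$ is $X_P$, not $\bar X_P$.

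Here $\bar g$ is in fact small. The nearby fibres have only Wahl points, so $\bar g$ contracts only the curve $(1)_0$ over $0$, and no refinement that merely adds rays can work. One must instead insert a wall, namely one of the diagonals $\operatorname{cone}(\iota_Q(v_2+cf_{23}),e_1)$ or $\operatorname{cone}(\iota_Q(v_2),f_{23}+e_1)$. Each cuts $N_{P,\rr}$ along $\rr_{\ge0}\,\iota_P(v_2+cf_{23})$.

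This is what the paper's planes $H_v=\rr\,\iota_P(v)+\rr e_1$ and $H'_v=\rr\,\iota_P(v)+\rr(f+e_1)$ are for. They prescribe walls of $\Lambda$, not just rays, so that $N_{P,\rr}$ and $N_{Q,\rr}$ are cut along the rays of $\overline\Sigma_P$ and $\overline\Sigma_Q$. That is why the paper stresses that negative-height rays of $\overline\Sigma_P$ come from walls. Your claim that the slices are $\overline\Sigma_P$, $\overline\Sigma_Q$ ``by construction'' is therefore unjustified, and false in the case that matters. Flatness should also not rest on $\qq$-factoriality. It follows as in the paper, since $\bar{\mathcal X}$ is integral and dominates $\pp^1$.

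\textbf{Remarks on the rest.} Where new rays really are needed, stellar subdivision can give the right slices. For example, along a top edge of $P$ with $d\ge 2$, the walls $\operatorname{cone}(u_k,e_1)$ and $\operatorname{cone}(u_k,f+e_1)$ produce the $d+2$ rays of $\overline\Sigma_Q$ there.

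However, a ray $u=\iota_P(x)$ with $x$ on an edge $[v,v'']$ of $P$ crossing $\{w=0\}$ does not lie on a $2$-face. It lies in the interior of a maximal cone such as $\operatorname{cone}(\iota_P(v),\iota_Q(v''),e_1)$, and there $\Psi(u)>1$. So the obstacle you anticipate is real: the ambient morphism is not crepant along $E_u$.

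Crepancy of $\bar g$ still holds. In adjunction, $\Psi(u)-1$ is cancelled by the multiplicity $\psi(u)-\varphi(u)$ of $E_u$ in the pull-back of $\mathcal X$, where $\psi$ interpolates $\varphi$ linearly on the cones of $\widetilde{\Sigma}$. Equivalently, $\varphi(u)=0$ and $(\Psi-\psi)(u)=1$. The latter holds because $\Psi-\psi$ restricts on $N_{P,\rr}$ (resp.\ $N_{Q,\rr}$) to the piecewise linear function of $-K_{X_P}$ (resp.\ $-K_{X_Q}$), which equals $1$ on $\partial P$ (resp.\ $\partial Q$).

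With $\Lambda$ repaired and crepancy checked this way, your route to the $\qq$-Gorenstein property is a legitimate alternative to the paper's. You deduce it from $K_{\bar{\mathcal X}}=\bar g^*K_{\mathcal X}$ being $\qq$-Cartier. The paper instead argues locally via the Behnke--Christophersen description of $\operatorname{Def}^{\qq G}$ of M-resolutions. Your route is more direct, and in the small case above it is automatic.
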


\begin{proof}
Let $\iota_P,\iota_Q\colon N\hookrightarrow\widetilde{N}=N\oplus\zz e_1$ be the saturated embeddings $\iota_P(v)=(v,0)$ and $\iota_Q(v)=(v,\langle w,v\rangle)$, dual to $\mu_M$ and to $\mu$, and put $N_P=\iota_P(N)$ and $N_Q=\iota_Q(N)$.
Let $\varphi=\max_{m\in T}\langle m,\cdot\rangle$ be the support function of $T=\operatorname{conv}\{0,e_1^*,e_1^*-w\}\subseteq\widetilde{M}_\rr$, and let $\Psi$ be the piecewise linear function on $\widetilde{\Sigma}$ with $\Psi\equiv 1$ on $\widetilde{\Sigma}(1)$, so that $\Psi=1$ exactly on $\partial\widetilde{P}$.
Evaluating on the generators of Theorem~\ref{thm:petrdeform} gives $\varphi(\iota_P(p))=\varphi(\iota_Q(q))=0$ and $\varphi(e_1)=\varphi(f+e_1)=1$, so that the three monomials defining $\mathcal{X}$ are $$x^{(m)}=\prod_\rho x_\rho^{\varphi(v_\rho)-\langle m,v_\rho\rangle}$$ with $m\in\mathcal{V}(T)$.

We proceed to construct $\Lambda$. For $v\in N$ primitive put $H_v=\rr\,\iota_P(v)+\rr e_1$ and $H'_v=\rr\,\iota_P(v)+\rr(f+e_1)$, so that
\[
H_v\cap N_{P,\rr}=H'_v\cap N_{P,\rr}=\rr\,\iota_P(v),
\qquad
H_v\cap N_{Q,\rr}=\rr\,\iota_Q(v),
\qquad
H'_v\cap N_{Q,\rr}=\rr\,\iota_Q(\theta(v)).
\]
Let $\mathcal{H}$ consist of all $H_v$ with $v\in\overline{\Sigma}_P(1)\cap\overline{\Sigma}_Q(1)$ together with all $H'_v$ with $v\in\overline{\Sigma}_P(1)$ and $\theta(v)\in\overline{\Sigma}_Q(1)$, which by Lemma~\ref{lem:mut-boundary} accounts for every ray of $\overline{\Sigma}_P$ and every ray of $\overline{\Sigma}_Q$.
Let $\Lambda$ be the common refinement of $\widetilde{\Sigma}$ and of the half-spaces bounded by the planes of $\mathcal{H}$, chosen so that
\begin{equation}\label{eq:Lambda-rays}
\Lambda(1)\smallsetminus\widetilde{\Sigma}(1)\subseteq\bigl\{\iota_P(v)\;:\;v\in\partial P,\ \langle w,v\rangle\ge 0\bigr\}\cup\bigl\{\iota_Q(v')\;:\;v'\in\partial Q,\ \langle w,v'\rangle\le 0\bigr\},
\end{equation}
which is the same constraint satisfied by the rays of $\widetilde{\Sigma}$ itself, and which gives $\Psi(v)=1$ and $\varphi(v)=\max\{0,\langle e_1^*,v\rangle\}=\max\{0,\langle e_1^*-w,v\rangle\}$ for every $v\in\Lambda(1)$.
Since intersecting cones with a subspace commutes with intersecting them with half-spaces, $\Lambda|_{N_P}$ is the refinement of $\widetilde{\Sigma}|_{N_P}=\Sigma_P$ by the lines $H\cap N_{P,\rr}$ with $H\in\mathcal{H}$, which are exactly the rays of $\overline{\Sigma}_P$. So, $\Lambda|_{N_P}=\overline{\Sigma}_P$ and symmetrically $\Lambda|_{N_Q}=\overline{\Sigma}_Q$.
Note that the rays of $\overline{\Sigma}_P$ at negative height are produced by walls rather than by rays of $\Lambda$, since $\operatorname{cone}(\iota_Q(v),e_1)\cap N_{P,\rr}=\rr_{\ge 0}\,\iota_P(v)$ whenever $\langle w,v\rangle<0$.

We proceed to prove that $\overline{\pi}$ gives a $\qq$-Gorenstein deformation with special fibers $\bar{X}_P$ and $\bar{X}_Q$. Since $\bar{\mathcal{X}}$ is the strict transform of the normal projective variety $\mathcal{X}$ under a proper birational morphism, it dominates $\pp^1$. A dominant morphism from a normal projective variety to a smooth curve is flat, so $\overline{\pi}$ is flat.

By \eqref{eq:Lambda-rays} the primitive generator of every ray of $\Lambda$ lies on $\partial\widetilde{P}$, so $\overline{g}$ is crepant, and by the previous paragraph the same holds fibrewise for $\bar{X}_P\to X_P$ and $\bar{X}_Q\to X_Q$; by Lemma~\ref{lem:canonical} these are the M-resolutions \eqref{eq:canonical}.
It remains to see that $\overline{\pi}$ is $\qq$-Gorenstein, which is a local condition at the singular points of its fibres.
Let $P_1,\dots,P_d$ be the Wahl points of $\bar{X}_P$ lying over a singularity $\frac{1}{dn^2}(1,dna-1)$ of $X_P$.
By \cite[Section 3]{BC94}, the $\qq$-Gorenstein versal deformation of that singularity is $\{xy=z^{dn}+a_{d-1}z^{(d-1)n}+\dots+a_0\}/\mu_n$, that of $\bar{X}_P$ near  $P_1,\dots, P_d$ is $\prod_i\operatorname{Def}^{\qq G}(P_i)\cong\cc^d$, and blowing down is the finite surjective morphism sending $(t_1,\dots,t_d)$ to the coefficients of $\prod_i(z^n-t_i)$.
The germ of $\overline{\pi}$ at $P_i$ is by construction induced from $\operatorname{Def}^{\qq G}(P_i)$, hence is $\qq$-Gorenstein, and therefore so is $\overline{\pi}$.
Additionally, $\overline{\pi}=\pi\circ\overline{g}$ holds by construction.
\end{proof}

\subsection{Degenerations of $\ff_0$}\label{subsec:F0}
Toric degenerations $\ff_0\rightsquigarrow X$ with $-K_X$ ample and $\rho(X)=1$ are classified by \cite[Theorem~4.1]{HP10}, they are weighted projective planes $\pp(a^2,b^2,2c^2)$, where $(a,b,c)\in\zz_{>0}^3$ is a solution of the diophantine equation
\begin{equation}\label{eq:Markov0}
  x^2+y^2+2z^2=4xyz.
\end{equation}
By \cite[Theorem~6]{KNP17}, every such surface lies in the mutation-equivalence class $[P_{\ff_0}]$ of the Fano polygon $P_{\ff_0}$. We adopt the convention $a\leq b$ throughout.

The T-singularities of $\pp(a^2,b^2,2c^2)$ have weights
\begin{equation}\label{eq:weights}
  w_a\equiv 4b^{-1}c\pmod{a},\quad
  w_b\equiv 2c^{-1}a\pmod{b},\quad
  w_c\equiv 2a^{-1}b\pmod{c}.
\end{equation}
We set $c':=2ab-c$ and $w_{c'}:=2aw_b-w_c$. The mutations of $\pp(a^2,b^2,2c^2)$ fall into two families.
Mutations fixing $c$: for the T-singularity $\frac{1}{b^2}(a^2,2c^2)$, a one-step mutation gives $\pp(a^2,(4ac-b)^2,2c^2)$; for $\frac{1}{a^2}(b^2,2c^2)$, one gets $\pp(b^2,(4bc-a)^2,2c^2)$.
For the mutations of $c$ (fixing $a$ and $b$), a one-step mutation does not give a deformation $\pp(a^2,b^2,2c^2)$ to $\pp(a^2,b^2,2(c')^2)$ with indecomposable mutation factor, see \cite[Remark 2]{KNP17}. We proceed to describe the geometric process that connects both. 

\begin{lemma}\label{lem:f0small}
Let $(a,b,c)$ be a solution of~\eqref{eq:Markov0} with $a<b$. Then $c'=2ab-c$ satisfies $c'<b$ if and only if $b<c$.
\end{lemma}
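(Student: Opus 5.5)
The plan is to treat \eqref{eq:Markov0} as a quadratic in $z$ with $a,b$ fixed, namely $2z^2-4ab\,z+(a^2+b^2)=0$. Its two roots are $c$ and $c'$, so Vieta gives
\[
c+c'=2ab,\qquad cc'=\tfrac{a^2+b^2}{2}.
\]
In particular $c'$ is a positive integer and $(a,b,c')$ is again a solution. The statement then becomes an inequality between the roots of a quadratic and the fixed number $b$.

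Set $g(z):=2z^2-4abz+a^2+b^2$, an upward parabola with roots $c,c'$. Then
\[
g(b)=2b^2-4ab^2+a^2+b^2=a^2+3b^2-4ab^2 .
\]
Since $a\ge 1$ and $a<b$, we have $4ab^2\ge 4b^2>a^2+3b^2$, so $g(b)<0$. Hence $b$ lies strictly between the two roots. Without ordering assumptions, this says exactly one of $c,c'$ is less than $b$ and the other is greater than $b$. Equality with $b$ is impossible because $g(b)\neq 0$.

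Both directions follow at once from this. If $b<c$, then $c$ is the larger root, so $c'$ is the smaller root and $c'<b$. Conversely, if $c'<b$, then $c'$ is the smaller root, so $c>b$.

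The only step needing care is the sign computation $g(b)<0$, which uses $a\ge1$ together with $a<b$, so that $a^2<b^2\le 4(a-\tfrac34)b^2$. I expect no further obstacle. Note that the hypothesis $a<b$ is genuinely used: in the case $a=b$, for instance $(1,1,1)$, the argument degenerates.
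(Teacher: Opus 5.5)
Your proof is correct and takes essentially the paper's approach: both treat \eqref{eq:Markov0} as the quadratic $2z^2-4abz+(a^2+b^2)$ in $z$, whose roots are $c$ and $c'$, and then use Vieta. The paper proves the two implications separately, via $cc'<b^2$ and via $c=2ab-c'>(2a-1)b$. Your sign check $g(b)=2(b-c)(b-c')<0$ combines both Vieta relations in one step, and directly gives the slightly stronger fact that $b$ lies strictly between $c$ and $c'$.
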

\begin{proof}
Rewrite~\eqref{eq:Markov0} as $2z^2-4abz+(a^2+b^2)=0$, a quadratic in $z$ with roots $c$ and $c'=2ab-c$.
By Vieta's formulas, it follows that $c+c'=2ab$ and $cc'=\frac{a^2+b^2}{2}$. Since $a<b$, we have $cc'=\frac{a^2+b^2}{2}<b^2$. Assume $b<c$, then $c'=\frac{cc'}{c}<\frac{b^2}{c}<b$. On the other hand, if $c'<b$. Then $c=2ab-c'>2ab-b=(2a-1)b\geq b$.
\end{proof}
We compute the M-resolution of $X_T=\pp(a^2,b^2,2c^2)$ over the T-singularity $\frac{1}{2c^2}(1,2cw_c-1)$ with $c>1$.  This is the crepant morphism $\bar{X}_T\to \pp(a^2,b^2,2c^2)$ defined at $\ref{eq:canonical}$. Since $d=2$, it has boundary data:
\begin{itemize}
\item if $b<c$:
\begin{equation}\label{eq:mres1}
  \Bigl[\tbinom{a}{w_a}\Bigr]-(1)-
  \Bigl[\tbinom{c}{w_c}\Bigr]-(1)_0-
  \Bigl[\tbinom{c}{w_c}\Bigr]-(1)_--
  \Bigl[\tbinom{b}{w_b}\Bigr];
\end{equation}
\item if $c<b$:
\begin{equation}\label{eq:mres2}
  \Bigl[\tbinom{a}{w_a}\Bigr]-(1)-
  \Bigl[\tbinom{b}{w_b}\Bigr]-(1)_--
  \Bigl[\tbinom{c}{w_c}\Bigr]-(1)_0-
  \Bigl[\tbinom{c}{w_c}\Bigr].
\end{equation}
\end{itemize}
Following Mori's recursion (see \cite[Section 5]{UZ24} for the current notation) to the extremal neighborhoods we produce toric del Pezzo surfaces in both situations.
In chain~\eqref{eq:mres1}, the $(-1)$-curve $\hat\Gamma_0$ forms a k2A extremal neighborhood with Wahl indices $n_0=c$ and $n_1=b$. The number $\delta=n_0n_1|K_{\bar{X}_T}\cdot \Gamma_0|$ satisfies $\delta=cw_b-bw_c=2a$. The Mori recursion gives $n_2=\delta n_1-n_0=2ab-c=c'$, and the corresponding weight is $w_{c'}=2aw_b-w_c$.
The Mori train modification of $\hat\Gamma_0$ produces the toric del Pezzo surface $X_{a,b,c}$ with boundary data
\begin{equation}\label{eq:mut1}
  \Bigl[\tbinom{a}{w_a}\Bigr]-(1)-
  \Bigl[\tbinom{c'}{w_{c'}}\Bigr]-(1)_--
  \Bigl[\tbinom{b}{w_b}\Bigr]-(1)_--
  \Bigl[\tbinom{c}{w_c}\Bigr].
\end{equation}
Performing the same modification on the remaining extremal neighborhood contracts the M-resolution of $\frac{1}{2(c')^2}(1,2c'w_{c'}-1)$ and yields the weighted projective plane $\pp(a^2,b^2,2(c')^2)$.
Analogously, starting from chain~\eqref{eq:mres2}, the same two-step process produces
\begin{equation}\label{eq:mut3}
  \Bigl[\tbinom{a}{w_a}\Bigr]-(1)-
  \Bigl[\tbinom{c}{w_c}\Bigr]-(1)_--
  \Bigl[\tbinom{b}{w_b}\Bigr]-(1)_--
  \Bigl[\tbinom{c'}{w_{c'}}\Bigr],
\end{equation}
and then $\pp(a^2,b^2,2(c')^2)$.

\medskip
The following result identifies the polygon mutation of the cone determining the $\frac{1}{2c^2}(1,2cw_c-1)$ of $\pp(a^2,b^2,2c^2)$ with Mori mutations.

\begin{theorem}\label{thm:F0-Mori-mutation}
Let $(a,b,c)$ be a solution of~\eqref{eq:Markov0} with $a\leq b<c$. Let $T=\operatorname{conv}\{v_1,v_2,v_3\}$ be the triangle of $\pp(a^2,b^2,2c^2)$, oriented so that $\operatorname{cone}(v_2,v_3)$ defines $\tfrac{1}{2c^2}(1,2cw_c-1)$.
Let $w_{23}$ be the primitive inner normal to $E_{23}=\operatorname{conv}\{v_2,v_3\}$ with mutation factor $f_{23}$.
\begin{enumerate}[label=\roman*)]
\item The polygon $Q=\operatorname{mut}_{w_{23}}(T,F_{23})$ is a Fano quadrilateral with toric surface $X_Q=X_{a,b,c}$ and boundary data~\eqref{eq:mut1}.
The mutations along the edges $E_{12}$ and $E_{13}$ of $T$ produce Fano triangles.
\item The $\qq$-Gorenstein deformation $\pi\colon\mathcal{X}\to\pp^1$ induced by the mutation, with $\pi^{-1}(0)=\pp(a^2,b^2,2c^2)$ and $\pi^{-1}(\infty)=X_{a,b,c}$, is a Mori mutation of the k2A neighborhood $\hat\Gamma_0$ in $\bar{X}_T$.
\end{enumerate}
\end{theorem}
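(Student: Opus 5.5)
We work directly with the triangle $T$ and the explicit form \eqref{mutpol} of the mutation, adapted to a triangle. Since $\operatorname{cone}(v_2,v_3)$ defines $\frac{1}{2c^2}(1,2cw_c-1)$, we have $h_{\min}=-c$ and $\ell(E_{23})=2c$. Hence $v_3-v_2=2c f_{23}$ and $E_{23}$ admits a mutation, since $\ell(E_{23})\geq r_{E_{23}}$. The vertex $v_1$ is the only point at height $h_{\max}=w_{23}(v_1)$. The mutated polygon $Q$ has at negative heights the edge replaced by $G_{-c}=\operatorname{conv}\{v_2,v_2+cf_{23}\}$, and at the top the vertex $v_1$ becomes the segment $\operatorname{conv}\{v_1,v_1+h_{\max}f_{23}\}$. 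So
\[
Q=\operatorname{conv}\{v_1,\,v_2,\,v_2+cf_{23},\,v_1+h_{\max}f_{23}\},
\]
which is the analogue of Lemma~\ref{pollist} with $v_4$ replaced by the midpoint $v_2+cf_{23}$ of $E_{23}$. This midpoint is exactly the interior ray of the M-resolution of Lemma~\ref{lem:canonical}.

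To identify $X_Q$ we compute the four cone multiplicities and singularity types. The cones $\operatorname{cone}(v_1,v_2)$ and $\operatorname{cone}(v_2,v_2+cf_{23})$ are unchanged, of types $[\binom{a}{w_a}]$ and $[\binom{c}{w_c}]$. By Lemma~\ref{lem:mut-boundary}, the cone $\operatorname{cone}(v_2+cf_{23},v_1+h_{\max}f_{23})$ is the $\theta$-image of $\operatorname{cone}(v_2+cf_{23},v_3)$ composed with the change at $h_{\max}$. The remaining cone $\operatorname{cone}(v_1+h_{\max}f_{23},v_1)$ has multiplicity $h_{\max}^2$. The key numerical step is to compute $h_{\max}$: the lattice distance from $v_1$ to $E_{23}$ is $h_{\max}=\frac{a^2+b^2}{c}\cdot\frac{1}{\,\cdot\,}$ normalized so that $2c^2 h_{\max}$ equals the total weight relation $a^2v_1+b^2v_2+2c^2v_3=0$. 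Using \eqref{eq:Markov0} together with Vieta ($cc'=\frac{a^2+b^2}{2}$) from Lemma~\ref{lem:f0small}, one finds that the new edge index is $c'$ and that the remaining two cones carry $[\binom{b}{w_b}]$ and $[\binom{c'}{w_{c'}}]$, the weight $w_{c'}=2aw_b-w_c$ coming from the wall relation. Matching these data in cyclic order with \eqref{eq:mut1}, and using $b<c$ so that the ordering is that of \eqref{eq:mres1}, gives $X_Q=X_{a,b,c}$. For the other two edges, $v_3$ (resp.\ $v_1$, $v_2$) is the unique vertex at maximal height. The mutation therefore keeps three vertices, and the edges $E_{12}$ and $E_{13}$ yield the triangles $\pp(b^2,(4bc-a)^2,2c^2)$ and $\pp(a^2,(4ac-b)^2,2c^2)$ recorded before Lemma~\ref{lem:f0small}, via Theorem~\ref{fake}.

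For ii), we apply Proposition~\ref{prop:mres-deformation} to replace the Ilten pencil by its M-resolved version $\bar{\mathcal X}\to\pp^1$. Its fibre at $0$ is $\bar X_T$ with chain \eqref{eq:mres1}, and its fibre at $\infty$ is $\bar X_Q=X_Q$, since $Q$ has only Wahl cones. We then argue exactly as in the proof of Theorem~\ref{thm:Mori-mutations}(i). The curve $\hat\Gamma_0=V(\rr_{\ge0}(v_2+cf_{23}))$ in $\bar X_T$ is the k2A with indices $(n_0,n_1)=(c,b)$ and $\delta=2a$. Its counterpart in $X_Q$ is the k2A with indices $(n_1,n_2)=(b,c')$ where $n_2=\delta n_1-n_0$, and both contract to the same $\frac1\Delta(1,\Omega)$, because the cones away from the moving ray are fixed or carried by $\theta$. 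The cone $[\binom{b}{w_b}]$ is untouched, so the pencil is $\qq$-Gorenstein trivial there and smooths the other point. By \cite[Proposition~2.4]{HTU17} the pencil is then the degeneration of the common k1A into two consecutive members of the universal family.

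The main obstacle is the arithmetic identification of $h_{\max}$ and of the new cone $\operatorname{cone}(v_2+cf_{23},v_1+h_{\max}f_{23})$ as $[\binom{c'}{w_{c'}}]$. Our first attempt will place $v_2=(0,1)$ and $v_3=(2c^2,1-2cw_c)$ as in Lemma~\ref{lem:canonical}, solve for $v_1$ from the weights, and then reduce everything using \eqref{eq:Markov0} and \eqref{eq:weights}.
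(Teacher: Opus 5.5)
Your route is the paper's. You compute $\mathcal V(Q)=\{v_1,v_2,v_2+cf_{23},v_1+c'f_{23}\}$ and read off the cone types via Lemma~\ref{lem:canonical} and the shear $\theta$. You then pass to the M-resolved pencil of Proposition~\ref{prop:mres-deformation} and recognize two consecutive k2A neighbourhoods over one cyclic quotient singularity via \cite[Proposition~2.4]{HTU17}. However, the key step of part ii) fails as you wrote it, and part i) contains two computational slips.

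\textbf{Part ii): the curve is misidentified.} The curve you call $\hat\Gamma_0$, namely $V(\rr_{\ge0}(v_2+cf_{23}))\subset\bar X_T$, is the crepant curve labelled $(1)_0$ in \eqref{eq:mres1}. By Lemma~\ref{lem:canonical} it passes through the two points of type $\frac1{c^2}(1,cw_c-1)$, has self-intersection $-2/c^2$, and has intersection $0$ with $K_{\bar X_T}$. So it is not a $K$-negative k2A with indices $(c,b)$. Contracting it simply returns $\frac1{2c^2}(1,2cw_c-1)$, which is not the singularity to which $\Gamma_0'=V(\rr_{\ge0}(v_1+c'f_{23}))\subset X_Q$ contracts. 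Your claim that ``both contract to the same $\frac1\Delta(1,\Omega)$'' is therefore false for your curve.

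The correct curve is $\bar\Gamma_0=V(\rr_{\ge0}v_3)$, the $(1)_-$ curve between the $c$-point $\operatorname{cone}(v_2+cf_{23},v_3)$ and the $b$-point $\operatorname{cone}(v_3,v_1)$. Your justification (``cones away from the moving ray are fixed or carried by $\theta$'') also does not prove that the contracted singularities agree. What proves it is that deleting $v_3$ from $\overline\Sigma_T$ and deleting $v_1+c'f_{23}$ from $\Sigma_Q$ give literally the same fan, with rays $v_1,v_2,v_2+cf_{23}$. Hence both curves contract to $\operatorname{cone}(v_2+cf_{23},v_1)$, with $\Delta_0=\det(v_2+cf_{23},v_1)=b^2-cc'$.

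Comparing this with $\Delta_0=n_0^2+n_1^2-\delta n_0n_1$ for $(n_0,n_1)=(c,b)$, and using $c+c'=2ab$, gives $\delta=2a$ and $n_2=\delta n_1-n_0=c'$. This is exactly what makes the two neighbourhoods consecutive in the universal family. You assert $\delta=2a$ without deriving it.

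\textbf{Part i): two slips.}
\begin{itemize}
\item The weight relation must be $2c^2v_1+b^2v_2+a^2v_3=0$, since the coefficient of $v_1$ is $\det(v_2,v_3)=2c^2$. Applying $w_{23}$ then gives $h_{\max}=w_{23}(v_1)=\frac{a^2+b^2}{2c}=c'$ by Vieta. The relation you wrote, $a^2v_1+b^2v_2+2c^2v_3=0$, gives a wrong $h_{\max}$.
\item Since $\theta(v_3)=v_2+cf_{23}$ and $\theta(v_1)=v_1+c'f_{23}$, the third cone of $Q$ is $\theta(\operatorname{cone}(v_3,v_1))$. That is why it has type $\frac1{b^2}(1,bw_b-1)$. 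By contrast, $\theta(\operatorname{cone}(v_2+cf_{23},v_3))$ is the second cone $\operatorname{cone}(v_2,v_2+cf_{23})$, not the third.
\end{itemize}

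With these corrections your argument coincides with the paper's proof.
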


\begin{proof}
We orientate $T$ counterclockwise, so that $\det(v_1,v_2)=a^2$,
$\det(v_2,v_3)=2c^2$, $\det(v_3,v_1)=b^2$ and $2c^2v_1+b^2v_2+a^2v_3=0$; all cones are read in this order. By \cite[Corollary~3]{KNP17}, the vertices of $Q$ are
\[
  \mathcal{V}(Q)=\bigl\{v_1,\;v_2,\;v_2+cf_{23},\;v_1+c'f_{23}\bigr\},
\]
ordered counterclockwise. The cones $\operatorname{cone}(v_1,v_2)$,
$\operatorname{cone}(v_2,v_2+cf_{23})$, $\operatorname{cone}(v_2+cf_{23},v_1+c'f_{23})$ and
$\operatorname{cone}(v_1+c'f_{23},v_1)$ have multiplicities $a^2$, $c^2$, $b^2$ and $(c')^2$
respectively. The first is a cone of $T$ and the third is its image under the map
$u\mapsto u+w_{23}(u)f_{23}\in\operatorname{SL}(N)$, so they define the singularities $\frac1{a^2}(1,aw_a-1)$ and $\frac1{b^2}(1,bw_b-1)$ respectively. For the second, we apply
Lemma~\ref{lem:canonical}. Since $v_3-v_2=2cf_{23}$, the refining ray is $v_2+\tfrac{1}{2}(v_3-v_2)=v_2+cf_{23}$ and both cones of the refinement are $\frac1{c^2}(1,cw_c-1)$; the second cone of $Q$ is one of them. A complete fan with four rays is determined up to $\operatorname{GL}_2(\zz)$ by three
consecutive cones. So $X_Q=X_{a,b,c}$ and the fourth cone is
$\frac1{(c')^2}(1,c'w_{c'}-1)$. 

The cones over $E_{12}$ and $E_{13}$ are Wahl, so these mutations have factors of unit length and coincide with the one-step mutations of Theorem~\ref{fake}.
By Equation \eqref{eq:Markov0} they are the Fano triangles of $\pp(b^2,(4bc-a)^2,2c^2)$ and $\pp(a^2,(4ac-b)^2,2c^2)$.

\medskip\textit{Part~ii).}
By i) all singularities of $X_Q$ are Wahl, so $\bar{X}_Q=X_Q$, and
Proposition~\ref{prop:mres-deformation} applied to $T$ and $Q$ gives a $\qq$-Gorenstein deformation $\bar\pi\colon\bar{\mathcal X}\to\pp^1$ with $\bar\pi^{-1}(0)=\bar X_T$ and
$\bar\pi^{-1}(\infty)=X_Q$, together with $\bar g\colon\bar{\mathcal X}\to\mathcal X$ satisfying $\bar\pi=\pi\circ\bar g$. Over $0$ the morphism $\bar g$ contracts the curve labelled $(1)_0$ in \eqref{eq:mres1}, and over $\infty$ it is an isomorphism.

Write $n_0=c$, $n_1=b$, $n_2=c'$ and $\delta=2a$, and let
$\bar{\Gamma}_0=V(\rr_{\geq0}v_3)\subset\bar X_T$ be the curve labelled $(1)_-$ in \eqref{eq:mres1} and $\Gamma_0'=V(\rr_{\geq0}(v_1+c'f_{23}))\subset X_Q$. 
Deleting $\rr_{\geq0}v_3$ from $\overline\Sigma_T$ and $\rr_{\geq0}(v_1+c'f_{23})$ from $\Sigma_Q$
produces the same fan, with rays $v_1,v_2,v_2+cf_{23}$. Hence both curves are contracted to the same cyclic quotient singularity $Y_0=\tfrac1{\Delta_0}(1,\Omega_0)$, that of
$\operatorname{cone}(v_2+cf_{23},v_1)$, where
\[
  \Delta_0=\det(v_2+cf_{23},v_1)=b^2-cc'=n_0^2+n_1^2-\delta n_0n_1 .
\]
Since $c+c'=2ab$ we have $n_2=\delta n_1-n_0$, and
\[
K_{X_Q}\cdot\Gamma_0'=-\frac{\delta}{n_1n_2},
\qquad
K_{\bar{X}_T}\cdot\bar{\Gamma}_0=\frac{\Delta_0-n_0^2-n_1^2}{(n_0n_1)^2}=-\frac{\delta}{n_0n_1}.
\]
Therefore $(\bar{\Gamma}_0\subset\bar{X}_T)\to Y_0$ is the k2A $v^0=\frac{1}{\Delta_0}(n_1^2,n_0^2)$ of
\cite[Proposition~2.4]{HTU17}, in the notation of which
$(m_0,m_1,\delta,\Delta)=(n_0,n_1,\delta,\Delta_0)$, and $(\Gamma_0'\subset X_Q)\to Y_0$ is the k2A $v^1=\frac{1}{\Delta_0}(n_2^2,n_1^2)$, consecutive to it because $n_2=\delta n_1-n_0$.
\end{proof}
\begin{remark}\label{rem:next-mut}
For $c<b$ the boundary data of $\bar X_T$ is \eqref{eq:mres2} and the mutation of $E_{23}$ produces \eqref{eq:mut3}; Part~ii) holds verbatim, the k2A being again the one with $n_0=c$, $n_1=b$, $\delta=2a$ and $n_2=\delta n_1-n_0=c'$.

We now mutate $Q$ along the edge $E=\operatorname{conv}\{v_2,\,v_2+cf_{23}\}$, whose mutation data is again $(w_{23},f_{23})$: the edge $E$ has height $-c$, while the two
remaining vertices $v_1$ and $v_1+c'f_{23}$ both lie at height $c'$. Since $\operatorname{cone}(E)$ is Wahl of index $c$, the edge $E$ collapses and
\cite[Corollary~3]{KNP17} gives
\[
  T':=\operatorname{mut}_{w_{23}}(Q,F_{23})
  =\operatorname{conv}\bigl\{v_1,\;v_2,\;v_1+2c'f_{23}\bigr\},
\]
which induces $\pp(a^2,b^2,2(c')^2)$.
By Lemma \ref{lem:f0small}, $b<c$ implies $c'<b$, so the $E_{23}$ mutation of $T'$ exists and the deformation $\operatorname{mut}_{w_{23}}(T',F_{23}')=Q$ is precisely the inverse of the $E_{23}$ mutation of $T$.
\end{remark}

Theorems \ref{thm:triangle-mutations}, \ref{thm:Mori-mutations} and \ref{thm:F0-Mori-mutation} establish Theorem \ref{ithm:mutations}. We conclude the paper, by describing the mutations of the quadrilateral $Q$.After a $\operatorname{GL}_2(\zz)$-transformation, relabel $v_1^\prime=v_2+cf_{23}$, $v_2^\prime=v_2$, $v_3^\prime=v_1$, and $v_4^\prime=v_1+c'f_{23}$.
Then $Q$ is in standard orientation for $E_{23}^\prime=\operatorname{conv}\{v_2^\prime,v_3^\prime\}$, with $m_{12}=c^2$, $m_{23}=a^2$, $m_{34}=(c')^2$, $m_{41}=b^2$, and $\Delta_0=\Delta_1=cc'-a^2$.
By Lemma~\ref{lem:sign}, $w_{23}^\prime(v_1^\prime)>w_{23}^\prime(v_4^\prime)>-a=w_{23}^\prime(v_2^\prime)=w_{23}^\prime(v_3^\prime)$.
Set $E_{41}^\prime=\operatorname{conv}\{v_4^\prime,v_1^\prime\}$.

\begin{proposition}\label{prop:Q-mutations}
Let $(a,b,c)$ be a solution of~\eqref{eq:Markov0} with $a\leq b<c$, and let $Q$ be the Fano quadrilateral of Theorem~\ref{thm:F0-Mori-mutation},i).
\begin{enumerate}[label=\roman*)]
\item The mutation of $Q$ along $E_{23}^\prime$ is a mutation as in Theorem~\ref{thm:triangle-mutations},i).
\item The mutation of $Q$ along $E_{41}^\prime$ is a Mori mutation of the k2A extremal neighborhood of $\Gamma_0^\prime=V(\rr_{\geq0}v_1^\prime)$, as in Theorem~\ref{thm:Mori-mutations},i).
\item Neither $Q_1=\operatorname{mut}_{w_{23}^\prime}(Q,F_{23}^\prime)$ nor $Q_2=\operatorname{mut}_{w_{41}^\prime}(Q,F_{41}^\prime)$ has a pair of parallel edges.
\end{enumerate}
\end{proposition}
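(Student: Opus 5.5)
The plan is to reduce each part to the earlier classification of mutations of quadrilaterals in standard orientation, using the explicit data of $Q$ recorded before the statement. In the relabelled coordinates we have $m_{12}=c^2$, $m_{23}=a^2$, $m_{34}=(c')^2$, $m_{41}=b^2$ and $\Delta_0=\Delta_1=cc'-a^2$. From $c+c'=2ab$ and $cc'=\tfrac{a^2+b^2}{2}$ (Lemma~\ref{lem:f0small}) this gives $\Delta_0=\tfrac{b^2-a^2}{2}$.

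We first confirm that $Q$ has no pair of parallel edges, so that Convention~\ref{conv:orientation} and Propositions~\ref{conelist}–\ref{prop:extremal} apply. Lemma~\ref{lem:sign} already gives $w'_{23}(v'_1)>w'_{23}(v'_4)$. For $E'_{41}$ we use part iii) of that lemma. Lemma~\ref{lem:F1moritrain} needs $m_{34}\le m_{41}$, which here reads $c'<b$; this follows from Lemma~\ref{lem:f0small} since $b<c$. The case $a=b$ should be handled separately or excluded, since then $\Delta_0=0$.

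For i), Proposition~\ref{prop:extremal}~i) places $X_1+X_3+X_5$ in $\sigma_{123}$ unconditionally for the edge $E'_{23}$. Theorem~\ref{contractionteo} then shows that $\varphi_{\widetilde R_1}$ is divisorial, and Theorem~\ref{thm:triangle-mutations}~i) identifies the pencil with the one-step mutation of the triangle $\operatorname{conv}\{v'_1,v'_2,v'_3\}$. The only thing to check is that $\tfrac1{a^2}(\Delta_1,c^2)$ is a T-singularity, which is exactly the input needed for Theorem~\ref{fake}. It is Wahl because the cone over $E'_{23}$ is $\operatorname{cone}(v_1,v_2)$ of $T$.

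For ii), we apply Theorem~\ref{thm:Mori-mutations}~i) directly. The neighborhood $\Gamma'_0=V(\rr_{\ge0}v'_1)$ passes through the singularities of indices $|h_{41}|=b$ and $|h_{12}|=c$. Its invariant $\delta_0=(c^2+b^2-\Delta_0)/(bc)$ should be computed, and the integrality of $n_2=\delta_0c-b$ should be checked. In particular, we should check that $w'_{41}(v'_2)=(c^2-\Delta_0)/b$ is a positive integer, which is the condition ensuring the mutation exists.

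For iii), we compute the vertices of $Q_1$ and $Q_2$ from \eqref{mutpol} and Lemma~\ref{pollist}, and check that for each edge the two off-edge vertices have different heights. For $Q_1$ the new vertices are $v'_1+w'_{23}(v'_1)f$ and $v'_4+w'_{23}(v'_4)f$, and the heights are governed by Lemma~\ref{lem:sign}. Here we would verify that $\Delta'_0\neq0$ and that no edge is parallel to its opposite edge, using strict inequalities such as $\Delta_0+m_{34}<\Delta_1+m_{12}$. For $Q_2$ we would use its identification as the next k2A in the Mori train, with indices $(c,n_2)$. Parallel edges would force a Picard rank one degeneration through a decomposable factor, as in the argument excluding parallel edges for $\ff_1$.

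I expect iii) to be the main obstacle. A parallel pair is detected by an equality of heights, and ruling it out requires explicit Markov-type arithmetic from \eqref{eq:Markov0}. The plan is to reduce each equality to an identity among $a,b,c,c'$ and show that it contradicts $a\le b<c$ and $c'<b$.
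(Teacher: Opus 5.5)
There is a genuine gap. Your opening step is false: $Q$ \emph{does} have a pair of parallel edges. In the relabelled coordinates, $E'_{12}=\operatorname{conv}\{v_2+cf_{23},v_2\}$ and $E'_{34}=\operatorname{conv}\{v_1,v_1+c'f_{23}\}$ are both parallel to $f_{23}$. Equivalently, for the edge $\operatorname{conv}\{v_2,v_2+cf_{23}\}$ the two other vertices $v_1$ and $v_1+c'f_{23}$ lie at the same height $c'$. This is exactly why mutating $Q$ along that edge returns the triangle of $\pp(a^2,b^2,2(c')^2)$ in Remark~\ref{rem:next-mut}.

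Your checks only examined $E'_{23}$ and $E'_{41}$, so you cannot invoke Convention~\ref{conv:orientation}, Lemma~\ref{lem:F1moritrain} and Theorems~\ref{thm:triangle-mutations}, \ref{thm:Mori-mutations} on the grounds you give. The paper instead observes that the computations for these two particular edges never use the global non-parallel hypothesis. Replace your global claim by that per-edge statement. What is actually used is:
\begin{itemize}
\item the off-edge vertices of the mutated edge have distinct heights, namely $w'_{23}(v'_1)\neq w'_{23}(v'_4)$ and $w'_{41}(v'_2)\neq w'_{41}(v'_3)$, which your Lemma~\ref{lem:sign} checks give;
\item $t_2,a_2<1$. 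Verify these directly rather than through Lemma~\ref{lem:F1moritrain}, whose proof assumes no parallel edges: $t_2=(c')^2/b^2<1$ by Lemma~\ref{lem:f0small}, and $a_2=\Delta_1/b^2=(b^2-a^2)/(2b^2)<1$.
\end{itemize}
With this change, i) and ii) go through as you describe. One finds $\delta_0=2a$ and $n_2=2ac-b$, and the case $a=b$ is vacuous since it forces $(1,1,1)$.

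For iii), the Picard-rank argument borrowed from $\ff_1$ is unavailable. $\ff_0$ has Picard rank one degenerations $\pp(a^2,b^2,2c^2)$, and $Q$ itself is a quadrilateral in $[P_{\ff_0}]$ with parallel edges, so nothing excludes them in $Q_2$ a priori. The content of iii) is the explicit arithmetic, which your proposal does not carry out, and the tool you suggest for $Q_1$ is not the right one.

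In $Q_1$, with cyclic vertices $v'_2,\theta(v'_4),\theta(v'_1),v'_1$:
\begin{itemize}
\item the pair $[v'_2,\theta(v'_4)]$, $[\theta(v'_1),v'_1]$ is non-parallel because $w'_{23}(v'_4)\neq h_{\min}$;
\item the pair $[v'_1,v'_2]$, $[\theta(v'_4),\theta(v'_1)]$ is parallel iff $h_{\max}-w'_{23}(v'_4)=a$, i.e.\ $c^2-(c')^2=a^2$, i.e.\ $2b(c-c')=a$. This is impossible since $c>c'$. The inequality $\Delta_0+m_{34}<\Delta_1+m_{12}$ alone does not decide this pair.
\end{itemize}

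In $Q_2$:
\begin{itemize}
\item one pair is parallel iff $w'_{41}(v'_3)=h_{41}$, i.e.\ $\Delta_1=b^2+(c')^2$, which is impossible since $\Delta_1=(b^2-a^2)/2$;
\item the other pair is parallel iff $c^2-(c')^2=b^2$, i.e.\ $2a(c-c')=b$.
\end{itemize}
This last case is \emph{not} excluded by $a\le b<c$ and $c'<b$ alone. For example, $(a,b,c,c')=(1,4,5,3)$ satisfies these inequalities together with $c+c'=2ab$. You need $\gcd(a,b)=1$ to force $a=1$ and $4c=5b$; then \eqref{eq:Markov0} gives $7b^2=8$, a contradiction.
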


\begin{proof}
Parts i) and ii) follow from Theorem~\ref{thm:triangle-mutations},i) and Theorem~\ref{thm:Mori-mutations},i), because mutation computation of these edges do not rely on non-parallel conditions.

For iii), Lemma~\ref{pollist} gives $\mathcal{V}(Q_1)=\{v_1^\prime,v_2^\prime,v_4^\prime+h(v_4^\prime)f,v_1^\prime+h_{\max}f\}$, where $f=f_{23}^\prime$ and $h=w_{23}^\prime$.
The first pair of opposite edges is not parallel since $\det(f,v_4^\prime-v_2^\prime)=h(v_4^\prime)-h_{\min}\ne0$.
The second is parallel only if $c^2-(c')^2=a^2$, equivalently $2b(c-c')=a$, which contradicts $c'<b<c$.
Thus $Q_1$ has no parallel edges.

For $Q_2$, put $f=f_{41}^\prime$ and $h=w_{41}^\prime$.
As in Theorem~\ref{thm:Mori-mutations}, $\mathcal{V}(Q_2)=\{v_1^\prime,v_2^\prime,v_2^\prime+h(v_2^\prime)f,v_3^\prime+h(v_3^\prime)f\}$.
One pair of opposite edges is parallel only if $\Delta_1=b^2+(c')^2$, which is excluded by part ii).
The other is parallel only if $c^2-(c')^2=b^2$, equivalently $2a(c-c')=b$.
Since $\gcd(a,b)=1$, this gives $a=1$ and $4c=5b$, contradicting \eqref{eq:Markov0}.
Hence $Q_2$ also has no parallel edges.
\end{proof}

By part~iii), neither $Q_1$ nor $Q_2$ has a pair of parallel edges.
Therefore Convention~\ref{conv:orientation} applies to both, and all results of the preceding subsection, in particular Lemma~\ref{lem:F1moritrain}, Lemma~\ref{lem:sign}, Propositions~\ref{flipray} and~\ref{prop:extremal}, and Theorems~\ref{contractionteo}, \ref{thm:triangle-mutations}, and~\ref{thm:Mori-mutations} describe their deformation theory.
Parts~i) and~ii) of Proposition~\ref{prop:Q-mutations} identify which case of that theory applies to each mutation of $Q$.

\bibliographystyle{habbvr}
\bibliography{bib}

\end{document}